\documentclass[11pt]{amsart} 
\usepackage{amsmath,amsfonts,amssymb,amsthm,amscd}
\usepackage{hyperref}
\usepackage{graphics}
\usepackage[latin1]{inputenc}
\usepackage{color}

\newcommand{\suchthat}{\;\ifnum\currentgrouptype=16 \middle\fi|\;}

\allowdisplaybreaks

\DeclareMathOperator{\id}{id}

\DeclareMathOperator{\diag}{diag}
\DeclareMathOperator{\Diag}{Diag}
\DeclareMathOperator{\Aut}{Aut}
\DeclareMathOperator{\rank}{rank}
\DeclareMathOperator{\tr}{tr.deg.}

\DeclareMathOperator{\GL}{GL}


\newcommand{\Q}{\mathbb{Q}}
\newcommand{\R}{\mathbb{R}}       
\newcommand{\C}{\mathbb{C}}  

\newcommand{\qc}{\mathfrak{c}}

\newcommand{\K}{\mathbb{K}} 	
\theoremstyle{plain}
\newtheorem{theorem}{Theorem}[section]

\newtheorem{prop}[theorem]{Proposition}

\newtheorem{fact}[theorem]{Fact}
\newtheorem{corollary}[theorem]{Corollary}
\newtheorem{proposition}[theorem]{Proposition}
\newtheorem{lemma}[theorem]{Lemma}

\theoremstyle{definition}

\newtheorem{defnprop}[theorem]{Definition-Proposition}
\newtheorem{definition}[theorem]{Definition}

\theoremstyle{remark}

\newtheorem{remark}[theorem]{Remark}

\title{Two-dimensional locally $\mathbb{K}$-Nash groups}
\author{El\'ias Baro}
\address{Departamento de \'Algebra, Facultad de Matem\'aticas, Universidad Complutense de Madrid, 
28040 Madrid, Spain. 
E-mail address: ebaro@ucm.es}
\author{Juan de Vicente}
\address{Departamento de Matem\'aticas, Universidad Aut\'onoma de Madrid, 28049 Madrid, Spain. 
E-mail address: juan.devicente@uam.es}
\author{Margarita Otero}
\address{Departamento de Matem\'aticas, Universidad Aut\'onoma de Madrid, 28049 Madrid, Spain. 
E-mail address: margarita.otero@uam.es}

\thanks{The first and third authors are partially supported by  Spanish MTM2014-55565-P and Grupos UCM 910444. Second author also supported by a grant of the International
Program of Excellence in Mathematics at Universidad Aut\'onoma de Madrid.}
\subjclass[2010]{Primary 14P20, 14L10; Secondary 22E15, 32C07}

\keywords{two-dimensional, algebraic groups, locally Nash groups, Weierstrass functions}

\date{31 October 2017}

\begin{document}
 
\begin{abstract} We give a classification of 
connected abelian locally (real) Nash  groups of dimension two. We first  consider Painlevé's description of meromorphic maps admitting an Algebraic Addition Theorem and analyse the algebraic dependence of  such maps. We then give a classification of connected abelian locally  complex Nash   groups of dimension two,  from which we deduce the  corresponding real classification.
\end{abstract}
\maketitle

\section{Introduction}\label{FCintroduction} 

The original motivation of this work  was to provide a classification of the two-dimensional abelian \emph{real} algebraic groups, \emph{i.e.}, the real points of  complex algebraic groups defined over $\mathbb{R}$. Abelian irreducible \emph{complex} algebraic groups have  already been classified by J.P.\,Serre \cite{Serre} and therefore a natural approach could be to study the \emph{descent datum} of the latter (see \S\,\ref{finalcomments}, below). Instead, in this paper, we will follow  a different  path: we will classify two-dimensional  real algebraic abelian groups -- up to isogeny -- via  a description of   simply connected two-dimensional abelian locally Nash groups. 

M.\,Shiota \cite{Shiota1} introduced locally Nash manifolds as a framework to study covers of Nash manifolds. On the other hand, A.\,Pillay \cite{Pillay} proved that every semialgebraic group is a Nash group. Therefore, locally Nash groups are the appropriated category to consider coverings of semialgebraic groups, in particular, coverings of real algebraic groups (moreover, E.\,Hrushovski and A.\,Pillay \cite{Hrushovski_Pillay} proved that  simply connected locally Nash groups are precisely  universal coverings of the connected components of  real algebraic groups). Our classification of the two dimensional simply-connected abelian locally Nash groups goes via their complex analogues  and  gives  an alternative proof -- by analytic methods -- of the results of Serre,  in dimension two. In particular, we can apply our results to obtain a classification of the mentioned descent datum.

In \cite{BDOLCN} we have introduced the category of locally $\mathbb{C}$-Nash groups, which is the complex version of the Nash one. We will make a strong use of the $\C$-Nash category  in the present (real) classification problem. Henceforth, we will denote by $\mathbb{K}$ either $\mathbb{R}$ or  $\mathbb{C}$, and we will speak of locally $\mathbb{K}$-Nash groups. Madden and Stanton already noted that meromorphic maps admitting an algebraic addition theorem play an important role in the study of abelian 	locally Nash groups. Recall that a meromorphic map $f:\mathbb{C}^n\dashrightarrow \mathbb{C}^n$ admits an \emph{algebraic addition theorem}  (AAT) if its coordinate functions $f_1,\ldots,f_n$ are 
algebraically independent over $\mathbb{C}$ and each $f_i(u+v)$ is algebraic over $\mathbb{K}(f_1(u),\ldots,f_n(u),f_1(v),\ldots,f_n(v)))$. Invariance with respect to complex conjugation will play an important role when studying the real case via the complex one. We say that a map $f:\mathbb{C}^n\dashrightarrow \mathbb{C}^m$ is \emph{$\mathbb{C}$-meromorphic} 
if it is just meromorphic and \emph{$\mathbb{R}$-meromorphic} if it is invariant meromorphic, \emph{i.e.}, if $\overline{f(\overline{u})}=f(u)$, for each $u\in \mathbb{C}^n$ where $f$ is defined. 
It can be checked (see the discussion before \cite[Rmk.\,4.3]{BDOLCN}) that if $f:\mathbb{C}^n\dashrightarrow \mathbb{C}^n$ is a $\mathbb{K}$-meromorphic map admitting an AAT and there exist $a\in \mathbb{K}^n$ and an open neighbourhood $U\subseteq \mathbb{K}^n$ of $0$ such that
$$
U\rightarrow \mathbb{K}^n: u\mapsto f(u+a) 
$$
is an analytic diffeomorphism, then we get a $\K$-Nash atlas  on $(\K^n,+)$ by taking translates of the above chart.  Hereafter, \emph{when we write $(\mathbb{K}^n,+,f)$, is a locally $\K$-Nash group  we are assuming that the map $f$ has the above properties and the relevant atlas is given as above}. In \cite{BDOLCN} we prove that all  simply connected abelian locally $\mathbb{K}$-Nash groups are (up to isomorphism) of this form.

In this paper we classify  two-dimensional   simply connected abelian locally $\mathbb{K}$-Nash groups and compute their  automorphism groups. 
The latter computation -- in the complex case -- will be specially useful  for the real classification.
More precisely, the content of this paper is the following. Sections\,\ref{one-dimensional} to \ref{Sproofs} are  dedicated to the complex case.  In \S\,\ref{one-dimensional}, we give a classification of one-dimensional simply connected locally $\mathbb{C}$-Nash groups 
(Theorem\,\ref{1dim C-classification}) and of their automorphism groups (Proposition\,\ref{C-automorphisms}) and in \S\,\ref{Sproofs} we give the corresponding two-dimensional  abelian results  (Theorem\,\ref{2dim C-classification} and Proposition\,\ref{C-automorphisms 2}). To compute the automorphism groups, it will be specially useful  a constant $\mathfrak c$ -- associated to the various Weierstrass functions -- which we introduce  in Definition\,\ref{defci}. In \S\,\ref{SPainleve}
 we consider  Painlev\'e's description  of the meromorphic  maps admitting an AAT -- which will be essential for the classification  given in \S\,\ref{Sproofs} -- and we characterise  algebraic dependence of the maps involved in such  description (Propositions\,\ref{Z general} and \ref{S general}).  In Section\,\ref{2dlng}  we give the main result of the paper: the classification of the simply connected two-dimensional abelian locally Nash groups (Theorem\,\ref{2dim R-classification}).

Finally, in Section\,\ref{finalcomments} we will apply the previous results to study two-dimensional algebraic groups. In \cite{BDOLCN} we proved that abelian simply-connected locally $\C$-Nash groups are universal coverings of irreducible complex algebraic groups. In Proposition\,\ref{algebraic groups} we identify these algebraic groups for the locally $\C$-Nash groups already obtained in the classification of Theorem\,\ref{2dim C-classification}. As a consequence, in Corollary\,\ref{isogenous corollary}, we give a classification of two-dimensional irreducible abelian algebraic groups up to isogeny (already obtained by J.P. Serre). In Proposition\,\ref{realalgebraicgroups} and Corollary\,\ref{real isogenous corollary} we give the real counterpart of these results, for which we need to study first the \emph{descent datum} of the algebraic groups of Corollary\,\ref{isogenous corollary}.
 
The results of this paper are part of the second author's Ph.D. thesis \cite{tesis}.

\section{Preliminaries}
We begin  by stating two results  which will be used in the rest of this paper without  further reference to them.

\begin{fact}\cite[Props.\,4.9 and 4.5, and Rem.\,4.3]{BDOLCN}\label{Kabelian} \emph{(1)} Every simply connected $n$-dimensional abelian locally $\mathbb{K}$-Nash group is isomorphic to some $(\mathbb{K}^n,+, f)$.

\noindent \emph{(2)} 
The locally $\mathbb{K}$-Nash groups $(\mathbb{K}^n,+,f )$ and $(\mathbb{K}^n,+,g)$ are isomorphic if and only if 
there exists an $\alpha \in \GL_n(\mathbb{K})$ such  that   $g\circ \alpha$ is algebraic over $\mathbb{K}(f)$.

\noindent \emph{(3)}  If $(\mathbb{R}^n,+, f)$ is a locally Nash group then $(\mathbb{C}^n,+, f)$ is a locally $\C$-Nash group. Conversely, if $(\mathbb{C}^n,+, f)$ is a locally $\C$-Nash group and $f$ is $\R$-meromorphic then  $(\mathbb{R}^n,+, f)$  is a locally Nash group.
\end{fact}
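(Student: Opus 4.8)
The plan is to handle the three parts separately, since part (1) carries essentially all the work while (2) and (3) become formal once the relevant notion of isomorphism is unwound.

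For part (1), I would start from a simply connected $n$-dimensional abelian locally $\K$-Nash group $G$ and first forget its Nash structure. As a connected, simply connected abelian Lie group over $\K$, $G$ is isomorphic via its exponential to $(\K^n,+)$ --- over $\C$ one uses that a connected abelian complex Lie group has the form $\C^n/\Lambda$ and that simple connectedness forces $\Lambda=0$. So I may assume the underlying group is $(\K^n,+)$ carrying a locally $\K$-Nash atlas compatible with $+$. Next I would fix a chart $\varphi=(\varphi_1,\dots,\varphi_n)$ at $0$ with $\varphi(0)=0$, defined on a neighbourhood $U$ of $0$: since $\varphi$ is an analytic diffeomorphism onto its image, the $\varphi_i$ are analytically, hence algebraically, independent over $\K$, and since multiplication is locally Nash, each $\varphi_i(u+v)$ is, for $u,v$ near $0$, a Nash --- hence algebraic --- function of $\varphi(u),\varphi(v)$. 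The key step, and the hard part, is to promote the germs $\varphi_i$ to global meromorphic functions $f_i$ on $\C^n$: I would propagate each $\varphi_i$ by analytic continuation across $\C^n$ using the local algebraic addition law, and invoke the classical theory (Weierstrass, Painlev\'e) that a germ obeying such an addition law extends to a meromorphic function on all of $\C^n$ admitting an AAT. Setting $f=(f_1,\dots,f_n)$ and picking $a$ so that $u\mapsto f(u+a)$ is an analytic diffeomorphism near $0$, the atlas of translates of this chart is by construction compatible with that of $G$, so $G\cong(\K^n,+,f)$; in the real case one checks moreover that the $f_i$ may be taken $\R$-meromorphic.

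For part (2), in the ``if'' direction, given $\alpha\in\GL_n(\K)$ with $g\circ\alpha$ algebraic over $\K(f)$, I note that $\alpha$ is an automorphism of $(\K^n,+)$, and pulling a $g$-chart $v\mapsto g(v+b)$ back along $\alpha$ yields $u\mapsto (g\circ\alpha)(u+\alpha^{-1}b)$, whose coordinates are algebraic over $\K\bigl(f(\cdot+\alpha^{-1}b)\bigr)$; hence the transition to an $f$-chart is Nash and $\alpha$ is an isomorphism of locally $\K$-Nash groups. Conversely, any isomorphism $(\K^n,+,f)\to(\K^n,+,g)$ is in particular an analytic automorphism of the Lie group $(\K^n,+)$, hence $\K$-linear, say equal to $\alpha\in\GL_n(\K)$, and compatibility of the two atlases says precisely that $g\circ\alpha$ is algebraic over $\K(f)$.

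For part (3), if $(\R^n,+,f)$ is a locally Nash group, its chart $u\mapsto f(u+a)$ with $a\in\R^n$ is a Nash diffeomorphism near $0$; Nash maps are real-analytic and algebraic, so they extend to $\C$-Nash maps on a complex neighbourhood of $0$, and this extension agrees with $f(\cdot+a)$ because $f$ is already $\C$-meromorphic. The complexified transitions between translated charts are again $\C$-Nash, so $(\C^n,+,f)$ is a locally $\C$-Nash group. Conversely, if $(\C^n,+,f)$ is locally $\C$-Nash and $f$ is $\R$-meromorphic, I choose $a\in\R^n$ (possible by the invariance of $f$) so that $u\mapsto f(u+a)$ is defined near a real point; it then takes real values on a real neighbourhood of $0$ and is the restriction of a $\C$-Nash map, hence Nash, and restricting all translated charts to $\R^n$ produces a locally Nash atlas on $(\R^n,+)$. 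The one genuine obstacle in the whole statement is the global meromorphic extension in part (1); everything else reduces to linear algebra on $(\K^n,+)$ together with the standard local-to-global transfer between Nash and $\C$-Nash functions.
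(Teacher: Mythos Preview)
The paper does not prove this statement: it is recorded as a \emph{Fact} with an explicit citation to \cite[Props.\,4.9 and 4.5, and Rem.\,4.3]{BDOLCN}, and no argument is given in the present paper. So there is no in-paper proof against which to compare your proposal.

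That said, your outline is broadly in line with how these results are established in \cite{BDOLCN}. A couple of minor comments: in part~(1), saying the coordinate functions of the chart are ``analytically, hence algebraically, independent'' is a slip of phrasing --- what you actually use is that $\varphi$ is a local diffeomorphism, hence functionally independent, which gives algebraic independence. You are also right that the only substantive difficulty is the global meromorphic extension of the local chart to $\C^n$; this is exactly the step that \cite{BDOLCN} (following the classical AAT literature) handles, and it is not something one can improvise. For parts~(2) and~(3) your reductions to linearity of automorphisms of $(\K^n,+)$ and to restriction/complexification of Nash maps are the expected ones and match the cited references.
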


\begin{fact}\label{Lemma 3}\cite[Lem.\,3]{BDOAAT} If $f:\C^n\dashrightarrow\C^n$  is a  meromorphic map which admits an AAT then $f(u+a)$ is algebraic over $\C(f(u))$, for each $a\in\C^n$.
\end{fact}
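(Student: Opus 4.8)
The plan is to specialise the algebraic addition theorem in the second variable, and then to dispose of the degenerate values of the parameter by a bootstrapping argument that exploits the additivity of that parameter.

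First I would fix, for each $i\in\{1,\dots,n\}$, a polynomial $P_i\in\C[X_1,\dots,X_n,Y_1,\dots,Y_n,T]$ of positive degree in $T$ such that
$$
P_i\bigl(f_1(u),\dots,f_n(u),\,f_1(v),\dots,f_n(v),\,f_i(u+v)\bigr)=0
$$
holds identically in $(u,v)\in\C^{2n}$; such a $P_i$ exists because the AAT gives that $f_i(u+v)$ is algebraic over $\C(f_1(u),\dots,f_n(u),f_1(v),\dots,f_n(v))$. Call $a\in\C^n$ \emph{admissible} if $f$ is defined at $a$ and, for each $i$, the polynomial $P_i(X_1,\dots,X_n,f_1(a),\dots,f_n(a),T)$ is not identically zero. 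For an admissible $a$ one may restrict the displayed identity to the slice $v=a$; since $f_1(a),\dots,f_n(a)$ are complex constants, this exhibits $f_i(u+a)$ as a root of $P_i(X,f(a),T)$ evaluated at $X=f(u)$, and that evaluated polynomial is still nonzero in $T$ because $f_1(u),\dots,f_n(u)$ are algebraically independent over $\C$. Hence $f(u+a)$ is algebraic over $\C(f_1(u),\dots,f_n(u))$ for every admissible $a$.

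Next I would check that the non-admissible parameters form a small set. Expanding $P_i=\sum_{k,\beta}c_{i,k,\beta}(Y)\,X^{\beta}T^{k}$ with $c_{i,k,\beta}\in\C[Y]$, the vanishing of $P_i(X,f(a),T)$ amounts to $c_{i,k,\beta}(f(a))=0$ for all $k,\beta$; since $P_i$ is nonzero, some $c_{i,k,\beta}\in\C[Y]$ is a nonzero polynomial, and as $f_1,\dots,f_n$ are algebraically independent over $\C$ the meromorphic function $a\mapsto c_{i,k,\beta}(f(a))$ does not vanish identically on $\C^n$. Its zero set is therefore a proper analytic subset of $\C^n$; adding in the pole locus of $f$, I obtain a proper analytic subset $W\subsetneq\C^n$ that contains every non-admissible point.

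Finally I would pass from admissible to arbitrary $a\in\C^n$ using the group law. The set $W\cup(a-W)$ is a finite union of proper analytic subsets of $\C^n$, hence proper, so there is $a_1\notin W\cup(a-W)$; then $a_1$ and $a_2:=a-a_1$ are both admissible. By the first step $f(u+a_1)$ is algebraic over $\C(f(u))$, and $f\bigl((u+a_1)+a_2\bigr)$ is algebraic over $\C(f(u+a_1))$; since $a=a_1+a_2$, transitivity of algebraic extensions yields that $f(u+a)$ is algebraic over $\C(f(u))$. The only genuinely delicate point is this last step: one cannot expect a single polynomial relation to specialise nondegenerately at every value of $a$, and it is precisely the fact that an arbitrary $a$ splits as a sum of two admissible parameters, together with transitivity of algebraicity, that repairs the exceptional values.
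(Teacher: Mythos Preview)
Your argument is correct. The paper does not actually prove this statement: it is recorded as a fact and attributed to \cite[Lem.\,3]{BDOAAT}, so there is no in-paper proof to compare against. The route you take---specialise the AAT relation at $v=a$ for generic $a$, observe that the exceptional set of parameters is a proper analytic subset by algebraic independence of $f_1,\dots,f_n$, and then cover an arbitrary $a$ as a sum of two generic parameters and use transitivity of algebraic extensions---is the standard one for this lemma and is exactly what one would expect the cited reference to do.

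One small remark on presentation: in your first step you could note explicitly why $Q_i(X,T):=P_i(X,f(a),T)$, once nonzero, must have positive $T$-degree after substituting $X=f(u)$. This is automatic, since if $Q_i(f(u),T)$ were a nonzero element of $\C(f(u))$ independent of $T$, the identity $Q_i(f(u),f_i(u+a))=0$ would contradict it; but saying so closes the only place a reader might pause.
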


Given a set of maps $S$ from $\K^m$ to $\K^n$, we say that a map $f:\K^m\to\K^n$ is \emph{algebraic over} $\K(S)$ if each coordinate function of $f$ is algebraic over the field generated -- over $\K$ -- by the coordinates functions of the maps in $S$. In that case we write $$f\in \K(S)^{alg}.$$

Recall that given a meromorphic map $f:\mathbb{C}^n\dashrightarrow \mathbb{C}^m$,  the \emph{group of periods of $f$} is
$\Lambda _f:=\{  a \in \mathbb{C}^n \suchthat f(u)=f(u+a)\}$. If $f:\mathbb{C}^n\dashrightarrow \mathbb{C}^n$  is a local diffeomorphism  at some point then $\Lambda _f$ is a discrete subgroup of $(\mathbb{C}^n,+)$, see \cite[Lem.\,4.6]{BDOLCN}, its \emph{rank} is its dimension as a free $\mathbb{Z}$-module. 
 
\begin{fact}\label{BDOrank}\emph{1)} \cite[Lem.\,4.7]{BDOLCN} 
Let $f,g:\mathbb{C}^n\dashrightarrow \mathbb{C}^n$ be meromorphic maps such that  $\Lambda _g$ is a discrete subgroup of $(\mathbb{C}^n,+)$ and  $g\in\mathbb{C}(f)^{alg}$.  Then, $\Lambda _f$ is also discrete and there exists $N\in \mathbb{N}\setminus \{0\}$ such that $N\Lambda _f\leq\Lambda _g$,  
and so  $\rank \Lambda _f \leq \rank \Lambda _g$.  Moreover, if the coordinate functions of $g$ are algebraically independent over $\mathbb{C}$ then $\rank \Lambda _f = \rank \Lambda _g$.

\noindent \emph{2)} \cite[Prop.\,4.8]{BDOLCN} Let $(\mathbb{K}^n,+,f)$ and $(\mathbb{K}^n,+,g)$ be isomorphic locally $\K$-Nash groups. Then $\rank \Lambda _f=\rank \Lambda _g$. 
\end{fact}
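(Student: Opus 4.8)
The plan is to derive \emph{2)} from \emph{1)} together with Fact~\ref{Kabelian}(2), so the real work is in \emph{1)}, whose point is that a period of $f$ acts by translation on the function field of $(f,g)$ as a field automorphism fixing $\C(f)$, and there are only finitely many such automorphisms.

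For \emph{1)} I would work inside the field $\mathcal M$ of meromorphic functions on $\C^n$, and set $K=\C(f_1,\dots,f_n)$ and $E=\C(f_1,\dots,f_n,g_1,\dots,g_n)$, both subfields of $\mathcal M$; since $g\in\C(f)^{alg}$ the extension $E/K$ is finite. For $a\in\Lambda_f$ let $T_a\colon\mathcal M\to\mathcal M$ be translation by $a$, $T_a(h)(u)=h(u+a)$; it is a field automorphism, and it fixes $K$ pointwise because $f(u+a)=f(u)$. Each $T_a(g_i)=g_i(\cdot+a)$ is a root of the minimal polynomial of $g_i$ over $K$, so for each $i$ the set $\{g_i(\cdot+a):a\in\Lambda_f\}$ is finite, and hence the compositum $F$ of all the fields $T_a(E)$, $a\in\Lambda_f$, is again a finite extension of $K$ inside $\mathcal M$; moreover each $T_a$ merely permutes the $T_b(E)$, so $T_a(F)=F$. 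Therefore $a\mapsto T_a|_F$ is a group homomorphism from $\Lambda_f$ to the finite group $\Aut(F/K)$. Its kernel is the set of common periods of all functions in $F$, hence is contained in $\Lambda_g$ (as $g_1,\dots,g_n\in F$); writing $N\in\N\setminus\{0\}$ for that index we get $N\Lambda_f\subseteq\Lambda_g$. Since $\Lambda_g$ is discrete so is $N\Lambda_f$, hence so is $\Lambda_f$ (multiplication by $N$ is a homeomorphism of $\C^n$), and $\rank\Lambda_f=\rank N\Lambda_f\le\rank\Lambda_g$ because $N\Lambda_f$ is a subgroup of $\Lambda_g$.

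For the \emph{moreover} clause, if $g_1,\dots,g_n$ are algebraically independent over $\C$ then, as $E/K$ is algebraic and $\C(g)\subseteq E$, we have $n\le\tr_{\C}\C(g)\le\tr_{\C}E=\tr_{\C}K\le n$, so all these are equal and $E/\C(g)$ is algebraic, i.e.\ $f\in\C(g)^{alg}$; since $\Lambda_f$ is now known to be discrete, applying \emph{1)} with the roles of $f$ and $g$ exchanged yields $\rank\Lambda_g\le\rank\Lambda_f$, hence equality. Finally, for \emph{2)}: by Fact~\ref{Kabelian}(2) there is $\alpha\in\GL_n(\K)$ with $g\circ\alpha\in\K(f)^{alg}\subseteq\C(f)^{alg}$; one checks $\Lambda_{g\circ\alpha}=\alpha^{-1}\Lambda_g$, so $\Lambda_{g\circ\alpha}$ is discrete with the same rank as $\Lambda_g$, and the coordinate functions of $g\circ\alpha$ are algebraically independent over $\C$ because those of $g$ are (an AAT is part of the data of the locally $\K$-Nash group $(\K^n,+,g)$, and when $\K=\R$ one may instead use the locally $\C$-Nash group $(\C^n,+,g)$ via Fact~\ref{Kabelian}(3)) and $\alpha$ is invertible. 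Applying the \emph{moreover} clause of \emph{1)} to the pair $(f,g\circ\alpha)$ gives $\rank\Lambda_f=\rank\Lambda_{g\circ\alpha}=\rank\Lambda_g$.

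The point I expect to require the most care is the bootstrapping nature of the discreteness assertion: a priori $\Lambda_f$ is only a subgroup of $(\C^n,+)$, and one sees it is discrete only after producing the finite-index subgroup $N\Lambda_f$ inside the (assumed discrete) $\Lambda_g$. A lesser point is making sure all the algebraic function fields involved can be realised as honest subfields of $\mathcal M$ stable under the $T_a$; restricting to the finitely many translates of the $g_i$ by periods of $f$, rather than passing to a full Galois closure, is what keeps this elementary.
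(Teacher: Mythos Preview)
The paper does not supply its own proof of this Fact: it is stated with citations to \cite[Lem.\,4.7 and Prop.\,4.8]{BDOLCN} and is used throughout without further argument. So there is no in-paper proof to compare against; I can only assess your argument on its own merits.

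Your proof is correct and is essentially the standard one. The core idea in part~1)---that translation by a period of $f$ gives a $K$-automorphism of a finite extension $F/K$, whence the map $\Lambda_f\to\Aut(F/K)$ has finite image and one can take $N$ equal to the order of that image---is exactly the expected mechanism. Your handling of the ``moreover'' clause by swapping the roles of $f$ and $g$ is clean, and your derivation of part~2) from part~1) via Fact~\ref{Kabelian}(2) and the observation $\Lambda_{g\circ\alpha}=\alpha^{-1}(\Lambda_g)$ is the natural route. The only cosmetic remark is that when you write ``$N$ for that index'', what you actually use is that $N$ annihilates the finite abelian quotient $\Lambda_f/\ker$; this follows from Lagrange, but it is worth saying explicitly, since at this stage $\Lambda_f$ is not yet known to be finitely generated.
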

The latter fact allows us to extend the notion of rank to certain families of fields as follows -- we will use this invariant in Section\,\ref{SPainleve} --.
\begin{definition}\label{Z-rank}
Let $\mathbb{L}$ be a field of meromorphic functions from $\mathbb{C}^n$ to $\mathbb{C}$.
Let $f=(f_1,\dots,f_n):\mathbb{C}^n\dashrightarrow \mathbb{C}^n$ be  such that $\{ f_1,\ldots ,f_n\}$ is a 
transcendence basis of $\mathbb{L}$ over $\mathbb{C}$ and $\Lambda _f$ is a discrete subgroup of $\mathbb{C}^n$.
We define $\rank\mathbb{L}:=\rank  \Lambda _f$. If $\mathcal{P}$ is a family of such fields,  all of the same rank, then we define $\rank\mathcal{P}$ as this common rank.
\end{definition}

We will also need  some basic properties of the Weierstrass $\wp$,  $\zeta$ and $\sigma$ functions that we now recall.
Given a lattice $\Omega$ of $\mathbb{C}$, we will consider the Weierstrass functions $\sigma _{\Omega}$, $\zeta _{\Omega}$ and 
$\wp _{\Omega}$.
Recall that $\zeta _{\Omega}(u) = \frac{\sigma '_{\Omega}(u)}{\sigma _{\Omega}(u)}$ and $\wp _{\Omega}(u)  = -\zeta '_{\Omega}(u)$.
Given $\xi \in \mathbb{C}$, we introduce  the following  notation $$\widetilde{\sigma}_{\Omega,\xi}(u):=\frac{\sigma _{\Omega}(u-\xi)}{\sigma _{\Omega}(u)}.$$
Also, given $\omega\in \mathbb{C}\setminus \mathbb{R}$, we denote by $\wp _{\omega}$, $\zeta _{\omega}$, $\sigma _{\omega}$, 
$\widetilde{\sigma} _{\omega ,\xi}$ the functions, $\wp _{{\langle 1,\omega \rangle}_\mathbb{Z}}$, $\zeta _{{\langle 1,\omega \rangle}_\mathbb{Z}}$, 
$\sigma _{{\langle 1,\omega \rangle}_\mathbb{Z}}$ and $\widetilde{\sigma} _{{\langle 1,\omega \rangle}_\mathbb{Z},\xi}$ respectively.

\begin{fact}\label{Chandrasekharan} Let $\Omega$ be a lattice of $\mathbb{C}$ and $\xi \in \mathbb{C}$.
Then, for each $a\in \mathbb{C}^*$ we have that: ${\wp _{a\Omega}(au)=a^{-2}\wp _{\Omega}(u)},  {\zeta _{a\Omega}(au)=a^{-1}\zeta _{\Omega}(u)},  \sigma _{a\Omega}(au)=a\sigma _{\Omega}(u)$ and $\widetilde{\sigma} _{a\Omega,a\xi}(au)=\widetilde{\sigma} _{\Omega,\xi }(u)$.
\end{fact}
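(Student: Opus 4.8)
\emph{Proof plan.} The plan is to reduce all four identities to the classical Weierstrass series and product expansions, using the single elementary remark that multiplication by $a$ is a group isomorphism $\Omega\to a\Omega$, hence restricts to a bijection $\Omega\setminus\{0\}\to a\Omega\setminus\{0\}$.

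First I would establish the identity for $\wp$. Starting from
\[
\wp_{a\Omega}(au)=\frac{1}{(au)^2}+\sum_{\omega'\in a\Omega\setminus\{0\}}\left(\frac{1}{(au-\omega')^2}-\frac{1}{\omega'^{2}}\right),
\]
the substitution $\omega'=a\omega$ converts each summand into $a^{-2}$ times the corresponding summand of $\wp_\Omega(u)$; absolute and locally uniform convergence makes the reindexing legitimate, and one reads off $\wp_{a\Omega}(au)=a^{-2}\wp_\Omega(u)$. The same substitution in the defining series of $\zeta_\Omega$ factors out an overall $a^{-1}$ and yields $\zeta_{a\Omega}(au)=a^{-1}\zeta_\Omega(u)$; alternatively this follows by integrating the previous identity, since $\frac{d}{du}\zeta_{a\Omega}(au)=-a\,\wp_{a\Omega}(au)=-a^{-1}\wp_\Omega(u)=\frac{d}{du}\big(a^{-1}\zeta_\Omega(u)\big)$, the two sides agreeing as $u\to 0$ because each has principal part $(au)^{-1}$ and vanishing holomorphic part there. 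For $\sigma$ I would use the Weierstrass product
\[
\sigma_\Omega(u)=u\prod_{\omega\in\Omega\setminus\{0\}}\left(1-\frac u\omega\right)\exp\left(\frac u\omega+\frac{u^{2}}{2\omega^{2}}\right);
\]
substituting $\omega'=a\omega$ in $\sigma_{a\Omega}(au)$ leaves every factor of the product untouched and scales the leading $u$ by $a$, so $\sigma_{a\Omega}(au)=a\,\sigma_\Omega(u)$ (or, again, integrate $\zeta_\Omega=\sigma'_\Omega/\sigma_\Omega$ and fix the multiplicative constant from $\sigma_\Omega(u)/u\to 1$ as $u\to 0$). Finally the statement for $\widetilde{\sigma}$ is then immediate:
\[
\widetilde{\sigma}_{a\Omega,a\xi}(au)=\frac{\sigma_{a\Omega}\big(a(u-\xi)\big)}{\sigma_{a\Omega}(au)}=\frac{a\,\sigma_\Omega(u-\xi)}{a\,\sigma_\Omega(u)}=\widetilde{\sigma}_{\Omega,\xi}(u).
\]

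I do not expect a genuine obstacle here: the whole content is the bijectivity of $\omega\mapsto a\omega$ together with the homogeneity degrees of the individual summands. The only point deserving a line of care is that the normalisations of $\zeta$ and $\sigma$ are preserved, i.e.\ that no extra additive or multiplicative constant appears; this is automatic with the series/product approach and needs the short limit computation indicated above if one instead derives $\zeta$ and $\sigma$ from $\wp$ by integration. Everything here is classical and can be found, for instance, in Chandrasekharan's treatment of elliptic functions.
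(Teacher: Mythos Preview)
Your proposal is correct and follows essentially the same approach as the paper: the paper's proof simply cites the relevant equations in Chandrasekharan's book for the $\wp$, $\zeta$, and $\sigma$ identities and then notes that the $\widetilde{\sigma}$ identity follows by definition, which is exactly the content you have spelled out via the series and product expansions. The only difference is that you give the argument explicitly rather than by reference; mathematically there is nothing to add.
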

\begin{proof}
See \cite[Ch.III eq. $(2.2)$, Ch.IV eqs. (1.4)  and (2.6)]{Chandrasekharan}. The last equality then follows by definition.
\end{proof}

\begin{lemma}\label{cosets} Let $\Omega _1$ be a sublattice  of index $n$ of  a lattice  $\Omega _2$ of  $\mathbb{C}$.
Let $\xi \in \mathbb{C}$ and 
$\Omega _2=\bigcup _{i=1}^n (\Omega _1 +a_i)$.
Then, there exist $\mathfrak{c},C,C'\in \mathbb{C}$ such that:

\emph{(1)} $\displaystyle  \wp _{\Omega _2}(u)=\sum _{i=1}^n \wp _{\Omega _1}(u+a_i)-\mathfrak{c}$.

\emph{(2) }$\displaystyle  \zeta _{\Omega _2}(u)=\sum _{i=1}^n \zeta _{\Omega _1}(u+a_i)+\mathfrak{c}u+C$.

\emph{(3) }$\displaystyle  \sigma _{\Omega _2}(u)=e^{(\mathfrak{c}/2)u^{2}+Cu+C'}\prod _{i=1}^n \sigma _{\Omega _1}(u+a_i)$.

\emph{(4)} $\displaystyle  \widetilde{\sigma} _{\Omega _2,\xi}(u)=
	e^{-\xi \mathfrak{c}u+(\mathfrak{c}/2)\xi^{2}-C\xi}\prod _{i=1}^n \widetilde{\sigma} _{\Omega _1,\xi}(u+a_i)$.

\end{lemma}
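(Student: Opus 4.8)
The plan is to prove (1) by a Liouville-type argument and then deduce (2), (3) and (4) from it by successive integration and exponentiation; proceeding this way forces the \emph{same} constants $\mathfrak{c}, C, C'$ to appear in all four formulas. Note that one of the cosets $\Omega_1 + a_i$ equals $\Omega_1$, so exactly one $a_i$ lies in $\Omega_1$.

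For (1), set $F(u) := \sum_{i=1}^n \wp_{\Omega_1}(u+a_i)$. First I would check that $F$ is $\Omega_2$-elliptic: for $\omega \in \Omega_2$ the tuple $(a_i+\omega)_i$ is a permutation of $(a_i)_i$ modulo $\Omega_1$, so the $\Omega_1$-periodicity of $\wp_{\Omega_1}$ gives $F(u+\omega) = F(u)$. The poles of $F$ are double and lie on $\bigcup_i(\Omega_1 - a_i) = -\Omega_2 = \Omega_2$; near $u=0$ only the unique summand with $a_i \in \Omega_1$ is singular, contributing $\wp_{\Omega_1}(u) = u^{-2} + O(u^2)$, so $F(u) - u^{-2}$ extends holomorphically across $0$ and hence, by periodicity, across $\Omega_2$. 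Since also $\wp_{\Omega_2}(u) = u^{-2} + O(u^2)$, the difference $F - \wp_{\Omega_2}$ is an entire $\Omega_2$-elliptic function, hence a constant $\mathfrak{c}$ (explicitly $\mathfrak{c} = \sum_{a_i \notin \Omega_1}\wp_{\Omega_1}(a_i)$, using that $\wp_{\Omega_1}$ has vanishing constant term in its Laurent expansion at $0$). This gives (1).

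Now (2) follows by integrating (1): since $\wp_\Omega = -\zeta_\Omega'$, part (1) says $\zeta_{\Omega_2}'(u) = \sum_i \zeta_{\Omega_1}'(u+a_i) + \mathfrak{c}$, and integration yields $\zeta_{\Omega_2}(u) = \sum_i \zeta_{\Omega_1}(u+a_i) + \mathfrak{c}u + C$ for some $C \in \mathbb{C}$. For (3), I would consider $H(u) := \sigma_{\Omega_2}(u)\big/\prod_{i=1}^n \sigma_{\Omega_1}(u+a_i)$; numerator and denominator are entire with simple zeros exactly on $\Omega_2$ — the zeros of $\sigma_{\Omega_1}(\cdot+a_i)$ form the coset $\Omega_1 - a_i$, and these cosets are pairwise disjoint with union $-\Omega_2 = \Omega_2$ — so $H$ is entire and nowhere zero, hence $H = e^{g}$ with $g$ entire. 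Using $\zeta_\Omega = \sigma_\Omega'/\sigma_\Omega$ and (2), the logarithmic derivative gives $g'(u) = \zeta_{\Omega_2}(u) - \sum_i \zeta_{\Omega_1}(u+a_i) = \mathfrak{c}u + C$, so $g(u) = (\mathfrak{c}/2)u^2 + Cu + C'$ for some $C' \in \mathbb{C}$, which is (3). Finally (4) is a direct substitution: write $\widetilde{\sigma}_{\Omega_2,\xi}(u) = \sigma_{\Omega_2}(u-\xi)/\sigma_{\Omega_2}(u)$, plug in (3) in numerator and denominator, simplify the exponent using $(u-\xi)^2 - u^2 = -2\xi u + \xi^2$, and recognise $\sigma_{\Omega_1}(u+a_i-\xi)/\sigma_{\Omega_1}(u+a_i) = \widetilde{\sigma}_{\Omega_1,\xi}(u+a_i)$. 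The argument is essentially routine; the only mildly delicate points are the coset-permutation justification of the $\Omega_2$-periodicity of $F$ in (1) and the zero-counting for $H$ in (3). The point worth stressing is that deriving (2)--(4) from (1), rather than proving each independently, is exactly what guarantees that the constants $\mathfrak{c}, C, C'$ are consistent across all four identities.
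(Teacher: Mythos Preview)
Your proof is correct and follows essentially the same strategy as the paper: the paper's proof starts from the identity $\wp'_{\Omega_2}(u)=\sum_{i=1}^n \wp'_{\Omega_1}(u+a_i)$ (one derivative below your starting point, where the constant vanishes by oddness) and then obtains (1)--(4) by the same chain of successive integrations using $\zeta'_\Omega=-\wp_\Omega$ and $(\ln\sigma_\Omega)'=\zeta_\Omega$. Your direct Liouville argument for (1) and the careful zero-counting for $H$ in (3) make explicit exactly what the paper leaves as ``follows easily''.
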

\begin{proof}All follow easily from the identity $\wp '_{\Omega _2}(u)=\sum _{i=1}^n \wp '_{\Omega _1}(u-a_i)$, noting that $\zeta _{\Omega} '=-\wp _{\Omega}$ and $(\ln (\sigma _{\Omega}(u)))'=\zeta _{\Omega}(u)$, for any lattice $\Omega$ of $\mathbb{C}$.
\end{proof}
Note that the constant $\mathfrak{c}$ of the lemma  only depends on $\Omega _1$ and $\Omega _2$, 
and not on the $a_1,\ldots ,a_n$ chosen.

\begin{definition}\label{defc}Let $\Omega _1$ and $\Omega _2$ be lattices of $\mathbb{C}$ with 
$\Omega _1\leq\Omega _2$. We define  the \emph{residue of $\Omega_1$ in $\Omega_2$} -- denoted by $\mathfrak{c}(\Omega _2,\Omega _1)$ --  the constant $\mathfrak{c}$  given by  the last lemma. 
\end{definition}

\begin{lemma}\label{C-wp} Let $\Lambda _1$ and $\Lambda _2$ be lattices of $\mathbb{C}$. Then, 

$1)$ $\wp _{\Lambda _1}$ and $\wp _{\Lambda _2}$ are algebraically dependent over $\mathbb{C}$ if and only if there exists a 
 lattice $\Lambda$ of $\mathbb{C}$ such that $\Lambda \leq \Lambda _1$ and $\Lambda \leq \Lambda _2$, and 
 
$2)$ $\wp _{\Lambda _1} \circ \alpha$ and $\wp _{\alpha ^{-1}(\Lambda _1)}$ are algebraically dependent over $\mathbb{C}$,   for any  $\alpha$ in $ \GL_1(\mathbb{C})$.
\end{lemma}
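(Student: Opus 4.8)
The plan is to treat the two items separately; item $(2)$ is immediate and item $(1)$ needs slightly more work. For $(2)$: an element $\alpha\in\GL_1(\mathbb{C})$ is multiplication by some $a\in\mathbb{C}^{*}$, so $\alpha^{-1}(\Lambda_1)=a^{-1}\Lambda_1$ and $(\wp_{\Lambda_1}\circ\alpha)(u)=\wp_{\Lambda_1}(au)$. Applying Fact~\ref{Chandrasekharan} to the lattice $\Omega=a^{-1}\Lambda_1$ with scalar $a$ (so $a\Omega=\Lambda_1$) gives $\wp_{\Lambda_1}(au)=a^{-2}\wp_{a^{-1}\Lambda_1}(u)$, hence $\wp_{\Lambda_1}\circ\alpha=a^{-2}\,\wp_{\alpha^{-1}(\Lambda_1)}$; in particular the two functions satisfy the nontrivial relation $X-a^{-2}Y=0$, so they are algebraically dependent over $\mathbb{C}$.

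For $(1)$ I would first record, for an arbitrary lattice $\Omega$ of $\mathbb{C}$, two standard observations: (a) the period group of $\wp_{\Omega}$ is exactly $\Omega$ --- its pole set is $\Omega$ and is preserved by every period, forcing $a\in\Omega$ for each period $a$ ---; and (b) $\wp_{\Omega}\colon\mathbb{C}\dashrightarrow\mathbb{C}$ admits an AAT, since it is transcendental over $\mathbb{C}$ and the classical addition formula expresses $\wp_{\Omega}(u+v)$ rationally in $\wp_{\Omega}(u),\wp_{\Omega}'(u),\wp_{\Omega}(v),\wp_{\Omega}'(v)$, while $\wp_{\Omega}'$ is algebraic over $\mathbb{C}(\wp_{\Omega})$ via $(\wp_{\Omega}')^{2}=4\wp_{\Omega}^{3}-g_2\wp_{\Omega}-g_3$. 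By Fact~\ref{Lemma 3}, statement (b) yields $\wp_{\Omega}(u+c)\in\mathbb{C}(\wp_{\Omega})^{alg}$ for every $c\in\mathbb{C}$.

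Now for the equivalence. $(\Leftarrow)$ Given a lattice $\Lambda$ with $\Lambda\leq\Lambda_1$ and $\Lambda\leq\Lambda_2$, write each $\Lambda_j$ as a finite union of cosets of $\Lambda$ and apply Lemma~\ref{cosets}$(1)$ to obtain $\wp_{\Lambda_j}(u)=\sum_{i}\wp_{\Lambda}(u+a_i^{(j)})-\mathfrak{c}(\Lambda_j,\Lambda)$; by the previous paragraph $\wp_{\Lambda_1},\wp_{\Lambda_2}\in\mathbb{C}(\wp_{\Lambda})^{alg}$, and since $\wp_{\Lambda}$ is transcendental over $\mathbb{C}$ the field $\mathbb{C}(\wp_{\Lambda_1},\wp_{\Lambda_2})$ has transcendence degree at most $1$, so $\wp_{\Lambda_1}$ and $\wp_{\Lambda_2}$ are algebraically dependent. $(\Rightarrow)$ Conversely, if $\wp_{\Lambda_1}$ and $\wp_{\Lambda_2}$ are algebraically dependent then, $\wp_{\Lambda_1}$ being transcendental, $\wp_{\Lambda_2}\in\mathbb{C}(\wp_{\Lambda_1})^{alg}$. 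Applying Fact~\ref{BDOrank}$(1)$ with $n=1$, $f=\wp_{\Lambda_1}$, $g=\wp_{\Lambda_2}$ --- noting that $\Lambda_g=\Lambda_2$ is discrete --- produces $N\in\mathbb{N}\setminus\{0\}$ with $N\Lambda_1=N\Lambda_f\leq\Lambda_g=\Lambda_2$; since also $N\Lambda_1\leq\Lambda_1$, the lattice $\Lambda:=N\Lambda_1$ is contained in both $\Lambda_1$ and $\Lambda_2$.

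The one point that genuinely requires care is that Lemma~\ref{cosets}$(1)$ expresses $\wp_{\Lambda_j}$ in terms of \emph{translates} of $\wp_{\Lambda}$ rather than $\wp_{\Lambda}$ itself, so the $(\Leftarrow)$ direction really rests on observation (b) together with Fact~\ref{Lemma 3}; everything else is routine transcendence-degree bookkeeping and the direct applications of Facts~\ref{Chandrasekharan} and \ref{BDOrank}.
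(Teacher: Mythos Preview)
Your proof is correct and follows essentially the same approach as the paper. For part~(2) both you and the paper invoke the homogeneity identity from Fact~\ref{Chandrasekharan}; for part~(1) both directions rest on the same two ingredients --- Fact~\ref{BDOrank}(1) for $(\Rightarrow)$, and the coset decomposition (Lemma~\ref{cosets}) combined with Fact~\ref{Lemma 3} for $(\Leftarrow)$ --- with the only difference being that the paper compresses the argument by reducing to the single implication ``$\Lambda_1\leq\Lambda_2\Rightarrow$ algebraic dependence'' and leaves the unpacking to the reader, whereas you spell out both directions explicitly.
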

\begin{proof}$1)$ By Fact\,\ref{BDOrank} it is enough to prove that if $\Lambda_1\leq \Lambda_2$ then $\wp _{\Lambda _1}$ and $\wp _{\Lambda _2}$ are algebraically dependent over $\mathbb{C}$. Let $n=[\Lambda _2:\Lambda _1]$. Then, there exist $a_1,\ldots ,a_n,C\in \mathbb{C}$ such that 
$\wp _{\Lambda _2}(u)=\sum _{i=1}^n \wp _{\Lambda _1}(u+a_i)+C$.
By Fact\,\ref{Lemma 3} and since $\wp _{\Lambda _1}$ admits an AAT, $\wp _{\Lambda _1}(u+a)\in\mathbb{C}(\wp _{\Lambda _1}(u))^{alg}$, for all $a\in \mathbb{C}$, and so
 $\wp _{\Lambda _2}\in\mathbb{C}(\wp _{\Lambda _1})^{alg}$. Finally, $2)$ follows from the identity $\wp _{\Lambda _1} (u)=b^{2}\wp _{b\Lambda _1}(bu)$ for all $b\in \mathbb{C}^*$ (see \emph{e.g.},\,\cite[Ch.III]{Chandrasekharan}).
\end{proof}

We will make use of the following characterisation of algebraic independence of Weierstrass functions. For the rest of the paper, we will also make use of the following notation: Let $\omega\in\C\setminus\R$. $K_{\omega}:=\Q(\omega)$ is $\omega$ is quadratic over $\mathbb{Q}$, and $K_{\omega}:= \mathbb{Q}$, otherwise.

\begin{fact}[{\cite[Thm.\,3]{Brownawell_Kubota}}] \label{Weierstrass algebraicity}
Fix $m\in \mathbb{N}$ and, for each $i\in \{1,\ldots ,m\}$, let $\Omega _i:={\langle 1,\omega _i\rangle}_{\mathbb{Z}}$, with 
$\omega _i\in \mathbb{C}\setminus \mathbb{R}$.
Suppose that $\omega _j\neq (a+b\omega _i)(c+d\omega _i)^{-1}$ whenever $i\neq j$ and $a,b,c,d\in \mathbb{Z}$ with $ad-bc\neq 0$.
For each $l\in \{1,\ldots ,m+1\}$, let $a_{l,1},\ldots ,a_{l,n_l}\in \mathbb{C}^*$.
Then, the functions
\[
 \{ u, \wp _{\Omega _i}(a_{i,j}u),\zeta _{\Omega _i}(a_{i,j}u), \exp (a_{m+1,k}u) \suchthat i,j,k\}, 
\]
where $i\in \{1,\ldots ,m\}$, $j\in \{1,\ldots ,n_i\}$, $k\in \{1,\ldots ,n_{m+1}\}$,
are algebraically independent over $\mathbb{C}$ if and only if  both the set $\{a_{i,1},\ldots ,a_{i,n_i}\}$
is linearly independent over $K_{\omega_i}$, for each $i \in \{1,\ldots ,m\}$, and $\{a_{m+1,1},\ldots ,a_{m+1,n_{m+1}}\}$ is linearly independent over $\Q$.
\end{fact}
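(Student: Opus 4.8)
\emph{Proof strategy.} This is a result of transcendence theory and I only outline the plan. The \emph{necessity} of the linear independence conditions is the elementary half, and I would treat it first. If some $\{a_{i,1},\dots,a_{i,n_i}\}$ is $K_{\omega_i}$-linearly dependent, then, solving the relation for one of the $a_{i,j}$ and using the addition theorems for $\wp_{\Omega_i}$ and $\zeta_{\Omega_i}$, the complex-multiplication endomorphisms of the elliptic curve $E_{\Omega_i}$ with lattice $\Omega_i$, and the algebraic dependence of Weierstrass functions attached to commensurable lattices (as in Lemma~\ref{C-wp}), one expresses $\wp_{\Omega_i}(a_{i,j}u)$ and $\zeta_{\Omega_i}(a_{i,j}u)$ algebraically over the field generated by the remaining functions, which drops the transcendence degree below the expected value $1+\sum_i 2n_i+n_{m+1}$; symmetrically, a $\Q$-linear relation among the $a_{m+1,k}$ yields, after clearing denominators, a multiplicative relation $\prod_k\exp(a_{m+1,k}u)^{N_k}=1$. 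In either case the functions are algebraically dependent.

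For the converse --- the substantive direction --- the plan is to pass to the commutative algebraic group
\[
G=\mathbb{G}_{a}\times\prod_{i=1}^{m}\bigl(E_{\Omega_i}^{\natural}\bigr)^{n_i}\times\mathbb{G}_{m}^{\,n_{m+1}},
\]
where $E_{\Omega_i}^{\natural}$ is the universal vectorial extension of $E_{\Omega_i}$, whose exponential map is parametrised by $(\wp_{\Omega_i},\wp'_{\Omega_i},\zeta_{\Omega_i})$; using the vectorial extension instead of $E_{\Omega_i}$ itself is exactly what accommodates the quasi-periodic function $\zeta_{\Omega_i}$ and accounts for the factor $2$ in the count above. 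The functions in the statement generate, up to algebraic extension, the same field as the coordinates of the analytic one-parameter subgroup $\varphi\colon u\mapsto\bigl(u,\,(a_{i,j}u\bmod\Omega_i)_{i,j},\,(\exp(a_{m+1,k}u))_k\bigr)$, whose tangent vector at $0$ is the tuple of the $a_{l,j}$. I would then establish: (i) if the functions are algebraically dependent, then $\varphi(\C)$ is contained in a proper algebraic subgroup $H\subsetneq G$; and (ii) the inclusion of that tangent vector in $\mathrm{Lie}(H)$ forces one of the asserted linear dependences. Step (ii) is algebraic-group bookkeeping: proper subgroups of $G$ are governed by the modules of homomorphisms between the factors, and here $\mathrm{Hom}(\mathbb{G}_{a},E_{\Omega_i}^{\natural})=\mathrm{Hom}(\mathbb{G}_{a},\mathbb{G}_{m})=\mathrm{Hom}(E_{\Omega_i}^{\natural},\mathbb{G}_{m})=0$, the vanishing of $\mathrm{Hom}(E_{\Omega_i}^{\natural},E_{\Omega_j}^{\natural})$ for $i\neq j$ corresponds to the non-isogeny hypothesis $\omega_j\neq(a+b\omega_i)(c+d\omega_i)^{-1}$, and $\mathrm{End}(E_{\Omega_i})\otimes\Q$ equals $\Q$ or the imaginary quadratic field $\Q(\omega_i)$ according to whether or not $\omega_i$ is quadratic --- that is, it equals $K_{\omega_i}$ --- so reading a proper $H$ against these modules yields exactly a $K_{\omega_i}$-linear or $\Q$-linear relation among the relevant $a$'s.

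Step (i) is the transcendence core and is the main obstacle. I would obtain it either by invoking W\"ustholz's analytic subgroup theorem for $\varphi$ and $G$, or --- as in the original argument of Brownawell and Kubota --- by a several-variable Schneider--Lang construction: an auxiliary polynomial in the given functions vanishing to high order at the points $\varphi(su_{0})$ for $s=1,\dots,S$, a Schwarz-type growth estimate, and a zero estimate on $G$ to reach a contradiction; the delicate point is the parameter balancing that makes the zero estimate applicable, and this is where the pairwise non-isogeny of the $\omega_i$ is genuinely used. Granting step (i), the translation into algebraic-group language and the homomorphism computation are routine.
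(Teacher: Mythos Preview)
The paper does not prove this statement at all: it is recorded as a \emph{Fact} with a bare citation to \cite[Thm.\,3]{Brownawell_Kubota}, and is used throughout as a black box. There is therefore no proof in the paper to compare your proposal against.

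Your outline is nonetheless a reasonable sketch of the standard modern approach (via W\"ustholz's analytic subgroup theorem), and you correctly identify the original Brownawell--Kubota argument as a Schneider--Lang construction. One technical point needs correction: your claim that $\mathrm{Hom}(\mathbb{G}_a, E_{\Omega_i}^{\natural})=0$ is false, since the universal vectorial extension $E^{\natural}$ is by construction an extension of $E$ by $\mathbb{G}_a$ and hence contains a copy of $\mathbb{G}_a$; in fact $\mathrm{Hom}(\mathbb{G}_a, E^{\natural})\cong\mathbb{C}$. This does not ultimately break step~(ii) --- the tangent vector you write down has nonzero component in the $E$-direction of each $E^{\natural}$ factor, so it cannot lie in the Lie algebra of a subgroup assembled from those additive pieces --- but it does mean that the classification of proper connected subgroups of $G$ is more delicate than your list of vanishing $\mathrm{Hom}$ modules suggests, and you would need to handle the additive subgroups of the $E^{\natural}$ factors explicitly.
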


We finish this section with some results and comments concerning invariance with respect to complex conjugation.

\begin{lemma}\label{wp_conjugate}\label{zeta_conjugate} 
Let $\Omega$ be a lattice of $\mathbb{C}$ and $\xi \in \mathbb{C}^*$.
Then, $\wp _{\overline{\Omega}} (u)=\overline{\wp _{\Omega}(\overline{u})}$, 
$\zeta _{\overline{\Omega}} (u)=\overline{\zeta _{\Omega}(\overline{u})}$, 
$\sigma _{\overline{\Omega}} (u)=\overline{\sigma _{\Omega}(\overline{u})}$ and
$\widetilde{\sigma}_{\overline{\Omega},\overline{\xi}} (u)=\overline{\widetilde{\sigma}_{\Omega ,\xi}(\overline{u})}$.
Consequently, $\wp _\Omega$, $\zeta _\Omega$, $\sigma_\Omega$ are  $\R$-meromorphic functions if and only if 
$\Omega =\overline{\Omega}$.
\end{lemma}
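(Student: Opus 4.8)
The plan is to derive everything from the classical series definitions of the Weierstrass functions and the fact that complex conjugation is a field automorphism of $\mathbb{C}$. Recall that for a lattice $\Omega$ one has
\[
\sigma_\Omega(u)=u\prod_{\omega\in\Omega\setminus\{0\}}\left(1-\frac{u}{\omega}\right)\exp\left(\frac{u}{\omega}+\frac{u^2}{2\omega^2}\right),\quad
\wp_\Omega(u)=\frac{1}{u^2}+\sum_{\omega\in\Omega\setminus\{0\}}\left(\frac{1}{(u-\omega)^2}-\frac{1}{\omega^2}\right),
\]
and $\zeta_\Omega$ has the analogous Mittag-Leffler expansion. First I would treat $\wp$: since $\overline{(\,\cdot\,)}$ is a continuous field automorphism, applying it termwise to the series for $\wp_\Omega(\overline u)$ and using that $\omega\mapsto\overline\omega$ is a bijection $\Omega\setminus\{0\}\to\overline\Omega\setminus\{0\}$ gives $\overline{\wp_\Omega(\overline u)}=\wp_{\overline\Omega}(u)$. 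The same termwise-conjugation argument applies verbatim to the defining product for $\sigma$ and the defining series for $\zeta$, so all three identities follow at once; one only needs that these expansions converge absolutely/normally so that conjugation commutes with the infinite sum or product, which is standard (see \cite[Ch.\,III]{Chandrasekharan}).

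Next I would deduce the identity for $\widetilde\sigma$ purely formally from the one for $\sigma$: by definition $\widetilde\sigma_{\Omega,\xi}(u)=\sigma_\Omega(u-\xi)/\sigma_\Omega(u)$, so
\[
\overline{\widetilde\sigma_{\Omega,\xi}(\overline u)}=\frac{\overline{\sigma_\Omega(\overline u-\overline\xi)}}{\overline{\sigma_\Omega(\overline u)}}=\frac{\sigma_{\overline\Omega}(u-\overline\xi)}{\sigma_{\overline\Omega}(u)}=\widetilde\sigma_{\overline\Omega,\overline\xi}(u),
\]
where in the middle step I apply the already-established $\sigma$ identity to both numerator and denominator (with $\overline u - \overline\xi = \overline{u-\xi}$). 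This disposes of the first block of assertions.

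For the final ``Consequently'' clause, recall that by definition $f$ is $\mathbb{R}$-meromorphic iff $\overline{f(\overline u)}=f(u)$ for all $u$ in its domain. If $\Omega=\overline\Omega$, then the identities just proved immediately give $\overline{\wp_\Omega(\overline u)}=\wp_{\overline\Omega}(u)=\wp_\Omega(u)$, and likewise for $\zeta_\Omega$ and $\sigma_\Omega$, so each is $\mathbb{R}$-meromorphic. Conversely, if $\wp_\Omega$ is $\mathbb{R}$-meromorphic then $\wp_{\overline\Omega}(u)=\overline{\wp_\Omega(\overline u)}=\wp_\Omega(u)$ as meromorphic functions, and since a lattice is recovered from $\wp$ as its set of poles (equivalently, $\Omega=\Lambda_{\wp_\Omega}$, the period lattice), we conclude $\overline\Omega=\Omega$; the same pole-set (resp.\ zero-set, resp.\ period-lattice) argument works for $\zeta_\Omega$ and $\sigma_\Omega$.

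I do not expect any serious obstacle here — the whole statement is a formal consequence of conjugation being a field automorphism together with the bijection $\omega\mapsto\overline\omega$ between $\Omega$ and $\overline\Omega$. The only mildly delicate point is justifying that conjugation commutes with the infinite sums and products defining the Weierstrass functions, which is handled by their absolute/normal convergence on compact subsets away from the poles; and in the converse direction, one must remember that a lattice is intrinsically determined by the associated Weierstrass function (via its poles or periods), which is exactly what forces $\Omega=\overline\Omega$.
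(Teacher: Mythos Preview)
Your proposal is correct and follows essentially the same route as the paper: termwise conjugation of the defining series/product for $\wp$, $\zeta$, $\sigma$, using that $\omega\mapsto\overline\omega$ bijects $\Omega\setminus\{0\}$ onto $\overline\Omega\setminus\{0\}$, and then deducing the $\widetilde\sigma$ identity from the $\sigma$ identity. The paper's proof is in fact terser than yours---it writes out only the $\wp$ computation and cites the analogous expansions for $\zeta$ and $\sigma$, and it leaves the ``Consequently'' clause implicit---so your added remarks on absolute convergence and on recovering $\Omega$ from the pole/period set of $\wp_\Omega$ are welcome but not strictly needed beyond what the paper does.
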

\begin{proof}
We note that
\[
\overline{\wp _{\Omega}(u)}
=
\overline{\frac{1}{u^2}+
\sum _{\omega \in \Omega \setminus \{0\}}
\left( \frac{1}{(u-\omega ) ^2}-\frac{1}{\omega ^2} \right) }
=
\frac{1}{\overline{u}^2}+
\sum _{\omega \in \Omega \setminus \{0\}}
\left( \frac{1}{(\overline{u}-\overline{\omega }) ^2}-\frac{1}{\overline{\omega} ^2}\right).
\]
and therefore $\overline{\wp _{\Omega}(u)}=\wp _{\Omega}(\overline{u})$. The proof for $\zeta_{\Omega}$ and $\sigma_\Omega$ is similar using \cite[Ch.IV, eq.\,$(1.1)$]{Chandrasekharan} and \cite[Ch.IV, eq.\,$(2.5)$]{Chandrasekharan}.

\end{proof}

\begin{lemma}\label{real wp}  Let $\Lambda$ be a lattice of $\mathbb{C}$.

$1)$ Let $g:\mathbb{C}\dashrightarrow \mathbb{C}$ be an $\R$-meromorphic function algebraic over $\mathbb{C}(\wp _\Lambda )$ such that $\Lambda _g$ is a discrete subgroup of $\C$.
Then, there exists an invariant lattice $\Lambda '\leq\Lambda$ such that $g\in\mathbb{C}(\wp _{\Lambda '})^{alg}$.

$2)$ If $\wp _{\Lambda}$ and $\wp _{\overline{\Lambda}}$ are algebraically dependent then $\Lambda \cap \overline{\Lambda}$ is an invariant lattice. 

\end{lemma}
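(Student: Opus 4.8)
I would first dispose of $2)$, and then reduce $1)$ to it. For $2)$: the set $\Lambda\cap\overline{\Lambda}$ is a subgroup of the lattice $\Lambda$, hence discrete, and it is invariant since $\overline{\Lambda\cap\overline{\Lambda}}=\overline{\Lambda}\cap\Lambda$; so the only thing to check is that it has rank $2$, \emph{i.e.}, that it is a lattice. By hypothesis $\wp_\Lambda$ and $\wp_{\overline{\Lambda}}$ are algebraically dependent over $\mathbb{C}$, so Lemma\,\ref{C-wp}\,$1)$ provides a lattice $\Lambda^{\ast}$ with $\Lambda^{\ast}\leq\Lambda$ and $\Lambda^{\ast}\leq\overline{\Lambda}$; then $\Lambda^{\ast}\leq\Lambda\cap\overline{\Lambda}$, and a discrete subgroup of $\mathbb{C}$ containing a lattice is itself a lattice. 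This settles $2)$.

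For $1)$: the first step is to note that $g$ is \emph{also} algebraic over $\mathbb{C}(\wp_{\overline{\Lambda}})$. Since $\Lambda_g$ is discrete, $g$ is non-constant, hence transcendental over $\mathbb{C}$ (a meromorphic function on $\mathbb{C}$ algebraic over $\mathbb{C}$ is constant, with period group $\mathbb{C}$). Starting from a nonzero $P\in\mathbb{C}[X,Y]\setminus\mathbb{C}[Y]$ with $P(g(u),\wp_\Lambda(u))\equiv0$, I would conjugate the coefficients of $P$ together with the identity, replace $u$ by $\overline{u}$, and use $\overline{g(\overline{u})}=g(u)$ ($\mathbb{R}$-meromorphy of $g$) and $\overline{\wp_\Lambda(\overline{u})}=\wp_{\overline{\Lambda}}(u)$ (Lemma\,\ref{wp_conjugate}) to obtain a nonzero $\overline{P}\in\mathbb{C}[X,Y]\setminus\mathbb{C}[Y]$ with $\overline{P}(g(u),\wp_{\overline{\Lambda}}(u))\equiv0$, as desired. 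Now $g$ is transcendental over $\mathbb{C}$ and algebraic over each of $\mathbb{C}(\wp_\Lambda)$ and $\mathbb{C}(\wp_{\overline{\Lambda}})$, so comparing transcendence degrees shows that $\wp_\Lambda$ and $\wp_{\overline{\Lambda}}$ are both algebraic over $\mathbb{C}(g)$, hence algebraically dependent over $\mathbb{C}$. By $2)$, $\Lambda':=\Lambda\cap\overline{\Lambda}$ is then an invariant lattice with $\Lambda'\leq\Lambda$, and by (the proof of) Lemma\,\ref{C-wp}\,$1)$ we have $\wp_\Lambda\in\mathbb{C}(\wp_{\Lambda'})^{alg}$; combining this with $g\in\mathbb{C}(\wp_\Lambda)^{alg}$ and transitivity of ``algebraic over'' yields $g\in\mathbb{C}(\wp_{\Lambda'})^{alg}$.

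I expect the main point to be the transcendence-degree step, which upgrades the two a priori unrelated facts ``$g$ is algebraic over $\mathbb{C}(\wp_\Lambda)$'' and ``$g$ is algebraic over $\mathbb{C}(\wp_{\overline{\Lambda}})$'' into ``$\wp_\Lambda$ and $\wp_{\overline{\Lambda}}$ are algebraically dependent''. This is precisely where the hypothesis that $\Lambda_g$ is discrete (equivalently, that $g$ is non-constant) is genuinely used: without it $\Lambda$ and $\overline{\Lambda}$ need not be commensurable, and then $\Lambda\cap\overline{\Lambda}$ need not even be a lattice, so no invariant $\Lambda'\leq\Lambda$ would exist and the statement would fail. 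Everything else is bookkeeping with conjugation and with the sublattice relations already packaged in Lemmas\,\ref{C-wp} and \ref{wp_conjugate}.
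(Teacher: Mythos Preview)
Your proof is correct. For part $2)$ your argument coincides with the paper's. For part $1)$, however, the paper does \emph{not} reduce to $2)$: it works directly with the period group $\Lambda_g$. Since $g$ is $\mathbb{R}$-meromorphic, $\Lambda_g$ is invariant; since $g\in\mathbb{C}(\wp_\Lambda)^{alg}$ with $\Lambda_g$ discrete, Fact~\ref{BDOrank}(1) forces $\Lambda_g$ to have rank~$2$, and (applied in the other direction, using that $\wp_\Lambda$ is then algebraic over $\mathbb{C}(g)$) yields $N\in\mathbb{N}^*$ with $\Lambda':=N\Lambda_g\leq\Lambda$. Lemma~\ref{C-wp}(1) then finishes as in your argument. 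Your route through conjugation and $\wp_{\overline{\Lambda}}$ is a pleasant alternative: it makes the link between $1)$ and $2)$ explicit and produces the canonical choice $\Lambda'=\Lambda\cap\overline{\Lambda}$, at the cost of the extra transcendence-degree step. The paper's argument is shorter and yields a $\Lambda'$ commensurable with $\Lambda_g$ itself, which can be handy when one later needs control over the periods of $g$.
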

\begin{proof}
1) Since $\Lambda$ is a lattice, $\Lambda _g$ is also a lattice by Fact\,\ref{BDOrank}(1), which is clearly invariant. By the same fact, there exists an $N\in \mathbb{N}^*$ such that $\Lambda ':=N\Lambda_g$ is contained in $\Lambda$, so the result follows by Lemma\,\ref{C-wp}(1).

2) Clearly $\Lambda \cap \overline{\Lambda}$ is an invariant discrete subgroup, and by Lemma\,\ref{C-wp}(1) it contains a sublattice, as required.
\end{proof}

\begin{remark}\label{Desrank2}Note that a non-trivial invariant discrete subgroup $\Lambda$ of $\C$ is of rank $1$ if it is 
either of the form ${\langle a\rangle}_{\mathbb{Z}}$ or ${\langle ia\rangle}_{\mathbb{Z}}$, for some $a\in \mathbb{R}^*$; and it is of rank $2$ if it is has a finite 
index subgroup of the form ${\langle a,bi\rangle}_{\mathbb{Z}}$, for some $a,b\in \mathbb{R}^*$.
Indeed, since $\Lambda$ is invariant we must have $\overline{\lambda}\in \Lambda$, for any $\lambda \in \Lambda$.
The only special case is when $\Lambda={\langle \lambda ,\overline{\lambda}\rangle}_{\mathbb{Z}}$, with $\lambda =a+ib$ with both $a,b\neq 0$.
Then, ${\langle 2a,2ib\rangle}_{\mathbb{Z}}$ is the required finite index subgroup of $\Lambda$.
\end{remark}

\section{One-dimensional locally \texorpdfstring{$\mathbb{C}$}{C}-Nash groups}\label{one-dimensional}

One-dimensional (simply connected) locally $\C$-Nash groups are abelian. Hence, a classification of two-dimensional simply connected abelian  locally $\C$-Nash groups entails the one-dimensional classification. However,  we give in this section the one-dimensional classification to simplify the proof of the  two-dimensional one.

\begin{theorem}\label{1dim C-classification} 
Every simply connected one-dimensional locally $\mathbb{C}$-Nash group is isomorphic to a group of one and only one of the following types:
$$(1)\ (\mathbb{C},+,\id); \quad (2)\ (\mathbb{C},+,\exp), \text{and} \quad (3)\ (\mathbb{C},+,\wp _\omega), \text{ for some }\omega \in \mathbb{C}\setminus \mathbb{R}.$$
 
\noindent
The groups $(\mathbb{C},+,\wp _{\omega _1})$ and $(\mathbb{C},+,\wp _{\omega _2})$ are isomorphic if and only the associated elliptic curves are isogenous, \emph{i.e.}, $\omega _2=\frac{a\omega _1+b}{c\omega _1+d}$
for some $a,b,c,d\in \mathbb{Z}$ with $ad-bc\neq 0$.
If such is the case, the isomorphisms are of the form $\alpha (u)=\frac{nu}{c\omega _1+d}$, for some $n\in \mathbb{N}^*$.
\end{theorem}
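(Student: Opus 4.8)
The plan is to reduce an arbitrary such group to one of the three listed normal forms by invoking the classical one-variable theory, then to separate the three families by the rank of the period group, and finally to treat the elliptic family by hand.

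\emph{Normal forms.} Since one-dimensional locally $\C$-Nash groups are abelian (as noted above), Fact~\ref{Kabelian}(1) lets me assume the group is $(\C,+,f)$ for a meromorphic $f\colon\C\dashrightarrow\C$ admitting an AAT; for $n=1$ this only says $f$ is non-constant and $f(u+v)\in\C(f(u),f(v))^{alg}$. By the classical theorem of Weierstrass on one-variable meromorphic functions with an algebraic addition theorem (the case $n=1$ of Painlev\'e's description recalled in \S\,\ref{SPainleve}), $f$ is algebraic over one of $\C(u)$, $\C(e^{cu})$ for some $c\in\C^*$, or $\C(\wp_\Omega,\wp'_\Omega)$ for some lattice $\Omega$; since $(\wp'_\Omega)^2$ is a cubic polynomial in $\wp_\Omega$, in the last case $f\in\C(\wp_\Omega)^{alg}$. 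I then apply Fact~\ref{Kabelian}(2) with a suitable $\alpha\in\GL_1(\C)$: with $\alpha=\id$ in the first case (then $f\circ\alpha\in\C(u)^{alg}$); with $\alpha(u)=u/c$ in the second, so that $f\circ\alpha\in\C(e^u)^{alg}$; and, writing $\Omega=\langle\omega_1,\omega_2\rangle_\Z$ and $\omega:=\omega_2/\omega_1\in\C\setminus\R$, with $\alpha(u)=\omega_1u$ in the third, because Fact~\ref{Chandrasekharan} gives $\wp_\Omega(\omega_1u)=\omega_1^{-2}\wp_\omega(u)$ and hence, differentiating, $\wp'_\Omega(\omega_1u)=\omega_1^{-3}\wp'_\omega(u)\in\C(\wp_\omega)^{alg}$, so $f\circ\alpha\in\C(\wp_\omega)^{alg}$. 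This shows $(\C,+,f)$ is isomorphic to $(\C,+,\id)$, $(\C,+,\exp)$ or $(\C,+,\wp_\omega)$; conversely one checks directly that these three are locally $\C$-Nash groups of the required form, so each class is non-empty.

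\emph{The three families are pairwise non-isomorphic.} Their groups of periods are $\Lambda_{\id}=\{0\}$, $\Lambda_{\exp}=2\pi i\Z$ and $\Lambda_{\wp_\omega}=\langle1,\omega\rangle_\Z$, of ranks $0$, $1$ and $2$ respectively. By Fact~\ref{BDOrank}(2) isomorphic groups have the same period rank, so no two of the three types can be isomorphic; this gives the ``one and only one'' clause.

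\emph{The elliptic case.} Fix $\omega_1,\omega_2\in\C\setminus\R$ and set $\Omega_j=\langle1,\omega_j\rangle_\Z$. By Fact~\ref{Kabelian}(2), an isomorphism $(\C,+,\wp_{\omega_1})\to(\C,+,\wp_{\omega_2})$ is some $\alpha\in\GL_1(\C)$, say $\alpha(u)=\mu u$, with $\wp_{\omega_2}\circ\alpha\in\C(\wp_{\omega_1})^{alg}$. By Fact~\ref{Chandrasekharan}, $\wp_{\omega_2}(\mu u)=\mu^{-2}\wp_{\mu^{-1}\Omega_2}(u)$, so such an $\alpha$ is an isomorphism if and only if $\wp_{\mu^{-1}\Omega_2}\in\C(\wp_{\Omega_1})^{alg}$, equivalently (Lemma~\ref{C-wp}(1)) $\mu^{-1}\Omega_2$ and $\Omega_1$ share a finite-index sublattice, equivalently $\Q\,\Omega_1=\mu^{-1}(\Q+\Q\omega_2)$. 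From the last equality, $\mu^{-1},\mu^{-1}\omega_2\in\Q+\Q\omega_1$; writing $\mu^{-1}=p_1+p_2\omega_1$ and $\mu^{-1}\omega_2=p_3+p_4\omega_1$ with $p_i\in\Q$ and clearing denominators by a suitable $n\in\N^*$ gives integers $a=np_4$, $b=np_3$, $c=np_2$, $d=np_1$ with $\omega_2=\tfrac{a\omega_1+b}{c\omega_1+d}$ and $\alpha(u)=\mu u=\tfrac{nu}{c\omega_1+d}$, and with $ad-bc\neq0$ (otherwise $\omega_2\in\Q\subseteq\R$, impossible). This is exactly the isogeny condition for the associated elliptic curves together with the claimed form of $\alpha$. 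Conversely, given integers $a,b,c,d$ with $ad-bc\neq0$ and $\omega_2=\tfrac{a\omega_1+b}{c\omega_1+d}$, and any $n\in\N^*$, put $\lambda:=c\omega_1+d$ (nonzero since $\omega_1\notin\R$); then $\lambda\Omega_2=\langle c\omega_1+d,\,a\omega_1+b\rangle_\Z\le\Omega_1$, so $\lambda\Omega_2$ is a common finite-index sublattice of $\Omega_1$ and of $\tfrac1n\lambda\Omega_2$, and combining Lemma~\ref{C-wp}(1) with $\wp_{\omega_2}(\tfrac{nu}{\lambda})=(\tfrac n\lambda)^{-2}\wp_{(\lambda/n)\Omega_2}(u)$ and Fact~\ref{Kabelian}(2) shows that $u\mapsto\tfrac{nu}{\lambda}$ is an isomorphism.

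I expect the only genuinely non-formal input to be the classical one-variable Weierstrass/Painlev\'e description invoked in the first step; everything else is bookkeeping, and the step most prone to slips is keeping track of the scaling constants from Fact~\ref{Chandrasekharan} and of the rational linear algebra in the elliptic case.
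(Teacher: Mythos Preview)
Your proof is correct and follows essentially the same route as the paper's: Weierstrass's classical theorem to reduce to the three normal forms, separation by period rank via Fact~\ref{BDOrank}(2), and the elliptic analysis through commensurability of lattices. The only cosmetic difference is that you package the elliptic step via Lemma~\ref{C-wp}(1) and the equality $\Q\Omega_1=\mu^{-1}\Q\Omega_2$, whereas the paper argues directly with Fact~\ref{BDOrank}(1); this is the same idea expressed slightly more compactly.
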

\begin{proof}Since one-dimensional complex analytic groups are abelian, 
any simply connected one-dimensional locally $\mathbb{C}$-Nash group is isomorphic to some $(\mathbb{C},+,f)$ for some meromorphic function $f$ admitting an AAT. By a classical result of Weierstrass, 
 such $f\in\mathbb{C}(g\circ \alpha)^{alg}$, for some $\alpha\in \GL_1(\mathbb{C})$, and some $g$ which is either $\id,\exp$ or $\wp_\Lambda$, where   $\Lambda$ is  a lattice of $\mathbb{C}$. Note that the ranks of the groups of periods of $\id,\exp$ and  $\wp_\Lambda$ are $0,1$ and $2$, respectively. In addition, $\rank \Lambda_f=\rank \Lambda_g$. Therefore, none 
of the groups of types $(1)$, $(2)$ or $(3)$ can be isomorphic to other of a different type.  
Furthermore, we claim the following.

\noindent
\emph{Claim: If $\rank \Lambda _f=0$, then the identity map is an isomorphism from $(\mathbb{C},+,f)$ to $(\mathbb{C},+,\id)$. If $\rank \Lambda _f=1$, then the map $\gamma (u)=2\pi i\omega _0 ^{-1}u$ is an isomorphism 
from $(\mathbb{C},+,f)$ to $(\mathbb{C},+,\exp)$, where $\omega _0 \in \mathbb{C}^*$ is such that ${\langle \omega _0\rangle}_{\mathbb{Z}}=\Lambda _f$. If $\rank \Lambda _f=2$, then the identity map is an isomorphism from $(\mathbb{C},+,f)$ to 
$(\mathbb{C},+,\wp _{\Lambda _f})$.}

\noindent
\emph{Proof of the Claim.} We just give the proof of the  case $\rank f= 2$, and so $f\in\C(\wp _{\Lambda}\circ \alpha)^{alg}$, for some lattice $\Lambda$ of $\C$  and some $\alpha\in \GL_1(\mathbb{C})$. Making use of Lemma \,\ref{C-wp} we obtain what follows.
Since $f\in\mathbb{C}(\wp _{\alpha ^{-1}(\Lambda)})^{alg}$ and 
$\alpha ^{-1}(\Lambda)$ is the group of periods of $\wp _{\alpha ^{-1}(\Lambda)}$, by Fact\,\ref{BDOrank}
there exists $n\in \mathbb{N}^*$ such that $n\alpha ^{-1}(\Lambda)\leq\Lambda _f$.
Let $\Lambda ':=n\alpha ^{-1}(\Lambda)\leq \alpha ^{-1}(\Lambda)$. Therefore, $\wp _{\alpha ^{-1}(\Lambda)}\in\mathbb{C}(\wp _{\Lambda '})^{alg}$, and  since $\wp _{\Lambda '}\in\mathbb{C}(\wp _{\Lambda _f})^{alg}$ we get  that
 $f\in\mathbb{C}(\wp _{\Lambda_f})^{alg}$, and hence,\,the result follows.\hfill$\Box$
 
To finish the proof of the theorem, first note that the special form of the lattice in case $(3)$ can be obtained since for any $\mathbb{R}$-linearly independent $\omega _1,\omega _2 \in \mathbb{C}^*$, the map $u\mapsto \omega _1^{-1}u$ is an isomorphism from 
$(\mathbb{C},+,\wp _{{\langle \omega _1,\omega _2\rangle}_{\mathbb{Z}}})$ to $(\mathbb{C},+,\wp _{\omega _1 ^{-1}\omega _2})$. Finally, if $(\mathbb{C},+,\wp _{\omega _1})$ and $(\mathbb{C},+,\wp _{\omega _2})$ are isomorphic, then 
there exists $\alpha \in \GL_1(\mathbb{C})$ such that $\wp _{\omega _2} \circ \alpha\in\mathbb{C}(\wp _{\omega _1})^{alg}$. Let  $\Lambda _i:={\langle 1,\omega _i\rangle}_{\mathbb{Z}}$, $i\in \{1,2\}$, and  $\alpha (u)=\tau u$, with $\tau\in \mathbb{C}^*$.
Let $\Lambda _2':=\alpha ^{-1}(\Lambda _2)={\langle \tau^{-1},\tau^{-1}\omega _2\rangle}_{\mathbb{Z}}$, which is also the group of 
periods of $\wp _{\Lambda _2} \circ \alpha$.
	Since $\wp _{\Lambda _1}\in\mathbb{C}(\wp _{\Lambda _2} \circ \alpha)^{alg}$, by Fact\,\ref{BDOrank} there 
exists $n\in \mathbb{N}^*$ such that $n\Lambda _2'\leq  \Lambda _1$.
So $n\tau^{-1}\in {\langle 1,\omega _1\rangle}_{\mathbb{Z}}$ and, hence, $n\tau^{-1}=c\omega _1+d$, for some $c,d\in \mathbb{Z}$.
Also $n\tau^{-1}\omega _2\in {\langle 1,\omega _1\rangle}_{\mathbb{Z}}$ and, hence, $\omega _2=(a\omega _1+b)(c\omega _1+d)^{-1}$, for some 
$a,b,c,d\in \mathbb{Z}$.
Furthermore, since $\omega _2\notin \mathbb{R}$, we get that $a\omega _1+b$ and $c\omega _1+d$ are linearly independent over 
$\mathbb{R}$ and, so $ad-bc\neq 0$. To prove the other implication, let $\alpha(u)=\frac{nu}{cw_1+d}$ with $n\in \mathbb{N}^*$. Then, $n\alpha^{-1}(\Lambda_2)={\langle cw_1+d,aw_1+b\rangle}_\mathbb{Z}$ is a sublattice of $\Lambda_1$.
Hence,  $\wp _{\alpha^{-1}(\Lambda_2)}\in\mathbb{C}(\wp _{\Lambda _1})^{alg}$.
Thus, since  $\wp _{\Lambda _2}\circ \alpha\in\mathbb{C}(\wp _{\alpha ^{-1}(\Lambda _2)})^{alg}$, it also algebraic over  $\mathbb{C}(\wp _{\Lambda _1})$, as required.
\end{proof}

\begin{proposition}\label{C-automorphisms} The automorphism groups of the one-dimensional simply connected locally $\C$-Nash groups are as follows: $\Aut(\mathbb{C},+,\id)\cong \mathbb{C}^*$, $\Aut(\mathbb{C},+,\exp) \cong \mathbb{Q}^*$ and,  for each $\omega\in \mathbb{C}\setminus \mathbb{R}$, $\Aut(\mathbb{C},+,\wp _{\omega})$ is isomorphic to  $K_{\omega}$.
\end{proposition}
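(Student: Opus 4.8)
The plan is to reduce, via Fact~\ref{Kabelian}(2), to a computation inside $\mathbb{C}^*$, and then to treat the three types separately. A locally $\mathbb{C}$-Nash automorphism of $(\mathbb{C},+,f)$ is in particular an analytic automorphism of the group $(\mathbb{C},+)$, hence a homothety $u\mapsto\alpha u$ with $\alpha\in\GL_1(\mathbb{C})=\mathbb{C}^*$ uniquely determined; by Fact~\ref{Kabelian}(2) applied with $g=f$, such a homothety is an automorphism exactly when $f\circ\alpha\in\mathbb{C}(f)^{alg}$. As composition of homotheties amounts to multiplying the scalars, we get a group isomorphism
\[
\Aut(\mathbb{C},+,f)\;\cong\;H_f:=\{\,\alpha\in\mathbb{C}^*\suchthat f(\alpha u)\in\mathbb{C}(f(u))^{alg}\,\}\;\leq\;\mathbb{C}^* .
\]
(The only point needing a word is closure of $H_f$ under inverses: since $f$ and $f\circ\alpha$ are transcendental over $\mathbb{C}$, a transcendence-degree count turns $f(\alpha u)\in\mathbb{C}(f(u))^{alg}$ into $f(u)\in\mathbb{C}(f(\alpha u))^{alg}$, i.e.\ $\alpha^{-1}\in H_f$.) It then remains to identify $H_f$ for $f=\id,\exp,\wp_\omega$.

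The cases $\id$ and $\exp$ are immediate. For $f=\id$ one has $\alpha u\in\mathbb{C}(u)$ for all $\alpha\in\mathbb{C}^*$, so $H_{\id}=\mathbb{C}^*$. For $f=\exp$: if $\alpha=p/q$ with $p,q\in\mathbb{Z}\setminus\{0\}$ then $\exp(\alpha u)^q=\exp(u)^p$, so $\mathbb{Q}^*\subseteq H_{\exp}$; conversely, if $\alpha\in H_{\exp}$ the functions $\exp(u),\exp(\alpha u)$ are algebraically dependent over $\mathbb{C}$, hence the family $\{u,\exp(u),\exp(\alpha u)\}$ is not algebraically independent over $\mathbb{C}$, so Fact~\ref{Weierstrass algebraicity} (with $m=0$ and exponents $1,\alpha$) forces $\{1,\alpha\}$ to be $\mathbb{Q}$-linearly dependent, i.e.\ $\alpha\in\mathbb{Q}$. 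Thus $H_{\exp}=\mathbb{Q}^*$.

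Now take $f=\wp_\omega$ and set $\Omega={\langle 1,\omega\rangle}_{\mathbb{Z}}$. By Fact~\ref{Chandrasekharan}, $\wp_\Omega(\alpha u)=\alpha^{-2}\wp_{\alpha^{-1}\Omega}(u)$, so $\mathbb{C}(\wp_\Omega(\alpha u))=\mathbb{C}(\wp_{\alpha^{-1}\Omega}(u))$ and hence (both functions being transcendental over $\mathbb{C}$) $\alpha\in H_{\wp_\omega}$ if and only if $\wp_{\alpha^{-1}\Omega}$ and $\wp_\Omega$ are algebraically dependent over $\mathbb{C}$, which by Lemma~\ref{C-wp}(1) is equivalent to $\alpha^{-1}\Omega$ and $\Omega$ having a common sublattice, i.e.\ being commensurable. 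For the inclusion $K_\omega^*\subseteq H_{\wp_\omega}$: let $\alpha\in K_\omega\setminus\{0\}$ and put $W:=\mathbb{Q}+\mathbb{Q}\omega$, a $2$-dimensional $\mathbb{Q}$-subspace of $\mathbb{C}$ in which $\Omega$ is a full $\mathbb{Z}$-lattice. Multiplication by $\alpha^{-1}\in K_\omega^*$ is a $\mathbb{Q}$-linear bijection of $W$ onto itself --- clear when $K_\omega=\mathbb{Q}$ (then $\alpha\in\mathbb{Q}$), and when $K_\omega=\mathbb{Q}(\omega)$ because then $\omega$ is quadratic over $\mathbb{Q}$, so $W=K_\omega$ is a field --- whence $\alpha^{-1}\Omega$ is again a full $\mathbb{Z}$-lattice in $W$, hence commensurable with $\Omega$, giving $\alpha\in H_{\wp_\omega}$. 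For the reverse inclusion: if $\alpha\in H_{\wp_\omega}$ then $\wp_\Omega(u)$ and $\wp_\Omega(\alpha u)$ are algebraically dependent over $\mathbb{C}$, so the family $\{u,\wp_\Omega(u),\zeta_\Omega(u),\wp_\Omega(\alpha u),\zeta_\Omega(\alpha u)\}$ is not algebraically independent over $\mathbb{C}$; applying Fact~\ref{Weierstrass algebraicity} with $m=1$, $\Omega_1=\Omega$, elliptic exponents $1,\alpha$ and no exponential term then forces $\{1,\alpha\}$ to be $K_\omega$-linearly dependent, i.e.\ $\alpha\in K_\omega$; since $\alpha\neq 0$, $\alpha\in K_\omega^*$. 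Therefore $H_{\wp_\omega}=K_\omega^*$, so $\Aut(\mathbb{C},+,\wp_\omega)\cong K_\omega^*$.

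The only step above that is not an outright citation is the commensurability assertion in the $\wp_\omega$ case, namely that multiplication by a nonzero element of $K_\omega$ sends the lattice $\Omega$ to a commensurable one. This rests on two elementary observations: that $\mathbb{Q}+\mathbb{Q}\omega$ carries the field structure of $K_\omega$ (for which one invokes the hypothesis that $\omega$ is quadratic when $K_\omega\neq\mathbb{Q}$), and that any two full $\mathbb{Z}$-lattices in a finite-dimensional $\mathbb{Q}$-vector space are commensurable. All the remaining steps are direct applications of Facts~\ref{Kabelian}, \ref{Chandrasekharan} and \ref{Weierstrass algebraicity} and of Lemma~\ref{C-wp}.
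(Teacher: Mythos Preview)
Your proof is correct and begins with the same reduction as the paper: an automorphism is a homothety $u\mapsto\alpha u$ with $\alpha\in\mathbb{C}^*$, and one must determine for which $\alpha$ the condition $f(\alpha u)\in\mathbb{C}(f(u))^{alg}$ holds. The paper then simply declares the remaining computations ``routine'' and, for the $\wp_\omega$ case, points to Theorem~\ref{1dim C-classification}, whose last clause gives the explicit form $\alpha(u)=\frac{nu}{c\omega+d}$ of all isomorphisms between $(\mathbb{C},+,\wp_{\omega_1})$ and $(\mathbb{C},+,\wp_{\omega_2})$; specialising to $\omega_1=\omega_2=\omega$ one reads off $\alpha\in K_\omega^*$ directly.

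Your route for $\wp_\omega$ is slightly different: you translate the algebraic-dependence condition into commensurability of $\Omega$ and $\alpha^{-1}\Omega$ via Lemma~\ref{C-wp}(1), then handle the inclusion $K_\omega^*\subseteq H_{\wp_\omega}$ by elementary lattice theory in the $\mathbb{Q}$-vector space $\mathbb{Q}+\mathbb{Q}\omega$, and the reverse inclusion by appealing to the Brownawell--Kubota theorem (Fact~\ref{Weierstrass algebraicity}). This is perfectly valid, but note that the reverse inclusion does not actually need Brownawell--Kubota: once you know $\alpha^{-1}\Omega$ and $\Omega$ are commensurable, Fact~\ref{BDOrank}(1) gives $N\alpha^{-1}\Omega\leq\Omega$ for some $N\in\mathbb{N}^*$, so $\alpha^{-1},\alpha^{-1}\omega\in\mathbb{Q}+\mathbb{Q}\omega$, and a short computation (splitting on whether $\omega$ is quadratic) yields $\alpha\in K_\omega^*$. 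Similarly, for $\exp$ one can avoid Fact~\ref{Weierstrass algebraicity} by comparing period groups via Fact~\ref{BDOrank}(1). The paper's approach thus stays closer to the period-lattice arguments already developed, while yours invokes a deeper transcendence result that is, however, already available in the paper.
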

\begin{proof}
In each case, $f=\id,\exp,\wp _{\omega}$,  it is enough to check for which $a\in \mathbb{C}^*$ the map $\alpha (u)=au$ has the property that $f\circ \alpha\in\mathbb{C}(f)^{alg}$. That can  be done by routine  computations, using Theorem\,\ref{1dim C-classification}  for the last case.
\end{proof}

\section{Painlev\'e families of maps admitting an AAT}\label{SPainleve}

In this section we will recall the description of  two-variable meromorphic maps admitting an AAT, given by  Painlev\'e \cite{Painleve}, and analyse the algebraic dependence of the maps involved in such description. Let $\mathbb{K}$ be $\mathbb{R}$ or $\mathbb{C}$,  following Painlevé, we say that 
$f:\mathbb{K}^n\dashrightarrow \mathbb{K}^n$ 
is \emph{functionally independent} if its image 
has an interior point in $\mathbb{K}^n$. Thus,  in particular, the coordinate functions of such $f$ are algebraically independent  over $\K$. We denote by $\mathbb{C}(\Lambda )$ the field of meromorphic functions $f:\mathbb{C}^n\dashrightarrow \mathbb{C}$ such that $\Lambda\leq \Lambda_f$. The transcendence degree of $\mathbb{C}(\Lambda )$ can be from $0$ to $n$, and it is $n$ if and only if it contains a function whose period group is discrete (see,
\cite[Ch. 5 \textsection 11 Thms.\,5 and 6]{Siegel}).

\begin{fact}[{Painlev\'e, \cite[Main Thm.]{Painleve}}]\label{TP}
If $f:\mathbb{C}^2\dashrightarrow \mathbb{C}$ is a functionally independent meromorphic function admitting 
 an AAT then there exist $i\in \{1,\ldots ,6\}$ such that the coordinate functions of $f$ are both algebraic over a  field  of the family $\mathcal{P}_i$, where:
\begin{align*}
\mathcal{P}_1 &:=\{ \, \mathbb{C}(g_1\circ \alpha ) \suchthat \alpha \in \GL_2(\mathbb{C})\, \}, \text{ where } g_1(u,v):=(u,v).\\
\mathcal{P}_2 &:=\{ \, \mathbb{C}(g_2\circ \alpha ) \suchthat \alpha \in \GL_2(\mathbb{C})\, \}, \text{ where } g_2(u,v):=(e^u,v).\\
\mathcal{P}_3 &:=\{ \, \mathbb{C}(g_3\circ \alpha ) \suchthat \alpha \in \GL_2(\mathbb{C})\, \}, \text{ where } g_3(u,v):=(e^u,e^v).\\
\mathcal{P}_4 &:=\{ \, \mathbb{C}(g_{4,\Omega,\xi}\circ \alpha ) \suchthat \alpha \in \GL_2(\mathbb{C}),\, \xi \! \in \{0,1\},
\, \Omega \text{ is a lattice of } \mathbb{C}\, \},\\
& \text{ where $g_{4,\Omega,\xi}(u,v)=(\wp _{\Omega }(u),v-\xi \zeta _{\Omega } (u))$.}\\
\mathcal{P}_5 & :=\{ \, \mathbb{C}(g_{5,\Omega,\xi}\circ \alpha ) \suchthat \alpha \in \GL_2(\mathbb{C}),\, \xi \in \mathbb{C}, 
\,\Omega \text{ is a lattice of } \mathbb{C}\, \},\\
& \text{ where $g_{5,\Omega,\xi }(u,v)=\left(\wp _{\Omega}(u),\frac{\sigma _\Omega (u-\xi )}{\sigma _\Omega (u)}e^v\right)$.}\\
\mathcal{P}_6 &:=\{  \, \mathbb{C}(\Lambda ) \suchthat \Lambda \text{ is a lattice of } \mathbb{C}^2,\,
\tr_{\mathbb{C}}\mathbb{C}(\Lambda )=2 \}.
\end{align*}
\end{fact}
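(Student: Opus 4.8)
The plan is to reduce Painlev\'e's theorem to two classical facts --- the classification of the connected commutative algebraic groups of dimension $2$ over $\mathbb{C}$, and the explicit form of their universal covering (exponential) maps --- and then to check that the six resulting families of covers are $\mathcal{P}_1,\dots,\mathcal{P}_6$. First I would extract an algebraic group from the AAT. Since $f=(f_1,f_2)\colon\mathbb{C}^2\dashrightarrow\mathbb{C}^2$ is functionally independent, $f_1,f_2$ are algebraically independent over $\mathbb{C}$, so $\mathbb{C}(f_1,f_2)$ is the function field of an irreducible surface $V$ over $\mathbb{C}$; the rational maps $(u,v)\mapsto f(u+v)$ and $u\mapsto f(-u)$, together with the point $f(0)$, endow a Zariski-dense open subset of $V$ with a birational commutative group law, all group axioms being inherited from $(\mathbb{C}^2,+)$. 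By Weil's theorem on birational group laws this group chunk is birationally equivalent to a unique connected commutative algebraic group $G$ with $\dim G=2$, and $u\mapsto f(u)$ becomes a homomorphism of complex Lie groups $\Phi\colon(\mathbb{C}^2,+)\to G(\mathbb{C})$ which, by functional independence and the homogeneity of $G(\mathbb{C})$, is generically a local biholomorphism. Hence $\Phi$ extends to the universal covering homomorphism of $G(\mathbb{C})$, so $\Phi=\exp_G\circ\alpha$ with $\exp_G\colon\mathrm{Lie}(G)\to G(\mathbb{C})$ the exponential map and $\alpha\in\GL_2(\mathbb{C})$ the linear identification $\mathbb{C}^2\cong\mathrm{Lie}(G)$ given by $d\Phi_0$; in particular $\mathbb{C}(f_1,f_2)\cong\mathbb{C}(G)$, with $f_1,f_2$ corresponding to rational functions on $G$ pulled back along $\exp_G\circ\alpha$. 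I expect this to be the main obstacle: producing $G$ and recognising $f$ as essentially the exponential map of $G$ is the analytic--algebraic heart of the theorem, already the content of Weierstrass's one-variable result; everything afterwards is classical enumeration and bookkeeping.

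Next I would classify $G$ and write down $\chi_G\circ\exp_G$ for a suitable birational chart $\chi_G\colon G\dashrightarrow\mathbb{A}^2$. By Chevalley's structure theorem there is an exact sequence $0\to L\to G\to A\to 0$ with $L\cong\mathbb{G}_a^r\times\mathbb{G}_m^s$ a connected commutative linear group and $A$ an abelian variety, $r+s+\dim A=2$. If $\dim A=0$, then $G$ is $\mathbb{G}_a^2$, $\mathbb{G}_a\times\mathbb{G}_m$ or $\mathbb{G}_m^2$, and $\exp_G$ reads (up to a coordinate permutation, absorbed in $\GL_2$) as $(u,v)\mapsto(u,v)$, $(e^u,v)$, $(e^u,e^v)$, i.e.\ as $g_1,g_2,g_3$. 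If $r+s=1$, then $A=E$ is an elliptic curve, say $E=\mathbb{C}/\Omega$: extensions of $E$ by $\mathbb{G}_a$ are classified up to isomorphism of groups by $H^1(E,\mathcal{O}_E)\cong\mathbb{C}$ modulo the scaling action of $\Aut(\mathbb{G}_a)$, leaving only the split extension $E\times\mathbb{G}_a$ and the universal vectorial extension $E^{\natural}$, and the quasi-periodicity $\zeta_{\Omega}(u+\omega)=\zeta_{\Omega}(u)+\eta(\omega)$ shows $\exp_G$ reads as $(u,v)\mapsto(\wp_{\Omega}(u),v-\xi\zeta_{\Omega}(u))=g_{4,\Omega,\xi}$ with $\xi\in\{0,1\}$; extensions of $E$ by $\mathbb{G}_m$ are classified by $\mathrm{Pic}^0(E)\cong E$, parametrised by $\xi$, and the transformation law of $\widetilde{\sigma}_{\Omega,\xi}(u)=\sigma_{\Omega}(u-\xi)/\sigma_{\Omega}(u)$ under the periods of $\Omega$ (an exponential of an affine function of $u$) shows $\exp_G$ reads as $(u,v)\mapsto(\wp_{\Omega}(u),\widetilde{\sigma}_{\Omega,\xi}(u)e^v)=g_{5,\Omega,\xi}$. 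If $r+s=0$, then $G$ is an abelian surface (possibly a product of two elliptic curves), and $\exp_G\colon\mathbb{C}^2\to\mathbb{C}^2/\Lambda$ for a rank-$4$ lattice $\Lambda$ satisfying the Riemann relations. As a consistency check, $\rank\Lambda_f$ in these cases is $0$; $1$; $2$; $2$; $3$; $4$, and the two rank-$2$ cases are distinguished by whether the linear part of $G$ is a torus (then $\mathcal{P}_3$) or $G$ has an elliptic quotient (then $\mathcal{P}_4$).

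Finally I would pass from $\exp_G$ to $\mathbb{C}(G)$. In the cases producing $g_i$ with $1\le i\le 5$, the two coordinates of $g_i$ are algebraically independent elements of $\mathbb{C}(G)$ and $\mathbb{C}(G)$ has transcendence degree $2$ over $\mathbb{C}$, so $\mathbb{C}(G)$ is algebraic over the subfield they generate --- this is harmless: for instance $\wp'_{\Omega}$ is algebraic over $\mathbb{C}(\wp_{\Omega})$, so $\mathbb{C}(E)=\mathbb{C}(\wp_{\Omega},\wp'_{\Omega})$ is algebraic over $\mathbb{C}(\wp_{\Omega})$, and likewise for the other models. In the abelian-surface case $\exp_G$ pulls $\mathbb{C}(G)$ back to $\mathbb{C}(\Lambda)$, which has $\tr_{\mathbb{C}}\mathbb{C}(\Lambda)=2$ since $G$ is an abelian variety. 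Combining with the first step: $f_1,f_2$ are algebraic over $\mathbb{C}(g_i\circ\alpha)$ for the appropriate $i\in\{1,\dots,5\}$, $\alpha\in\GL_2(\mathbb{C})$, lattice $\Omega$ and $\xi$, or else $f_1,f_2$ already lie in $\mathbb{C}(\Lambda)$ for a suitable rank-$4$ lattice $\Lambda$ with $\tr_{\mathbb{C}}\mathbb{C}(\Lambda)=2$. In either case the coordinate functions of $f$ are algebraic over a field of one of the six families $\mathcal{P}_i$, which is the assertion.
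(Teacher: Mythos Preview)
The paper does not prove this statement: it is recorded as a \emph{Fact} with a citation to Painlev\'e and used as a black box, so there is no proof in the paper to compare against. Your strategy --- extract a connected commutative algebraic group $G$ from the AAT via Weil's theorem, classify two-dimensional such $G$ via Chevalley and Serre, and identify the exponential maps --- is the natural modern route, and is in spirit the content of \cite[Thm.\,4.10]{BDOLCN} (invoked later in the paper) combined with the classification that the paper recovers in Corollary~\ref{isogenous corollary}. The enumeration of $G$ and the identification of $\exp_G$ with $g_1,\dots,g_5$ via the quasi-periodicity of $\zeta_\Omega$ and $\widetilde\sigma_{\Omega,\xi}$, and of the abelian-surface case with $\mathcal{P}_6$, are all correct.

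There is, however, a genuine gap in your first step. An AAT only says that each $f_i(u+v)$ is \emph{algebraic} over $\mathbb{C}(f(u),f(v))$; it does not say $f(u+v)$ is a \emph{rational} function of $(f(u),f(v))$. Hence ``the rational map $(u,v)\mapsto f(u+v)$'' need not exist as a morphism $V\times V\dashrightarrow V$, and Weil's theorem on birational group laws does not apply to $V$ as written. (Already in one variable: $\wp(u+v)$ is algebraic but not rational over $\mathbb{C}(\wp(u),\wp(v))$; one must adjoin $\wp'$.) The standard repair is to first pass to a map $g$ admitting a \emph{rational} addition theorem with $\mathbb{C}(f)^{alg}=\mathbb{C}(g)^{alg}$; this step is nontrivial and is precisely what is supplied in \cite{BDOAAT}. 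Once such a $g$ is in hand, Weil gives $G$ with $\mathbb{C}(G)=\mathbb{C}(g)$, and then $f_1,f_2\in\mathbb{C}(g)^{alg}$ are algebraic over $\mathbb{C}(G)$, which suffices for the conclusion. Accordingly, your claim ``$\mathbb{C}(f_1,f_2)\cong\mathbb{C}(G)$'' should be weakened to ``$f_1,f_2$ are algebraic over $\mathbb{C}(G)$''; the rest of the argument then goes through.
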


We point out that $g_1,g_2,g_3$  y $g_{4,\Omega,0}$ clearly admit an AAT and $g_{4,\Omega,1}$ and $g_{5,\Omega,\xi}$ also admit an AAT by \cite[Art.\,16 and 19]{Painleve}, moreover, each one  is  functionally independent. On the other hand, by \cite[Ch.\,5. \textsection\,13]{Siegel}, any map whose coordinate functions form a transcendence basis of a field in  $\mathcal{P}_6$ satisfies an AAT  and it is  functionally independent.  Now,  functionally independence implies  local diffeomorphism at some point. Therefore,  all the maps mentioned induce locally $\mathbb{C}$-Nash group structures on $(\mathbb{C}^2,+)$, and so they do  their compositions with some $\alpha$ in $\GL_2(\mathbb{C})$.

From Painlev\'e's result we will obtain  -- in the next section --  representatives of  isomorphism classes of two-dimensional simply connected abelian locally $\C$-Nash groups. We begin by computing the $\rank$s of the Painlev\'e's families,  this will show that locally $\C$-Nash groups coming from different families cannot be isomorphic.
\begin{proposition}\label{rank} The $\rank$s of the Painlev\'e's families are: $\rank \mathcal{P}_1=0,\, \rank \mathcal{P}_2=1,\, \rank \mathcal{P}_3=2,\, \rank \mathcal{P}_4=2,\, \rank \mathcal{P}_5=3,\, \rank \mathcal{P}_6=4.$	
\end{proposition}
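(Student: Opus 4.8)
The plan is to compute $\rank \mathcal{P}_i$ for each family by exhibiting a single representative map $g$ (or a transcendence basis, in the case of $\mathcal{P}_6$) whose group of periods is discrete, computing $\rank \Lambda_g$ directly, and then invoking Fact~\ref{BDOrank} together with Definition~\ref{Z-rank} to see that this rank is independent of the choices involved. Recall that by Fact~\ref{BDOrank}(1), if the coordinate functions of $g$ are algebraically independent over $\mathbb{C}$ (which holds here by functional independence), then $\rank \Lambda_f = \rank \Lambda_g$ for any $f$ with the same algebraic closure; moreover precomposition with $\alpha \in \GL_2(\mathbb{C})$ only replaces $\Lambda_g$ by $\alpha^{-1}(\Lambda_g)$, which has the same rank. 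So it suffices to work with the bare generators $g_1, g_2, g_3, g_{4,\Omega,\xi}, g_{5,\Omega,\xi}$ and one basis for each field in $\mathcal{P}_6$.

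Next I would carry out the five easy computations. For $g_1 = \id$ the period group is $\{0\}$, so $\rank \mathcal{P}_1 = 0$. For $g_2(u,v) = (e^u, v)$ a period $(a,b)$ must satisfy $e^{u+a} = e^u$ and $v + b = v$, forcing $b = 0$ and $a \in 2\pi i \mathbb{Z}$; hence $\Lambda_{g_2} = 2\pi i \mathbb{Z} \times \{0\}$ has rank $1$. For $g_3(u,v) = (e^u, e^v)$ one gets $\Lambda_{g_3} = 2\pi i \mathbb{Z} \times 2\pi i \mathbb{Z}$ of rank $2$. For $g_{4,\Omega,\xi}(u,v) = (\wp_\Omega(u), v - \xi \zeta_\Omega(u))$, a period $(a,b)$ must have $a \in \Omega$ (since $\wp_\Omega$ has period lattice exactly $\Omega$) and then $b = \xi(\zeta_\Omega(u+a) - \zeta_\Omega(u)) = \xi \eta(a)$, a constant depending only on $a$ (quasi-periodicity of $\zeta_\Omega$); so $\Lambda_{g_{4,\Omega,\xi}} = \{(a, \xi\eta(a)) : a \in \Omega\}$, which is the graph of a homomorphism on a rank-$2$ lattice, hence of rank $2$. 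For $g_{5,\Omega,\xi}(u,v) = (\wp_\Omega(u), \widetilde{\sigma}_{\Omega,\xi}(u)e^v)$, again $a \in \Omega$ is forced by the first coordinate, and then the second coordinate constrains $b$ modulo $2\pi i \mathbb{Z}$ by the quasi-periodicity factor of $\widetilde\sigma_{\Omega,\xi}$ under $u \mapsto u+a$; thus $\Lambda_{g_{5,\Omega,\xi}}$ projects onto $\Omega$ with kernel $\{0\} \times 2\pi i \mathbb{Z}$, giving rank $3$. Finally, $\mathcal{P}_6$ consists of the fields $\mathbb{C}(\Lambda)$ with $\Lambda$ a lattice of $\mathbb{C}^2$ and $\tr_\mathbb{C}\mathbb{C}(\Lambda) = 2$; a transcendence basis $(f_1, f_2)$ then has $\Lambda_f \supseteq \Lambda$ discrete, and in fact by the same argument as in Fact~\ref{BDOrank} one has $\rank \Lambda_f = \rank \Lambda = 4$ (a lattice of $\mathbb{C}^2$ has rank $4$, and a transcendence basis of $\mathbb{C}(\Lambda)$ cannot acquire strictly more periods than $\Lambda$ up to finite index, again by Fact~\ref{BDOrank}(1) applied after noting $\mathbb{C}(f_1,f_2)^{alg} = \mathbb{C}(\Lambda)^{alg}$).

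The one point requiring a little care — and the main (mild) obstacle — is the well-definedness of $\rank \mathcal{P}_i$ as an invariant of the family, i.e.\ that all fields in a given $\mathcal{P}_i$ really do have the same rank, so that Definition~\ref{Z-rank} applies. For $\mathcal{P}_1, \mathcal{P}_2, \mathcal{P}_3$ this is immediate since there is essentially one generator up to $\GL_2(\mathbb{C})$. For $\mathcal{P}_4$ and $\mathcal{P}_5$ one must check the rank does not depend on the lattice $\Omega$ nor on $\xi$: this follows because the projection-onto-first-coordinate argument above shows the period group always surjects onto a rank-$2$ lattice with kernel of a fixed rank ($0$ in case $4$, $1$ in case $5$), regardless of $\Omega$ and $\xi$. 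For $\mathcal{P}_6$ one must check that $\tr_\mathbb{C}\mathbb{C}(\Lambda) = 2$ forces, for a lattice $\Lambda \leq \mathbb{C}^2$, that any transcendence basis has period lattice of rank exactly $4$; this is where one appeals to the cited results of Siegel (\cite[Ch.~5 §11 Thms.~5 and 6]{Siegel} and \cite[Ch.~5 §13]{Siegel}) guaranteeing that such a field is generated by theta-type functions with period lattice commensurable with $\Lambda$, so no periods are lost or gained beyond finite index, and $\rank \Lambda = 4$ since $\Lambda$ is a full lattice in $\mathbb{C}^2 \cong \mathbb{R}^4$. Assembling these computations yields $\rank \mathcal{P}_1 = 0$, $\rank \mathcal{P}_2 = 1$, $\rank \mathcal{P}_3 = \rank \mathcal{P}_4 = 2$, $\rank \mathcal{P}_5 = 3$, $\rank \mathcal{P}_6 = 4$, as claimed; and since $\rank \Lambda_f$ is an isomorphism invariant by Fact~\ref{BDOrank}(2), locally $\mathbb{C}$-Nash groups arising from distinct families can only possibly be isomorphic when the two families share the same rank, which happens only for $\mathcal{P}_3$ and $\mathcal{P}_4$ — a coincidence to be resolved separately in the next section.
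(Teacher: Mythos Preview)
Your proposal is correct and follows essentially the same approach as the paper: compute the period lattice of each explicit generator $g_i$ (using quasi-periodicity of $\zeta_\Omega$ and $\sigma_\Omega$ for families $4$ and $5$), and for $\mathcal{P}_6$ argue that a transcendence basis has discrete period group containing the rank-$4$ lattice $\Lambda$. The only minor difference is in the $\mathcal{P}_6$ case: the paper obtains discreteness of $\Lambda_f$ more directly by noting that $\tr_\mathbb{C}\mathbb{C}(\Lambda)=2$ guarantees some $g\in\mathbb{C}(\Lambda)$ with $\Lambda_g$ discrete, and then applying Fact~\ref{BDOrank}(1) to $g\in\mathbb{C}(f)^{alg}$, rather than invoking the theta-function generators from Siegel.
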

\begin{proof}For the first five families it is enough to compute the ranks of $g_1,g_2,g_3$, $g_{4,\Omega,\xi}$ and $g_{5,\Omega,\xi}$ where $\Omega:={\langle \omega _1,\omega _2\rangle}_{\mathbb{Z}}$ 
is a lattice of $\mathbb{C}$ and $\xi \neq 0$. It is easy to check that $\displaystyle \Lambda _{g_1}=\{(0,0)\}$,
 $\displaystyle \Lambda _{g_2}={\langle(2\pi i,0)\rangle}_{\mathbb{Z}}$ and
 $\displaystyle \Lambda _{g_3}={\langle(2\pi i,0), (0,2\pi i)\rangle}_{\mathbb{Z}}$. 
 
 We show that $\displaystyle \Lambda _{g_{4,\Omega,\xi}}={\langle(\omega _1 ,2\xi \zeta _\Omega (\omega _1/2)),
(\omega _2 ,2\xi \zeta _\Omega (\omega _2/2))\rangle}_{\mathbb{Z}}$. Let $\Lambda$ denote the period group of $g_{4,\Omega,\xi}$, and let $\Lambda_1$ and $\Lambda_2$ denote the periods groups of its coordinate functions. Clearly, $\Lambda \leq \Lambda _{1}\cap \Lambda _{2}$. Fix $\lambda :=(\lambda _1,\lambda _2)\in \Lambda$.

Since $\lambda \in \Lambda _{1}$, we have $\lambda _1\in \Omega$.
Fix $m,n\in \mathbb{Z}$ such that $\lambda _1 =m\omega _1+n\omega _2$.
By \cite[Ch.IV, Thm.\,1]{Chandrasekharan} it follows that
\begin{equation}\label{Chfact}
\zeta _\Omega (u+m\omega _1+n\omega _2) - \zeta _\Omega (u)=2m\zeta _\Omega (\omega _1/2)+2n\zeta _\Omega (\omega _2/2).
\end{equation}
Since $\lambda \in \Lambda _{2}$, we deduce $v+\lambda _2-\xi \zeta _\Omega (u+m\omega _1+n\omega _2) = v-\xi \zeta _\Omega (u)
$
and, hence, by equation (\ref{Chfact}), 
$
\lambda _2=2\xi m\zeta _\Omega (\omega _1/2)+2\xi n\zeta _\Omega (\omega _2/2).
$
This means that the elements of $\Lambda$ are of the form 
\[
\big(m\omega _1+n\omega _2, 2\xi m\zeta _\Omega (\omega _1/2)+2\xi n\zeta _\Omega (\omega _2/2)\big),
\]
for some  $m,n\in \mathbb{Z}$, so we are done with this case.

We now show that $\Lambda _{g_{5,\Omega,\xi}}={\langle(\omega _1 ,2\xi \zeta _\Omega (\omega _1/2)),
(\omega _2 ,2\xi \zeta _\Omega (\omega _2/2),(0,2\pi i))\rangle}_{\mathbb{Z}}$.
Again, let $\Lambda$ denote the period group of $g_{5,\Omega,\xi}$, and let $\Lambda_1$ and $\Lambda_2$ denote the periods groups of its coordinate functions. Fix $\lambda :=(\lambda _1,\lambda _2)\in \Lambda$.
Reasoning as before, there exists $m,n\in \mathbb{Z}$ such that $\lambda_1 =m\omega _1+n\omega _2$.
Moreover, \cite[Ch.IV, Thm.\,3]{Chandrasekharan},
\begin{equation}\label{Chfact2}
\frac{\sigma _\Omega (u+m\omega _1+n\omega _2)}{\sigma _\Omega (u)}=
Ce^{u(2m\zeta _\Omega (\omega _1/2)+2n\zeta _\Omega (\omega _2/2))},
\end{equation}
for some constant $C\in \mathbb{C}$.
Since $\lambda \in \Lambda _{2}$, we deduce
\[
\frac{\sigma _\Omega (u+\lambda _1-\xi)}{\sigma _\Omega (u+\lambda _1)}e^{v+\lambda _2} 
=
\frac{\sigma _\Omega (u-\xi)}{\sigma _\Omega (u)}e^v.
\]
Consequently,
\[
e^{\lambda _2}=\frac{\sigma _\Omega (u-\xi)}{\sigma _\Omega (u)}\frac{\sigma _\Omega (u+\lambda _1)}{\sigma _\Omega (u+\lambda _1-\xi)}.
\]
So, by equation (\ref{Chfact2}), we get $e^{\lambda _2}=e^{2\xi (m\zeta _\Omega (\omega _1/2)+n\zeta _\Omega (\omega _2/2))}$, and therefore 
$\lambda _2=2\xi m\zeta _\Omega (\omega _1/2)+2\xi n\zeta _\Omega (\omega _2/2)+2p \pi i$, for some $p \in \mathbb{Z}$.
This means that the elements of $\Lambda _g$ are of the form 
\[
\big(m\omega _1+n\omega _2, 2\xi m\zeta _\Omega (\omega _1/2)+2\xi n\zeta _\Omega (\omega _2/2)+2p \pi i\big),
\]
with $m,n,p\in \mathbb{Z}$, as required.

Finally, we consider the case of the $\mathcal{P}_6$. Let  $\{f_1,f_2\}$ be a transcendence basis of $\mathbb{C}(\Lambda)$, and let us see that $\rank \Lambda_f=4$. 
By definition, $\Lambda \leq \Lambda_f$ and, therefore, it is enough to check that $\Lambda_f$ is discrete. 
Since $\tr_\mathbb{C}\mathbb{C}(\Lambda)=2$, there exists a meromorphic function $g\in \mathbb{C}(\Lambda)$ such that $\Lambda_g$ is discrete. 
 We conclude  by Fact\,\ref{BDOrank}(1), since $g\in\mathbb{C}(f)^{alg}$.
 \end{proof}

 Next we have a series of technical results dedicated to characterise   algebraic dependence of  maps  within a fixed Painlev\'e family.
The cases of the three first families are trivial and we will not treat the case of the sixth family (being beyond the objectives of this paper). The strategy for the family $\mathcal{P}_4$ is as follows:  Firstly, we reduce to the case of  lattices of the type $\langle1,\tau\rangle$ (Lemma\,\ref{Z lattice}); secondly, we study the case when the relevant lattices are one a subgroup of the other (Lemma\,\ref{Z sublattices}); and finally, we consider the general case (Proposition\,\ref{Z general}). The strategy for the family $\mathcal{P}_5$ is similar for the first  two steps (Lemmas\,\ref{S parameter} and \ref{S sublattices}). However,  the corresponding  general case  is more involved due to the existence -- besides a lattice --  of a parameter $\xi$. The latter is analised  in Lemmas\,\ref{S Theta}  and \ref{representation}  via the introduction (Definition\,\ref{defXi}) of the  set  $\varXi$  (see also Corollary\,\ref{S characterization}). We finish the characterisation of this family in  Proposition\,\ref{S general}.  We will make use of  Fact\,\ref{Chandrasekharan} without further reference.

\

We begin with the first step for the family $\mathcal{P}_4$.

\begin{lemma}\label{Z lattice} Let $\Omega :={\langle \omega _1,\omega _2\rangle}_{\mathbb{Z}}$ be a lattice of $\mathbb{C}$, 
$\xi \in \mathbb{C}^*$,  $\tau := \omega _1^{-1}\omega _2$ and define $\alpha (u,v) := (\omega _1 u,\xi \omega _1^{-1} v)\in \GL_2(\mathbb{C})$. 
Then, $\big(\wp _{\Omega}(u), v-\xi \zeta _{\Omega} (u)\big)\circ \alpha\in\mathbb{C}\big(\wp _\tau (u), v-\zeta _\tau (u)\big)^{alg}$.
\end{lemma}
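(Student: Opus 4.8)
The plan is a direct computation, together with a recognition that after composing with $\alpha$ we have simply rescaled the parameters. This lemma is the first reduction step for $\mathcal P_4$: it shows that when classifying algebraic dependence inside $\mathcal P_4$ one may always assume the lattice is normalised to the form $\langle 1,\tau\rangle_{\mathbb Z}$ and the parameter is $\xi=1$.

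First I would rewrite the lattice as $\Omega=\omega_1\langle 1,\tau\rangle_{\mathbb Z}$, since $\tau=\omega_1^{-1}\omega_2$. Applying the homogeneity relations of Fact\,\ref{Chandrasekharan} to the base lattice $\langle 1,\tau\rangle_{\mathbb Z}$ with $a=\omega_1$ gives
\[
\wp_\Omega(\omega_1 u)=\omega_1^{-2}\,\wp_\tau(u),\qquad
\zeta_\Omega(\omega_1 u)=\omega_1^{-1}\,\zeta_\tau(u).
\]
Then I would substitute $\alpha(u,v)=(\omega_1 u,\ \xi\omega_1^{-1}v)$ into $g_{4,\Omega,\xi}(u,v)=\big(\wp_\Omega(u),\,v-\xi\zeta_\Omega(u)\big)$ and simplify, obtaining
\[
g_{4,\Omega,\xi}\big(\alpha(u,v)\big)
=\Big(\omega_1^{-2}\wp_\tau(u),\ \xi\omega_1^{-1}v-\xi\omega_1^{-1}\zeta_\tau(u)\Big)
=\Big(\omega_1^{-2}\wp_\tau(u),\ \xi\omega_1^{-1}\big(v-\zeta_\tau(u)\big)\Big).
\]

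Since $\omega_1^{-2}$ and $\xi\omega_1^{-1}$ are nonzero elements of $\mathbb C$, each coordinate function of $g_{4,\Omega,\xi}\circ\alpha$ already belongs to the field $\mathbb C\big(\wp_\tau(u),\,v-\zeta_\tau(u)\big)$, and in particular is algebraic over it; this is exactly the assertion. There is no real obstacle here: the only point requiring care is to keep track of the correct scaling exponents in Fact\,\ref{Chandrasekharan} (namely $-2$ for $\wp$ against $-1$ for $\zeta$), which is precisely what makes the $\xi\zeta_\Omega$ term transform into a scalar multiple of $\zeta_\tau$ and lets the second coordinate collapse to a constant times $v-\zeta_\tau(u)$.
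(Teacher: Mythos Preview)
Your proof is correct and is exactly the approach the paper intends: the lemma is stated without proof there, relying on the homogeneity relations of Fact\,\ref{Chandrasekharan} (which the paper announces it will use ``without further reference''). Your direct computation spells this out precisely.
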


For the second step for the family $\mathcal{P}_4$ we will use the notation $\mathfrak{c}(\Omega _2,\Omega _1)$ introduced in  Definition\,\ref{defc}.

\begin{lemma}\label{Z sublattices} Let $\Omega _1$ and $\Omega _2$ be lattices of $\C$ such that 
$\Omega _1\leq\Omega _2$. Let $\alpha (u,v) := (u,\mathfrak{c}(\Omega _2,\Omega _1)u+[\Omega _2:\Omega _1]v)$. Then, $\big(\wp _{\Omega _2}(u), v-\zeta _{\Omega_2} (u)\big)\circ \alpha\in\mathbb{C}\big(\wp _{\Omega _1}(u), v-\zeta _{\Omega _1} (u)\big)^{alg}$.
\end{lemma}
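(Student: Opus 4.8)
The plan is to compute the composition explicitly and then treat its two coordinate functions separately, in each case using the coset decomposition of Lemma~\ref{cosets} together with the algebraic addition theorems already available. Write $n:=[\Omega_2:\Omega_1]$ and $\mathfrak{c}:=\mathfrak{c}(\Omega_2,\Omega_1)$, and fix $a_1,\dots,a_n\in\mathbb{C}$ with $\Omega_2=\bigcup_{i=1}^n(\Omega_1+a_i)$. Since $\alpha(u,v)=(u,\mathfrak{c}u+nv)$ (a matrix of determinant $n\neq 0$, so indeed in $\GL_2(\mathbb{C})$), unwinding gives
\[
\big(\wp_{\Omega_2}(u),\,v-\zeta_{\Omega_2}(u)\big)\circ\alpha=\big(\wp_{\Omega_2}(u),\,\mathfrak{c}u+nv-\zeta_{\Omega_2}(u)\big),
\]
and I would show that each coordinate on the right lies in $\mathbb{C}\big(\wp_{\Omega_1}(u),v-\zeta_{\Omega_1}(u)\big)^{alg}$.

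For the first coordinate, Lemma~\ref{cosets}(1) gives $\wp_{\Omega_2}(u)=\sum_{i=1}^n\wp_{\Omega_1}(u+a_i)-\mathfrak{c}$; since $\wp_{\Omega_1}$ admits an AAT, Fact~\ref{Lemma 3} shows each $\wp_{\Omega_1}(u+a_i)$ is algebraic over $\mathbb{C}(\wp_{\Omega_1}(u))$, hence so is $\wp_{\Omega_2}(u)$ (this is just a restatement of Lemma~\ref{C-wp}(1)). For the second coordinate, the point of the definition of $\alpha$ is that the constant $\mathfrak{c}$ is chosen exactly to cancel the term that is linear in $u$ coming from the quasi-periodicity of $\zeta$: by Lemma~\ref{cosets}(2) there is a constant $C\in\mathbb{C}$ with $\zeta_{\Omega_2}(u)=\sum_{i=1}^n\zeta_{\Omega_1}(u+a_i)+\mathfrak{c}u+C$, so
\[
\mathfrak{c}u+nv-\zeta_{\Omega_2}(u)=\sum_{i=1}^n\big(v-\zeta_{\Omega_1}(u+a_i)\big)-C .
\]
Now $g_{4,\Omega_1,1}(u,v)=(\wp_{\Omega_1}(u),v-\zeta_{\Omega_1}(u))$ is functionally independent and admits an AAT (as noted after Fact~\ref{TP}), so applying Fact~\ref{Lemma 3} to this two-variable map and the translation by $(a_i,0)\in\mathbb{C}^2$ gives that $v-\zeta_{\Omega_1}(u+a_i)$ is algebraic over $\mathbb{C}\big(\wp_{\Omega_1}(u),v-\zeta_{\Omega_1}(u)\big)$ for every $i$; summing these and subtracting the constant $C$ puts the second coordinate in the same algebraic closure, which finishes the argument. (Alternatively, the same two-variable application of Fact~\ref{Lemma 3} handles the first coordinate too.)

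I do not expect a serious obstacle here; the one thing to be careful about is the bookkeeping of constants, namely that the coefficient of $u$ appearing in the $\zeta$-identity of Lemma~\ref{cosets}(2) is \emph{precisely} the residue $\mathfrak{c}(\Omega_2,\Omega_1)$ used to build $\alpha$ — but this is exactly how that residue is defined in Definition~\ref{defc}, so the cancellation is automatic and no extra computation is needed.
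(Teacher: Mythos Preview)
Your proof is correct and follows essentially the same approach as the paper: both use the coset identity of Lemma~\ref{cosets}(2) to expand $\zeta_{\Omega_2}$ as a sum of translates of $\zeta_{\Omega_1}$ and then invoke Fact~\ref{Lemma 3} for the AAT map $g_{4,\Omega_1,1}$. The only cosmetic difference is that the paper works with $\beta:=\alpha^{-1}$ and shows $\big(\wp_{\Omega_2}(u),v-\zeta_{\Omega_2}(u)\big)\in\mathbb{C}\big(g_{4,\Omega_1,1}\circ\beta\big)^{alg}$ before inverting, whereas you compose with $\alpha$ at the outset --- your route is marginally more direct and avoids the auxiliary constant $D$.
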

\begin{proof} Let $n$ be the index of $\Omega _1$ in $\Omega _2$,  $\mathfrak{c}:=\mathfrak{c}(\Omega _2,\Omega _1)$ and $\Omega _2=\bigcup _{i=1}^n(\Omega _1+a_i)$.
Let $C$ be as in Lemma\,\ref{cosets}.
Then, for $D:=\sum _{i=1}^n n^{-1}\mathfrak{c}a_i-C$, we have that
\[  v-\zeta _{\Omega _2}(u)
=D+\sum _{i=1}^n \left(n^{-1}v-n^{-1}\mathfrak{c}(u+a_i)-\zeta _{\Omega _1}(u+a_i)\right), 
\]
since $D-\sum _{i=1}^n n^{-1}\mathfrak{c}(u+a_i)=-\mathfrak{c}u-C$.

Let $\beta (u,v)=(u,n^{-1}v-n^{-1}\mathfrak{c}u)$, so that 
$v-\zeta _{\Omega _2}(u)=D+\sum _{i=1}^n\big(\big(v-\zeta _{\Omega _1}(u)\big)\circ \beta \big)(u+a_i,v)$.
Since $\big(\wp _{\Omega _1}(u), v-\zeta _{\Omega _1}(u)\big)$ admits an AAT,  so does
$\big(\wp _{\Omega _1}(u), v-\zeta _{\Omega _1}(u)\big)\circ \beta$.
By Fact\,\ref{Lemma 3}, we deduce that $$\big(\big(v-\zeta _{\Omega _1}(u)\big)\circ \beta \big) (u+a_i,v)\in\C\big(\big(\wp _{\Omega _1}(u), v-\zeta _{\Omega _1}(u)\big)\circ \beta \big)^{alg},$$ for each $i\in \{1,\ldots, n\}$.
So $v-\zeta _{\Omega _2}(u)\in\mathbb{C}\big(\big(\wp _{\Omega _1}(u), v-\zeta _{\Omega _1} (u)\big)\circ \beta \big)^{alg}$.
Since $\wp _{\Omega _2}(u)\in\C(\wp _{\Omega _1}(u))^{alg}$ and 
 $\wp _{\Omega _1}(u)=\big(\wp _{\Omega _1}(u)\big)\circ \beta$ -- as a function of two variables --,  we also get that 
$$\big(\wp _{\Omega _2}(u), v-\zeta _{\Omega _2}(u)\big)\in\mathbb{C}\big(\big(\wp _{\Omega _1}(u), v-\zeta _{\Omega _1} (u)\big)\circ \beta \big)^{alg}.$$
We are done by taking $\alpha = \beta ^{-1}$.
\end{proof}

We now  give the  characterisation  of algebraic dependence  of the maps of the family $\mathcal{P}_4$.

\begin{proposition}\label{Z general} 
Let $\Omega _1:={\langle 1,\omega _1\rangle}_{\mathbb{Z}}$ and $\Omega _2:={\langle 1,\omega _2\rangle}_{\mathbb{Z}}$, with 
$\omega _1,\omega _2 \in \mathbb{C}\setminus \mathbb{R}$.
Then, there exists $\alpha \in \GL_2(\mathbb{C})$ such that $\big(\wp _{\Omega _2}(u), v-\zeta _{\Omega _2}(u)\big)\circ \alpha$ is algebraic 
over $\mathbb{C}\big(\wp _{\Omega _1}(u), v-\zeta _{\Omega _1}(u)\big)$ if and only if there exist $a,b,c,d\in \mathbb{Z}$ such that $ad-bc\neq 0$ and $\omega _2=\frac{a\omega _1+b}{c\omega _1+d}$.
Furthermore, if such is the case, then for some  $n\in \mathbb{N}^*$, 
\[
\alpha (u,v)=\left(\rho u,\rho ^{-1}\left(\mathfrak{c}(\Omega:n\Omega)-\frac{\mathfrak{c}(\Omega_1,n\Omega)[\Omega:n\Omega]}{[\Omega_1:n\Omega]}\right)u+\frac{[\Omega :n\Omega ]}{[\Omega _1:n\Omega]}\rho ^{-1}v\right), 
\]
where $\rho :=\frac{n}{c\omega _1+d}$ and $\Omega=\rho ^{-1}\Omega_2$.
\end{proposition}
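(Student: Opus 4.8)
The plan is to reduce Proposition~\ref{Z general} to the two already-established steps (Lemmas~\ref{Z lattice} and~\ref{Z sublattices}) exactly as in the parallel one-dimensional argument of Theorem~\ref{1dim C-classification}, using Fact~\ref{BDOrank} to control the relevant lattices, and then to bookkeep the affine change of coordinates that composes all the intermediate $\alpha$'s. For the forward direction, suppose $\alpha\in\GL_2(\mathbb{C})$ is such that $(\wp_{\Omega_2}(u),v-\zeta_{\Omega_2}(u))\circ\alpha$ is algebraic over $\mathbb{C}(\wp_{\Omega_1}(u),v-\zeta_{\Omega_1}(u))$. Projecting to first coordinates (and noting the first coordinate of $g_{4,\Omega_2,1}\circ\alpha$ is $\wp_{\Omega_2}$ composed with the linear part of $\alpha$ restricted to $u$, i.e.\ $\wp$ of a rescaled lattice, by Fact~\ref{Chandrasekharan}/Lemma~\ref{C-wp}), I get that $\wp$ of a $\GL_1$-rescale of $\Omega_2$ is algebraically dependent over $\mathbb{C}$ with $\wp_{\Omega_1}$. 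By the last part of Theorem~\ref{1dim C-classification} (or equivalently Lemma~\ref{C-wp} together with Fact~\ref{BDOrank}), this forces $\omega_2=\frac{a\omega_1+b}{c\omega_1+d}$ for some $a,b,c,d\in\mathbb{Z}$ with $ad-bc\neq0$, which is the stated condition.

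For the converse, assume $\omega_2=\frac{a\omega_1+b}{c\omega_1+d}$ with $ad-bc\neq 0$. Set $\rho:=\frac{n}{c\omega_1+d}$ for a positive integer $n$ to be chosen, and $\Omega:=\rho^{-1}\Omega_2$. A direct computation as in the proof of Theorem~\ref{1dim C-classification} shows that $n\Omega=\langle c\omega_1+d,\ a\omega_1+b\rangle_{\mathbb{Z}}$ is a sublattice of $\Omega_1$ (for any $n$; one picks $n$ later only if denominators in the formula require it, but in fact $n=1$ suffices here since $c\omega_1+d, a\omega_1+b\in\Omega_1$). Now apply Lemma~\ref{Z lattice} with the lattice $\Omega_2$, parameter $\xi=1$, and $\omega_1$ there equal to our $\rho$: this shows $(\wp_{\Omega_2}(u),v-\zeta_{\Omega_2}(u))$ composed with $(u,v)\mapsto(\rho u,\rho^{-1}v)$ is algebraic over $\mathbb{C}(\wp_{\Omega}(u),v-\zeta_{\Omega}(u))$. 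Next, since $n\Omega\leq\Omega$ and $n\Omega\leq\Omega_1$, apply Lemma~\ref{Z sublattices} twice: once to the inclusion $n\Omega\leq\Omega$, giving that $(\wp_{\Omega}(u),v-\zeta_{\Omega}(u))$ composed with $(u,v)\mapsto(u,\mathfrak{c}(\Omega,n\Omega)u+[\Omega:n\Omega]v)$ is algebraic over $\mathbb{C}(\wp_{n\Omega}(u),v-\zeta_{n\Omega}(u))$; and once to $n\Omega\leq\Omega_1$, giving that $(\wp_{\Omega_1}(u),v-\zeta_{\Omega_1}(u))$ composed with $(u,v)\mapsto(u,\mathfrak{c}(\Omega_1,n\Omega)u+[\Omega_1:n\Omega]v)$ is algebraic over $\mathbb{C}(\wp_{n\Omega}(u),v-\zeta_{n\Omega}(u))$. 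Inverting the latter (an affine automorphism of $\mathbb{C}^2$, so its inverse is again of the required affine form) and noting that algebraicity is transitive and stable under composition with a fixed affine map, chaining these three statements yields that $(\wp_{\Omega_2}(u),v-\zeta_{\Omega_2}(u))\circ\alpha$ is algebraic over $\mathbb{C}(\wp_{\Omega_1}(u),v-\zeta_{\Omega_1}(u))$ for the composite $\alpha$.

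It remains to compute $\alpha$ explicitly and match it to the displayed formula. Composing $(u,v)\mapsto(\rho u,\rho^{-1}v)$, then the inverse of $(u,v)\mapsto(u,\mathfrak{c}(\Omega_1,n\Omega)u+[\Omega_1:n\Omega]v)$, then $(u,v)\mapsto(u,\mathfrak{c}(\Omega,n\Omega)u+[\Omega:n\Omega]v)$, is a straightforward (if slightly tedious) multiplication of lower-triangular matrices; the first coordinate is $\rho u$ and the second is a linear combination of $u$ and $v$ whose $v$-coefficient is $\rho^{-1}[\Omega:n\Omega]/[\Omega_1:n\Omega]$ and whose $u$-coefficient collects the residue terms into $\rho^{-1}\big(\mathfrak{c}(\Omega,n\Omega)-\mathfrak{c}(\Omega_1,n\Omega)[\Omega:n\Omega]/[\Omega_1:n\Omega]\big)$, exactly as stated (here I write $\mathfrak{c}(\Omega:n\Omega)$ for $\mathfrak{c}(\Omega,n\Omega)$ following the paper's Definition~\ref{defc} notation). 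The main obstacle is really just this last bookkeeping step: making sure the order of composition matches the direction of "algebraic over", that the inverse affine map is substituted correctly, and that the index/residue constants land in the right slots; the conceptual content is entirely carried by Lemmas~\ref{Z lattice} and~\ref{Z sublattices} together with the isogeny criterion from Theorem~\ref{1dim C-classification}.
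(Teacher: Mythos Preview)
Your converse direction is essentially the paper's argument: reduce $\Omega_2$ to $\Omega=\rho^{-1}\Omega_2$ via Lemma~\ref{Z lattice}, then bridge $\Omega$ and $\Omega_1$ through the common sublattice $n\Omega$ via two applications of Lemma~\ref{Z sublattices}, and compose the resulting lower-triangular affine maps. (Your verbal description of the composition order does not quite match the displayed formula --- the order you write yields a factor $\rho$ rather than $\rho^{-1}$ in front of the $u$-term of the second coordinate --- but this is only bookkeeping, and the correct order $\alpha_1^{-1}\circ\alpha_2\circ\alpha_3^{-1}$ does give the stated $\alpha$.)

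The forward direction, however, has a genuine gap. You write that ``the first coordinate of $g_{4,\Omega_2,1}\circ\alpha$ is $\wp_{\Omega_2}$ composed with the linear part of $\alpha$ restricted to $u$, i.e.\ $\wp$ of a rescaled lattice''. But if $\alpha(u,v)=(a'u+b'v,c'u+d'v)$, the first coordinate is $\wp_{\Omega_2}(a'u+b'v)$, a function of \emph{both} variables; it is a one-variable $\wp$ of a rescaled lattice only if $b'=0$, and that is precisely what must be proved. The paper establishes $b'=0$ (and then $\wp_{\Omega_2}(a'u)\in\mathbb{C}(\wp_{\Omega_1}(u))^{alg}$) via a separate Claim whose proof uses the AAT for $\wp_{\Omega_2}$ to express $\wp_{\Omega_2}(b'v)$ algebraically in terms of $\wp_{\Omega_1}(u)$, $v-\zeta_{\Omega_1}(u)$ and $\wp_{\Omega_2}(a'u)$, and then invokes the Brownawell--Kubota algebraic independence theorem (Fact~\ref{Weierstrass algebraicity}) to derive a contradiction from $\wp_{\Omega_2}(b'v)\in\mathbb{C}(v)^{alg}$. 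Without this step (or an equivalent), your ``projection to first coordinates'' does not yield the one-dimensional dependence needed to appeal to Theorem~\ref{1dim C-classification}.
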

\begin{proof}
We begin with the right to left implication. Fix $n\in \mathbb{N}^*$ and let 
$\Omega :={\langle \frac{c\omega _1+d}{n},\frac{a\omega_1+b}{n}\rangle}_{\mathbb{Z}}$. 
Let $\alpha _1(u,v):=(\frac{c\omega _1+d}{n}u,\frac{n}{c\omega _1+d}v)$.
Then, applying  Lemma\,\ref{Z lattice}  to $\Omega$ and $\xi =1$, we get 
$$\big(\wp _{\Omega}(u), v-\zeta _{\Omega}(u)\big)\circ \alpha _1\in\mathbb{C}\big(\wp _{\Omega _2}(u), v-\zeta _{\Omega _2}(u)\big)^{alg}.$$
Next, we apply twice Lemma\,\ref{Z sublattices}, firstly to $n\Omega\leq\Omega$ and $$\alpha_2(u,v):=\big(u,\mathfrak{c}(\Omega,n\Omega)u+[\Omega:n\Omega]v\big),$$  and then to $n\Omega\leq\Omega_1$ and $$\alpha _3(u,v):=\big(u,\mathfrak{c}(\Omega _1,n\Omega)u+[\Omega _1:n\Omega]v\big),$$ and we get 
$\big(\wp _{\Omega}(u), v-\zeta _{\Omega}(u)\big)\in\mathbb{C}\big(\big(\wp _{\Omega _1}(u), v-\zeta _{\Omega _1}(u)\big)\circ \alpha _3\circ \alpha_2^{-1}\big)^{alg}$. Thus, 
$\big(\wp _{\Omega _2}(u), v-\zeta _{\Omega _2}(u)\big)\in\mathbb{C}\big(\big(\wp _{\Omega _1}(u), v-\zeta _{\Omega _1}(u)\big)\circ \alpha _3\circ \alpha_2^{-1} \circ \alpha_1\big)^{alg}$. 
We conclude by taking $\alpha=(\alpha _3\circ \alpha_2^{-1} \circ \alpha_1)^{-1}$, which also proves the furthermore clause.

For the converse, let $\alpha (u,v)= (a'u+b'v,c'u+d'v)\in \GL_2(\mathbb{C})$.
If we show that $a'\neq 0$ and $\wp _{\Omega _2}(a'u)\in\mathbb{C}(\wp _{\Omega _1}(u))^{alg}$, we get that   the function $u\mapsto a' u$ is an isomorphism between 
$(\mathbb{C},+,\wp _{\Omega _1})$ and $(\mathbb{C},+,\wp _{\Omega _2})$ and hence, the lemma follows from 
Theorem\,\ref{1dim C-classification}.
Thus,  it is enough to prove the following more general result:

\medskip

\noindent
 \emph{Claim. Let $\alpha (u,v)= (au+bv,cu+dv)\in \GL_2(\mathbb{C})$ and $\xi \in \{0,1\}$.
If $\big(\wp _{\Omega _2}(u), v-\zeta _{\Omega _2}(u)\big)\circ \alpha\in\mathbb{C}\big(\wp _{\Omega _1}(u), v-\xi\zeta _{\Omega _1}(u)\big)^{alg}$ then $b=0$ and $\wp _{\Omega _2}(au)$ is 
algebraic over $\mathbb{C}(\wp _{\Omega _1}(u))$.}

\smallskip

\noindent
\emph{Proof of the claim.} Assume for the sake of contradiction that $b\neq 0$.
 Note that $a\neq 0$, otherwise  $\wp _{\Omega _2}(bv)\in\mathbb{C}(\wp _{\Omega _1}(u), v- \xi\zeta _{\Omega _1} (u))^{alg}$. On the other hand, 
 $\wp _{\Omega _2}$ admits an AAT and so  $\wp _{\Omega _2}(au+bv)\in\mathbb{C}(\wp _{\Omega _2}(au),\wp _{\Omega _2}(bv))^{alg}$.
Hence, $\wp _{\Omega _2}(bv)\in\mathbb{C}(\wp _{\Omega _2}(au+bv), \wp _{\Omega _2}(au))^{alg}$. 
Thus, 
\[
\wp _{\Omega _2}(bv)\in\mathbb{C}\left(\wp _{\Omega _2}(au+bv), 
cu+dv-\zeta _{\Omega _2} (au+bv),\wp _{\Omega _2}(au)\right)^{alg}. 
\]
By hypothesis, we also have 
\[
\left(\wp _{\Omega _2}(au+bv), cu+dv-\zeta _{\Omega _2} (au+bv)
\right)\in \mathbb{C}(\wp _{\Omega _1}(u), v- \xi\zeta _{\Omega _1} (u))^{alg},
\]
hence 
\[
\wp _{\Omega _2}(bv)\in\mathbb{C}\left(\wp _{\Omega _1}(u), v- \xi\zeta _{\Omega _1} (u),\wp _{\Omega _2}(au)\right)^{alg}.
\]
Since $\wp _{\Omega _2}(bv)$ only depends on $v$, we get that $\wp _{\Omega _2}(bv)\in\mathbb{C}(v)^{alg}$.
This contradicts Fact\,\ref{Weierstrass algebraicity}.
Therefore,  $b=0$ and hence $a\neq 0$. We conclude that $\wp _{\Omega _2}(au)\in\mathbb{C}(\wp _{\Omega _1}(u), v-\xi\zeta_{\Omega_1} (u))^{alg}$. 
If $\wp _{\Omega _2}(au)\not\in\mathbb{C}(\wp _{\Omega _1}(u))^{alg}$, 
then $v-\xi\zeta_{\Omega_1} (u)\in\mathbb{C}(\wp _{\Omega _1}(u),\wp _{\Omega _2}(au))^{alg}$, which is impossible.
\end{proof}

We now  study  algebraic dependence for maps of  family $\mathcal{P}_5$. The problem is  that we cannot normalise simultaneously both parameters $\Omega$ and $\xi$.
We recall the notation $\widetilde{\sigma} _{\Omega ,\xi}(u):=\frac{\sigma _{\Omega}(u-\xi)}{\sigma_{\Omega}(u)}$. Again, the relevant lattices can be taken of the form $\langle1,\tau\rangle$, as follows:
\begin{lemma}\label{S parameter} 
Let $\Omega :={\langle \omega _1,\omega _2\rangle}_{\mathbb{Z}}$ be a lattice of $\mathbb{C}$ and $\xi \in \mathbb{C}$.
Let $\tau := \omega _1^{-1}\omega _2$.
Let $\alpha (u,v) := (\omega _1 u,v)$.
Then, $$\big(\wp _{\Omega}(u), e^v\widetilde{\sigma} _{\Omega,\xi}(u)\big)\circ \alpha\in\mathbb{C}\big(\wp _\tau (u),e^v\widetilde{\sigma} _{\tau,\omega _1^{-1}\xi}(u)\big)^{alg}.$$
\end{lemma}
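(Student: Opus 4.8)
This is the $\mathcal{P}_5$-analogue of Lemma\,\ref{Z lattice}, and the proof is a direct computation: I would simply evaluate the left-hand side using the scaling relations of Fact\,\ref{Chandrasekharan} and observe that the result lies in the target field. So the plan is to compute $\big(\wp _{\Omega}(u), e^v\widetilde{\sigma} _{\Omega,\xi}(u)\big)\circ \alpha = \big(\wp _{\Omega}(\omega _1 u), e^v\widetilde{\sigma} _{\Omega,\xi}(\omega _1 u)\big)$ coordinate by coordinate.

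\textbf{Key step.} Write $\Omega = \omega _1 \Omega'$, where $\Omega' := {\langle 1,\tau\rangle}_{\mathbb{Z}}$, so that $\wp _{\Omega'} = \wp _\tau$ and $\widetilde{\sigma}_{\Omega',\,\omega _1^{-1}\xi} = \widetilde{\sigma}_{\tau,\,\omega _1^{-1}\xi}$ by our notational convention. Applying Fact\,\ref{Chandrasekharan} to the lattice $\Omega'$ with $a = \omega _1$ gives $\wp _{\Omega}(\omega _1 u) = \wp _{\omega _1\Omega'}(\omega _1 u) = \omega _1^{-2}\wp _{\tau}(u)$. For the second coordinate, note that $\xi = \omega _1\cdot(\omega _1^{-1}\xi)$, so the last equality in Fact\,\ref{Chandrasekharan}, again with the lattice $\Omega'$ and $a = \omega _1$, yields $\widetilde{\sigma} _{\Omega,\xi}(\omega _1 u) = \widetilde{\sigma} _{\omega _1\Omega',\,\omega _1(\omega _1^{-1}\xi)}(\omega _1 u) = \widetilde{\sigma} _{\tau,\,\omega _1^{-1}\xi}(u)$.

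\textbf{Conclusion.} Combining the two computations, $\big(\wp _{\Omega}(u), e^v\widetilde{\sigma} _{\Omega,\xi}(u)\big)\circ \alpha = \big(\omega _1^{-2}\wp _{\tau}(u),\, e^v\widetilde{\sigma} _{\tau,\,\omega _1^{-1}\xi}(u)\big)$. The first coordinate is a $\mathbb{C}$-multiple of $\wp _\tau(u)$ and the second coordinate is literally the second generator of the field $\mathbb{C}\big(\wp _\tau (u),e^v\widetilde{\sigma} _{\tau,\omega _1^{-1}\xi}(u)\big)$; hence both coordinates of the composition lie in that field, which gives the claim. There is no genuine obstacle here — the only point requiring a little care is bookkeeping the scaling of the parameter $\xi$ correctly (it transforms like a point of $\mathbb{C}$, i.e.\ divides by $\omega _1$), which is exactly what the $\widetilde{\sigma}$-identity in Fact\,\ref{Chandrasekharan} records.
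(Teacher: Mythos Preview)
Your argument is correct and is exactly the computation the paper has in mind: the lemma is stated there without proof, relying on the scaling identities of Fact\,\ref{Chandrasekharan} (the paper remarks just before Lemma\,\ref{Z lattice} that this fact will be used without further reference). Your bookkeeping of the parameter $\xi$ is right, and the composition actually lands in the field itself, not just its algebraic closure.
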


The second step for the family $\mathcal{P}_5$ is as follows.

\begin{lemma}\label{S sublattices} Let $\Omega _1$ and $\Omega _2$ be lattices of $\C$ such that 
$\Omega _1\leq \Omega _2$, and let $\xi \in \mathbb{C}$. Let $\alpha (u,v) := (u,\xi \mathfrak{c}(\Omega _2,\Omega _1)u+[\Omega _2:\Omega _1]v)$.
Then, $$\big(\wp _{\Omega _2}(u), e^v\widetilde{\sigma} _{\Omega_2,\xi} (u)\big)\circ \alpha\in\mathbb{C}\big(\wp _{\Omega _1}(u), e^v\widetilde{\sigma}_{\Omega_1,\xi} (u)\big)^{alg}.$$
\end{lemma}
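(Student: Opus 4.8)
The plan is to transcribe the proof of Lemma \ref{Z sublattices} into the multiplicative setting, using the product formula for $\widetilde\sigma_{\Omega_2,\xi}$ coming from Lemma \ref{cosets}(4) in place of the additive decomposition of $\zeta_{\Omega_2}$. First I would fix notation: put $n:=[\Omega_2:\Omega_1]$ and $\mathfrak c:=\mathfrak c(\Omega_2,\Omega_1)$, choose coset representatives $\Omega_2=\bigcup_{i=1}^n(\Omega_1+a_i)$, and let $C$ be the constant furnished by Lemma \ref{cosets}. By Lemma \ref{cosets}(4),
$$
\widetilde\sigma_{\Omega_2,\xi}(u)=e^{-\xi\mathfrak c u+(\mathfrak c/2)\xi^{2}-C\xi}\prod_{i=1}^{n}\widetilde\sigma_{\Omega_1,\xi}(u+a_i),
$$
so that $e^{v}\widetilde\sigma_{\Omega_2,\xi}(u)=e^{(\mathfrak c/2)\xi^{2}-C\xi}\,e^{\,v-\xi\mathfrak c u}\prod_{i=1}^{n}\widetilde\sigma_{\Omega_1,\xi}(u+a_i)$.

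Next I would introduce the linear map $\beta(u,v):=\big(u,\;n^{-1}v-n^{-1}\xi\mathfrak c u\big)\in\GL_2(\mathbb C)$ and check, exactly as in Lemma \ref{Z sublattices} but with a product instead of a sum, that
$$
e^{v}\widetilde\sigma_{\Omega_2,\xi}(u)=D\prod_{i=1}^{n}\Big(\big(e^{v}\widetilde\sigma_{\Omega_1,\xi}(u)\big)\circ\beta\Big)(u+a_i,v),
$$
where $D:=e^{(\mathfrak c/2)\xi^{2}-C\xi+n^{-1}\xi\mathfrak c\sum_i a_i}\in\mathbb C^{*}$ absorbs the extra scalar that falls out of the product. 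Since $\big(\wp_{\Omega_1}(u),e^{v}\widetilde\sigma_{\Omega_1,\xi}(u)\big)=g_{5,\Omega_1,\xi}$ admits an AAT, so does its composition with $\beta\in\GL_2(\mathbb C)$; Fact \ref{Lemma 3} applied to the shifts by $(a_i,0)\in\mathbb C^{2}$ then shows that each factor, hence the product, hence $e^{v}\widetilde\sigma_{\Omega_2,\xi}(u)$, lies in $\mathbb C\big(g_{5,\Omega_1,\xi}\circ\beta\big)^{alg}$. For the first coordinate, $\wp_{\Omega_2}(u)\in\mathbb C(\wp_{\Omega_1}(u))^{alg}$ by Lemma \ref{C-wp}(1), and $\wp_{\Omega_1}(u)=\wp_{\Omega_1}(u)\circ\beta$ as functions of two variables since $\beta$ fixes the first coordinate, so $\wp_{\Omega_2}(u)\in\mathbb C(g_{5,\Omega_1,\xi}\circ\beta)^{alg}$ as well. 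Combining the two coordinates gives $g_{5,\Omega_2,\xi}\in\mathbb C(g_{5,\Omega_1,\xi}\circ\beta)^{alg}$, and I would finish by precomposing the whole configuration with $\beta^{-1}$, noting that $\beta^{-1}(u,v)=\big(u,\xi\mathfrak c u+nv\big)$ is precisely the map $\alpha$ in the statement (here one uses that algebraicity over a field of functions is preserved both under multiplying by a nonzero constant and under substituting a fixed element of $\GL_2(\mathbb C)$ into all the functions involved).

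The only genuinely delicate point is the bookkeeping: getting the exponent in Lemma \ref{cosets}(4) to combine correctly with $\beta$ and with the constants $C$, $\mathfrak c$, $\sum_i a_i$ so that the resulting $\alpha=\beta^{-1}$ matches the formula in the statement on the nose; once the right $\beta$ is identified, the rest is a routine repetition of the $\mathcal P_4$ argument. I do not expect any essential difficulty from the presence of the extra parameter $\xi$ at this stage, since $\xi$ enters only linearly in the exponent and the case $\xi=0$ is included trivially.
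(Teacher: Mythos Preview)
Your proof is correct and follows essentially the same route as the paper's own argument: the same choice of $\beta(u,v)=(u,n^{-1}v-n^{-1}\xi\mathfrak c u)$, the same product decomposition of $e^{v}\widetilde\sigma_{\Omega_2,\xi}(u)$ via Lemma~\ref{cosets}(4), the same constant $D$, the same appeal to Fact~\ref{Lemma 3} for the shifted factors, and the same conclusion by taking $\alpha=\beta^{-1}$. The only cosmetic difference is that you cite Lemma~\ref{C-wp}(1) directly for $\wp_{\Omega_2}\in\mathbb C(\wp_{\Omega_1})^{alg}$, whereas the paper simply refers back to the argument of Lemma~\ref{Z sublattices}.
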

\begin{proof} Let $n:=[\Omega _2:\Omega _1]$, $\mathfrak{c}:=\mathfrak{c}(\Omega _2,\Omega _1)$ and $\Omega _2=\bigcup _{i=1}^n(\Omega _1+a_i)$.
Let $C$ be as in Lemma\,\ref{cosets} and let 
$D:=e^{(\mathfrak{c}/2)\xi ^2-C\xi}\prod_{i=1}^n e^{n^{-1}\xi \mathfrak{c}a_i}$.
Note that,
\[
e^{-\xi \mathfrak{c}u+(\mathfrak{c}/2)\xi ^2-C\xi}=D\prod _{i=1}^n (e^{-n^{-1}\xi \mathfrak{c}(u+a_i)}),
\]
thus,
\[ \displaystyle e^v\widetilde{\sigma}_{\Omega _2,\xi}(u)=
 D\prod _{i=1}^n \left(e^{n^{-1}v-n^{-1}\xi \mathfrak{c}(u+a_i))} 
\widetilde{\sigma}_{\Omega _1,\xi}(u+a_i)\right).
\]
Take $\beta (u,v):=(u,n^{-1}v-n^{-1}\xi \mathfrak{c}u)$, so that 
\[
e^v\widetilde{\sigma}_{\Omega_2,\xi} (u)=D\prod _{i=1}^n\big(\big(e^v\widetilde{\sigma}_{\Omega_1,\xi} (u)\big)\circ \beta \big)(u+a_i,v).
\]
Since $\big(\wp _{\Omega _1}(u), e^v\widetilde{\sigma}_{\Omega _1,\xi}(u)\big)$ admits an AAT, 
$\big(\wp _{\Omega _1}(u), e^v\widetilde{\sigma}_{\Omega _1,\xi}(u)\big)\circ \beta$ also admits an AAT.
By Fact\,\ref{Lemma 3}, we get 
$$\big(\big(e^v\widetilde{\sigma}_{\Omega _1,\xi}(u)\big)\circ \beta \big) (u+a_i,v)\in\C\big(\big(\wp _{\Omega _1}(u), e^v\widetilde{\sigma}_{\Omega _1,\xi}(u)\big)\circ \beta \big)^{alg},$$ 
for each $i\in \{1,\ldots n\}$.
So $e^v\widetilde{\sigma}_{\Omega _2,\xi}(u)\in\mathbb{C}\big(\big(\wp _{\Omega _1}(u), e^v\widetilde{\sigma}_{\Omega _1,\xi}(u)\big)\circ \beta \big)^{alg}$. Then, reasoning as in Lemma\,\ref{Z sublattices}, we get 
 that $\big(\wp _{\Omega _2}(u), e^v\widetilde{\sigma}_{\Omega _2,\xi}(u)\big)\circ \beta ^{-1}\in\mathbb{C}\big(\wp _{\Omega _1}(u), e^v\widetilde{\sigma}_{\Omega _1,\xi}(u)\big)^{alg} $.
\end{proof}

We now  fix  a lattice in $\C$ and consider  the parameters (in $\C$) which give us algebraic dependent maps in the family $\mathcal{P}_5$

\begin{definition}\label{defXi}Let $\Omega$ be a lattice of $\C$ and  $\xi \in \mathbb{C}$. We define

\smallskip
 
 \noindent
%
%

\noindent
{\small$\varXi (\Omega ,\xi):=\{ \rho\in \mathbb{C}:\exists\alpha \in \GL_2(\mathbb{C}), \left(\wp _{\Omega} (u),e^v\widetilde{\sigma} _{\Omega ,\rho} (u)\right)\circ \alpha\in \mathbb{C}\left(\wp _{\Omega}(u),e^v\widetilde{\sigma}_{\Omega ,\xi}(u)\right)^{alg}\}$.}

\end{definition}
Given $\omega \in \mathbb{C}\setminus \mathbb{R}$, let $\varXi (\omega ,\xi):=\varXi ({\langle 1,\omega \rangle}_\mathbb{Z} ,\xi)$.
\begin{lemma}\label{S Theta}
Let $\Omega$ be a lattice of $\C$ and $\xi ,\xi ' \in \mathbb{C}$. 
We have:  
\begin{enumerate}
 \item[$(1)$] \emph{$\xi \sim _{\Omega} \xi '$ if and only if $\xi '\in \varXi (\Omega ,\xi)$} is an equivalence 
 relation.
 \item[$(2)$] If $\Omega '$ is a sublattice of $\Omega$, then $\varXi (\Omega ',\xi)=\varXi (\Omega ,\xi)$.
 \item[$(3)$] If $\wp _{\Omega} (\xi '\xi ^{-1}u)\in\C(\wp _{\Omega}(u))^{alg}$, 
 then $\xi '\in \varXi (\Omega ,\xi)$. In particular, $K_\omega ^*\xi\subseteq \varXi (\omega ,\xi)$, 
 for each $\omega \in \mathbb{C}\setminus \mathbb{R}$.
 \item[$(4)$] For each $q\in \mathbb{Q}^*$, $e^{qv}\widetilde{\sigma}_{\Omega ,q\xi}(u)\in\mathbb{C}\left(\wp _{\Omega}(u), e^v\widetilde{\sigma}_{\Omega,\xi}(u)\right)^{alg}$.
 \item[$(5)$] If $\xi' \in \Omega$,  then  
 $\big(\wp_{\Omega}(u),e^{cu+v}\widetilde{\sigma}_{\Omega ,\xi+\xi'}(u)\big)\in\mathbb{C}\big(\wp_{\Omega}(u),e^v\widetilde{\sigma}_{\Omega ,\xi}(u)\big)^{alg}$, for some $c\in \mathbb{C}$.
 In particular, $\Omega +\xi \subseteq \varXi(\Omega ,\xi)$.
\end{enumerate}
\end{lemma}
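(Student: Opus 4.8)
The plan is to reduce the ``soft'' assertions $(1)$, $(2)$ and $(3)$ to the classification tool Fact\,\ref{Kabelian}(2), and to settle the ``computational'' assertions $(4)$ and $(5)$ by direct manipulation of the Weierstrass $\sigma$-function. Write $g_{\Omega,\rho}:=\big(\wp_{\Omega}(u),e^v\widetilde{\sigma}_{\Omega,\rho}(u)\big)$; by the remarks following Fact\,\ref{TP} this map is functionally independent, so $(\C^2,+,g_{\Omega,\rho})$ is a locally $\C$-Nash group and its coordinates are algebraically independent over $\C$. The key observation is that, by Fact\,\ref{Kabelian}(2),
\[
\rho\in\varXi(\Omega,\xi)\iff (\C^2,+,g_{\Omega,\rho})\cong(\C^2,+,g_{\Omega,\xi}).
\]
Since isomorphism of groups is an equivalence relation, $(1)$ is immediate. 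For $(2)$, Lemma\,\ref{S sublattices} (applied to $\Omega'\leq\Omega$ with parameter $\rho$) produces, for every $\rho$, some $\alpha\in\GL_2(\C)$ with $g_{\Omega,\rho}\circ\alpha\in\C(g_{\Omega',\rho})^{alg}$, hence $(\C^2,+,g_{\Omega,\rho})\cong(\C^2,+,g_{\Omega',\rho})$; feeding this into the displayed equivalence on both sides gives $\varXi(\Omega',\xi)=\varXi(\Omega,\xi)$.

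For $(3)$ I would put $t:=\xi'\xi^{-1}$ and $\alpha_0(u,v):=(tu,v)\in\GL_2(\C)$. By Fact\,\ref{Chandrasekharan}, $\wp_\Omega(tu)=t^{-2}\wp_{t^{-1}\Omega}(u)$ and $\widetilde{\sigma}_{\Omega,t\xi}(tu)=\widetilde{\sigma}_{t^{-1}\Omega,\xi}(u)$, so $g_{\Omega,\xi'}\circ\alpha_0$ generates the same field as $g_{t^{-1}\Omega,\xi}$ and thus $(\C^2,+,g_{\Omega,\xi'})\cong(\C^2,+,g_{t^{-1}\Omega,\xi})$. The hypothesis says $\wp_{t^{-1}\Omega}\in\C(\wp_\Omega)^{alg}$, so by Lemma\,\ref{C-wp}(1) the lattices $t^{-1}\Omega$ and $\Omega$ admit a common sublattice $\Lambda$, whence $(\C^2,+,g_{t^{-1}\Omega,\xi})\cong(\C^2,+,g_{\Lambda,\xi})\cong(\C^2,+,g_{\Omega,\xi})$ by two applications of Lemma\,\ref{S sublattices}. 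Chaining the isomorphisms and using the displayed equivalence yields $\xi'\in\varXi(\Omega,\xi)$. For the ``in particular'' clause, if $t\in K_\omega^*$ then $t^{-1}{\langle 1,\omega\rangle}_\Z$ and ${\langle 1,\omega\rangle}_\Z$ are commensurable (rational multiplication when $t\in\Q^*$, and $\omega^2\in K_\omega$ when $\omega$ is quadratic), so $\wp_{t^{-1}\Omega}\in\C(\wp_\Omega)^{alg}$ by Lemma\,\ref{C-wp}(1) and $\wp_\Omega(tu)\in\C(\wp_{t^{-1}\Omega})^{alg}$ by Lemma\,\ref{C-wp}(2); the hypothesis of $(3)$ thus holds and $t\xi\in\varXi(\omega,\xi)$.

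For $(4)$ I would start from the telescoping identity $\widetilde{\sigma}_{\Omega,p\xi}(u)=\prod_{k=0}^{p-1}\widetilde{\sigma}_{\Omega,\xi}(u-k\xi)$, which for $p\in\N^*$ gives $e^{pv}\widetilde{\sigma}_{\Omega,p\xi}(u)=\prod_{k=0}^{p-1}\big(e^v\widetilde{\sigma}_{\Omega,\xi}(u-k\xi)\big)$; each factor is the second coordinate of the AAT-map $g_{\Omega,\xi}$ translated by $(-k\xi,0)$ — its first coordinate being $\wp_\Omega(u)$ — hence is algebraic over $F:=\C\big(\wp_\Omega(u),e^v\widetilde{\sigma}_{\Omega,\xi}(u)\big)$ by Fact\,\ref{Lemma 3}, and so is the product. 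The case $q=-p$ follows from $\widetilde{\sigma}_{\Omega,-p\xi}(u)=\widetilde{\sigma}_{\Omega,p\xi}(u+p\xi)^{-1}$ together with the previous case, one more translation, and that reciprocals of algebraic functions are algebraic. For $q=1/n$, the same telescoping with step $\xi/n$ gives $e^v\widetilde{\sigma}_{\Omega,\xi}(u)=\prod_{k=0}^{n-1}\big(e^{v/n}\widetilde{\sigma}_{\Omega,\xi/n}(u-k\xi/n)\big)$, whose factors are translates of a coordinate of $g_{\Omega,\xi/n}\circ\beta_0$ with $\beta_0(u,v)=(u,v/n)$, hence lie in $M^{alg}$, where $M:=\C\big(\wp_\Omega(u),e^{v/n}\widetilde{\sigma}_{\Omega,\xi/n}(u)\big)$; this shows $F\subseteq M^{alg}$. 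Since $F$ and $M$ both have transcendence degree $2$ over $\C$ (both come from functionally independent maps), one deduces $M^{alg}=F^{alg}$, so in particular $e^{v/n}\widetilde{\sigma}_{\Omega,\xi/n}(u)\in F^{alg}$; the general case $q=m/n$ is then obtained by applying the integer case with parameter $\xi/n$ and the substitution $v\mapsto v/n$, giving $e^{(m/n)v}\widetilde{\sigma}_{\Omega,(m/n)\xi}(u)\in M^{alg}=F^{alg}$. Finally, for $(5)$: by the quasi-periodicity of $\sigma_\Omega$ (e.g. \cite[Ch.IV, Thm.\,3]{Chandrasekharan}) there are, for $\xi'\in\Omega$, a constant $C\in\C^*$ and $c\in\C$ with $\sigma_\Omega(u-\xi-\xi')=Ce^{-cu}\sigma_\Omega(u-\xi)$, so $\widetilde{\sigma}_{\Omega,\xi+\xi'}(u)=Ce^{-cu}\widetilde{\sigma}_{\Omega,\xi}(u)$ and hence $e^{cu+v}\widetilde{\sigma}_{\Omega,\xi+\xi'}(u)=Ce^v\widetilde{\sigma}_{\Omega,\xi}(u)\in F$; together with $\wp_\Omega(u)\in F$ this is the stated membership, and taking $\alpha(u,v):=(u,cu+v)\in\GL_2(\C)$ we get $g_{\Omega,\xi+\xi'}\circ\alpha\in F^{alg}$, so $\xi+\xi'\in\varXi(\Omega,\xi)$.

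The step I expect to be the main obstacle is the fractional case of $(4)$: the integer case is a routine telescoping, but the product identity only yields the inclusion $F\subseteq M^{alg}$ for free, and reversing it rests on the elementary but crucial fact that two algebraically closed fields, one contained in the other and of the same finite transcendence degree over $\C$, must coincide. A secondary point of care throughout $(3)$ and $(4)$ is bookkeeping for the interaction between the lattice-rescaling identities of Fact\,\ref{Chandrasekharan} and the changes of variables in $\GL_2(\C)$.
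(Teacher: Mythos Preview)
Your proof is correct. For parts $(1)$, $(2)$, $(3)$ and $(5)$ you follow essentially the same route as the paper, only packaged more concisely through the equivalence $\rho\in\varXi(\Omega,\xi)\iff(\C^2,+,g_{\Omega,\rho})\cong(\C^2,+,g_{\Omega,\xi})$; the paper instead writes out the explicit chains of $\GL_2(\C)$-maps, but the content is identical.

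Part $(4)$ is where you genuinely diverge. The paper first proves (via the identity $\wp_\Omega(u)-\wp_\Omega(z)=-\sigma_\Omega(u+z)\sigma_\Omega(u-z)/(\sigma_\Omega^2(u)\sigma_\Omega^2(z))$) that $\C(\wp_\Omega(u),e^{-v}\widetilde{\sigma}_{\Omega,-\xi}(u))=\C(\wp_\Omega(u),e^v\widetilde{\sigma}_{\Omega,\xi}(u))$, then runs an induction on $n\in\N^*$, and finally handles $q=a/b$ by passing to the sublattice $b\Omega$ and rescaling back. Your approach replaces all of this with the telescoping identity $\widetilde{\sigma}_{\Omega,p\xi}(u)=\prod_{k=0}^{p-1}\widetilde{\sigma}_{\Omega,\xi}(u-k\xi)$ together with a transcendence-degree argument ($F\subseteq M^{alg}$ and $\tr_\C F=\tr_\C M=2$ force $F^{alg}=M^{alg}$). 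This is more elementary---you never touch the $\wp$--$\sigma$ addition formula, and no auxiliary lattice enters---at the cost of invoking the soft field-theoretic fact about relatively algebraically closed subfields of equal transcendence degree. The paper's route, by contrast, yields the stronger intermediate statement that the two fields in Claim~$(1)$ are literally \emph{equal} (not just share an algebraic closure), and this equality is reused later in the paper; your argument would not give that for free.
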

\begin{proof}
(2) Note that, by Lemma\,\ref{S sublattices}, for each $\rho \in \mathbb{C}$, there is $\alpha _{\rho} \in \GL_2(\mathbb{C})$ 
such that $(\wp _{\Omega}(u),e^v\widetilde{\sigma}_{\Omega,{\rho}}(u))\circ \alpha _{\rho}\in\mathbb{C}\big(\wp _{\Omega '}(u),e^v\widetilde{\sigma}_{\Omega ',{\rho}}(u)\big)^{alg}$.
In particular, $(\wp _{\Omega '}(u),e^v\widetilde{\sigma}_{\Omega ',{\rho}}(u))\circ \alpha _{\rho}^{-1}\in\mathbb{C}\big(\wp _{\Omega}(u),e^v\widetilde{\sigma}_{\Omega,{\rho}}(u)\big)^{alg}$.
For each $\xi '\in \varXi (\Omega ',\xi )$, there is $\alpha \in \GL_2(\mathbb{C})$ such that
$(\wp _{\Omega '}(u),e^v\widetilde{\sigma}_{\Omega ',\xi '}(u))\circ \alpha$ is algebraic over 
$\mathbb{C}\big(\wp _{\Omega '}(u),e^v\widetilde{\sigma}_{\Omega ',\xi}(u)\big)$.
Thus, $$(\wp _{\Omega}(u),e^v\widetilde{\sigma}_{\Omega,\xi '}(u))\circ 
\alpha _{\xi '} \circ \alpha \circ \alpha _{\xi}^{-1}\in\mathbb{C}\big(\wp _{\Omega}(u),e^v\widetilde{\sigma}_{\Omega,\xi}(u)\big)^{alg},$$
and so $\xi '\in \varXi (\Omega ,\xi)$.
Similarly, for any given $\xi '\in \varXi (\Omega,\xi)$ there exists $\beta\in \GL_2(\mathbb{C})$ such that
$(\wp _{\Omega}(u),e^v\widetilde{\sigma}_{\Omega,\xi '}(u))\circ \beta\in\mathbb{C}\big(\wp _{\Omega}(u),e^v\widetilde{\sigma}_{\Omega,\xi}(u)\big)^{alg}$.
Thus, $(\wp _{\Omega '}(u),e^v\widetilde{\sigma}_{\Omega ',\xi '}(u))
\circ \alpha _{\xi '}^{-1} \circ \beta\circ \alpha _{\xi}\in\mathbb{C}\big(\wp _{\Omega '}(u),e^v\widetilde{\sigma}_{\Omega ',\xi}(u)\big)^{alg}$, so $\xi '\in \varXi (\Omega ',\xi)$.

(3) Let $\Omega ':=\xi '^{-1} \xi \Omega$.
Since $\wp _{\Omega '}(u)=\xi '^2\xi ^{-2}\wp _{\Omega}(\xi '\xi ^{-1}u)$, we get
$\wp _{\Omega '}(u)\in\mathbb{C}(\wp _{\Omega}(u))^{alg}$.
By Lemma\,\ref{C-wp}, there exists a lattice $\Omega ''$ of $\C$ such that both $\Omega ''\leq\Omega$ and $\Omega '' \leq\Omega '$.
By Lemma\,\ref{S sublattices}, there exist $\alpha,\beta \in \GL_2(\mathbb{C})$ with 
$(\wp _{\Omega}(u), e^v\widetilde{\sigma} _{\Omega,\xi}(u))\circ \alpha\in\mathbb{C}\big(\wp _{\Omega ''}(u), e^v\widetilde{\sigma} _{\Omega '',\xi}(u)\big)^{alg}$ and 
$$(\wp _{\Omega '}(u), e^v\widetilde{\sigma} _{\Omega ',\xi}(u))\circ \beta\in\mathbb{C}\big(\wp _{\Omega ''}(u), e^v\widetilde{\sigma} _{\Omega '',\xi}(u)\big)^{alg}.$$
Let $\gamma (u,v)=(\xi '\xi ^{-1}u,v)$. Then, 
$$(\wp _{\Omega}(u), e^v\widetilde{\sigma} _{\Omega,\xi '}(u))\circ \gamma\in\mathbb{C}\big(\wp _{\Omega '}(u), e^v\widetilde{\sigma} _{\Omega ',\xi}(u)\big)^{alg}.$$ 
Thus, $(\wp _{\Omega}(u), e^v\widetilde{\sigma} _{\Omega,\xi '}(u))\circ \gamma \circ \beta\circ \alpha ^ {-1}\in\mathbb{C}\big(\wp _{\Omega}(u), e^v\widetilde{\sigma} _{\Omega,\xi}(u)\big)^{alg}$, and hence  $\xi '\in \varXi (\Omega ,\xi)$.
The additional condition follows by Proposition\,\ref{C-automorphisms}.

We will prove $(4)$ in three steps.

\noindent
 \emph{Claim $(1)$. For each lattice $\Omega$ of $\C$,
\[
\mathbb{C}\left(\wp _{\Omega}(u), e^{-v}\widetilde{\sigma}_{\Omega,-\xi}(u)\right) 
=\mathbb{C}\left(\wp _{\Omega}(u), e^v\widetilde{\sigma}_{\Omega,\xi }(u)\right).
\]
Furthermore, $\Omega \subseteq \varXi (\Omega ,0)$.}

\smallskip

\noindent
\emph{Proof of Claim $(1)$.}
We first consider the case where $\xi \notin \Omega$.
By \cite[Ch.IV, \textsection\,$3$, Cor.]{Chandrasekharan},
\[
\wp _{\Omega} (u) - \wp _{\Omega} (z) = -\frac{\sigma _{\Omega} (u+z)\sigma  _{\Omega} (u-z)}
{\sigma ^2 _{\Omega}(u) \sigma ^2  _{\Omega} (z)}.
\]
Evaluating the above expression at $z=\xi$ and solving for $\frac{\sigma _{\Omega} (u+\xi)}
{\sigma _{\Omega}(u)}$, we deduce that
\[
e^{-v}\frac{\sigma _{\Omega}(u+\xi)}{\sigma _{\Omega}(u)}=(\wp _{\Omega}(u)-\wp _{\Omega}(\xi))
\left(e^{-v}\frac{\sigma _{\Omega}(u)}{\sigma _{\Omega}(u-\xi)}\right)\sigma _{\Omega}^2(\xi).
\]
The claim follows easily from this, so we are done with this case.
Now, consider the case when $\xi \in \Omega$.
Since $\wp _{\Omega} (u-\xi )=\wp _{\Omega }(u)$, there exists $C\in \mathbb{C}$ such that 
$\zeta _{\Omega}(u-\xi )=\zeta _{\Omega}(u)+C$, so there also exists $D\in \mathbb{C}$ such that 
$\widetilde{\sigma}_{\Omega,\xi}(u)=e^{Cu+D}$. 
Since $\sigma_{\Omega} $ is an odd function, we get that 
$\widetilde{\sigma}_{\Omega,-\xi}(u)=\widetilde{\sigma}_{\Omega,\xi}(-u)=e^{-Cu+D}$
and, therefore, 
$e^{-v}\widetilde{\sigma}_{\Omega,-\xi}(u)=e^{-v}e^{-Cu+D}=e^{2D}(e^{v}e^{Cu+D})^{-1}=e^{2D}(e^{v}\widetilde{\sigma}_{\Omega,\xi}(u))^{-1}$,
which shows the first part.
For the remaining condition, let $\alpha (u,v)=(u,-Cu+v)$ and note that 
$(\wp _{\Omega}(u),e^v\widetilde{\sigma}_{\Omega,\xi }(u))\circ \alpha= (\wp _{\Omega}(u),e^{v+D})$.
Since $\alpha \in \GL_2(\mathbb{C})$, this shows that $\xi \in \varXi (\Omega, 0)$.
Consequently, $\Omega \subseteq \varXi (\Omega, 0)$. {\hfill $_\square$}

\medskip

\noindent
{\it Claim $(2)$: For each given lattice $\Omega$ of $\C$ and for each $n\in \mathbb{N}^*$, 
\[
e^{nv}\widetilde{\sigma}_{\Omega,n\xi}(u)\in 
 \mathbb{C}\left(\wp _{\Omega}(u), e^v\widetilde{\sigma}_{\Omega,\xi}(u)\right)^{alg}. 
\]
}
\emph{Proof of Claim $(2)$.}
By induction on $n$.
Suppose that $e^{(n-1)v}\widetilde{\sigma}_{\Omega,(n-1)\xi}(u)$ is 
algebraic over $\mathbb{C}\big(\wp _{\Omega}(u), e^v\widetilde{\sigma}_{\Omega,\xi}(u)\big)$, and so over $\mathbb{C}\big(\wp _{\Omega}(u), e^{-v}\widetilde{\sigma}_{\Omega,-\xi}(u)\big)$ 
by Claim $(1)$.
Changing $(u,v)$ by $(u-\xi,v)$, we get that 
$$e^{(n-1)v}\widetilde{\sigma}_{\Omega,(n-1)\xi}(u-\xi)\in\mathbb{C}\big(\wp _{\Omega}(u-\xi), e^{-v}(\widetilde{\sigma}_{\Omega,\xi}(u))^{-1}\big)^{alg}.$$
By Fact\,\ref{Lemma 3}, $\wp _{\Omega}(u-\xi)\in\mathbb{C}\big(\wp _{\Omega}(u)\big)^{alg}$, so $e^{(n-1)v}\widetilde{\sigma}_{\Omega,(n-1)\xi}(u-\xi)$ 
is algebraic over $\mathbb{C}\big(\wp _{\Omega}(u), e^v\widetilde{\sigma}_{\Omega,\xi}(u)\big)$.
Multiplying by $e^v\widetilde{\sigma}_{\Omega,\xi}(u)$ we conclude that $e^{nv}\widetilde{\sigma}_{\Omega,n\xi}(u)\in\mathbb{C}\big(\wp _{\Omega}(u), e^v\widetilde{\sigma}_{\Omega,\xi}(u)\big)^{alg}$. {\hfill $_{\square}$}

\medskip

Let us finish the proof of $(4)$.
Let $q=a/b\in \mathbb{Q}^*$.
By Claim (1) and (2) $e^{av}\widetilde{\sigma}_{b\Omega,a\xi}(u)$  and $e^{bv}\widetilde{\sigma}_{b\Omega,b\xi}(u)$
belong to 
$\mathbb{C}(\wp _{b\Omega}(u), e^v\widetilde{\sigma}_{b\Omega,\xi}(u))^{alg}$.
Therefore, $(\wp _{b\Omega}(u), e^{av}\widetilde{\sigma}_{b\Omega,a\xi}(u))\in\mathbb{C}(\wp _{b\Omega}(u), e^{bv}\widetilde{\sigma}_{b\Omega,b\xi}(u))^{alg}$.
We conclude by  a change of variables ($bu$ for $u$ and $b^{-1}v$ for $v$).

Finally, we prove $(5)$. 
First note that
\[
\widetilde{\sigma}_{\Omega,\xi+\xi'}(u)=\frac{\sigma _{\Omega} (u-\xi-\xi')}{\sigma_{\Omega}(u-\xi')}\frac{\sigma _{\Omega}(u-\xi')}{\sigma _{\Omega}(u)}=
\widetilde{\sigma}_{\Omega ,\xi}(u-\xi')\widetilde{\sigma}_{\Omega ,\xi'}(u).
\]
Since $\xi'\in \Omega$, it follows from the proof of Claim\,(1) of Lemma\,\ref{S Theta}.$(4)$ that 
$\widetilde{\sigma}_{\Omega,\xi'}(u)=e^{Cu+D}$, for some $C,D\in \mathbb{C}$, and, therefore,
\[
\big(\wp_{\Omega} (u),e^{-D-Cu+v}\widetilde{\sigma}_{\Omega ,\xi+\xi'}(u)\big)
=
\big(\wp_{\Omega} (u-\xi'),e^v\widetilde{\sigma}_{\Omega ,\xi}(u-\xi')\big).
\]
Now, by Fact\,\ref{Lemma 3}, we deduce that 
$$
\big(\wp_{\Omega} (u-\xi'),e^v\widetilde{\sigma}_{\Omega ,\xi}(u-\xi')\big)\in
\mathbb{C}\big(\wp_{\Omega} (u),e^v\widetilde{\sigma}_{\Omega ,\xi}(u)\big)^{alg}.
$$
Hence, taking $c:=-C$ and noting that $e^{-D}$ is a constant in $\mathbb{C}$, we conclude that
$
\big(\wp_{\Omega}(u),e^{cu+v}\widetilde{\sigma}_{\Omega ,\xi+\xi'}(u)\big)\in
\mathbb{C}\big(\wp_{\Omega}(u),e^v\widetilde{\sigma}_{\Omega ,\xi}(u)\big)^{alg},
$
as requested.
\end{proof}

The next lemma  corresponds to the  claim in the proof of  Proposition\,\ref{Z general}, for groups associated to the  family $\mathcal{P}_5$ with the same lattice.

\begin{lemma}\label{representation}
Let $\Omega$ be a lattice of $\C$, $\xi _1,\xi _2\in \mathbb{C}$ and $\alpha (u,v):=(au+bv,cu+dv)\in \GL_2(\mathbb{C})$.
If $(\wp _{\Omega}(u), e^v\widetilde{\sigma}_{\Omega,\xi _2}(u)) \circ \alpha\in\mathbb{C}(\wp _{\Omega}(u), e^v\widetilde{\sigma}_{\Omega,\xi _1}(u))^{alg}$, then $b=0$,
$\wp _{\Omega} (au)\in\C(\wp _{\Omega}(u))^{alg}$,  $d\in \mathbb{Q}^*$ and $a^{-1}\xi _2-q\xi _1\in \Omega$,  
for some $q\in \mathbb{Q}^*$. 
If in addition $a=1$ then $\xi _2-d\xi _1\in \Omega$.
\end{lemma}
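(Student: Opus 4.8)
The plan is to follow the scheme of the Claim in the proof of Proposition~\ref{Z general}: first pin down the shape of $\alpha$, then extract the relation among the parameters. Throughout, put $F:=\mathbb{C}(\wp_{\Omega}(u),e^{v}\widetilde{\sigma}_{\Omega,\xi_{1}}(u))$; since $g_{5,\Omega,\xi_{1}}$ is functionally independent, its two coordinate functions are algebraically independent over $\mathbb{C}$, and I write $P:=\wp_{\Omega}(u)$, $W:=e^{v}\widetilde{\sigma}_{\Omega,\xi_{1}}(u)$, so that $\partial_{v}P=0$ and $\partial_{v}W=W$.

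\emph{Step 1 ($b=0$).} Suppose $b\neq0$. If $a=0$, then $\wp_{\Omega}(bv)$ is a function of $v$ alone lying in $F^{alg}$; specialising $u$ at a generic value collapses the generators of $F$ to a constant and to a constant multiple of $e^{v}$, so $\wp_{\Omega}(bv)\in\mathbb{C}(e^{v})^{alg}$, contradicting Fact~\ref{Weierstrass algebraicity}. Hence $a\neq0$. Since $\wp_{\Omega}$ admits an AAT, $\wp_{\Omega}(bv)=\wp_{\Omega}\big((au+bv)+(-au)\big)$ is algebraic over $\mathbb{C}(\wp_{\Omega}(au+bv),\wp_{\Omega}(au))$, and the hypothesis gives $\wp_{\Omega}(au+bv)=\wp_{\Omega}(u)\circ\alpha\in F^{alg}$; therefore $\wp_{\Omega}(bv)\in\mathbb{C}(F,\wp_{\Omega}(au))^{alg}$, and specialising $u$ generically again yields $\wp_{\Omega}(bv)\in\mathbb{C}(e^{v})^{alg}$, a contradiction. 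Thus $b=0$ and $\alpha(u,v)=(au,cu+dv)$ with $ad\neq0$.

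\emph{Step 2 ($\wp_{\Omega}(au)\in\mathbb{C}(\wp_{\Omega}(u))^{alg}$ and $d\in\mathbb{Q}^{*}$).} For any $h\in F^{alg}$ with $\partial_{v}h=0$: applying $\partial_{v}$ to the monic minimal polynomial of $h$ over $F=\mathbb{C}(P,W)$ and using minimality forces every coefficient to have vanishing $W$-derivative, hence to lie in $\mathbb{C}(P)$, so $h\in\mathbb{C}(P)^{alg}$. Taking $h=\wp_{\Omega}(au)=\wp_{\Omega}(u)\circ\alpha$ gives $\wp_{\Omega}(au)\in\mathbb{C}(\wp_{\Omega}(u))^{alg}$. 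For $d$, set $Z:=(e^{v}\widetilde{\sigma}_{\Omega,\xi_{2}}(u))\circ\alpha=e^{cu+dv}\widetilde{\sigma}_{\Omega,\xi_{2}}(u)\in F^{alg}$, so $\partial_{v}Z=dZ$; applying $\partial_{v}$ to its minimal polynomial $Z^{n}+\sum_{j<n}a_{j}(P,W)Z^{j}=0$ and comparing with the original forces $W\,\partial_{W}a_{j}=(n-j)d\,a_{j}$ for all $j<n$, so each nonzero $a_{j}$ is a $\mathbb{C}(P)$-multiple of $W^{(n-j)d}$, which is rational in $W$ only when $(n-j)d\in\mathbb{Z}$; since the $a_{j}$ with $j<n$ are not all zero, $d\in\mathbb{Q}$, and $ad\neq0$ gives $d\in\mathbb{Q}^{*}$. (Alternatively, specialise $u$ generically to reduce to $e^{dv}\in\mathbb{C}(e^{v})^{alg}$ and quote Fact~\ref{Weierstrass algebraicity}.)

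\emph{Step 3 (the parameter relation — the main obstacle).} By Fact~\ref{Chandrasekharan}, $\widetilde{\sigma}_{\Omega,\xi_{2}}(au)=\widetilde{\sigma}_{a^{-1}\Omega,a^{-1}\xi_{2}}(u)$ and $\wp_{\Omega}(au)=a^{-2}\wp_{a^{-1}\Omega}(u)$, so $\wp_{a^{-1}\Omega}\in\mathbb{C}(\wp_{\Omega})^{alg}$ and, by Lemma~\ref{C-wp}, there is a lattice $\Lambda\leq\Omega\cap a^{-1}\Omega$. Using Lemma~\ref{S Theta}(2) and Lemma~\ref{S sublattices} I would transport the whole situation down to $\Lambda$ — the change-of-variables maps only rescale $v$ by a positive integer and shear by a multiple of $u$, so $d$ is only multiplied by a nonzero rational — reducing to the case where the two lattices coincide and $a=1$. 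Now, as $d\in\mathbb{Q}^{*}$, Lemma~\ref{S Theta}(4) yields $e^{dv}\widetilde{\sigma}_{\Omega,d\xi_{1}}(u)\in F^{alg}$; dividing $e^{cu+dv}\widetilde{\sigma}_{\Omega,\xi_{2}}(u)\in F^{alg}$ by it leaves $\chi(u):=e^{cu}\sigma_{\Omega}(u-\xi_{2})/\sigma_{\Omega}(u-d\xi_{1})\in F^{alg}$, a function of $u$ alone, whence $\chi\in\mathbb{C}(\wp_{\Omega}(u))^{alg}$ by the derivation argument of Step 2. Finally one analyses when such a $\chi$ — a ratio of two translates of $\sigma_{\Omega}$ times an exponential, with divisor $(\xi_{2}+\Omega)-(d\xi_{1}+\Omega)$ — can be algebraic over $\mathbb{C}(\wp_{\Omega})$: being algebraic over the function field of the elliptic curve $\mathbb{C}/\Omega$, $\chi$ is either constant or elliptic with respect to some lattice commensurable with $\Omega$ (a curve admitting a finite map to an elliptic curve and a non-constant holomorphic map from $\mathbb{C}$ must have genus one, so $\chi$ descends to a quotient $\mathbb{C}/\Lambda_{\chi}$), and comparing its zero-class $\xi_{2}+\Omega$ with its pole-class $d\xi_{1}+\Omega$ leads to $\xi_{2}-d\xi_{1}\in\Omega$; undoing the reductions gives $a^{-1}\xi_{2}-q\xi_{1}\in\Lambda\subseteq\Omega$ with the resulting $q\in\mathbb{Q}^{*}$ (namely $q=d$ when $a=1$). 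The hard part is exactly this last step: one has to rule out $\chi$ being a genuinely non-constant elliptic function with respect to a proper sublattice, and this divisor/period bookkeeping for the Weierstrass $\sigma$-functions is where the real work lies.
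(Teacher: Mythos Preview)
Your Steps 1 and 2 are correct and parallel the paper's Claim (the paper specialises $u$ at a point outside the relevant lattices to get $e^{dv}\in\mathbb{C}(e^v)^{alg}$, which is your parenthetical alternative; the $\partial_v$-argument is a clean variant). The reduction at the start of Step~3 to a common sublattice via Lemma~\ref{C-wp} and Lemma~\ref{S sublattices} is also what the paper does, though the paper does \emph{not} literally set $a=1$: it keeps $a$ and works with the pair $(a^{-1}\xi_2,\xi_1)$ over the common sublattice $\Omega'$, computing an explicit $d'=d\,[\Omega:\Omega']/[a^{-1}\Omega:\Omega']\in\mathbb{Q}^*$. That bookkeeping is what makes the last clause transparent: when $a=1$ one has $d'=d$.

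The genuine gap is exactly the endgame you flag. After a translation your $\chi$ has the form $e^{(\text{linear})}\,\widetilde\sigma_{\Omega',\rho}(u)$ with $\rho=\xi_2-d\xi_1$, which is precisely the paper's $f(u)=e^{-c'u}\widetilde\sigma_{\Omega',\rho}(u)$ (there $\rho=d'\xi_1-a^{-1}\xi_2$). The paper reaches $f$ by a small detour: it \emph{multiplies} by $e^{-d'v}\widetilde\sigma_{\Omega',-d'\xi_1}(u)$ (from Lemma~\ref{S Theta}(4)) rather than dividing, obtaining $e^{c'u}\widetilde\sigma_{\Omega',a^{-1}\xi_2}\widetilde\sigma_{\Omega',-d'\xi_1}\in\mathbb{C}(\wp_{\Omega'})^{alg}$, and then invokes the classical identity \cite[Ch.\,IV, Thm.\,4]{Chandrasekharan} that $\widetilde\sigma_{\Omega',A}\cdot\widetilde\sigma_{\Omega',B}\cdot\widetilde\sigma_{\Omega',-A-B}$ is $\Omega'$-elliptic to strip off a single $\widetilde\sigma$ factor. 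Your direct division is more efficient and lands at the same $f$. What you are missing is the finishing move the paper uses: since $f$ is algebraic over $\mathbb{C}(\wp_{\Omega'})$ it is elliptic, and $\widetilde\sigma_{\Omega',\rho}$ has a \emph{unique simple pole} in a fundamental domain; a non-constant elliptic function has order at least two, so $f$ is constant. Writing $\widetilde\sigma_{\Omega',\rho}(u)=Ce^{c'u}$, taking logarithms and differentiating twice gives $\wp_{\Omega'}(u-\rho)=\wp_{\Omega'}(u)$, hence $\rho\in\Omega'\leq\Omega$. So the divisor bookkeeping you anticipate is replaced by this one-line order argument; you do not need to run Abel's theorem on a proper sublattice.
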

\begin{proof}We first proof the following:

\medskip

\noindent
{\it Claim. Let $\Omega _1$ and $\Omega _2$ be lattices of $\mathbb{C}$, $\xi _1,\xi _2\in \mathbb{C}$ and
$\alpha (u,v) =(au+bv,cu+dv)\in \GL_2(\mathbb{C})$. If $\big(\wp _{\Omega _2}(u), e^v\widetilde{\sigma} _{\Omega_2,\xi _2} (u)\big)\circ \alpha\in\mathbb{C}\big(\wp _{\Omega _1}(u), e^v\widetilde{\sigma}_{\Omega_1,\xi _1} (u)\big)^{alg}$ then $b=0$, $\wp _{\Omega _2}(au)\in\mathbb{C}(\wp _{\Omega _1}(u))^{alg}$  and $d\in \Q^*$.}
   
\smallskip

\noindent
\emph{Proof of the claim.} We begin by arguing as in the proof of the claim in  Lemma\,\ref{Z general}, and  get $b=0$  and   $\wp _{\Omega_2}(au)\in\mathbb{C}(\wp _{\Omega_1}(u))^{alg}$. Now, note that $e^{cu+dv}\widetilde{\sigma}_{\Omega_2,\xi_2}(au)\in\mathbb{C}(\wp _{\Omega_1}(u),
e^v\widetilde{\sigma}_{\Omega_1,\xi_1} (u))^{alg}$.
Thus, if we evaluate $u$ in a point that does not belong to $a^{-1}\Omega_2\cup \Omega_1$, we conclude that 
$e^{dv}\in\mathbb{C}(e^v)^{alg}$, so $d\in \mathbb{Q}^*$ as required. {\hfill $_{\square}$}

\medskip

By the above claim, we have that $\wp _{a^{-1}\Omega}(u)\in \mathbb{C}(\wp _{\Omega}(au))$, so 
it is algebraic over $\mathbb{C}(\wp _{\Omega}(u))$. 
Thus, by Lemma\,\ref{C-wp}, there exists a lattice $\Omega '\leq\mathbb{C}$ such that $\Omega '\leq\Omega$ and 
$\Omega '\leq a^{-1}\Omega$.

By hypothesis, as usual, we get, $$(\wp_{a^{-1}\Omega}(u),e^{cu+dv}\widetilde{\sigma}_{a^{-1}\Omega,a^{-1}\xi_2}(u))\in\mathbb{C}(\wp _{\Omega}(u), e^v\widetilde{\sigma}_{\Omega,\xi _1}(u))^{alg}.$$ 
Our aim now is to derive from the latter an algebraic relation that is uniform with respect to $\Omega'$.

On one hand, defining $\beta_1(u,v):=(u,a^{-1}\xi_2 \mathfrak{c}(a^{-1}\Omega,\Omega')u+[a^{-1}\Omega:\Omega']v)$, 
we get, by Lemma\,\ref{S sublattices}, that $\big(\wp _{a^{-1}\Omega}(u), e^{cu+dv}\widetilde{\sigma}_{a^{-1}\Omega,a^{-1}\xi_2}(u)\big)\circ \beta_1$ 
is algebraic over $\mathbb{C}(\wp _{\Omega'}(u), e^{cu+dv}\widetilde{\sigma}_{\Omega',a^{-1}\xi_2}(u))$. 
On the other hand, if we define $\beta_2(u,v):=(u,\xi_1 \mathfrak{c}(\Omega,\Omega')u+[\Omega:\Omega']v)$ then, by Lemma\,\ref{S sublattices}, 
$$\big(\wp _{\Omega}(u), e^v\widetilde{\sigma}_{\Omega,\xi _1}(u)\big)\circ \beta_2\in \mathbb{C}(\wp_{\Omega'}(u), e^v\widetilde{\sigma}_{\Omega',\xi _1}(u))^{alg}.$$ 
All in all, we deduce that 
$$
(\wp _{\Omega'}(u), e^{cu+dv}\widetilde{\sigma}_{\Omega',a^{-1}\xi_2}(u))\circ \beta_1^{-1} \circ \beta_2 
\in\mathbb{C}(\wp _{\Omega'}(u), e^v\widetilde{\sigma}_{\Omega',\xi _1}(u))^{alg}.$$ 
Making some computations, we deduce that
\[
e^{c'u+d'v}\widetilde{\sigma}_{\Omega',a^{-1}\xi_2}(u)\in\mathbb{C}(\wp _{\Omega'}(u), e^v\widetilde{\sigma}_{\Omega',\xi _1}(u))^{alg},
\]
where 
\[
c':=c+d\frac{\xi_1 \mathfrak{c}(\Omega,\Omega')-a^{-1}\xi_2 \mathfrak{c}(a^{-1}\Omega,\Omega')}{[a^{-1}\Omega:\Omega']}  \quad 
\text{and} \quad d':=d\frac{[\Omega:\Omega']}{[a^{-1}\Omega:\Omega']}\in \mathbb{Q}^*. 
\]

Next, by Lemma\,\ref{S Theta}.$(4)$, $e^{-d'v}\widetilde{\sigma}_{\Omega ',-d'\xi _1}(u)\in\mathbb{C}(\wp _{\Omega '}(u), e^v\widetilde{\sigma}_{\Omega',\xi _1}(u))^{alg}$  since $d'\in \mathbb{Q}^*$. 
Thus, taking the product by the above function, we deduce that 
$e^{c'u}\widetilde{\sigma}_{\Omega',a^{-1}\xi_2}(au)\widetilde{\sigma}_{\Omega ',-d'\xi _1}(u)$ is algebraic over 
$\mathbb{C}(\wp _{\Omega'}(u), e^v\widetilde{\sigma}_{\Omega',\xi _1}(u))$ and, in particular, over $\mathbb{C}(\wp _{\Omega '}(u))$.

By \cite[Ch.\,IV, Thm.\,$4$]{Chandrasekharan}, the function
\[
\widetilde{\sigma}_{\Omega ',a^{-1}\xi _2}(u)\cdot \widetilde{\sigma}_{\Omega ',-d'\xi _1}(u)\cdot 
\widetilde{\sigma}_{\Omega ',-a^{-1}\xi _2+d'\xi _1}(u) 
\] 
is elliptic of period $\Omega'$, so it is algebraic over $\mathbb{C}(\wp_{\Omega'}(u))$. 
In particular,
$f(u):=e^{-c'u}\widetilde{\sigma}_{\Omega ',-a^{-1}\xi _2+d'\xi _1}(u)$ is algebraic over $\mathbb{C}(\wp _{\Omega '}(u))$, 
which means that $f(u)$ is an elliptic function. 
As, it has a unique simple pole in its fundamental domain (see \cite[Ch.IV, \S\,2]{Chandrasekharan}), so
$f(u)$ must be constant (\cite[Ch.,II, Cor. to Thm\,$2$]{Chandrasekharan}).
Thus, there exists $C\in \mathbb{C}$ such that $\widetilde{\sigma}_{\Omega',-a^{-1}\xi _2+d'\xi _1}(u)=e^{C+c'u}$.
Consequently, 
\[
\ln\big(\sigma_{\Omega'}(u-(-a^{-1}\xi _2+d'\xi _1))\big)-\ln(\sigma _{\Omega '}(u))=C+c'u.
\]
Differentiating twice, we get 
\[
\wp_{\Omega'}(u-(-a^{-1}\xi _2+d'\xi _1))-\wp _{\Omega '}(u)=0,
\]
so $-a^{-1}\xi _2+d'\xi _1\in \Omega '\leq \Omega$, so it is enough to define $q:=d'$, as required. 
Finally, note that in the particular case $a=1$ we get that $d'=d$.
\end{proof} 

The two latter  lemmas give us a characterisation of the sets $\varXi $ as follows.
\begin{corollary}\label{S characterization}
Let $\omega \in \mathbb{C}\setminus \mathbb{R}$. 
Then, $\varXi (\omega ,\xi )={\langle 1,\omega \rangle}_\mathbb{Q}+ K_\omega ^*\xi$, for each $\xi\in \mathbb{C}$.
\end{corollary}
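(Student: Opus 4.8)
The plan is to prove the two inclusions separately, combining the ``positive'' results of Lemma~\ref{S Theta} for the inclusion $\supseteq$ with the ``rigidity'' conclusion of Lemma~\ref{representation} for the inclusion $\subseteq$. Throughout fix $\omega\in\mathbb{C}\setminus\mathbb{R}$, write $\Omega:={\langle 1,\omega\rangle}_{\mathbb{Z}}$, and recall that by Lemma~\ref{S Theta}(1) the set $\varXi(\omega,\xi)$ is closed under the equivalence relation $\sim_\Omega$, so it suffices to produce enough elements on one side and enough constraints on the other.

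\emph{The inclusion ${\langle 1,\omega\rangle}_{\mathbb{Q}}+K_\omega^*\xi\subseteq\varXi(\omega,\xi)$.} First, by Lemma~\ref{S Theta}(3) we already know $K_\omega^*\xi\subseteq\varXi(\omega,\xi)$. Next I would show ${\langle 1,\omega\rangle}_{\mathbb{Q}}\subseteq\varXi(\omega,0)$: by Lemma~\ref{S Theta}(5) we have $\Omega\subseteq\varXi(\Omega,0)$, and since $\Omega'=\tfrac1n\Omega$ contains $\Omega$ as a sublattice of index $n^2$, Lemma~\ref{S Theta}(2) gives $\varXi(\Omega',0)=\varXi(\Omega,0)$; applying Lemma~\ref{S Theta}(5) to the lattice $\Omega'$ yields $\tfrac1n\Omega=\Omega'\subseteq\varXi(\Omega',0)=\varXi(\omega,0)$, and taking the union over $n\in\mathbb{N}^*$ gives ${\langle 1,\omega\rangle}_{\mathbb{Q}}\subseteq\varXi(\omega,0)$. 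Finally I would combine these using transitivity of $\sim_\Omega$: given $r\xi+s$ with $r\in K_\omega^*$ and $s\in{\langle 1,\omega\rangle}_{\mathbb{Q}}$, we have $r\xi\in\varXi(\omega,\xi)$ by (3); it remains to see that adding an element of ${\langle 1,\omega\rangle}_{\mathbb{Q}}$ stays inside $\varXi(\omega,\xi)$, which follows by translating the argument for $\varXi(\omega,0)$ (a rational translate of a lattice applied via Lemma~\ref{S Theta}(5) and the index-$n$ reduction of (2)) — concretely, $\varXi(\omega,\eta)+{\langle 1,\omega\rangle}_{\mathbb{Q}}\subseteq\varXi(\omega,\eta)$ for any $\eta$, so starting from $\eta=r\xi\in\varXi(\omega,\xi)$ we conclude $r\xi+s\in\varXi(\omega,\xi)$.

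\emph{The inclusion $\varXi(\omega,\xi)\subseteq{\langle 1,\omega\rangle}_{\mathbb{Q}}+K_\omega^*\xi$.} Let $\xi'\in\varXi(\omega,\xi)$, so there is $\alpha(u,v)=(au+bv,cu+dv)\in\GL_2(\mathbb{C})$ with $(\wp_\Omega(u),e^v\widetilde\sigma_{\Omega,\xi'}(u))\circ\alpha$ algebraic over $\mathbb{C}(\wp_\Omega(u),e^v\widetilde\sigma_{\Omega,\xi}(u))$. Apply Lemma~\ref{representation} with $\Omega_1=\Omega_2=\Omega$, $\xi_1=\xi$, $\xi_2=\xi'$: this gives $b=0$, $\wp_\Omega(au)\in\mathbb{C}(\wp_\Omega(u))^{alg}$, $d\in\mathbb{Q}^*$, and $a^{-1}\xi'-q\xi\in\Omega$ for some $q\in\mathbb{Q}^*$. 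By Lemma~\ref{C-wp}(2) and Theorem~\ref{1dim C-classification} (or directly Proposition~\ref{C-automorphisms} for the case $f=\wp_\omega$), the condition $\wp_\Omega(au)\in\mathbb{C}(\wp_\Omega(u))^{alg}$ forces $a\in K_\omega^*$ (up to the harmless integer factor built into the isomorphism statement — any $a$ giving an isogeny of ${\langle 1,\omega\rangle}$ to itself lies in $K_\omega$). Hence $\xi'=a(q\xi+m+n\omega)=aq\,\xi+a(m+n\omega)$ for some $m,n\in\mathbb{Z}$; since $aq\in K_\omega^*$ and $a(m+n\omega)\in K_\omega\cdot{\langle 1,\omega\rangle}_{\mathbb{Z}}\subseteq{\langle 1,\omega\rangle}_{\mathbb{Q}}$ (as $K_\omega\subseteq\mathbb{Q}(\omega)$ preserves the $\mathbb{Q}$-span of $\{1,\omega\}$), we conclude $\xi'\in{\langle 1,\omega\rangle}_{\mathbb{Q}}+K_\omega^*\xi$, as required.

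\emph{Main obstacle.} The delicate point is the bookkeeping in the first inclusion: making sure the rational-translate statement ``$\varXi(\omega,\eta)+{\langle 1,\omega\rangle}_{\mathbb{Q}}\subseteq\varXi(\omega,\eta)$'' is genuinely available from parts (2) and (5) of Lemma~\ref{S Theta} (which as stated only give $\Omega+\xi\subseteq\varXi(\Omega,\xi)$, i.e.\ integer translates), and that composing the relevant $\alpha$'s stays in $\GL_2(\mathbb{C})$ and keeps track of the extra exponential factors $e^{cu}$ that appear in (5). The second inclusion is essentially immediate once Lemma~\ref{representation} is in hand; the only subtlety there is translating ``$\wp_\Omega(au)$ algebraic over $\mathbb{C}(\wp_\Omega(u))$'' into ``$a\in K_\omega^*$'', which is exactly the content of the automorphism computation in Proposition~\ref{C-automorphisms}.
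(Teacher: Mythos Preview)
Your proof is correct and follows essentially the same route as the paper. Both directions match: for $\supseteq$ you combine Lemma~\ref{S Theta}(3) for the factor $K_\omega^*\xi$ with the passage to the superlattice $n^{-1}\Omega$ via Lemma~\ref{S Theta}(2),(5) to handle rational translates (this is exactly the paper's ``Claim''); for $\subseteq$ you invoke Lemma~\ref{representation} and then Proposition~\ref{C-automorphisms} to convert $\wp_\Omega(au)\in\mathbb{C}(\wp_\Omega(u))^{alg}$ into $a\in K_\omega^*$, which is just what the paper does (though there the appeal to Proposition~\ref{C-automorphisms} is left implicit). The only cosmetic difference is that you first prove ${\langle 1,\omega\rangle}_{\mathbb{Q}}\subseteq\varXi(\omega,0)$ as a special case before stating the general translate property $\varXi(\omega,\eta)+{\langle 1,\omega\rangle}_{\mathbb{Q}}\subseteq\varXi(\omega,\eta)$, whereas the paper proves the latter directly; since the argument is identical, this is harmless.
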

\begin{proof}
Fix $\xi \in \mathbb{C}$ and let $\Omega:={\langle 1,\omega\rangle}_{\mathbb{Z}}$. 
Firstly, let us show that $\varXi (\Omega, \xi)\subseteq \, {\langle 1,\omega\rangle}_{\mathbb{Q}}+K_\omega ^* \xi$. 
Indeed, take $\xi'\in \varXi (\Omega, \xi)$.
By Lemma\,\ref{representation}, there exist $a\in K_\omega ^*$ and $q\in \mathbb{Q} ^*$ such that $a^{-1}\xi'-q\xi\in \Omega \subseteq {\langle 1,\omega\rangle}_{\mathbb{Q}}$.
Note that  $K_\omega^*$ is contained in ${\langle 1,\omega\rangle}^*_{\mathbb{Q}}$ and, therefore, $\xi'\in {\langle 1,\omega \rangle}_\mathbb{Q}+ K_\omega ^*\xi$, as required.

For the other inclusion, let us see first the following:

\medskip

\noindent
{\it Claim. $\lambda+\xi \in \varXi (\omega ,\xi)$, for all $\lambda \in \, {\langle 1,\omega \rangle}_{\mathbb{Q}}$.}

\smallskip

\noindent
\emph{Proof of the claim.}
There exists $n\in \mathbb{N}^*$ such that $n\lambda\in  \, {\langle 1,\omega \rangle}_{\mathbb{Z}}$.
In particular, $\lambda\in n^{-1}\Omega$.
By Lemma\,\ref{S Theta}(5), $\lambda+\xi \in \varXi (n^{-1}\Omega ,\xi)$.
Since $\Omega \leq n^{-1}\Omega$, by Lemma\,\ref{S Theta}(2), we get $\lambda+\xi \in \varXi (\Omega ,\xi)$.{\hfill $_{\square}$}

\medskip

\noindent Let $\lambda\in \, {\langle 1,\omega \rangle}_{\mathbb{Q}}$ and $q\in K_\omega ^*$, we show now that 
$\lambda+q\xi \in \varXi (\omega ,\xi)$. 
Since $q\in K_\omega ^*$, there exists by Fact\,\ref{Chandrasekharan} a lattice $\Omega '$ that is both a sublattice 
of $\Omega$ and $q^{-1}\Omega$. 
Then, by Lemma\,\ref{S sublattices}, there exists $\alpha_1\in \GL_2(\mathbb{C})$ such that
\[
\big(\wp _{\Omega}(u),e^v\widetilde{\sigma}_{\Omega ,\xi}(u)\big)\circ \alpha _1\in\mathbb{C}\big(\wp _{\Omega '}(u),e^v\widetilde{\sigma}_{\Omega ',\xi}(u)\big)^{alg}.
\]
Similarly, there exists $\alpha_2\in \GL_2(\mathbb{C})$ such that
\[
\big(\wp _{q^{-1}\Omega}(u),e^v\widetilde{\sigma}_{q^{-1}\Omega ,\xi}(u)\big)\circ \alpha _2\in\mathbb{C}\big(\wp _{\Omega '}(u),e^v\widetilde{\sigma}_{\Omega ',\xi}(u)\big)^{alg}. 
\]
Let $\beta _2:=\alpha _2\circ \alpha _1^{-1}$.Thus,
$$\big(\wp _{q^{-1}\Omega}(u),e^v\widetilde{\sigma}_{q^{-1}\Omega ,\xi}(u)\big)\circ \beta _2\in\mathbb{C}\big(\wp _{\Omega}(u),e^v\widetilde{\sigma}_{\Omega,\xi}(u)\big)^{alg}.$$
Moreover, if we define $\alpha _3(u,v):=(qu,v)$ and $\beta _3(u,v):=\alpha _3\circ \beta _2$, then
$
\big(\wp _{\Omega}(u),e^v\widetilde{\sigma}_{\Omega ,q\xi}(u)\big)\circ \beta _3\in\mathbb{C}\big(\wp _{\Omega}(u),e^v\widetilde{\sigma}_{\Omega,\xi}(u)\big)^{alg}.
$
By the claim above, there exists $\alpha _4\in \GL_2(\mathbb{C})$ such that 
\[
\big(\wp _{\Omega}(u),e^v\widetilde{\sigma}_{\Omega ,\lambda+q\xi}(u)\big)\circ \alpha _4\in\mathbb{C}\big(\wp _{\Omega}(u),e^v\widetilde{\sigma}_{\Omega,q\xi}(u)\big)^{alg}.
\]
Thus, if we denote $\beta _4(u,v):=\alpha _4\circ \beta _3$, then $\big(\wp _{\Omega}(u),e^v\widetilde{\sigma}_{\Omega ,q\xi+\lambda}(u)\big)\circ \beta _4$ 
belongs to  $\mathbb{C}\big(\wp _{\Omega}(u),e^v\widetilde{\sigma}_{\Omega,\xi}(u)\big)^{alg}$, as required.
\end{proof}

Finally, we  can characterise algebraic dependence for maps of the  family $\mathcal{P}_5$.

\begin{proposition}\label{S general} 
Let $\omega _1,\omega _2 \in \mathbb{C}\setminus \mathbb{R}$, $\xi _1,\xi _2\in \mathbb{C}$. Then,  there exists $\alpha \in \GL_2(\mathbb{C})$ such that 
$\big(\wp _{\omega _2}(u), e^v\widetilde{\sigma}_{\omega _2,\xi_2}(u)\big)\circ \alpha\in\mathbb{C}\big(\wp _{\omega _1}(u), e^v\widetilde{\sigma}_{\omega _1,\xi_1}(u)\big)^{alg}$  if and only if there are $a,b,c,d\in \mathbb{Z}$ such that $ad-bc\neq 0$, $\omega _2=\frac{a\omega _1+b}{c\omega _1+d}$ and $(c\omega _1+d)\xi _2\in \varXi (\omega _1,\xi _1)$. 
\end{proposition}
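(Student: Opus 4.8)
The plan is to prove the two implications separately. The ``if'' direction is assembled from a scaling reduction (Fact~\ref{Chandrasekharan}), the passage to a sublattice (Lemma~\ref{S sublattices}), and the defining property of $\varXi$; the ``only if'' direction is obtained by running the argument of Lemma~\ref{representation} for two (a priori distinct) lattices and then invoking Theorem~\ref{1dim C-classification} and Corollary~\ref{S characterization}.

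\textbf{The ``if'' direction.} Assume $\omega_2=\frac{a\omega_1+b}{c\omega_1+d}$ with $a,b,c,d\in\mathbb{Z}$, $ad-bc\neq0$, and $\eta:=(c\omega_1+d)\xi_2\in\varXi(\omega_1,\xi_1)$. Put $\rho:=c\omega_1+d$ (non-zero, since $\omega_1\notin\mathbb{R}$ and $ad-bc\neq0$) and $\Omega:=\rho\langle1,\omega_2\rangle_{\mathbb{Z}}=\langle c\omega_1+d,\,a\omega_1+b\rangle_{\mathbb{Z}}$, which (as $ad-bc\neq0$ and $\omega_1\notin\mathbb{R}$) is a lattice and a finite-index sublattice of $\Omega_1:=\langle1,\omega_1\rangle_{\mathbb{Z}}$. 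First, using Fact~\ref{Chandrasekharan} one checks that composing $(\wp_{\omega_2}(u),e^v\widetilde{\sigma}_{\omega_2,\xi_2}(u))$ on the right with $(u,v)\mapsto(\rho^{-1}u,v)$ yields $(\rho^{2}\wp_{\Omega}(u),e^v\widetilde{\sigma}_{\Omega,\eta}(u))$; since the scalar $\rho^2$ is harmless, it suffices to produce $\beta\in\GL_2(\mathbb{C})$ with $(\wp_{\Omega}(u),e^v\widetilde{\sigma}_{\Omega,\eta}(u))\circ\beta\in\mathbb{C}(\wp_{\omega_1}(u),e^v\widetilde{\sigma}_{\omega_1,\xi_1}(u))^{alg}$. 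Next, Lemma~\ref{S sublattices} applied to $\Omega\leq\Omega_1$ with parameter $\eta$ gives $\sigma_1\in\GL_2(\mathbb{C})$ with $(\wp_{\Omega_1},e^v\widetilde{\sigma}_{\Omega_1,\eta})\circ\sigma_1\in\mathbb{C}(\wp_{\Omega},e^v\widetilde{\sigma}_{\Omega,\eta})^{alg}$; as both maps are functionally independent, the algebraically closed fields $\mathbb{C}(\wp_{\Omega},e^v\widetilde{\sigma}_{\Omega,\eta})^{alg}$ and $\mathbb{C}((\wp_{\Omega_1},e^v\widetilde{\sigma}_{\Omega_1,\eta})\circ\sigma_1)^{alg}$ both have transcendence degree $2$ over $\mathbb{C}$ and one contains the other, so they coincide and hence $(\wp_{\Omega},e^v\widetilde{\sigma}_{\Omega,\eta})\circ\sigma_1^{-1}\in\mathbb{C}(\wp_{\Omega_1},e^v\widetilde{\sigma}_{\Omega_1,\eta})^{alg}$. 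Finally, since $\eta\in\varXi(\omega_1,\xi_1)$, Definition~\ref{defXi} provides $\sigma_2\in\GL_2(\mathbb{C})$ with $(\wp_{\Omega_1},e^v\widetilde{\sigma}_{\Omega_1,\eta})\circ\sigma_2\in\mathbb{C}(\wp_{\Omega_1},e^v\widetilde{\sigma}_{\Omega_1,\xi_1})^{alg}$, and chaining these three steps shows that $\beta:=\sigma_1^{-1}\circ\sigma_2$, composed with the initial scaling, is the required element.

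\textbf{The ``only if'' direction.} Suppose $(\wp_{\omega_2}(u),e^v\widetilde{\sigma}_{\omega_2,\xi_2}(u))\circ\alpha\in\mathbb{C}(\wp_{\omega_1}(u),e^v\widetilde{\sigma}_{\omega_1,\xi_1}(u))^{alg}$ with $\alpha(u,v)=(au+bv,cu+dv)\in\GL_2(\mathbb{C})$. The argument of Lemma~\ref{representation} applies verbatim with the single lattice $\Omega$ replaced by $\Omega_2:=\langle1,\omega_2\rangle_{\mathbb{Z}}$ (carrying $\xi_2$) and $\Omega_1:=\langle1,\omega_1\rangle_{\mathbb{Z}}$ (carrying $\xi_1$): its internal Claim is already stated for two lattices and gives $b=0$, $\wp_{\omega_2}(au)\in\mathbb{C}(\wp_{\omega_1}(u))^{alg}$ and $d\in\mathbb{Q}^*$; from there the proof only uses that $\wp_{a^{-1}\Omega_2}$ and $\wp_{\Omega_1}$ admit a common sublattice $\Omega'$ (furnished by Lemma~\ref{C-wp}), and thus concludes, exactly as there, that $a^{-1}\xi_2-q\xi_1\in\Omega'\leq\Omega_1$ for some $q\in\mathbb{Q}^*$. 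Now $\wp_{\omega_2}(au)\in\mathbb{C}(\wp_{\omega_1}(u))^{alg}$ says, by Fact~\ref{Kabelian}(2), that $u\mapsto au$ is an isomorphism $(\mathbb{C},+,\wp_{\omega_1})\to(\mathbb{C},+,\wp_{\omega_2})$, so by Theorem~\ref{1dim C-classification} there are $a',b',c',d'\in\mathbb{Z}$ with $a'd'-b'c'\neq0$, $\omega_2=\frac{a'\omega_1+b'}{c'\omega_1+d'}$ and $a=\frac{n}{c'\omega_1+d'}$ for some $n\in\mathbb{N}^*$. Consequently $(c'\omega_1+d')\xi_2=na^{-1}\xi_2=nq\,\xi_1+n\lambda$, where $\lambda:=a^{-1}\xi_2-q\xi_1\in\Omega_1\subseteq\langle1,\omega_1\rangle_{\mathbb{Q}}$ and $nq\in\mathbb{Q}^*\subseteq K_{\omega_1}^*$; by Corollary~\ref{S characterization} this element lies in $\langle1,\omega_1\rangle_{\mathbb{Q}}+K_{\omega_1}^*\xi_1=\varXi(\omega_1,\xi_1)$, so the tuple $(a',b',c',d')$ witnesses the right-hand side of the proposition.

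The step I expect to be delicate is the claim that the proof of Lemma~\ref{representation} transfers to two distinct lattices: one has to confirm that nowhere in it is the equality $\Omega_1=\Omega_2$ essential, only the commensurability of $\Omega_1$ and $a^{-1}\Omega_2$, which its internal Claim already secures. Everything else---the reversal of algebraic-dependence relations between functionally independent maps, and the bookkeeping of the $\GL_2(\mathbb{C})$ substitutions and of harmless constant factors---is routine.
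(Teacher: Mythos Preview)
Your proof is correct and follows essentially the same route as the paper's. The ``if'' direction uses the same three ingredients (scaling via Fact~\ref{Chandrasekharan}/Lemma~\ref{S parameter}, the sublattice passage of Lemma~\ref{S sublattices}, and the definition of $\varXi$), only in a slightly different order, with your transcendence-degree inversion replacing the paper's direct chaining. For the ``only if'' direction the paper invokes only the internal Claim of Lemma~\ref{representation}, then redoes the sublattice reduction inside the proof of Proposition~\ref{S general} and concludes via Lemma~\ref{S Theta}(1)--(3); you instead observe (correctly) that the entire proof of Lemma~\ref{representation} goes through for two lattices, obtain the explicit relation $a^{-1}\xi_2-q\xi_1\in\Omega_1$, and finish with Corollary~\ref{S characterization}. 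These are equivalent packagings of the same argument.
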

\begin{proof}
Let  $\Omega _i:={\langle 1,\omega _i\rangle}_{\mathbb{Z}}, i=1,2$.
We begin with the right to left implication.
Let $\Omega :={\langle c\omega _1+d,a\omega _1+b\rangle}_{\mathbb{Z}}\leq \Omega_1$ and $\alpha _1(u,v):=((c\omega _1+d)u,v)$.
By Lemma\,\ref{S parameter}, $\big(\wp _{\Omega}(u), e^v\widetilde{\sigma}_{\Omega, (c\omega _1+d)\xi _2}(u)\big)\circ \alpha _1\in\mathbb{C}\big(\wp _{\Omega _2}(u), e^v\widetilde{\sigma}_{\Omega _2,\xi_2}(u)\big)^{alg}$.
Let $\alpha _2(u,v):=\big(u,\xi _1\mathfrak{c}(\Omega _1,\Omega)u+[\Omega _1:\Omega]v\big)$.
By Lemma\,\ref{S sublattices},
$$\big(\wp _{\Omega _1}(u), e^v\widetilde{\sigma}_{\Omega _1,\xi_1}(u)\big)\circ \alpha _2\in\mathbb{C}\big(\wp _{\Omega}(u), e^v\widetilde{\sigma}_{\Omega,\xi_1}(u)\big)^{alg}.$$
By Lemma\,\ref{S Theta}.$(2)$, $(c\omega _1+d)\xi _2\in \varXi (\Omega,\xi _1)$.
So there exists $\alpha _3 \in \GL_2(\mathbb{C})$, such that
$\big(\wp _{\Omega}(u), e^v\widetilde{\sigma}_{\Omega,(c\omega _1+d)\xi _2}(u)\big)\circ \alpha _3$
is algebraic over $\mathbb{C}\big(\wp _{\Omega}(u), e^v\widetilde{\sigma}_{\Omega,\xi_1}(u)\big)$.
So $\big(\wp _{\Omega _1}(u), e^v\widetilde{\sigma}_{\Omega _1,\xi_1}(u)\big)
\circ \alpha _2\circ \alpha _3^{-1}\circ \alpha _1\in\mathbb{C}\big(\wp _{\Omega _2}(u), e^v\widetilde{\sigma}_{\Omega _2,\xi_2}(u)\big)^{alg}$.
Define $\alpha :=\alpha_1^{-1}\circ \alpha_3 \circ \alpha_2^{-1}$ and we are done.

Now, we prove the converse. 
By hypothesis and the claim in the proof of Lemma\,\ref{representation}, we have $\alpha(u,v)=(a'u,c'u+d'v)$, for some 
$a',c',d'\in \mathbb{C}$ such that $\wp_{\Omega _2}(a'u)\in\mathbb{C}(\wp_{\Omega_1}(u))^{alg}$. 
Thus, by Theorem\,\ref{1dim C-classification} there exist 
$a,b,c,d\in \mathbb{Z}$ with $ad-bc\neq 0$ such that $\omega_2=\frac{a\omega _1+b}{c\omega _1+d}$ and 
$a'=\frac{n}{c\omega _1+d}$, for some $n\in \mathbb{N}^*$.

On the other hand, by Fact\,\ref{Chandrasekharan} and Lemma\,\ref{C-wp}, there exists a lattice $\Omega$ of 
$\C$ such that both $\Omega \leq  \Omega_1$ and $\Omega\leq  a'^{-1}\Omega_2$.
By hypothesis, \begin{center}
$\big(\wp _{\Omega _2}(a'u), e^{c'u+d'v}\widetilde{\sigma}_{\Omega _2,\xi_2}(a'u)\big)\in\mathbb{C}\big(\wp _{\Omega _1}(u), e^v\widetilde{\sigma}_{\Omega _1,\xi_1}(u)\big)^{alg}$, 
\end{center}
so, also $\big(\wp _{a'^{-1}\Omega _2}(u), 
e^{c'u+d'v}\widetilde{\sigma}_{a'^{-1}\Omega _2,a'^{-1}\xi _2}(u)\big)\in\mathbb{C}\big(\wp _{\Omega _1}(u), e^v\widetilde{\sigma}_{\Omega _1,\xi_1}(u)\big)^{alg}$.
By Lemma\,\ref{S sublattices}, there exists $\alpha _1\in \GL_2(\mathbb{C})$ such that
$$\big(\wp _{\Omega _1}(u), e^v\widetilde{\sigma}_{\Omega _1,\xi_1}(u)\big)\circ \alpha _1\in\mathbb{C}\big(\wp _{\Omega}(u), e^v\widetilde{\sigma}_{\Omega,\xi_1}(u)\big)^{alg}.$$

Similarly, pick $\alpha _2\in \GL_2(\mathbb{C})$ such that
$$\big(\wp _{a'^{-1}\Omega _2}(u), e^{c'u+d'v}\widetilde{\sigma}_{a'^{-1}\Omega _2,a'^{-1}\xi _2}(u)\big)\circ \alpha _2\in\mathbb{C}\big(\wp _{\Omega}(u), e^{c'u+d'v}\widetilde{\sigma}_{\Omega,a'^{-1}\xi _2}(u)\big)^{alg}.$$
Then,  $\big(\wp _{\Omega}(u), e^{c'u+d'v}\widetilde{\sigma}_{\Omega,a'^{-1}\xi _2}(u)\big)
\circ \alpha _2^{-1}\circ \alpha _1\in\mathbb{C}\big(\wp _{\Omega}(u), e^v\widetilde{\sigma}_{\Omega,\xi_1}(u)\big)^{alg}$,
 and so $a'^{-1}\xi _2\in \varXi (\Omega ,\xi _1)$.
Thus, by Lemma\,\ref{S Theta}.$(1)$ and\,\ref{S Theta}.$(2)$, we have that 
\[
\varXi(\omega_1,\xi_1)=\varXi(\Omega,\xi_1)=\varXi(\Omega,a'^{-1}\xi_2)=\varXi(\omega_1,a'^{-1}\xi_2).  
\]
Finally, it follows from Lemma\,\ref{S Theta}.$(3)$ that 
\[
(cw+d)\xi_2=na'^{-1}\xi_2\in \varXi(\omega_1,a'^{-1}\xi_2)=\varXi(\omega_1,\xi_1),  
\]
as required.
\end{proof}

\begin{corollary}\label{S general: specialcase}
Let $\omega \in \mathbb{C}\setminus \mathbb{R}$, $\Omega ={\langle 1,\omega\rangle}_{\mathbb{Z}}$ and $\xi \in \mathbb{C}$.
For each $q\in \mathbb{Z}^*$, $\big(\wp _{\omega}(u), e^v\widetilde{\sigma}_{\omega,\xi}(u)\big)\circ \alpha\in\mathbb{C}\big(\wp _{\omega }(u), e^v\widetilde{\sigma}_{\omega,\xi}(u)\big)^{alg}$, where 
$\alpha (u,v)=q^{-1}\big(u,-\xi \mathfrak{c}(\Omega,q\Omega)u+v\big)$.
\end{corollary}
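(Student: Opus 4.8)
The plan is to specialise the right-to-left construction in the proof of Proposition \ref{S general} to the self-referential situation $\omega_1=\omega_2=\omega$, $\xi_1=\xi_2=\xi$ with the integer matrix $(a,b,c,d)=(q,0,0,q)$. With these choices $ad-bc=q^2\neq 0$, $\tfrac{a\omega+b}{c\omega+d}=\omega$, and $(c\omega+d)\xi=q\xi$, so the hypotheses of Proposition \ref{S general} reduce to $q\xi\in\varXi(\omega,\xi)$; this holds because $q\in\mathbb{Z}^*\subseteq\mathbb{Q}^*\subseteq K_\omega^*$ and $K_\omega^*\xi\subseteq\varXi(\omega,\xi)$ by Lemma \ref{S Theta}(3) (equivalently, $q\xi\in {\langle 1,\omega\rangle}_{\mathbb{Q}}+K_\omega^*\xi=\varXi(\omega,\xi)$ by Corollary \ref{S characterization}). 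Hence the statement is contained in Proposition \ref{S general}, and the only real task is to chase through its proof and read off the explicit $\alpha$.

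Write $\Omega:={\langle 1,\omega\rangle}_{\mathbb{Z}}$, so $\wp_\omega=\wp_\Omega$ and $\widetilde{\sigma}_{\omega,\xi}=\widetilde{\sigma}_{\Omega,\xi}$. Inside that proof the auxiliary lattice ${\langle c\omega+d,a\omega+b\rangle}_{\mathbb{Z}}$ becomes ${\langle q,q\omega\rangle}_{\mathbb{Z}}=q\Omega$, which for every $q\in\mathbb{Z}^*$ is a sublattice of $\Omega$ with $[\Omega:q\Omega]=q^2$. The three $\GL_2(\mathbb{C})$-maps produced there specialise to
$$\alpha_1(u,v)=(qu,v),\qquad \alpha_2(u,v)=\big(u,\ \xi\,\mathfrak{c}(\Omega,q\Omega)\,u+q^2 v\big),$$
coming respectively from Lemma \ref{S parameter} applied to the lattice $q\Omega={\langle q,q\omega\rangle}_{\mathbb{Z}}$ (for which $\tau=\omega$ and $q^{-1}(q\xi)=\xi$) and from Lemma \ref{S sublattices} applied to $q\Omega\leq\Omega$ with parameter $\xi$; and the map $\alpha_3$ realising $q\xi\in\varXi(q\Omega,\xi)$ may be taken to be $\alpha_3(u,v)=(u,qv)$, since Lemma \ref{S Theta}(4) for the lattice $q\Omega$ and the rational $q$ gives $e^{qv}\widetilde{\sigma}_{q\Omega,q\xi}(u)\in\mathbb{C}\big(\wp_{q\Omega}(u),e^v\widetilde{\sigma}_{q\Omega,\xi}(u)\big)^{alg}$, i.e. $\big(\wp_{q\Omega}(u),e^v\widetilde{\sigma}_{q\Omega,q\xi}(u)\big)\circ\alpha_3\in\mathbb{C}\big(\wp_{q\Omega}(u),e^v\widetilde{\sigma}_{q\Omega,\xi}(u)\big)^{alg}$. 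Exactly as in the last lines of the proof of Proposition \ref{S general} -- composing these three algebraic-dependence relations and using that each links two fields of transcendence degree $2$ with algebraically independent generators, so the inclusions of algebraic closures are equalities -- one obtains $\big(\wp_\omega(u),e^v\widetilde{\sigma}_{\omega,\xi}(u)\big)\circ\alpha\in\mathbb{C}\big(\wp_\omega(u),e^v\widetilde{\sigma}_{\omega,\xi}(u)\big)^{alg}$ with $\alpha=\alpha_1^{-1}\circ\alpha_3\circ\alpha_2^{-1}$.

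It then remains to compute the composite: from $\alpha_2^{-1}(u,v)=\big(u,\ q^{-2}(v-\xi\,\mathfrak{c}(\Omega,q\Omega)\,u)\big)$ one gets $\alpha_3\big(\alpha_2^{-1}(u,v)\big)=\big(u,\ q^{-1}(v-\xi\,\mathfrak{c}(\Omega,q\Omega)\,u)\big)$, and applying $\alpha_1^{-1}(u,v)=(q^{-1}u,v)$ yields $\alpha(u,v)=\big(q^{-1}u,\ q^{-1}(v-\xi\,\mathfrak{c}(\Omega,q\Omega)\,u)\big)=q^{-1}\big(u,\ -\xi\,\mathfrak{c}(\Omega,q\Omega)\,u+v\big)$, which is precisely the map in the statement. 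The points needing care are the bookkeeping of the compositions and inverses (together with the elementary facts $q\Omega\leq\Omega$ and $[\Omega:q\Omega]=q^2$ for $q\in\mathbb{Z}^*$) and the identification of the correct $\alpha_3$; everything else is a direct specialisation of Proposition \ref{S general}. Alternatively one may verify the claim by hand without invoking Proposition \ref{S general}: by Fact \ref{Chandrasekharan} one has $\wp_\Omega(q^{-1}u)=q^2\wp_{q\Omega}(u)\in\mathbb{C}(\wp_\Omega(u))^{alg}$ and $\widetilde{\sigma}_{\Omega,\xi}(q^{-1}u)=\widetilde{\sigma}_{q\Omega,q\xi}(u)$, so that the coordinates of $\big(\wp_\Omega(u),e^v\widetilde{\sigma}_{\Omega,\xi}(u)\big)\circ\alpha$ are $q^2\wp_{q\Omega}(u)$ and $e^{-q^{-1}\xi\,\mathfrak{c}(\Omega,q\Omega)u}e^{q^{-1}v}\widetilde{\sigma}_{q\Omega,q\xi}(u)$, and one checks that the second lies in $\mathbb{C}\big(\wp_\Omega(u),e^v\widetilde{\sigma}_{\Omega,\xi}(u)\big)^{alg}$ by combining Lemma \ref{S sublattices} (for $q\Omega\leq\Omega$) with Lemma \ref{S Theta}(4) (for the lattice $q\Omega$ and the rational $q$) -- which is the same computation reorganised.
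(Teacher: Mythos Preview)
Your proof is correct and follows essentially the same approach as the paper: specialise the right-to-left construction of Proposition~\ref{S general} with $(a,b,c,d)=(q,0,0,q)$, identify $\alpha_1,\alpha_2,\alpha_3$ exactly as you do, and compute $\alpha=\alpha_1^{-1}\circ\alpha_3\circ\alpha_2^{-1}$. The only cosmetic difference is that the paper cites Lemma~\ref{S Theta}(4) rather than Lemma~\ref{S Theta}(3) to obtain $q\xi\in\varXi(\omega,\xi)$, and it does not include your alternative direct verification at the end.
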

\begin{proof}
Fix $q\in \mathbb{Z}^*$. Since by Lemma\,\ref{S Theta}(4) we have that $q\xi\in \varXi(\Omega,\xi)$, we can apply the right to left implication of Proposition\,\ref{S general} for $b=c=0$ and $a=d=q$. Thus, there is $\alpha \in \GL_2(\mathbb{C})$ such that 
$\big(\wp _{\omega _2}(u), e^v\widetilde{\sigma}_{\omega _2,\xi_2}(u)\big)\circ \alpha\in\mathbb{C}\big(\wp _{\omega _1}(u), e^v\widetilde{\sigma}_{\omega _1,\xi_1}(u)\big)^{alg}$. It only remains to explicitly compute  the map $\alpha$ -- provided by the proof of the proposition -- in our case. Following the notation there, we obtain
$\alpha _1(u,v)=(qu,v)$ and $\alpha _2(u,v)=\big(u,\xi\mathfrak{c}(\Omega,q\Omega)u+[\Omega:q\Omega]v\big)$. Also, by Lemma\,\ref{S Theta}.$(4)$, we can choose $\alpha_3(u,v)=(u,qv)$. 
Finally, since $[\Omega:q\Omega]=q^2$, we get that 
\[
\alpha (u,v) = \alpha _1^{-1}\circ \alpha _3\circ \alpha _2^{-1}(u,v)=q^{-1}\big(u,-\xi \mathfrak{c}(\Omega,q\Omega)u+v \big),  
\]
as required.
\end{proof}

We finish this section with notions that extend those of index and residue. We will make use of them to describe the automorphisms groups of our locally $\mathbb{K}$-Nash groups.
Recall that the constant $\mathfrak{c}$ was introduced in Definition\,\ref{defc}.

\begin{defnprop}\label{defci}Let $\Omega _1$ and $\Omega _2$ be two lattices of $\C$ that have $\Omega$ as a common sublattice.
We define 
\[
 [\Omega _2:\Omega _1]:= \frac{[\Omega _2:\Omega]}{[\Omega _1:\Omega]} \qquad \text{and} \qquad
 \qc(\Omega _2,\Omega _1):= \mathfrak{c}(\Omega_2,\Omega )- \frac{[\Omega _2:\Omega]}{[\Omega _1:\Omega]}\mathfrak{c}(\Omega_1,\Omega ).
\] 
The above definitions do not depend on the chosen sublattice $\Omega$.
\end{defnprop}
\begin{proof}
Let $\alpha_i (u,v):= (u, \mathfrak{c}(\Omega_i,\Omega)u+[\Omega _i:\Omega]v)$, $i=1,2$, 
 and $\alpha :=\alpha _2\circ \alpha _1^{-1}$ then, by Lemma\,\ref{S sublattices},
$\big(\wp _{\Omega _2}(u),v-\zeta _{\Omega _2}(u)\big)\circ \alpha\in\mathbb{C}\big( \wp _{\Omega_1}(u), v-\zeta _{\Omega_1}(u)\big)^{alg}$.
Note that
\[
 \alpha (u,v)= \left( u, \left(\mathfrak{c}(\Omega_2,\Omega)- \frac{[\Omega _2:\Omega]}{[\Omega _1:\Omega]}\mathfrak{c}(\Omega_1,\Omega)\right) u + \frac{[\Omega _2:\Omega]}{[\Omega _1:\Omega]} v\right).
\]
Suppose now that $\Omega'$ is another common sublattice of both $\Omega_1$ and $\Omega_2$. 
Then, $\big(\wp _{\Omega _2}(u),v-\zeta _{\Omega _2}(u)\big)\circ \alpha'\in\mathbb{C}\big( \wp _{\Omega_1}(u), v-\zeta _{\Omega_1}(u)\big)^{alg}$ for
\[
 \alpha'(u,v)= \left(u, \left(\mathfrak{c}(\Omega_2,\Omega')- \frac{[\Omega _2:\Omega']}{[\Omega _1:\Omega']}\mathfrak{c}(\Omega_1,\Omega')\right) u + \frac{[\Omega _2:\Omega']}{[\Omega _1:\Omega']} v\right).
\]
In particular, $\alpha^{-1} \circ \alpha'$ is an automorphism of $\big(\mathbb{C}^2,+,\big(\wp _{\Omega_1}(u),v-\zeta _{\Omega_1}(u)\big)$. 
To finish the proof, it is enough to show the following:

\medskip

\noindent
\emph{Claim. Let $\Omega$ be a lattice of $\C$ and let $\beta(u,v):=(u,cu+dv)\in \GL_2(\mathbb{C})$ be an automorphism of $\big(\mathbb{C}^2,+,\big(\wp _{\Omega}(u),v-\zeta _{\Omega}(u)\big)$. 
Then, $\beta$ is the identity.}

\smallskip

\noindent
\emph{Proof of the claim.}  As usual, we can assume $\Omega=\langle 1, \omega\rangle$, for some $\omega\in\C\setminus\R$. By hypothesis  $cu+dv-\zeta_{\omega}(u)\in\mathbb{C}(\wp_{\omega}(u),v-\zeta _{\omega}(u))^{alg}$. 
 Now, if we take $v=\zeta _{\omega}(u)$, we obtain that $f(u):=cu+(d-1)\zeta _{\omega}(u)\in\mathbb{C}(\wp_{\omega}(u))^{alg}$. By Fact\,\ref{Weierstrass algebraicity} such $f$ must be 0, in particular $c=0$, as required.
\end{proof}

\begin{lemma}\label{qc}Let $\Omega$ and $\Omega'$ be lattices of $\C$ with a common sublattice, and let $a\in \mathbb{C}^*$. Then,
\begin{enumerate}
 \item[$(1)$] $\qc(\Omega,\Omega')=a^2\qc(a\Omega,a\Omega')$.
 \item[$(2)$] $\qc(\Omega,\frac{m}{n}\Omega)=\mathfrak{c}(\Omega,m\Omega)-\mathfrak{c}(\Omega,n\Omega)$ for all $m,n\in \mathbb{Z}^*$.
 \item[$(3)$] If $\Omega$ and $\Omega'$ are invariant then  $\qc(\Omega,\Omega')\in \mathbb{R}$.
\end{enumerate} 
\end{lemma}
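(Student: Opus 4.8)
The plan is to prove the three items directly from the definition of $\qc$ (Definition-Proposition \ref{defci}) together with the scaling behaviour of the residue $\mathfrak{c}$ coming from Fact \ref{Chandrasekharan} and Lemma \ref{cosets}. First I would establish the needed scaling law for the ordinary residue: if $\Omega_1 \leq \Omega_2$ are lattices and $a \in \mathbb{C}^*$, then $\mathfrak{c}(a\Omega_2, a\Omega_1) = a^{-2}\mathfrak{c}(\Omega_2,\Omega_1)$. This follows by writing $\Omega_2 = \bigcup_{i=1}^n(\Omega_1 + a_i)$, so that $a\Omega_2 = \bigcup_{i=1}^n(a\Omega_1 + aa_i)$, and applying the identity $\wp_{a\Omega_1}(au) = a^{-2}\wp_{\Omega_1}(u)$ from Fact \ref{Chandrasekharan} inside part (1) of Lemma \ref{cosets}; comparing the constant terms on both sides of $\wp_{a\Omega_2}(au) = \sum_i \wp_{a\Omega_1}(au + aa_i) - \mathfrak{c}(a\Omega_2,a\Omega_1)$ with $a^{-2}$ times the corresponding identity for $\Omega_1,\Omega_2$ gives the claim. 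Since the index is obviously scale-invariant, $[a\Omega_2 : a\Omega_1] = [\Omega_2 : \Omega_1]$.

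For item (1): choose a common sublattice $\Omega$ of $\Omega$ and $\Omega'$ (abusing notation, say $\Lambda \leq \Omega, \Omega'$), so that $a\Lambda$ is a common sublattice of $a\Omega$ and $a\Omega'$. Then by definition
\[
\qc(a\Omega, a\Omega') = \mathfrak{c}(a\Omega, a\Lambda) - \frac{[a\Omega:a\Lambda]}{[a\Omega':a\Lambda]}\mathfrak{c}(a\Omega', a\Lambda) = a^{-2}\left(\mathfrak{c}(\Omega,\Lambda) - \frac{[\Omega:\Lambda]}{[\Omega':\Lambda]}\mathfrak{c}(\Omega',\Lambda)\right) = a^{-2}\qc(\Omega,\Omega'),
\]
using the scaling law and index-invariance just established; multiplying through by $a^2$ gives (1).

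For item (2): here $\Omega$ and $\frac{m}{n}\Omega$ have $mn\Omega$ (or $\mathrm{lcm}$) as a common sublattice — more simply, $mn\Omega \leq \Omega$ and $mn\Omega = n(m\Omega) \leq \frac{m}{n}\Omega$ since $\frac{m}{n}\Omega \supseteq \frac{m}{n}\cdot n\Omega = m\Omega \supseteq mn\Omega$; wait, I should instead just take the common sublattice to be $mn\,\Omega$ and compute the relevant indices, $[\Omega : mn\Omega] = (mn)^2$ and $[\frac{m}{n}\Omega : mn\Omega] = [\frac{m}{n}\Omega : m^2 \cdot \frac{1}{m}\Omega]$... — the cleanest route is to plug into the definition with this common sublattice and use the multiplicativity of $\mathfrak{c}$ under towers of lattices (which itself follows from composing the identities in Lemma \ref{cosets}), namely $\mathfrak{c}(\Omega, kL) = \mathfrak{c}(\Omega, L) + [\Omega:L]\cdot(\text{something})$ — actually the transparent identity is $\mathfrak{c}(\Omega, mn\Omega) = \mathfrak{c}(\Omega, m\Omega) + m^2\mathfrak{c}(m\Omega, mn\Omega) = \mathfrak{c}(\Omega, m\Omega) + m^2 \cdot m^{-2}\mathfrak{c}(\Omega, n\Omega) = \mathfrak{c}(\Omega, m\Omega) + \mathfrak{c}(\Omega, n\Omega)$ using the scaling law again. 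Feeding these additive relations into the definition of $\qc(\Omega, \frac{m}{n}\Omega)$ and cancelling yields $\mathfrak{c}(\Omega, m\Omega) - \mathfrak{c}(\Omega, n\Omega)$.

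For item (3): if $\Omega$ and $\Omega'$ are invariant ($\overline{\Omega} = \Omega$, $\overline{\Omega'} = \Omega'$), then they admit an invariant common sublattice $\Lambda$ (e.g. intersect a common sublattice with its conjugate, using Lemma \ref{C-wp}(1) to see the intersection still contains a lattice). Using Lemma \ref{wp_conjugate}, $\wp_{\overline{\Omega}}(u) = \overline{\wp_\Omega(\overline{u})}$; applying complex conjugation to both sides of the defining identity $\wp_{\Omega}(u) = \sum_i \wp_\Lambda(u + a_i) - \mathfrak{c}(\Omega,\Lambda)$ and using $\overline{\Omega} = \Omega$, $\overline{\Lambda} = \Lambda$ shows $\mathfrak{c}(\Omega,\Lambda) = \overline{\mathfrak{c}(\Omega,\Lambda)}$, i.e. $\mathfrak{c}(\Omega,\Lambda) \in \mathbb{R}$, and likewise $\mathfrak{c}(\Omega',\Lambda) \in \mathbb{R}$; the index is an integer, so $\qc(\Omega,\Omega') \in \mathbb{R}$. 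The main obstacle I anticipate is purely bookkeeping: making sure the scaling and tower identities for $\mathfrak{c}$ are stated in exactly the form needed and that the choice of common sublattice in each part is handled uniformly — there is no conceptual difficulty, since everything reduces to the well-definedness already proved in Definition-Proposition \ref{defci} plus the functional equations of $\wp$.
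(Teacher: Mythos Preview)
Your proposal is correct and follows essentially the same approach as the paper: establish the scaling law $\mathfrak{c}(a\Omega_2,a\Omega_1)=a^{-2}\mathfrak{c}(\Omega_2,\Omega_1)$ from Fact~\ref{Chandrasekharan} and Lemma~\ref{cosets}, then plug into Definition-Proposition~\ref{defci}; for (3), pass to an invariant common sublattice and conjugate the defining identity for $\mathfrak{c}$ using Lemma~\ref{wp_conjugate}.

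One small difference worth noting: for (2) the paper avoids your detour through $mn\Omega$ and the tower identity $\mathfrak{c}(\Omega,mn\Omega)=\mathfrak{c}(\Omega,m\Omega)+\mathfrak{c}(\Omega,n\Omega)$. It simply observes that $m\Omega$ is already a common sublattice of $\Omega$ and $\tfrac{m}{n}\Omega$, so the definition gives
\[
\qc(\Omega,\tfrac{m}{n}\Omega)=\mathfrak{c}(\Omega,m\Omega)-\frac{[\Omega:m\Omega]}{[\tfrac{m}{n}\Omega:m\Omega]}\,\mathfrak{c}(\tfrac{m}{n}\Omega,m\Omega)
=\mathfrak{c}(\Omega,m\Omega)-\frac{m^2}{n^2}\Bigl(\tfrac{m}{n}\Bigr)^{-2}\mathfrak{c}(\Omega,n\Omega),
\]
using the scaling law once on the second term. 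Your route works too, but this saves the bookkeeping you were worried about. For (3), the paper makes explicit what you left implicit: that the conjugates $\overline{a_i}$ again form a set of coset representatives (since both lattices are invariant), so the independence of $\mathfrak{c}$ from the chosen representatives forces $\overline{\mathfrak{c}(\Omega,\Lambda)}=\mathfrak{c}(\Omega,\Lambda)$.
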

\begin{proof}$(1)$ Let $\Omega''$ be a common sublattice of $\Omega$ and $\Omega'$.
Take $b_1:=0,b_2,\ldots,b_{n}$ representatives of the cosets of the quotient of $\Omega$ by $\Omega''$. 
Then, from the definition of $\mathfrak{c}$, it follows that $\mathfrak{c}(\Omega,\Omega'')=\sum_{i=2}^{n} \wp_{\Omega'}(b_i)$. 
Since $0,ab_2,\ldots,ab_{n}$ are representatives of the cosets of the quotient of $a\Omega$ by $a\Omega''$, we obtain
\[
\mathfrak{c}(a\Omega,a\Omega'')=\sum_{i=2}^{n} \wp_{a\Omega''}(ab_i)= a^{-2}\sum_{i=2}^{n}\wp_{\Omega''}(b_i)=a^{-2}\mathfrak{c}(\Omega,\Omega'').
\]
Similarly, $\mathfrak{c}(a\Omega',a\Omega'')=a^{-2}\mathfrak{c}(\Omega',\Omega'')$.
Therefore,
\[
\qc(a\Omega,a\Omega')=\mathfrak{c}(a\Omega,a\Omega'')-\frac{[a\Omega:a\Omega'']}{[a\Omega':a\Omega'']}\mathfrak{c}(a\Omega ',a\Omega'')=a^{-2}\qc(\Omega,\Omega'). 
\]
$(2)$ Note that $m\Omega$ is a common sublattice of $\Omega$ and $\frac{m}{n}\Omega$.
By definition and property $(2)$, we deduce
\begin{equation*}
\begin{split}
\qc(\Omega,\tfrac{m}{n}\Omega)= & \mathfrak{c}(\Omega,m\Omega)-\frac{[\Omega:m\Omega]}{[\frac{m}{n}\Omega:m\Omega]}\mathfrak{c}(\tfrac{m}{n}\Omega,m\Omega)=\\
= & \mathfrak{c}(\Omega,m\Omega)-\tfrac{m^2}{n^2}\big(\tfrac{m}{n}\big)^{-2}\mathfrak{c}(\Omega,n\Omega), 
\end{split} 
\end{equation*}
as required. 

\noindent (3) We first assume that $\Omega'\leq \Omega$. We show that there are $a_1,\ldots,a_n\in \Omega$ such that, 
for each $i\in \{1,\ldots,n\}$ there exists $j\in \{1,\ldots ,n\}$ satisfying $\Omega'+a_i=\Omega'+\overline{a_j}$.
Indeed, let $\Omega=\bigcup _{i=1}^n \Omega'+b_i$.
If $\Omega'+\overline{b_1}=\Omega'+b_1$ then we take $a_1:=b_1$.
Otherwise, there exists $j\in \{1,\ldots ,n\}$ such that $\Omega'+\overline{b_1}=\Omega'+b_j$, so we define $a_1:=b_i$ and $a_j:=b_1$.
The claim is proved repeating this process.

Now, $0=\qc(\Omega,\Omega')+\sum _{i=1}^n\wp _{\Omega'}(u+a_i)-\wp _{\Omega}(u)$.
By our choice of $a_1,\ldots ,a_n$ and applying Lemma\,\ref{wp_conjugate} we get
\[
0=\overline{\qc(\Omega,\Omega')}+\sum _{i=1}^n\wp _{\Omega'}(u+\overline{a_i})-\wp _{\Omega}(u)=\overline{\qc(\Omega,\Omega')}+\sum _{i=1}^n\wp _{\Omega'}(u+a_i)-\wp _{\Omega}(u),
\]
and therefore $\qc(\Omega,\Omega')\in \mathbb{R}$, as required.

Finally, if $\Omega'$ and $\Omega$ have a common sublattice $\Omega''$, then  we can assume that $\Omega''$ is invariant by Lemma\,\ref{C-wp}$(1)$ and Fact\,\ref{BDOrank}(1). Therefore $\qc(\Omega,\Omega')\in \R$ by definition and the previous case.\end{proof}

\section{Two-dimensional complex case}\label{Sproofs}

Now we have all the ingredients to prove the classification of the  two-dimensional simply connected abelian locally $\C$-Nash groups. We will use  the notation introduce in pages \pageref{Z-rank} and \pageref{Weierstrass algebraicity}.

\begin{theorem}\label{2dim C-classification}
$(I)$ Every two-dimensional simply connected abelian locally $\mathbb{C}$-Nash group is isomorphic
to a group of one and only one of the following types:

\smallskip

\noindent\emph{(1c)} A direct product of one-dimensional locally $\mathbb{C}$-Nash groups.
 
\noindent\emph{(2c)} $(\mathbb{C}^2,+,(\wp _\omega(u), v-\zeta _\omega (u))$, for some 
 $\omega\in \mathbb{C}\setminus \mathbb{R}$.
 
\noindent\emph{(3c)} $(\mathbb{C}^2,+,(\wp _\omega(u), e^v\widetilde{\sigma} _{\omega,\xi} (u)))$, 
 for some $\omega\in \mathbb{C}\setminus \mathbb{R}$ and $\xi \notin {\langle 1,\omega \rangle}_{\mathbb{Q}}$.
 
\noindent\emph{(4c)} Universal covering groups of complex abelian surfaces which are not direct products of elliptic curves.

\smallskip

$(II)$ The isomorphism classes are as follows:

\smallskip
\noindent\emph{(i)} Two direct products are isomorphic if and only if their factor groups are isomorphic.

\noindent\emph{(ii)} Two groups of type \emph{(2c)} defined by data $\omega _1$ and $\omega _2$, respectively, 
are isomorphic if and only if there exist $a,b,c,d\in \mathbb{Z}$ with $ad-bc\neq 0$ such that 
$\omega _2=\frac{a\omega _1+b}{c\omega _1+d}$.

\noindent\emph{(iii)} Two groups of type \emph{(3c)} defined by data $(\omega _1,\xi _1)$ and $(\omega _2,\xi _2)$, 
respectively, are isomorphic if and only if there exist $a,b,c,d\in \mathbb{Z}$ with $ad-bc\neq 0$ such that 
$\omega _2=\frac{a\omega _1+b}{c\omega _1+d}$ and $(c\omega _1+d)\xi _2 \in {\langle 1,\omega _1 \rangle}_{\mathbb{Q}}+\xi _1 K_{\omega _1}^*$. 

\noindent\emph{(iv)} Two groups that both are universal covering groups of abelian surfaces are 
isomorphic if and only if the corresponding abelian surfaces are isogenous as abelian varieties.

\end{theorem}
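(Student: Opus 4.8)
The plan is to assemble the statement from the material of Sections \ref{one-dimensional}--\ref{SPainleve}: Painlev\'e's description (Fact \ref{TP}), the rank computation (Proposition \ref{rank}), the one-dimensional classification (Theorem \ref{1dim C-classification}), the intra-family characterisations (Propositions \ref{Z general} and \ref{S general} together with Corollary \ref{S characterization}), and the Brownawell--Kubota independence criterion (Fact \ref{Weierstrass algebraicity}). For part $(I)$ I would start with an arbitrary $G=(\mathbb{C}^2,+,f)$ (Fact \ref{Kabelian}(1)), so that $f$ admits an AAT and is functionally independent; Fact \ref{TP} then places the coordinates of $f$ in $\mathbb{C}(g_i\circ\alpha)^{alg}$ for a Painlev\'e representative $g_i$ and some $\alpha\in\GL_2(\mathbb{C})$ (or, for $\mathcal{P}_6$, in $\mathbb{C}(h)^{alg}$ for a transcendence basis $h$ of the corresponding field). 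Since $f$ and the chosen representative both have transcendence degree $2$ over $\mathbb{C}$, the containment goes both ways, so the representative composed with $\alpha$ is algebraic over $\mathbb{C}(f)$ and Fact \ref{Kabelian}(2) identifies $G$ with the group defined by that representative.

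It then remains to match each representative with the list. The maps $g_1,g_2,g_3$, and the cases $g_{4,\Omega,0}$ and $g_{5,\Omega,0}$ (where $\widetilde{\sigma}_{\Omega,0}\equiv 1$), become after normalising the lattice to $\langle 1,\tau\rangle_{\mathbb{Z}}$ (Lemma \ref{Z lattice}, Lemma \ref{S parameter} and Fact \ref{Chandrasekharan}) direct products of one-dimensional groups, hence type $(1c)$; the map $g_{4,\Omega,1}$ yields, via Lemma \ref{Z lattice}, a group of type $(2c)$; the map $g_{5,\Omega,\xi}$ with $\xi\neq 0$ yields, via Lemma \ref{S parameter}, the group $(\mathbb{C}^2,+,(\wp_\tau(u),e^v\widetilde{\sigma}_{\tau,\xi'}(u)))$ with $\xi'=\omega_1^{-1}\xi$, which is of type $(3c)$ if $\xi'\notin\langle 1,\tau\rangle_{\mathbb{Q}}$ and which, if $\xi'\in\langle 1,\tau\rangle_{\mathbb{Q}}$, is isomorphic to $(\mathbb{C},+,\wp_\tau)\times(\mathbb{C},+,\exp)$ because Corollary \ref{S characterization} then gives $0\in\varXi(\tau,\xi')$; finally, for a representative $h$ of $\mathcal{P}_6$ the group $\Lambda_h$ is discrete of rank $4$ (Proposition \ref{rank}), so $\mathbb{C}^2/\Lambda_h$ is a complex torus whose field of meromorphic functions has transcendence degree $2=\dim$, hence an abelian surface $A$ with $G$ its universal covering group, and $G$ is of type $(1c)$ when $A$ is isogenous to a product of elliptic curves (an isogeny presents the two period lattices as commensurable after a linear change of coordinates, so Fact \ref{Kabelian}(2) applies) and of type $(4c)$ otherwise.

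For the ``one and only one'' clause I would use the rank: by Proposition \ref{rank} and Fact \ref{BDOrank}(2) the rank of $\Lambda_f$ is an isomorphism invariant, and it separates all types except for the coincidences, which all occur at a fixed rank -- at rank $2$, type $(2c)$ against $(\mathbb{C},+,\exp)^2$ and $(\mathbb{C},+,\wp)\times(\mathbb{C},+,\id)$; at rank $3$, type $(3c)$ against $(\mathbb{C},+,\exp)\times(\mathbb{C},+,\wp)$; at rank $4$, type $(4c)$ against $(\mathbb{C},+,\wp)\times(\mathbb{C},+,\wp)$. The rank-$3$ coincidence and the $(2c)$-versus-$(\mathbb{C},+,\wp)\times(\mathbb{C},+,\id)$ coincidence reduce to Propositions \ref{Z general} and \ref{S general} (or the Claims inside their proofs) with parameter $\xi=0$: for instance an isomorphism of a type-$(3c)$ group with $(\mathbb{C}^2,+,(\wp_{\omega'}(u),e^v\widetilde{\sigma}_{\omega',0}(u)))$ would force, by Proposition \ref{S general} and Corollary \ref{S characterization}, $0\in\varXi(\omega,\xi)=\langle 1,\omega\rangle_{\mathbb{Q}}+K_\omega^*\xi$, i.e. $\xi\in\langle 1,\omega\rangle_{\mathbb{Q}}$, contrary to $(3c)$. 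The $(2c)$-versus-$(\mathbb{C},+,\exp)^2$ coincidence I would exclude directly with Fact \ref{Weierstrass algebraicity}: writing such an isomorphism as a linear map and restricting to a generic line $v=tu$ produces $\wp_\omega(cu)$ algebraic over $\mathbb{C}(e^u,e^{tu})$ for generic $t$, against the algebraic independence of $u,\wp_\omega(cu),\zeta_\omega(cu),e^u,e^{tu}$. The rank-$4$ coincidence is then automatic from part $(II)(iv)$: the product $(\mathbb{C},+,\wp_{\omega_1})\times(\mathbb{C},+,\wp_{\omega_2})$ is the universal covering group of $E_{\omega_1}\times E_{\omega_2}$, so an isomorphism with a type-$(4c)$ group would make its surface isogenous to that product, against the definition of $(4c)$.

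For part $(II)$, assertions $(ii)$ and $(iii)$ are immediate from Fact \ref{Kabelian}(2) together with Proposition \ref{Z general} (for $(ii)$), respectively Proposition \ref{S general} and Corollary \ref{S characterization} (for $(iii)$), the hypothesis $\xi\notin\langle 1,\omega\rangle_{\mathbb{Q}}$ in $(3c)$ being exactly what keeps the group out of type $(1c)$. Assertion $(iv)$ follows from Fact \ref{Kabelian}(2): an isomorphism of universal covering groups of abelian surfaces is a $\GL_2(\mathbb{C})$-map identifying the two fields of rational functions up to algebraicity, hence (by Fact \ref{BDOrank}(1) applied both ways) the two period lattices up to commensurability and a linear change, i.e. an isogeny -- and conversely -- which is where Serre's classification is recovered by analytic means. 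For $(i)$ the ``if'' direction is trivial, and for ``only if'' one uses Theorem \ref{1dim C-classification} and the rank, which fixes the multiset of factor-shapes except that rank $2$ admits both $\{\id,\wp\}$ and $\{\exp,\exp\}$; the clash $\{\id,\wp\}$ versus $\{\exp,\exp\}$ is excluded by Fact \ref{Weierstrass algebraicity} exactly as above, and once the shapes agree the elliptic factors are matched up to isogeny by Fact \ref{Weierstrass algebraicity} and Theorem \ref{1dim C-classification} (for the $\{\wp,\wp\}$-shape one may instead invoke $(iv)$ together with Poincar\'e reducibility). I expect the main obstacle to be the ``one and only one'' clause: each coincidence needs its own short argument and keeping track of which one-dimensional blocks can occur at each rank uses the full strength of Section \ref{SPainleve}, especially Corollary \ref{S characterization}; the geometric identification of the $\mathcal{P}_6$-groups with universal covers of abelian surfaces, and of their isomorphisms with isogenies, also relies on standard but non-trivial facts about complex tori.
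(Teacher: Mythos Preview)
Your proposal is correct and follows essentially the same architecture as the paper: Painlev\'e's list plus the rank invariant to separate the families, Lemmas~\ref{Z lattice} and~\ref{S parameter} to normalise the lattices, Corollary~\ref{S characterization} to decide when a $\mathcal{P}_5$-group degenerates to a product, and Propositions~\ref{Z general} and~\ref{S general} for parts (II)(ii)--(iii). The one place where the paper does substantially more than your sketch is the $\mathcal{P}_6$ case: you assert that $(\mathbb{C}^2,+,h)$ \emph{is} the universal covering of the abelian surface $\mathbb{C}^2/\Lambda$, but this requires checking that the quotient map lies in the locally $\mathbb{C}$-Nash category, which the paper carries out explicitly via Siegel's theta-function embedding of $\mathbb{C}^2/\Lambda$ into projective space and a transcendence-degree comparison of the resulting affine chart against $h$; your closing remark that this ``relies on standard but non-trivial facts about complex tori'' is accurate, and the paper supplies exactly those facts. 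A minor stylistic difference: for (II)(i) the paper simply evaluates the linear isomorphism at $v=0$ to reduce to the one-dimensional classification, which is shorter than your rank-plus-case-analysis route (and avoids invoking Poincar\'e reducibility for the $\{\wp,\wp\}$ shape); and for (II)(iv) the paper cites \cite[Thm.~3.10]{BDOLCN} rather than arguing directly with commensurable lattices as you do.
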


\begin{proof}
(I) Firstly note that, as mentioned after Fact\,\ref{TP},  the groups that appear  in the statement are indeed  locally $\C$-Nash groups. On the other hand, any two-dimensional simply connected abelian locally
$\mathbb{C}$-Nash group is isomorphic to some $(\mathbb{C}^2,+,f)$, where $f$ is a meromorphic map satisfying AAT and condition (*) of page \pageref{Z-rank}. In particular, $f$ is functionally independent so we can apply Fact \ref{TP},  and hence assume that either $f$ is exactly one of the functions defining the first five families
of the Painlev\'e's description, or that $f$ is algebraic over a field of the six family $\mathcal{P}_6$.
Moreover, by Lemmas \ref{Z lattice} and \ref{S parameter}, we can assume that all the relevant lattices of $\C$ are of the form
${\langle  1, \omega\rangle}_{\mathbb{Z}}$ for some $\omega\in \mathbb{C} \setminus \mathbb{R}$.

Clearly, if $f$ belongs to $\mathcal{P}_1$, $\mathcal{P}_2$ or $\mathcal{P}_3$ then the corresponding group is a direct product of one-dimensional groups.
We now study direct products in the families $\mathcal{P}_4$ and $\mathcal{P}_5$.

\smallskip
\noindent \emph{Claim $(1)$.} \emph{$(\mathbb{C}^2,+,(\wp _\omega(u), v-\xi\zeta _\omega (u)))$ is a direct product  of one-dimensional locally $\C$-Nash groups if and only if $\xi=0$.}

\proof For the nontrivial implication, suppose the group is not a direct product. Then, by rank considerations, there are only two possible cases  (see Fact \ref{BDOrank}(2) and Proposition \ref{rank}). One case is when there exists an isomorphism
$$\alpha:(\mathbb{C}^2,+,(\wp _\omega(u), v-\zeta _\omega (u))) \rightarrow (\mathbb{C}^2,+,(e^u,e^v)).$$
Let $\alpha(u,v):=(au+bv,cu+dv)$. Assume  first that $b\neq 0$ (the case $d\not=0$ is similar). Then, we have that
$
\left( e^{au+bv}, e^{cu+dv} \right)\in\mathbb{C}(\wp _{\Omega}(u),\zeta _{\Omega} (u), v)^{alg}.
$
Since $e^{bv}$ is algebraic over $\mathbb{C}(e^{au}, e^{au+bv})$, we get 
$e^{bv}\in\mathbb{C}(e^{au}, \wp _{\Omega}(u),\zeta _{\Omega} (u), v)^{alg}$, a contradiction.

In the other case, we have  an isomorphism
\[
\alpha: (\mathbb{C}^2,+,(\wp_{\omega}(u), v-\zeta _{\omega}(u))) \rightarrow (\mathbb{C}^2,+,(\wp_{\omega_0}(u), v)),
\] for some $\omega_0\in \C\setminus \R$.
Let $\alpha^{-1}(u,v):=(au+bv,cu+dv)$. Note that $\big(\wp _{\omega}(u), v-\zeta _{\omega}(u)\big)\circ \alpha^{-1}\in\mathbb{C}\big(\wp _{\omega _0}(u), v\big)^{alg}$.
By the claim in the proof of Lemma \ref{Z general}, we deduce that $b=0$ and $\wp _{\omega}(au)\in\mathbb{C}(\wp _{\omega_0}(u))^{alg}$.
In particular, $cu+dv-\zeta _{\omega}(au)\in\mathbb{C}\big(\wp _{\omega}(au),v\big)^{alg}$.
Evaluating $v=0$, we get  a  contradiction with Fact \ref{Weierstrass algebraicity}.
\qed

\smallskip
\noindent \emph{Claim $(2)$.}  \emph{$(\mathbb{C}^2,+,(\wp _\omega(u), e^v\widetilde{\sigma} _{\omega,\xi} (u)))$ is a direct product of  $1$-dimensional locally $\C$-Nash groups if and only if $\xi\in {\langle 1,\omega \rangle}_{\mathbb{Q}}$.}
\proof We first show left to right implication.  By rank considerations, for some $\omega_0\in \C\setminus \R$ there exists an isomorphism
\[
\alpha:(\mathbb{C}^2,+,(\wp _{\omega}(u),
e^v\widetilde{\sigma} _{\omega,\xi}(u)) \rightarrow
(\mathbb{C}^2,+,(\wp _{\omega_0}(u),e^v)).
\]
Then $\big(\wp _{\omega_0}(u),e^v\big)\circ \alpha\in\mathbb{C}\big(\wp _{\omega}(u), e^v\widetilde{\sigma} _{\omega,\xi}(u)\big)^{alg}$, so
by Lemma \ref{S general}, $0\in \varXi (\omega,\xi)$.
By Lemmas \ref{S Theta}.$(1)$ and \ref{S characterization} we get that
$\xi\in \varXi (\omega,0)={\langle 1,\omega\rangle}_{\mathbb{Q}}$, as required.
On the other hand, if $\xi\in {\langle 1,\omega\rangle}_{\mathbb{Q}}=\varXi (\omega,0)$ then $0\in \varXi (\omega,\xi)$ and therefore, by Lemma \ref{S general},  $(\mathbb{C}^2,+,(\wp _{\omega}(u),
e^v\widetilde{\sigma} _{\omega,\xi}(u))$ and 
$(\mathbb{C}^2,+,(\wp _{\omega}(u),e^v))$ are isomorphic, as required.\qed

All in all, to prove that the types from (1c) to (4c) cover all the possible isomorphic types it only remains to show that if the coordinate functions of $f$ are in $\mathcal{P}_6$ then $(\mathbb{C}^2,+,f)$  is isomorphic to the universal covering of an abelian surface. Suppose that the coordinates functions of $f:=(f_1,f_2)$ are algebraic over a field of the family $\mathcal{P}_6$,
\emph{i.e.}, over a field of the form $\mathbb{C}(\Lambda)$ for some lattice $\Lambda$ of $\C^2$ and satisfying $\tr_{\mathbb{C}} \mathbb{C}(\Lambda)=2$. Because of the latter we know  that $\mathbb{C}^2/\Lambda$ is an abelian variety. Thus, we only have to show that the analytic covering map $(\mathbb{C}^2,+,f)\rightarrow \mathbb{C}^2/\Lambda$ lies properly in the locally $\mathbb{C}$-Nash category. For that reason, we must describe the algebraic structure of $\mathbb{C}^2/\Lambda$.

By \cite[Ch.V,\,\textsection\,$12$,\,Thm.\,$1$]{Siegel}, there are theta functions $g_0(z), \ldots ,g_m(z)$ whose quotients generate
$\mathbb{C}(\Lambda )$ and such that if we write out all the homogeneous algebraic relations of the form
$P(g_0(z),\ldots ,g_m(z))=0$ then the equations $P(x_0,\ldots ,x_m)=0$ define a nonsingular irreducible algebraic variety $\mathcal{N}$
in a $m$-dimensional projective space, which may be mapped biregularly onto the period torus $\mathcal{T}:=\mathbb{C}^2/\Lambda$
by the correspondence $x_i=g_i(z)$ for $i\in \{0,\ldots ,m\}$.
Thus, define
\[
\Phi:\mathcal{T}\rightarrow \mathcal{N}: z\mapsto (g_0(z):\ldots :g_{m}(z)).
\]
We can assume that $g_0(0)\neq 0$.
Clearly, the addition group operation in $\mathcal{T}$ induces a group operation in $\mathcal{N}$ via
\[
a \oplus  b := \Phi (\Phi ^{-1}( a)+ \Phi ^{-1}(b)), \quad  a, b\in \mathcal{N}.
\]
This group operation is regular and $\mathcal{N}$ is an algebraic group (see the paragraph above \cite[Ch.V, \textsection $13$, Thm.\,$2$]{Siegel}).
Now, consider the continuous homomorphism
\[
\Psi:(\mathbb{C}^2,+,f)\rightarrow  \mathcal{N}:(u,v)\mapsto \Phi(\bar{u},\bar{v})
\]
and let us show that it is a locally $\mathbb{C}$-Nash map.
We can assume that a chart of the identity of the locally $\mathbb{C}$-Nash structure on $\mathcal{N}$ is given
by the restriction of the projection $\pi:\mathcal{N}\rightarrow \mathbb{C}^2:(x_0:\ldots:x_m)\mapsto (x_1x^{-1}_0,x_2x^{-1}_0)$ to some open neighbourhood $W$ of the identity of $\mathcal{N}$.
By \cite[Prop.\,3.3]{BDOLCN}, the map $\Psi$ is a locally $\mathbb{C}$-Nash map if $\pi \circ\Psi \circ f^{-1}$ is a $\mathbb{C}$-Nash map.
In other words, if $(g_1g^{-1}_0,g_2g^{-1}_0)\in\C(f)^{alg}$.
Recall that the theta functions satisfy
\[
\mathbb{C}(\Lambda)=\mathbb{C}(g_ig^{-1}_j \suchthat 1\leq i,j\leq m)=\mathbb{C}(g_ig^{-1}_0 \suchthat 1\leq i\leq m).
\]
Since $\tr_{\mathbb{C}} \mathbb{C}(\Lambda)=2$ and we are assuming that $g_1g^{-1}_0$ and $g_2g^{-1}_0$ are algebraically independent, it follows that
$\mathbb{C}(\Lambda)$ is an algebraic extension of $\mathbb{C}(g_1g^{-1}_0,g_2g^{-1}_0)$.
In particular, $f\in\mathbb{C}(g_1g^{-1}_0,g_2g^{-1}_0)^{alg}$.
This shows that $(g_1g^{-1}_0,g_2g^{-1}_0)\in\C(f)^{alg}$, as required.

To finish the proof of (I) we must show that two groups  of different types (from (1c) to (4c)) cannot be isomorphic. This  follows by Fact \ref{BDOrank}(2) and Proposition \ref{rank} considering ranks and the cases for which rank do not make a difference, are done in the proofs of Claim (1) and Claim (2) above in this proof.

(II) To prove (i), let $f_1,f_2,h_1,h_2$ be $\id,\exp$ or $\wp_\Omega$ for a lattice $\Omega$ of $(\C,+)$. Let $$\alpha:(\mathbb{C}^2,+,(f_1(u),f_2(v)))\rightarrow
(\mathbb{C}^2,+,(h_1(u),h_2(v)))$$ be an isomorphism, say $\alpha(u,v)=(au+bv,cu+dv)$. Therefore,  $f_1(au+bv)$ and $f_2(cu+dv)$ belong to   $\C(h_1(u),h_2(v))^{alg}$. Assuming first that $a\neq 0$ and evaluating $v=0$ we get the result. If $a=0$, then the result is proved in a similar way.

The other cases follow from Proposition\,\ref{Z general},   Proposition\,\ref{S general} and  Corollary\,\ref{S characterization}, and \cite[Thm.\,3.10]{BDOLCN}, respectively.
\end{proof}

We end this section by computing the automorphism groups of the two-dimensional locally $\C$-Nash groups given in  Theorem\,\ref{2dim C-classification}. 
We need to introduce more notation (recall also Definition\,\ref{defci}).
Given two complex numbers $a$ and $b$, we denote by $\diag(a,b)$ the diagonal $2\times 2$ matrix whose entries in 
the diagonal are $a$ and $b$ and that, given two subsets $A$ and $B$, we denote $\Diag(A,B):=\{\diag(a,b)\,|\,a\in A,\,b\in B\}$.
The isomorphisms and automorphisms of the universal coverings of abelian varieties --which are not isomorphic to a direct product
of elliptic curves-- are beyond the objectives of this paper.

\begin{proposition}\label{C-automorphisms 2}
Let $\omega,\omega_1,\omega_2\in \mathbb{C}\setminus \mathbb{R}$, $\xi \in \mathbb{C} \setminus {\langle 1,\omega\rangle}_{\mathbb{Q}}$
and  $\Omega :={\langle 1,\omega \rangle}_\mathbb{Z}$. 
Then,

\smallskip

\noindent\emph{(1)} $\Aut(\mathbb{C}^2,+,\id\times \id)=\GL_2(\mathbb{C})$.\\
\emph{(2)} $\Aut(\mathbb{C}^2,+,\exp\times \id)= \Diag(\mathbb{Q}^*, \mathbb{C}^*)$.\\
\emph{(3)} $\Aut(\mathbb{C}^2,+,\wp _\omega \times \id)= \Diag( K_\omega ^*, \mathbb{C}^*)$.\\
\emph{(4)} $\Aut(\mathbb{C}^2,+,\exp\times \exp)=\GL_2(\mathbb{Q})$.\\
\emph{(5)} $\Aut(\mathbb{C}^2,+,\wp _\omega \times \exp)=\Diag(K_\omega ^*, \mathbb{Q}^*)$.\\
\emph{(6.1)} $\Aut(\mathbb{C}^2,+,\wp _{\omega_1} \times \wp_{\omega_2})=\diag(1,\tau ^{-1})\GL_2(K_{\omega_1})\diag(1,\tau )$,  

if  $\wp_{\omega_1}(\tau u)\in\mathbb{C}(\wp_{\omega_2}(u))^{alg}$, for some $\tau \in \mathbb{C}^*$,  and\\
\emph{(6.2)} $\Aut(\mathbb{C}^2,+,\wp _{\omega_1} \times \wp_{\omega_2})=\Diag(K^*_{\omega_1}, K^*_{\omega_2})$,  otherwise.\\ 
\emph{(7)} $\Aut(\mathbb{C}^2,+,(\wp _\omega(u),v-\zeta _\omega(u)))=
\left\{ q\begin{pmatrix} 1 & 0\\ 
\qc(\Omega,q\Omega) & [\Omega: q\Omega]q^{-2} \end{pmatrix}
\suchthat q\in K_\omega^* \right\}$.\\
\emph{(8)} $\Aut(\mathbb{C}^2,+,(\wp _\omega(u),e^v\widetilde{\sigma} _{\omega,\xi}(u)))=
\left\{ q\begin{pmatrix} 1 & 0\\ \xi \qc(\Omega,q\Omega) & 1 \end{pmatrix}
\suchthat q\in \mathbb{Q}^* \right\}$.\\
\end{proposition}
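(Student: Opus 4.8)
The plan is to compute, for each of the eight families in Theorem~\ref{2dim C-classification}, exactly which $\alpha\in\GL_2(\mathbb{C})$ satisfy $f\circ\alpha\in\mathbb{C}(f)^{alg}$, where $f$ is the defining map. By Fact~\ref{Kabelian}(2) this set is precisely $\Aut(\mathbb{C}^2,+,f)$. Cases (1)--(6) concern direct products, so one reduces to the one-variable computations of Proposition~\ref{C-automorphisms} combined with the rigidity statement that such an $\alpha$ must respect the product structure; the genuinely new work is in cases (7) and (8).

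\textbf{Direct product cases (1)--(6).} For $f=f_1\times f_2$ with each $f_i\in\{\id,\exp,\wp_{\omega_i}\}$, write $\alpha(u,v)=(au+bv,cu+dv)$. First I would argue, exactly as in part (II)(i) of the proof of Theorem~\ref{2dim C-classification} and in the Claim inside Proposition~\ref{Z general}, that the off-diagonal entries are forced to vanish whenever the corresponding factor is $\exp$ or $\wp$: if $b\neq 0$ then specialising the variable $u$ and using algebraic independence (Fact~\ref{Weierstrass algebraicity}) gives a contradiction, so $b=0$, and symmetrically $c=0$, unless the relevant factor is $\id$. When a factor is $\id$ the corresponding row/column is unconstrained (one may add an arbitrary multiple of the other variable), which explains the full column $\mathbb{C}^*$ in (2),(3),(5) and the full group $\GL_2(\mathbb{C})$ in (1) and $\GL_2(\mathbb{Q})$ in (4) (for (4) one also uses $\Aut(\mathbb{C},+,\exp)\cong\mathbb{Q}^*$ entrywise). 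Once $\alpha=\diag(a,d)$, the condition decouples into $f_i\circ(\text{mult. by }a\text{ or }d)\in\mathbb{C}(f_i)^{alg}$, which is Proposition~\ref{C-automorphisms}: multiplication scalars lie in $\mathbb{C}^*$, $\mathbb{Q}^*$ or $K_{\omega_i}^*$ according as $f_i$ is $\id$, $\exp$ or $\wp_{\omega_i}$. Case (6.1)/(6.2) is the same, with the extra subtlety that an isomorphism $(\mathbb{C},+,\wp_{\omega_1})\cong(\mathbb{C},+,\wp_{\omega_2})$ may exist (precisely when the elliptic curves are isogenous, Theorem~\ref{1dim C-classification}); conjugating by $\diag(1,\tau)$, where $\tau$ realises such an isogeny, reduces (6.1) to the case $\omega_1=\omega_2$, whose automorphism group is $\GL_2(K_{\omega_1})$ — here the off-diagonal entries survive precisely because the two coordinate functions become algebraically related.

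\textbf{Case (8).} Here $f=(\wp_\omega(u),e^v\widetilde{\sigma}_{\omega,\xi}(u))$ with $\xi\notin\langle1,\omega\rangle_{\mathbb{Q}}$. Given $\alpha(u,v)=(au+bv,cu+dv)$ with $f\circ\alpha\in\mathbb{C}(f)^{alg}$, Lemma~\ref{representation} applied with $\Omega_1=\Omega_2=\Omega$, $\xi_1=\xi_2=\xi$ immediately forces $b=0$, $\wp_\omega(au)\in\mathbb{C}(\wp_\omega(u))^{alg}$ (so $a\in K_\omega^*$ by Proposition~\ref{C-automorphisms}), $d\in\mathbb{Q}^*$, and $a^{-1}\xi-q\xi\in\Omega$ for some $q\in\mathbb{Q}^*$; but since $\xi\notin\langle1,\omega\rangle_{\mathbb{Q}}$ this last relation with $a\in K_\omega^*$ — recall $K_\omega^*\subseteq\langle1,\omega\rangle_{\mathbb{Q}}^*$ — forces $a=q\in\mathbb{Q}^*$ (and then $\Omega\ni(a^{-1}-q)\xi$ with a rational scalar on an element outside $\langle1,\omega\rangle_\mathbb{Q}$ forces that scalar to be $0$), and moreover $\xi-d\xi\in\Omega$ which, again by $\xi\notin\langle1,\omega\rangle_\mathbb{Q}$, forces $d=a$. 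So $\alpha=q\begin{pmatrix}1&0\\ \gamma/q&1\end{pmatrix}$ for a single unknown $\gamma=c$. To pin down $c$, I would invoke Corollary~\ref{S general: specialcase}, which tells us that for each $q\in\mathbb{Z}^*$ the map $q^{-1}(u,-\xi\mathfrak{c}(\Omega,q\Omega)u+v)$ is an automorphism; scaling shows the same for $q\in\mathbb{Q}^*$, giving containment $\supseteq$. For the reverse, the Claim in Definition-Proposition~\ref{defci} (the off-diagonal entry of an automorphism of $(\mathbb{C}^2,+,(\wp_\omega(u),v-\zeta_\omega(u)))$ is rigid once the lattice/scaling is fixed) — adapted to the $\widetilde\sigma$ setting by the same specialisation-and-logarithmic-derivative argument — shows $c$ is uniquely determined by $q$, hence equals $q\cdot\bigl(-\xi\mathfrak{c}(\Omega,q\Omega)\bigr)/q$; substituting $\qc(\Omega,q\Omega)=\mathfrak{c}(\Omega,\Omega)-\ldots$ via Lemma~\ref{qc}(2) with $m=1$ rewrites the entry in the stated form $\xi\qc(\Omega,q\Omega)$.

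\textbf{Case (7) and the main obstacle.} Case (7), $f=(\wp_\omega(u),v-\zeta_\omega(u))$, runs in parallel: the Claim in~\ref{defci} gives $b=0$ and reduces matters to $\alpha(u,v)=(au,cu+dv)$ with $\wp_\omega(au)\in\mathbb{C}(\wp_\omega(u))^{alg}$, so $a\in K_\omega^*$; then Lemma~\ref{Z sublattices} with $\Omega_1=a\Omega$, $\Omega_2=\Omega$ (or $n\Omega\le\Omega,n\Omega\le a\Omega$ when $a$ is not an integer, as in Proposition~\ref{Z general}) produces the explicit automorphism $q\begin{pmatrix}1&0\\ \qc(\Omega,q\Omega)&[\Omega:q\Omega]q^{-2}\end{pmatrix}$, and the rigidity Claim shows there is no further freedom. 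The main obstacle I anticipate is the bookkeeping in (7) and (8): turning the compositions $\alpha_3\circ\alpha_2^{-1}\circ\alpha_1$ (or $\beta^{-1}$ of Lemma~\ref{Z sublattices}) into the clean closed forms with $\qc$ and $[\Omega:q\Omega]$ requires carefully tracking residues through the index-$q^2$ sublattice $q\Omega\le\Omega$ and invoking Definition-Proposition~\ref{defci} to see the answer is independent of the auxiliary sublattice; and one must double-check that when $a\in K_\omega^*\setminus\mathbb{Z}$ the same formula persists (the isogeny $\wp_\omega(au)\in\mathbb{C}(\wp_\omega(u))^{alg}$ means $a\Omega$ and $\Omega$ are commensurable, so one works over a common sublattice and the $\qc$, $[\,\cdot:\cdot\,]$ notation of~\ref{defci} — which is designed for exactly this — absorbs the discrepancy). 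Everything else is the routine specialisation-plus-algebraic-independence arguments already rehearsed above.
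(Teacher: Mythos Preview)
Your plan matches the paper's approach closely, but there are two genuine gaps.

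\textbf{Direct products.} Your claim that ``when a factor is $\id$ the corresponding row/column is unconstrained'' is false. Take $\Aut(\mathbb{C}^2,+,\exp\times\id)$ with $\alpha(u,v)=(au+bv,cu+dv)$: the condition that $cu+dv$ be algebraic over $\mathbb{C}(e^u,v)$ forces $cu\in\mathbb{C}(e^u)^{alg}$, hence $c=0$ since $u$ and $e^u$ are algebraically independent. The $\id$ factor only makes the diagonal entry $d\in\mathbb{C}^*$ free. The correct principle is that off-diagonal entries survive precisely when the two factors share a compatible addition formula --- both $\exp$ (case (4), via $e^{au+bv}=e^{au}e^{bv}$) or isogenous $\wp$'s (case (6.1)). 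In every mixed case, including those with an $\id$ factor, the off-diagonal entries vanish by specialising one variable and invoking Fact~\ref{Weierstrass algebraicity}. Your stated conclusions are correct; the reasoning offered for them is not.

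\textbf{Case (8), the step $d=a$.} Your jump to ``$\xi-d\xi\in\Omega$'' is not justified: Lemma~\ref{representation} yields $a^{-1}\xi-q\xi\in\Omega$ for some unspecified $q\in\mathbb{Q}^*$, and the conclusion $\xi-d\xi\in\Omega$ is stated only under the extra hypothesis $a=1$. The paper proceeds by first establishing $a\in\mathbb{Q}^*$, then showing that the only automorphism with first row $(1,0)$ is the identity (here one may apply the $a=1$ clause to get $d=1$, and then $e^{cu}\in\mathbb{C}(\wp_\omega(u))^{alg}$ forces $c=0$); combining this uniqueness with the explicitly constructed automorphisms (Corollary~\ref{S general: specialcase} for $q\in\mathbb{Z}^*$, extended to $\mathbb{Q}^*$ by composition and Lemma~\ref{qc}(2)) finishes the case. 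You should insert this reduction-to-$a=1$ step explicitly.

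On the positive side: your argument that $(a^{-1}-q)\xi\in\Omega$ with $a^{-1}-q\in K_\omega$ and $\xi\notin\langle 1,\omega\rangle_\mathbb{Q}$ forces $a^{-1}=q\in\mathbb{Q}^*$ is correct and is actually cleaner than the paper's separate Claim for the quadratic case, which re-derives essentially the same relation through auxiliary sublattices. (Two minor slips: the conclusion is $a=q^{-1}$, not $a=q$; and in the quadratic case the scalar $a^{-1}-q$ lies in $K_\omega$, not necessarily in $\mathbb{Q}$ --- but $K_\omega=\langle 1,\omega\rangle_\mathbb{Q}$ is a field, so the argument goes through.)
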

\begin{proof}
Let $(\C^2, +,f)$ be one of the groups under consideration. Then, recall that  $\alpha\in\Aut(\C^2, +,f)$ if and only if $\alpha\in \GL_2(\mathbb{C})$ and $f\circ \alpha\in\mathbb{C}(f)^{alg}$. Making use of the one-dimensional corresponding result of Proposition\,\ref{C-automorphisms}, the computation (1) to (5) is easy. Note that the two cases (6.1) and (6.2) correspond to whether $(\mathbb{C},+,\wp_{\omega_1})$ and $(\mathbb{C},+,\wp_{\omega_2})$ are isomorphic or not, respectively

$(6.1)$ We first  check that $\Aut(\mathbb{C}^2,+,\wp_{\omega}\times\wp_{\omega})= \GL_2(K_{\omega})$. If $\alpha:=(au+bv,cu+dv)\in\Aut(\mathbb{C}^2,+,\wp_{\omega}\times\wp_{\omega})$ then $(\wp_{\omega}(au+bv),\wp_{\omega}(cu+dv))$ 
is algebraic over $\mathbb{C}(\wp_{\omega}(u),\wp_{\omega}(v))$.
If we evaluate at $v=v_0$ -- for some $v_0\notin {\langle 1,\omega _1\rangle}_{\mathbb{Z}}$ --  we get 
$(\wp_{\omega}(au+bv_0),\wp_{\omega}(cu+dv_0))\in\mathbb{C}(\wp_{\omega}(u))^{alg}$.
Hence, both $\wp_{\omega}(au)$ and $\wp_{\omega}(cu)$ 
are algebraic over $\mathbb{C}(\wp_{\omega}(u))$, so that $a,c\in K_{\omega}$. 
Similarly,  we obtain $b,d\in K_{\omega}$. 
Thus, $\alpha\in \GL_2(K_{\omega})$. 
For the converse, note that if $a,b,c,d\in K_{\omega}$, we obtain, since $\wp_{\omega}(u)$ admits an AAT, that $\wp_{\omega}(au+bv)$ is algebraic over 
$\mathbb{C}(\wp_{\omega}(au),\wp_{\omega}(bv))$, which in turn is algebraic over $\mathbb{C}(\wp_{\omega}(u),\wp_{\omega}(v))$. 
Similarly, $\wp_{\omega}(cu+dv)\in\mathbb{C}(\wp_{\omega}(u),\wp_{\omega_1}(v))^{alg}$, as required.
For the general case, assume there is  a $\tau \in \mathbb{C}^*$ such that $\wp_{\omega_1}(\tau u)\in\wp_{\omega_2}(u)^{alg}$, and note that 
$(u,\tau v)$ is an isomorphism from $(\mathbb{C}^2,+,\wp_{\omega_1}\times\wp_{\omega_2})$ to $(\mathbb{C}^2,+,\wp_{\omega_1}\times\wp_{\omega_1})$, so that we can apply the previous case. 

$(6.2)$ By hypothesis, there is not $\tau \in \mathbb{C}^*$ such that $\wp_{\omega_1}(\tau u)\in\wp_{\omega_2}(u)^{alg}$. 
If $\alpha:=(au+bv,cu+dv)\in \Aut(\mathbb{C}^2,+,\wp_{\omega_1}\times\wp_{\omega_2})$ then $$(\wp_{\omega_1}(au+bv),\wp_{\omega_2}(cu+dv))\in\mathbb{C}(\wp_{\omega_1}(u),\wp_{\omega_2}(v))^{alg}.$$ 
If we evaluate at $v=v_0$, for some $v_0\notin {\langle 1,\omega _2\rangle}_{\mathbb{Z}}$, then we obtain that $\wp_{\omega_1}(au)\in\mathbb{C}(\wp_{\omega_1}(u))^{alg}$ and $\wp_{\omega_2}(cu)\in\mathbb{C}(\wp_{\omega_1}(u))^{alg}$. 
In particular, $a\in K_{\omega_1}^*$ and $c=0$ (otherwise we would have that $\wp_{\omega_1}(c^{-1}u)\in\mathbb{C}(\wp_{\omega_2}(u))^{alg}$, a contradiction). 
If we evaluate at $u=u_0$, for some $u_0\notin {\langle 1,\omega _1\rangle}_{\mathbb{Z}}$, then we obtain $b=0$ and $d\in K_{\omega_2}^*$,as required. 
The converse is obvious. 

$(7)$ We first show the  inclusion  from right to left. Suppose that $\omega$ is quadratic over $\mathbb{Q}$, so there exist $A,B\in \mathbb{Z}$ and $C\in \mathbb{N}^*$ such that $C\omega^2+B\omega+A=0$. 
Let $q\in K^*_\omega$. 
We can write $q=\frac{n}{cw+d}$, for certain $n\in \mathbb{N}^*$ and $c,d\in \mathbb{Z}$. 
If we define $a:=Cd-cB$ and $b:=-Ac$ then
\[
\omega=\frac{a\omega +b}{C(c\omega +d)}. 
\]
Moreover, $ad-bc\neq 0$, because otherwise the equality above would provide that $\omega\in \mathbb{R}$, a contradiction. 
Finally, since $q=\frac{Cn}{C(cw_1+d)}$, by Lemma\,\ref{Z general} the map $\alpha(u,v)$ defined by
\[
\left(qu,q^{-1}\left(\mathfrak{c}(\Omega':Cn\Omega')-\frac{\mathfrak{c}(\Omega,Cn\Omega')[\Omega',Cn\Omega']}{[\Omega,Cn\Omega']}\right)u+\frac{[\Omega' :Cn\Omega' ]}{[\Omega:Cn\Omega']}q^{-1}v\right)
\]
where $\Omega':=q^{-1}\Omega$, is in $\Aut(\mathbb{C}^2,+,(\wp _{\omega}(u),v-\zeta _{\omega}(u)))$. On the other hand, by Proposition\,\ref{defci} and by Lemma\,\ref{qc}(1) we get $\alpha (u,v)  = q\big(u,\qc(\Omega,q\Omega)u+[\Omega:q\Omega]q^{-2}v\big)$, 
as required. Suppose now that $\omega$ is not quadratic over $\mathbb{Q}$ and let $q\in K^*_\omega$, so $q=\frac{n}{d}$ for some $n\in \mathbb{N}$ and $d\in \mathbb{Z}^*$. 
Clearly, $\omega=\frac{d\omega}{d}$, so that, by Lemma\,\ref{Z general}, we obtain
\[
\alpha (u,v)  = \left(qu,q^{-1}\left(\mathfrak{c}(\Omega':n\Omega')-\frac{\mathfrak{c}(\Omega,n\Omega')[\Omega',n\Omega']}{[\Omega,n\Omega']}\right)u+\frac{[\Omega' :n\Omega' ]}{[\Omega:n\Omega']}q^{-1}v\right)
\]
where $\Omega':=q^{-1}\Omega$, is in $\Aut(\mathbb{C}^2,+,(\wp _{\omega}(u),v-\zeta _{\omega}(u)))$. Again, it is enough to apply Proposition\,\ref{defci} and Lemma\,\ref{qc}(1).

For the other inclusion, let  $$\alpha (u,v)=(au+bv,cu+dv)\in \Aut(\mathbb{C}^2,+,(\wp _{\omega}(u),v-\zeta _{\omega}(u))).$$ 
By the claim in the proof of Lemma\,\ref{Z general} (with $\xi=1$), we have that $b=0$ and $\wp _{\omega} (au)\in\mathbb{C}(\wp _{\omega}(u))^{alg}$.
By Proposition\,\ref{C-automorphisms}, $a\in K_\omega^*$. 
So it is enough to prove that, for each $a\in K^*_\omega$, there exists at most one automorphism of the form $\alpha (u,v)=(au,cu+dv)\in \GL_2(\mathbb{C})$. 
That is, an automorphism of the form $(u,cu+dv)$ must be the identity, which follows from the claim of the proof of Proposition\,\ref{defci}. 

$(8)$ Firstly, we show $q\begin{pmatrix} 1 & 0\\ \xi \qc(\Omega,q\Omega) & 1\end{pmatrix}\in 
\Aut(\mathbb{C}^2,+,(\wp _{\omega}(u),e^v\widetilde{\sigma} _{\omega,\xi}(u)))$ for each $q\in \mathbb{Q}^*$.
Take $m,n\in \mathbb{Z}^*$ coprimes such that $q=mn^{-1}$.
By Corollary\,\ref{S general: specialcase}, we have that both  
\begin{align*}
\alpha_1(u,v) &:=m^{-1}\big(u,-\xi \mathfrak{c}(\Omega,m\Omega)u+v\big)\\
\alpha_2(u,v) &:=n^{-1}\big(u,-\xi \mathfrak{c}(\Omega,n\Omega)u+v\big)
\end{align*}
belong to $\Aut(\mathbb{C}^2,+,(\wp _{\omega}(u),e^v\widetilde{\sigma} _{\omega,\xi}(u)))$.
Therefore, by Lemma\,\ref{qc}(2),
\[
(\alpha_1^{-1}\circ \alpha _2)(u,v)
=q(u,\xi\qc(\Omega,q\Omega)u+v)
\]
also belongs to $\Aut(\mathbb{C}^2,+,(\wp _{\omega}(u),e^v\widetilde{\sigma} _{\omega,\xi}(u)))$, as required.

Next, let  $\alpha (u,v)=(au+bv,cu+dv)\in \Aut(\mathbb{C}^2,+,(\wp _{\omega}(u),e^v\widetilde{\sigma} _{\omega,\xi}(u)))$. 
By Lemma\,\ref{representation}, $a\in K^*_\omega$, $b=0$ and $d\in \mathbb{Q}^*$. 
If $\omega$ is not quadratic then $a\in \mathbb{Q}^*$ because $K^*_\omega=\mathbb{Q}^*$.
Moreover:

\medskip

\noindent
{\it Claim. Let $\omega$ be quadratic over $\mathbb{Q}$. If $\alpha (u,v):=(au+bv,cu+dv)\in \GL_2(\mathbb{C})$ is an automorphism of $\big(\mathbb{C}^2,+,(\wp_{\omega}(u),e^v\widetilde{\sigma}_{\omega ,\xi}(u))\big)$
then $a\in \mathbb{Q}^*$.}

\smallskip

\noindent
\emph{Proof of the claim.}
Recall that, by hypothesis, $\xi \notin {\langle 1,\omega \rangle}_{\mathbb{Q}}=K_\omega$.
Suppose by contradiction that $a\in K_\omega \setminus \mathbb{Q}^*$.
By Lemma\,\ref{representation}, both $b=0$ and $\wp_{\Omega}(au)\in\mathbb{C}(\wp _{\Omega}(u))^{alg}$.
In particular, $a\neq 0$ and $\wp_{a^{-1}\Omega}(u)\in\mathbb{C}(\wp _{\Omega}(u))^{alg}$.
By Lemma\,\ref{C-wp}, there exists a sublattice $\Omega'$ of both $a^{-1}\Omega$ and $\Omega$.
By Lemma\,\ref{S sublattices}, there exists $\alpha _1 (u,v):=(u,c_1u+d_1v)$ in $\GL_2(\mathbb{C})$ such that 
$
\big(\wp_{\Omega}(u),e^v\widetilde{\sigma}_{\Omega ,\xi}(u)\big)\circ \alpha _1\in\mathbb{C}\big(\wp_{\Omega'}(u),e^v\widetilde{\sigma}_{\Omega',\xi}(u)\big)^{alg}.$

Similarly, there exists $\alpha _2 (u,v):=(u,c_2u+d_2v)$ in $\GL_2(\mathbb{C})$ such that 
$
\big(\wp_{a^{-1}\Omega}(u),e^v\widetilde{\sigma}_{a^{-1}\Omega ,a^{-1}\xi}(u)\big)\circ \alpha _2\in\mathbb{C}\big(\wp_{\Omega'}(u),e^v\widetilde{\sigma}_{\Omega',a^{-1}\xi}(u)\big)^{alg}.
$
Let $\beta(u,v) :=(u,cu+dv)$.
Since $\big(\wp_{\Omega}(u),e^v\widetilde{\sigma}_{\Omega ,\xi}(u)\big)\circ \alpha\in\mathbb{C}\big(\wp_{\Omega}(u),e^v\widetilde{\sigma}_{\Omega ,\xi}(u)\big)^{alg}$, we have that
$
 \big(\wp_{a^{-1}\Omega}(u),e^v\widetilde{\sigma}_{a^{-1}\Omega ,a^{-1}\xi}(u)\big)\circ \beta$ is algebraic over $\mathbb{C}\big(\wp_{\Omega}(u),e^v\widetilde{\sigma}_{\Omega ,\xi}(u)\big).
$
Thus, 
$
\mathbb{C}\big(\wp_{\Omega'}(u),e^v\widetilde{\sigma}_{\Omega',a^{-1}\xi}(u)\big)\circ \alpha _2^{-1} \circ \beta\circ \alpha _1\in\mathbb{C}\big(\wp_{\Omega'}(u),e^v\widetilde{\sigma}_{\Omega',\xi}(u)\big)^{alg}.
$
Note that the isomorphism $\alpha _2^{-1} \circ \beta\circ \alpha _1$ is of the form $(u,c_4u+d_4v)\in \GL_2(\mathbb{C})$.
Thus, by Lemma\,\ref{representation}, there exists $d'\in \mathbb{Q}^*$ such that
\[
 a^{-1}\xi -d'\xi=(a^{-1}-d')\xi \in \Omega'\leq  \Omega \leq  K_\omega= {\langle 1,\omega \rangle}_{\mathbb{Q}}\ .
\]
Recall that $a\in K^*_\omega$, so that $a^{-1}-d'\in K_\omega$. 
Moreover, by hypothesis, $a\notin \mathbb{Q}^*$ and, therefore, $a^{-1}-d'\neq 0$.
We deduce that $\xi\in K_\omega$, which is the desired contradiction. {\hfill $_\square$}

\medskip

Finally, as we did in the case $(7)$, it is enough to show that the identity map is the only automorphism of the form $(u,cu+dv)$, for $c\in \mathbb{C}$ and $d\in \mathbb{Q}^*$. 
Indeed, if $(\wp _\omega (u), e^{cu+dv}\widetilde{\sigma}_{\omega,\xi}(u))\in\mathbb{C}(\wp _{\omega}(u),e^v\widetilde{\sigma} _{\omega,\xi}(u))^{alg}$ then, by Lemma\,\ref{representation}, $\xi-d\xi \in \Omega$.
Since $d\in \mathbb{Q}^*$ and $\xi \notin {\langle 1,\omega\rangle}_{\mathbb{Q}}$, we have $d=1$.

As $e^{cu}$ is algebraic over $\mathbb{C}(\wp _{\omega}(u),e^v\widetilde{\sigma} _{\omega,\xi}(u))$ and, hence, over
$\mathbb{C}(\wp _{\omega}(u))$, which implies $c=0$.
\end{proof}

\section{The two-dimensional real case}\label{2dlng}

We are now ready to give the classification of the abelian simply-connected two-dimensional locally Nash groups. A classification of the one-dimensional was given by Madden and Stanton in \cite[\S\,4,Thm.]{Madden_Stanton}
(see also \cite{Madden_Stanton_Errata}).

\begin{fact}\label{1dim R-classification}
Every simply connected one-dimensional locally Nash group is isomorphic to a group of one and only one of the following types:
\begin{enumerate}
\item[$(1)$] $(\mathbb{R},+,\id)$.
\item[$(2)$] $(\mathbb{R},+,\exp)$.
\item[$(3)$] $(\mathbb{R},+,\sin)$.
\item[$(4)$] $(\mathbb{R},+,\wp _\Lambda )$, where $\Lambda ={\langle 1,ia\rangle}_{\mathbb{Z}}$ for some $[a]\in \mathbb{R}^*/\mathbb{Q}^*$.
\end{enumerate}
Moreover, $\Aut(\mathbb{R},+,\exp)$, $\Aut(\mathbb{R},+,\sin)$ and $\Aut(\mathbb{R},+,\wp _{ia})$ are isomorphic to $\mathbb{Q}^*$,
and $\Aut(\mathbb{R},+,\id)$ to $\mathbb{R}^*$.
\end{fact}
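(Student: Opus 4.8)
The plan is to deduce this real classification from the complex one (Theorem~\ref{1dim C-classification}) by tracking conjugation-invariance, following the paper's general strategy of passing through the $\mathbb{C}$-Nash category. By Fact~\ref{Kabelian}, every simply connected one-dimensional locally Nash group is isomorphic to some $(\mathbb{R},+,f)$ with $f\colon\mathbb{C}\dashrightarrow\mathbb{C}$ an $\mathbb{R}$-meromorphic map admitting an AAT, and then $(\mathbb{C},+,f)$ is a locally $\mathbb{C}$-Nash group; by Theorem~\ref{1dim C-classification} it is $\mathbb{C}$-isomorphic to exactly one of $(\mathbb{C},+,\id)$, $(\mathbb{C},+,\exp)$, $(\mathbb{C},+,\wp_\omega)$, the three possibilities being separated by $\rank\Lambda_f\in\{0,1,2\}$. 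Since $f$ is $\mathbb{R}$-meromorphic, $\Lambda_f$ is an invariant discrete subgroup of $\mathbb{C}$, so its shape is constrained by Remark~\ref{Desrank2}. Throughout, the key point is that an isomorphism of locally Nash groups $(\mathbb{R},+,g)\to(\mathbb{R},+,h)$ is, by the real instance of Fact~\ref{Kabelian}(2), a map $u\mapsto cu$ with $c\in\mathbb{R}^*$; so among the complex isomorphisms furnished by Theorem~\ref{1dim C-classification} one must isolate those whose scalar happens to be real.

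For $\rank\Lambda_f=0$ the identity is an $\mathbb{R}$-isomorphism onto $(\mathbb{R},+,\id)$. For $\rank\Lambda_f=1$, Remark~\ref{Desrank2} gives $\Lambda_f={\langle\omega_0\rangle}_{\mathbb{Z}}$ with either $\omega_0\in i\mathbb{R}^*$ or $\omega_0\in\mathbb{R}^*$. In the first case the scalar $2\pi i\,\omega_0^{-1}$ produced in Theorem~\ref{1dim C-classification} is real and yields an $\mathbb{R}$-isomorphism onto $(\mathbb{R},+,\exp)$; in the second case one compares instead with $\sin$, whose period group ${\langle2\pi\rangle}_{\mathbb{Z}}$ is real and for which the relevant scalar $2\pi\,\omega_0^{-1}$ is real, giving $(\mathbb{R},+,\sin)$. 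These two subcases are genuinely different because no real scalar carries a purely imaginary period group to a real one.

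The substantial case is $\rank\Lambda_f=2$: here $(\mathbb{C},+,f)\cong(\mathbb{C},+,\wp_{\Lambda_f})$ via the identity and $\Lambda_f$ is an invariant lattice. By Remark~\ref{Desrank2}, after rescaling by a positive real and using Fact~\ref{Chandrasekharan}, one may assume $\Lambda_f$ contains a rectangular sublattice ${\langle1,ia'\rangle}_{\mathbb{Z}}$; when $\Lambda_f$ is itself rhombic, reaching a purely rectangular lattice needs, in addition, an isogeny of the fractional-linear type supplied by Theorem~\ref{1dim C-classification} (for instance $2\lambda-1$ is purely imaginary when $\Lambda_f={\langle1,\lambda\rangle}_{\mathbb{Z}}$ with $\mathrm{Re}\,\lambda=1/2$), whose scalar is $\pm n$ and hence real. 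Once $f$ is placed over some $\wp_{\langle1,ia\rangle_{\mathbb{Z}}}$, the isogeny criterion of Theorem~\ref{1dim C-classification} says $(\mathbb{C},+,\wp_{\langle1,ia_1\rangle_{\mathbb{Z}}})\cong(\mathbb{C},+,\wp_{\langle1,ia_2\rangle_{\mathbb{Z}}})$ iff $ia_2=\frac{a\,ia_1+b}{c\,ia_1+d}$ with integer $a,b,c,d$ and $ad-bc\neq0$, the isomorphisms being $u\mapsto n u(c\,ia_1+d)^{-1}$; demanding this scalar to be real forces $c=0$, then $b=0$, hence $a_2/a_1\in\mathbb{Q}^*$. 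This is exactly where the real classification departs from the complex one, and I expect it to be the main obstacle: over $\mathbb{C}$ many rectangular lattices are isogenous, whereas over $\mathbb{R}$ only the dilations by $\mathbb{Q}^*$ survive, which produces the invariant $[a]\in\mathbb{R}^*/\mathbb{Q}^*$ and the ``one and only one'' clause.

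Finally, for the automorphism groups: an $\mathbb{R}$-automorphism of $(\mathbb{R},+,f)$ is a map $u\mapsto cu$ with $c\in\mathbb{R}^*$ and $f\circ(cu)\in\mathbb{C}(f)^{alg}$ (the latter being equivalent to algebraicity over $\mathbb{R}(f)$, by a conjugation argument), so $\Aut(\mathbb{R},+,f)$ is precisely $\mathbb{R}^*\cap\Aut(\mathbb{C},+,f)$ with the complex automorphism group read off from Proposition~\ref{C-automorphisms}. This is all of $\mathbb{R}^*$ for $\id$; it is $\mathbb{Q}^*$ for $\exp$ and for $\sin$; and for $\wp_{ia}$ it is $K_{ia}\cap\mathbb{R}^*=\mathbb{Q}^*$, since when $ia$ is quadratic the field $K_{ia}=\mathbb{Q}(ia)$ is imaginary quadratic and contains no real number outside $\mathbb{Q}$. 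This yields all the stated automorphism groups.
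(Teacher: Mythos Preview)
The paper does not supply its own proof of this statement: it is quoted as a \emph{Fact} from Madden--Stanton \cite{Madden_Stanton}, so there is no ``paper's proof'' to compare against. Your argument is therefore an independent derivation, and it is essentially correct. It is also exactly the one-dimensional analogue of the strategy the paper carries out in detail for Theorem~\ref{2dim R-classification}: complexify via Fact~\ref{Kabelian}(3), invoke the complex classification, use invariance of $\Lambda_f$ together with Remark~\ref{Desrank2} to constrain the shape of the period group, and then isolate those $\mathbb{C}$-isomorphisms whose scalar lies in $\mathbb{R}^*$. The computation of the automorphism groups as $\mathbb{R}^*\cap\Aut(\mathbb{C},+,f)$, with the observation that $K_{ia}^*\cap\mathbb{R}^*=\mathbb{Q}^*$ whether or not $ia$ is quadratic, is clean and correct.

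One minor redundancy: in the rank-$2$ case you do not need a separate ``rhombic'' step. Once Remark~\ref{Desrank2} furnishes a rectangular sublattice $\langle a,bi\rangle_{\mathbb{Z}}\leq\Lambda_f$, Lemma~\ref{C-wp}(1) gives that the \emph{identity} is already an $\mathbb{R}$-isomorphism $(\mathbb{R},+,\wp_{\Lambda_f})\to(\mathbb{R},+,\wp_{\langle a,bi\rangle_{\mathbb{Z}}})$, after which the real rescaling $u\mapsto a^{-1}u$ lands you on $\langle 1,i(b/a)\rangle_{\mathbb{Z}}$. The fractional-linear isogeny you invoke for the rhombic case is harmless but unnecessary.
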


For the two-dimensional classification we will make use of the notation introduced in \S\,Preliminaries, and Fact\,\ref{Chandrasekharan}.
\begin{theorem}\label{2dim R-classification}
$(I)$ Every two-dimensional simply connected abelian locally Nash group is isomorphic to a group of one and only one of the following types:

\smallskip
\noindent \emph{(1r)} A direct product of one-dimensional locally Nash groups with charts $\id$, $\exp$, $\sin$ or $\wp _{ai}$ for some $a\in \mathbb{R}^*$.

\noindent \emph{(2r)} $(\mathbb{R}^2,+,(\wp _{ai}(u),v-\zeta _{ai}(u)))$, for some $a\in \mathbb{R}^*$.

\noindent \emph{(3r)} $(\mathbb{R}^2,+,(\wp _{ai}(u),e^v\widetilde{\sigma} _{ai,\xi}(u)))$, for some $a\in \mathbb{R}^*$ and $\xi \in \mathbb{R}\setminus \mathbb{Q}$.

\noindent \emph{(4r)} $\big(\mathbb{R}^2,+,\big(\wp _{ai}(u),\frac{1}{2i}(e^{iv}\widetilde{\sigma}_{ai,\xi i}(u)-e^{-iv}\widetilde{\sigma}_{ai,-\xi i}(u))\big)\big)$, for some  $a\in \mathbb{R}^*$ and $\xi \in \mathbb{R}\setminus a\mathbb{Q}$.

\noindent \emph{(5r)} The universal covering group $(\mathbb{R}^2,+,f)$ of the connected component of the real points of a simple abelian surface defined over $\mathbb{R}$.

\smallskip

$(II)$ The isomorphism classes within each type are defined as follows:

\noindent\emph{(i)} Two groups of type \emph{(1r)} are isomorphic if and only if their factor groups are isomorphic,
where $(\mathbb{R},+,\wp _{ai})$ and $(\mathbb{R},+,\wp _{bi})$ are isomorphic if and only if $a/b\in \mathbb{Q}^*$.

\noindent\emph{(ii)} Two groups of type \emph{(2r)}, defined by $a$ and $b$,\,respectively, are isomorphic if and only if $a/b\in \mathbb{Q}^*$.

\noindent\emph{(iii)} Two groups of type \emph{(3r)}, defined by $(a,\xi _1)$ and $(b,\xi _2)$,\,respectively, are isomorphic if and only if
$a/b\in \mathbb{Q}^*    $ and $\xi _2\in \mathbb{Q}+\xi _1\mathbb{Q}^*$.

\noindent\emph{(iv)} Two groups of type \emph{(4r)}, defined by $(a,\xi _1)$ and  $(b,\xi _2)$,\,respectively, are isomorphic if and only if
$a/b\in \mathbb{Q}^*$ and $\xi _2\in a\mathbb{Q}+\xi _1\mathbb{Q}^*$.

\noindent\emph{(v)} Two groups of type \emph{(5r)} are isomorphic if and only if there is an isogeny between the corresponding abelian surfaces.

\end{theorem}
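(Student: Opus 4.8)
The plan is to reduce the real classification to the complex one, Theorem~\ref{2dim C-classification}, together with the computation of the complex automorphism groups, Proposition~\ref{C-automorphisms 2}, by a Galois descent along $\mathbb{C}/\mathbb{R}$. By Fact~\ref{Kabelian}(1) any two-dimensional simply connected abelian locally Nash group is isomorphic to some $(\mathbb{R}^2,+,f)$, and by Fact~\ref{Kabelian}(3) this forces $f$ to be $\mathbb{R}$-meromorphic while $(\mathbb{C}^2,+,f)$ is a locally $\mathbb{C}$-Nash group, hence $\mathbb{C}$-isomorphic to one of the types (1c)--(4c). So for each complex type I would single out which members admit an $\mathbb{R}$-meromorphic representative and normalise the data; conversely, for each group listed in (1r)--(5r) I would verify, using the conjugation formulas of Lemma~\ref{wp_conjugate} and the reality statements of Lemma~\ref{qc}(3), that its defining map is $\mathbb{R}$-meromorphic, so that it is genuinely a locally Nash group.

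If $f$ lies in $\mathcal{P}_1,\mathcal{P}_2$ or $\mathcal{P}_3$ the group is a direct product of one-dimensional locally $\mathbb{C}$-Nash groups, and the one-dimensional real classification, Fact~\ref{1dim R-classification}, produces factors with charts $\id,\exp,\sin$ or $\wp_{ai}$; here one uses that $\wp_\Lambda$ is $\mathbb{R}$-meromorphic precisely when $\Lambda$ is invariant (Lemma~\ref{wp_conjugate}), that such a $\Lambda$ can be normalised to $\langle 1,ai\rangle_{\mathbb{Z}}$ by Remark~\ref{Desrank2}, Lemma~\ref{real wp} and Theorem~\ref{1dim C-classification}, and that $e^v$ and $\sin v$ are $\mathbb{C}$-isomorphic but not $\mathbb{R}$-isomorphic. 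For type (2c) the same normalisation applies and $v-\zeta_{ai}(u)$ is then automatically $\mathbb{R}$-meromorphic by Lemma~\ref{zeta_conjugate}, giving (2r). The genuinely new case is (3c): after normalising the lattice to $\langle 1,ai\rangle_{\mathbb{Z}}$, the analysis of when $e^v\widetilde{\sigma}_{ai,\xi}(u)$ --- or a map algebraically equivalent to it via a real isomorphism --- is $\mathbb{R}$-meromorphic, carried out with $\widetilde{\sigma}_{\overline{\Omega},\overline{\xi}}(u)=\overline{\widetilde{\sigma}_{\Omega,\xi}(\overline u)}$, splits into two mutually exclusive cases: either $\xi$ may be taken real, yielding the chart $e^v\widetilde{\sigma}_{ai,\xi}(u)$ of (3r); or it may not, but then the invariant combination $\tfrac{1}{2i}\bigl(e^{iv}\widetilde{\sigma}_{ai,\xi i}(u)-e^{-iv}\widetilde{\sigma}_{ai,-\xi i}(u)\bigr)$ with $\xi$ real does the job, yielding (4r). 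This is exactly the two-variable analogue of the $\exp$-versus-$\sin$ dichotomy, and the restrictions $\xi\notin\mathbb{Q}$ in (3r) and $\xi\notin a\mathbb{Q}$ in (4r) are, via Claim~(2) in the proof of Theorem~\ref{2dim C-classification} with $\omega=ai$ and $\langle 1,\omega\rangle_{\mathbb{Q}}=\mathbb{Q}+ai\mathbb{Q}$, precisely the conditions that the group not be a direct product. Finally, an $\mathbb{R}$-meromorphic representative of a universal covering of a complex abelian surface forces the surface to be defined over $\mathbb{R}$ with its locally Nash structure pulled back from the connected component of its real points; normalising and folding the geometrically split but $\mathbb{R}$-simple instances back into the earlier types leaves exactly (5r).

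For part (II) I would use Fact~\ref{Kabelian}(2) over $\mathbb{R}$: $(\mathbb{R}^2,+,f)$ and $(\mathbb{R}^2,+,g)$ are isomorphic iff some $\alpha\in\GL_2(\mathbb{R})$ has $g\circ\alpha\in\mathbb{R}(f)^{alg}$. A real isomorphism complexifies, so the ``only if'' implications on the $\omega$- and $\xi$-data follow from (ii)--(iv) of Theorem~\ref{2dim C-classification}, Proposition~\ref{S general}, Corollary~\ref{S characterization} and \cite[Thm.\,3.10]{BDOLCN}; the complex conditions collapse to their stated real form because a real $\alpha$ cannot realise the scalings in $K_\omega^*\setminus\mathbb{Q}^*$, the intersection $K_{ai}\cap\mathbb{R}$ being $\mathbb{Q}$. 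For the ``if'' direction I would run the descent: given a $\mathbb{C}$-isomorphism $\alpha$ between complexifications of $\mathbb{R}$-meromorphic data, $\overline\alpha:=\overline{\alpha(\overline{\,\cdot\,})}$ is again such an isomorphism, so $\theta:=\alpha^{-1}\circ\overline\alpha$ is a complex automorphism with $\theta\circ\overline\theta=\id$; the explicit (and in every relevant case commutative) description of $\Aut$ in Proposition~\ref{C-automorphisms 2}, together with the reality of all the auxiliary lattices and residues $\mathfrak{c},\qc$ for real data (Lemma~\ref{real wp}, Lemma~\ref{qc}(3)), shows that $\theta$ is a coboundary $\theta=\delta\circ\overline\delta^{-1}$, whence $\alpha\circ\delta\in\GL_2(\mathbb{R})$ is the required real isomorphism; equivalently, one checks directly that the explicit isomorphisms built in Lemmas~\ref{S parameter} and \ref{S sublattices}, Proposition~\ref{S general} and Corollary~\ref{S characterization} have real matrices once the data are real. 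For (5r) this descent step is the classical statement that an isogeny of abelian surfaces compatible with the real structures is defined over $\mathbb{R}$.

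The main obstacle I anticipate is exactly this descent for the transcendental families (3c)/(3r)/(4r): there the automorphism groups are genuinely non-trivial, so one must track the action of complex conjugation on the normal forms $\wp_{ai},\zeta_{ai},\widetilde{\sigma}_{ai,\xi}$ and on the constants $\mathfrak{c},\qc$ with care; and one must separately show that (3r) and (4r) exhaust the real forms of (3c) and are never $\mathbb{R}$-isomorphic to each other --- the two-variable counterpart of the fact that $(\mathbb{R},+,\exp)$ and $(\mathbb{R},+,\sin)$ are non-isomorphic despite sharing a complexification.
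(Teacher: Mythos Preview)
Your proposal is correct and follows essentially the same approach as the paper: complexify, invoke Theorem~\ref{2dim C-classification}, and run a Galois descent using the cocycle $h=\hat\alpha\circ\alpha^{-1}$ with $\hat h\circ h=\id$, exploiting Proposition~\ref{C-automorphisms 2} to pin down the possible $h$ and thereby find an $\mathbb{R}$-meromorphic normal form reached by a real isomorphism; the (3r)/(4r) split you describe is exactly the paper's case analysis on the reality of $\xi$, and your treatment of (II) matches the paper's combination of ``complexify for only-if'' and ``explicit real $\alpha$ for if''.

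One small caution on your ``if'' direction for (II): the abstract coboundary argument you sketch (find $\delta$ with $\theta=\delta\circ\overline\delta^{-1}$) does not go through cleanly in cases (7)--(8) of Proposition~\ref{C-automorphisms 2}, because those automorphism groups consist entirely of real matrices (the parameter $q$ is in $\mathbb{Q}^*$ and $\qc$ is real by Lemma~\ref{qc}(3)), so $\overline\delta=\delta$ and you can never realise $\theta=-\id$ as a coboundary inside $\Aut$. The paper sidesteps this by your ``equivalently'' route: it traces the explicit isomorphisms from Lemmas~\ref{S parameter}, \ref{S sublattices}, \ref{S Theta}(5) and Proposition~\ref{S general} and checks directly that, with real data, the resulting matrix is real. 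You should commit to that direct check rather than the cohomological phrasing.
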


Related to the isomorphism classes of groups of type (5r), we remark that, given a meromorphic map
$f:\mathbb{C}^2\dashrightarrow \mathbb{C}^2$ admitting an AAT, it may happen that $(\mathbb{R}^2,+,f)$ is the universal covering of
a real abelian surface that is not a direct product of two real elliptic curves whereas $(\mathbb{C}^2,+,f)$ is locally $\mathbb{C}$-Nash
isomorphic to the universal covering of a direct product of two elliptic curves (see \cite[Example\,$48$]{Thesis:Huisman}).

\begin{proof}[Proof of Theorem \emph{\ref{2dim R-classification}}]

$(I)$ Firstly, note that by Lemma\,\ref{wp_conjugate}, all the maps that appear in cases (1r)-(4r) are $\R$-meromorphic. Also, by Theorem\,\ref{2dim C-classification},  the corresponding complex versions of the groups that appear in the statement are locally $\C$-Nash groups (see case (3c) below, for case (4r)). Therefore the groups that appear in cases (1r)-(5r) are indeed locally Nash groups.  On the other hand, as usual, every simply connected two-dimensional locally Nash group is isomorphic to some $(\mathbb{R}^2,+,f)$, for some $\R$-meromorphic map $f$. Moreover, $(\mathbb{C}^2,+,f)$ is a locally $\C$-Nash group.  We already know that there is an isomorphism $\alpha$ between $(\mathbb{C}^2,+,f)$ and a group $(\mathbb{C}^2,+,g)$ of one of the four types (1c)-(4c) in the statement of Theorem \ref{2dim C-classification}. However, such $g$ is not necessarily invariant. We now explain the strategy we are going to follow in all  cases. Since $f$ is invariant, it follows that $\hat{\alpha}(u,v):=\overline{\alpha(\overline{u},\overline{v})}$ is a isomorphism from $(\mathbb{C}^2,+,f)$ to $(\mathbb{C}^2,+,\hat{g})$. In particular, the map $h:=\hat{\alpha}\circ \alpha^{-1}$ is a isomorphism from $(\mathbb{C}^2,+,g)$ to $(\mathbb{C}^2,+,\hat{g})$ which satisfies $\hat{h}\circ h=id$. From the existence of such a  map $h$, and using the computation of the automorphism groups in  Proposition \ref{C-automorphisms 2}, we will find a locally $\C$-Nash group $(\mathbb{C}^2,+,g_0)$,  with $g_0$ invariant, $\R$-isomorphic to  $(\mathbb{C}^2,+,f)$ and such that $(\mathbb{R}^2,+,g_0)$ is of one of the five required types (1r)-(5r). Consequently, we divide the proof depending on cases (1c)-(4c) of Theorem\,\ref{2dim C-classification}.

 
(1c) $(\mathbb{C}^2,+,f)$ is $\C$-isomorphic to a direct product of $1$-dimensional locally $\C$-Nash groups:  We exclude the case of a universal covering of a non-simple complex abelian surface -- \emph{i.e.}, $(\C^2,+,(\wp_{\omega_1}(u),\wp_{\omega_2}(v)))$ --, which will be studied below, together with case (4c). We focus on the two cases that present some difficulties, the others are similar. 
Suppose firstly that there is an isomorphism  $\alpha (u,v)\in \GL_2(\mathbb{C})$ between $(\C^2,+,f)$ and $(\C^2,+,(e^u,e^v))$.  We have that $h:=\hat{\alpha}\circ \alpha^{-1}$ is an automorphism of $(\C^2,+,(e^u,e^v))$,\emph{ i.e.}, it belongs to $\GL_2(\Q)$, and so $\hat{h}=h$. Since $\hat{h}\circ h=\text{id}$, the eigenvalues of $h$ are $1$ or $-1$. Suppose that the eigenvalues are $1$ and $-1$ (for the other eigenvalues the argument is similar). Then, there is $\beta\in \GL_2(\Q)$ such that $h=\beta^{-1} \circ (u,-v) \circ \beta$. In particular, the isomorphism $\gamma:= \beta \circ \alpha$ from $(\C^2,+,f)$ to $(\C^2,+,(e^u,e^v))$
satisfies $\hat{\gamma}\circ \gamma^{-1}=(u,-v)$, and therefore $\gamma(u,v)=(au,biv)$, for some $a,b\in \R$. Hence $\gamma'(u,v)=(au,bv)\in\GL_2(\R)$ is an isomorphism from $(\C^2,+,f)$ to $(\C^2,+,(e^u,e^{iv}))$. Since $\id$ is an isomorphism from $(\C^2,+,(e^u,e^{iv}))$ to $(\C^2,+,(e^u,\sin(v))$, we are done.

Next, assume that there is an isomorphism  $\alpha (u,v)\in \GL_2(\mathbb{C})$ between $(\C^2,+,f)$ and $(\C^2,+,(\wp_{\Omega}(u),v))$, for some lattice $\Omega$ of $\C$.  We have that $h:=\hat{\alpha}\circ \alpha^{-1}$ is an isomorphism from $(\C^2,+,(\wp_{\Omega}(u),v))$ to $(\C^2,+,(\wp_{\overline{\Omega}}(u),v))$. Clearly, it is of the form $h(u,v)=(au,bv)$ for some $a,b\in \C^*$. Since $\wp_{\overline{\Omega}}(au)\in\wp_{\Omega}(u)^{alg}$, we have that $a^{-1}\overline{\Omega}\cap \Omega$ is a lattice. On the other hand, since $\hat{h}\circ h=\text{id}$ we get that $|a|=1$. Thus there is $a_1\in \C^*$ such that $\overline{a_1}a^{-1}_1=a^{-1}$. We deduce that $\Omega_1:=\overline{a_1\Omega}\cap a_1\Omega $ is an invariant sublattice of $a_1\Omega$. Since $|b|=1$, there is also  $b_1\in \C^*$ such that $\overline{b_1}b^{-1}_1=b^{-1}$. Now, clearly  $\beta(u,v):=(a_1u,b_1v)$ is an isomorphism from $(\C^2,+,(\wp_\Omega(u),v))$ to $(\C^2,+,(\wp_{a_1\Omega}(u),v))$. Moreover, by Lemma \ref{C-wp}, the identity is an isomorphism from $(\C^2,+,(\wp_{a_1\Omega}(u),v))$ to $(\C^2,+,(\wp_{\Omega_{1}}(u),v))$. Therefore, $\gamma:=\beta \circ \alpha$ is an isomorphism from $(\C^2,+,f)$ to $(\C^2,+,(\wp_{\Omega_{1}}(u),v))$. We need to show that $\gamma\in \GL_2(\R)$. Indeed,  an easy computation shows that $\hat{\gamma}\circ \gamma^{-1}=\id$, as required.

(2c)  $\alpha (u,v)\in \GL_2(\mathbb{C})$ is an isomorphism from $ (\C^2,+,f)$ to $(\C^2,+,g_{4,\Omega})$, where $g_{4,\Omega}:=(\wp_{\Omega}(u),v-\zeta_{\Omega}(u))$, for some lattice $\Omega$ of $\C$:

By Lemma \ref{wp_conjugate} we have that $h:=\hat{\alpha}\circ \alpha^{-1}$ is an isomorphism from $(\C^2,+,g_{4,\Omega})$ to $(\C^2,+,g_{4,\overline{\Omega}})$. By the claim of the proof of Lemma\,\ref{Z general}, $h$ is of the form $(au,cu+dv)$, for some $a,c,d\in \C$ with $ad\neq 0$, and $\wp_{\overline{\Omega}}(au)$ is algebraic over $\wp_{\Omega}(u)$. Since $\hat{h}\circ h=\text{id}$, similarly as with the above reasoning, we get that there is $a_1\in \C$ such that $\overline{a_1}a^{-1}_1=a^{-1}$ and $\Omega_1:=\overline{a_1\Omega}\cap a_1\Omega $ is an invariant sublattice of $a_1\Omega$.

Clearly $\beta_1(u,v):=(a_1u,a^{-1}_1v)$ is an isomorphism from $(\C^2,+,g_{4,\Omega})$ to $(\C^2,+,g_{4,a_1\Omega})$. On the other hand, by Lemma\,\ref{Z sublattices} we have that $\beta_2(u,v):=(u,\mathfrak{c}(a_1\Omega,\Omega_1)u+[a_1\Omega:\Omega_1]v)$ is an isomorphism from $(\C^2,+,g_{4,a_1\Omega})$ to $(\C^2,+,g_{4,\Omega_1})$. Since $\Omega_1$ is invariant, it has a sublattice of the form $\Omega_0={\langle r_1,r_2i\rangle}$ for some $r_1,r_2\in \R^*$. Therefore $\beta_3(u,v)=(u,\mathfrak{c}(\Omega_1,\Omega_0)u+[\Omega_1:\Omega_0]v)$ is an isomorphism from $(\C^2,+,g_{4,\Omega_1})$ to $(\C^2,+,g_{4,\Omega_0})$.
Moreover, we have that $\beta_4(u,v):=(r_1^{-1}u,r_1v)$ is an isomorphism from $(\C^2,+,g_{4,\Omega_0})$ to $(\C^2,+,g_{4,ri})$ where $r:=\frac{r_2}{r_1}$.

All in all, we have that  $\gamma:=\beta_4 \circ \beta_3\circ \beta_2 \circ\beta_1\circ \alpha$ is an isomorphism from $(\C^2,+,f)$ to $(\C^2,+,(\wp_{ri}(u),v-\zeta_{ri}(u)))$, with $r\in\R^*$. It remains to show that $\gamma  \in \GL_2(\mathbb{R})$. Indeed, $\hat{\gamma}\circ \gamma^{-1}(u,v)\in \text{Aut}(\C^2,+,g_{4,ri})$ and an easy computation shows that its first coordinate function is $u$. Then, it follows from Proposition\,\ref{C-automorphisms 2} that $\hat{\gamma}\circ \gamma^{-1}=\id$, as required.

(3c) $\alpha (u,v)\in \GL_2(\mathbb{C})$ is an isomorphism   from $(\C^2,+,f)$ to $(\C^2,+,g_{5,\omega,\xi})$, where $\omega\in \mathbb{C}\setminus \mathbb{R}$, $\xi \notin {\langle 1,\omega \rangle}_{\mathbb{Q}}$, and  $g_{5,\omega,\xi}(u,v):=(\wp_{\omega}(u),e^v\widetilde{\sigma} _{\omega ,\xi}(u))$). As we will see, this complex case produces two real cases. 

By Lemma \ref{wp_conjugate}, $\hat{g}_{5,\xi ,\omega}(u,v)=\overline{g_{5,\omega,\xi}(\overline{u},\overline{v})}=g_{5,\overline{\omega},\overline{\xi}}(u,v)$. If we denote $\Omega:=\langle 1,\omega \rangle$, then by the claim in the proof of Lemma \ref{representation} and arguing as in the above case, there is $a_1\in \C$ such that  $\Omega_1:=\overline{a_1\Omega}\cap a_1\Omega$ is an invariant sublattice of $a_1\Omega$.

Once again  $\beta_1(u,v):=(a_1u,v)$ is an isomorphism from $(\C^2,+,g_{5,\Omega,\xi})$ to $(\C^2,+,g_{5,a_1\Omega,a_1\xi})$. On the other hand, by Lemma \ref{S sublattices} we have that $$\beta_2(u,v):=(u,a_1\xi\mathfrak{c}(a_1\Omega,\Omega_1)u+[a_1\Omega:\Omega_1]v)$$ is an isomorphism from $(\C^2,+,g_{5,a_1\Omega,a_1\xi})$ to $(\C^2,+,g_{5,\Omega_1,a_1\xi})$.
Since $\Omega_1$ is invariant, it has a sublattice of the form $\Omega_0={\langle r_1,r_2i\rangle}$ for some $r_1,r_2\in \R^*$. Therefore $\beta_3(u,v)=(u,a_1\xi\mathfrak{c}(\Omega_1,\Omega_0)u+[\Omega_1:\Omega_0]v)$ is an isomorphism from $(\C^2,+,g_{5,\Omega_1,a_1\xi})$ to $(\C^2,+,g_{5,\Omega_0,a_1\xi})$. Moreover, we have that $\beta_4(u,v):=(r_1^{-1}u,v)$ is an isomorphism from $(\C^2,+,g_{5,\Omega_0,a_1\xi})$ to $(\C^2,+,g_{5,ri,r^{-1}_1a_1\xi})$ where $r:=\frac{r_2}{r_1}$.  All in all, we have that $\gamma:=\beta_4 \circ \beta_3 \circ \beta_2 \circ\beta_1\circ \alpha$ is an isomorphism from $(\C^2,+,f)$ to $(\C^2,+,g_{5,ri,r_1^{-1}a_1\xi})$.

Note that $\xi_0:=r_1^{-1}a_1\xi \notin {\langle 1,ri \rangle}_{\mathbb{Q}}$, otherwise $\xi\in  {\langle 1,\omega \rangle}_{\mathbb{Q}}$. The map  $\hat{\gamma}\circ \gamma^{-1}$ is an isomorphism from $(\C^2,+,g_{5,ri,\xi_0})$ to $(\C^2,+,g_{5,ri,\overline{\xi_0}})$ and therefore $\overline{\xi_0}\in \Xi(ri,\xi_0)={\langle 1,ri \rangle}_\Q+\xi_0 K^*_{ri}$.

Let $\xi_0':=\xi_0+\overline{\xi_0}$. We are going to study two settings depending on $\xi_0'\in {\langle 1,ri\rangle}_\Q$ or not. Assume  first that $\xi_0'\notin  {\langle 1,ri\rangle}_\Q$. Then, the real number $\xi_0'$ belongs to ${\langle 1,ri\rangle}_\Q+\xi_0 K^*_{ri}=\Xi(ri,\xi_0)$, so there is an isomorphism
$$\beta_5:(\mathbb{C}^2,+,g_{5,ri,\xi_0})\rightarrow (\mathbb{C}^2,+,g_{5,ri,\xi'_0}).$$
Consider the isomorphism $\gamma_1:=\beta_5\circ \gamma$ from $(\C^2,+,f)$ to $(\mathbb{C}^2,+,g_{5,ri,\xi_0'})$. Let $h_1:=\hat{\gamma}_1 \circ \gamma_1^{-1} \in \text{Aut} (\mathbb{C}^2,+,g_{5,ri,\xi'_0})$, which satisfies $\hat{h}_1 \circ h_1=\text{id}$. By Proposition\,\ref{C-automorphisms 2} we have that $h_1=\text{id}$ or $h_1=-\text{id}$.     If $h_1=\text{id}$ then $\gamma_1\in \text{GL}_2(\R)$ and we have proved that $(\R^2,+,f)$ is of type (3r), as required (note that $\xi'_0\notin \Q$ because $\xi_0'\notin {\langle 1,ri\rangle}_\Q$). If $h_1=-\text{id}$ then $\gamma_1(u,v)=\gamma'_1(iu,iv)$ for some $\gamma'_1 \in \text{GL}_2(\R)$. Then $\gamma'_1$ is an isomorphism from $(\C^2,+,f)$ to $(\mathbb{C}^2,+,g_{5,ri,\xi_0'}(iu,iv))$. Note that $g_{5,ri,\xi_0'}(iu,iv)$ equals
$$(\wp_{\langle 1,ri \rangle}(iu),e^{iv}\widetilde{\sigma}_{{\langle 1,ri \rangle},\xi'_0}(iu))
=(-\wp_{\langle i,r \rangle}(u),e^{iv}\widetilde{\sigma}_{{\langle i,r \rangle},-\xi'_0i}(u)),$$
and therefore by claim (1) of Lemma \ref{S Theta} we have that $$g_{5,ri,\xi_0'}(iu,iv)\in\big(\wp_{{\langle i,r \rangle}}(u),\frac{1}{2i}(e^{iv}\widetilde{\sigma}_{{\langle i,r \rangle},-\xi_0' i}(u)-e^{-iv}\widetilde{\sigma}_{{\langle i,r \rangle},\xi'_0 i}(u))\big)\big)^{alg}.$$
Finally, for $\beta_6(u,v):=(r^{-1}u,v)$ we have that $\gamma_1'':=\beta_6\circ \gamma'_1$ is an isomorphism from $(\C^2,+,f)$ to
$$(\C^2,+,\big(\wp_{r^{-1}i}(u),\frac{1}{2i}(e^{iv}\widetilde{\sigma}_{r^{-1}i,-r^{-1}\xi_0'i}(u)-e^{-iv}\widetilde{\sigma}_{r^{-1}i,r^{-1}\xi'_0 i}(u))\big)\big),$$
and hence we have proved that  the corresponding locally (real) Nash group  is of type (4r), as required (again note that $r^{-1}\xi_0'\notin r^{-1}\Q$).

Assume now that  $\xi'_0\in {\langle 1,ri\rangle}_\Q$. Then, the pure imaginary number $\xi_0'':=\xi_0-\overline{\xi_0}\notin {\langle 1,ri\rangle}_\Q$, otherwise $\xi_0 \in {\langle 1,ri\rangle}_\Q$. In particular,
$\xi_0''\in {\langle 1,ri\rangle}_\Q+\xi_0 K^*_{ri}=\Xi(ri,\xi_0)$
and therefore there is an isomorphism
$$\beta_7:(\mathbb{C}^2,+,g_{5,ri,\xi_0})\rightarrow (\mathbb{C}^2,+,g_{5,ri,\xi_0''}).$$

Consider the isomorphism $\gamma_2:=\beta_7\circ \gamma$ from $(\C^2,+,f)$ to $(\mathbb{C}^2,+,g_{5,ri,\xi_0''})$, and note that  $\beta_8(u,v)=(-u,v)$ is an isomorphism from $(\mathbb{C}^2,+,g_{5,ri,-\xi_0''})$ to $(\mathbb{C}^2,+,g_{5,ri,\xi_0''})$. Therefore, we can consider the map $h_2:=\beta_8\circ \hat{\gamma}_2\circ \gamma_2^{-1} \in \text{Aut} (\mathbb{C}^2,+,g_{5,ri,\xi''_0})$, which satisfies $\hat{h}_2\circ h_2=\text{id}$. By Proposition \ref{C-automorphisms 2} we have that $h_2=\text{id}$ or $h_2=-\text{id}$. If $h_2=\text{id}$ then $\gamma_2(u,v)=\gamma'_2(iu,v)$ for some $\gamma'_2\in \text{GL}_2(\R)$.
In particular, $\gamma'_2$ is an isomorphism from $(\C^2,+,f)$ to $(\mathbb{C}^2,+,g_{5,ri,\xi_0''}(iu,v))$. Now, we have that $g_{5,ri,\xi_0''}(iu,v)$ equals
$$(\wp_{\langle 1,ri \rangle}(iu),e^{v}\widetilde{\sigma}_{{\langle 1,ri \rangle},\xi''_0}(iu))
=(-\wp_{\langle i,r \rangle}(u),e^{v}\widetilde{\sigma}_{{\langle i,r \rangle},-\xi''_0i}(u)).$$
Therefore, if we denote $\beta_9(u,v)=(r^{-1}u,v)$, the map $\gamma''_2:=\beta_9\circ \gamma'_2$ is an isomorphism from $(\C^2,+,f)$ to
$(\C^2,+,g_{5,r^{-1}i,-r^{-1}\xi''_0i})$. Since $-r^{-1}\xi''_0i$ is real and do not belong to $\Q$ -- otherwise $\xi''_0\in {\langle 1,ri\rangle}_\Q$ -- we have showed that the corresponding real group  is of type (3r), as required.
If $h_2=-\text{id}$ then $\gamma_2(u,v)=\gamma'_2(u,iv)$, for some $\gamma'_2\in \text{GL}_2(\R)$, and similarly as with the argument above, it turns out that $(\R^2,+,f)$ is of type (4r).

\smallskip
(4c) and remain case of (1c):  $(\C^2,+,f)$ is the universal covering of an abelian surface defined over $\C$, non necessarily simple. Then, 
$f\in\mathbb{C}(\Lambda)^{alg}$, for some lattice $\Lambda \leq \C^2$  with $\tr_{\mathbb{C}} \mathbb{C}(\Lambda) = 2$.
In particular, by \cite[Lem.4.6(3)]{BDOLCN} and Fact \ref{BDOrank}, the period group $\Lambda_f$ of $f$ is also a lattice of
$\C^2$ and, since the coordinates functions $f_1,f_2\in \mathbb{C}(\Lambda_f)$ are algebraically independent, $\tr_\mathbb{C} \mathbb{C}(\Lambda_f) = 2$
(see \cite[Ch.\,5 \textsection 11 Thms.\,5 and 6]{Siegel}).

On the other hand, by the proof of Theorem \ref{2dim C-classification}(I.4c) the quotient map $\Psi:(\mathbb{C}^2 ,+,f) \rightarrow \mathbb{C}^2 /\Lambda_f$
is a locally $\mathbb{C}$-Nash universal covering map, where the abelian variety $\mathbb{C}^2/\Lambda_f$ is endowed with the canonical structure of complex projective variety.
Moreover, since $\Lambda_f$ is invariant by \cite[Lem.4.6\,(3)]{BDOLCN}, the conjugation map induces an antiholomorphic involution $\tau$ of $\mathbb{C}^2/\Lambda_f$,
so we can assume that $\mathbb{C}^2/\Lambda_f$ is defined over $\mathbb{R}$ (see \cite[pag.\,56]{Serre2} or the discussion after Corollary \ref{isogenous corollary}). Note also that $\Psi(\mathbb{R}^2)$ is contained in the fixed points of $\tau$, {\it i.e.}, the real points of the real abelian variety $\mathbb{C}^2/\Lambda_f$.
All in all, $\Psi|_{\mathbb{R}}$ maps $(\mathbb{R}^2 ,+,f)$ onto the connected component of the real points of $\mathbb{C}^2/\Lambda_f$ with discrete kernel
and, therefore, it is a locally Nash covering. Note that if $\mathbb{C}^2/\Lambda_f$ is simple over $\R$ then $(\mathbb{R}^2 ,+,f)$ is of type (5r) in the statement of the theorem, and if it is not simple over $\R$ then, it is of type (1r).

To finish the proof of $(I)$ it suffices to check that two groups of different type are not isomorphic.
We note that if $(\mathbb{R}^2,+,f)$ and $(\mathbb{R}^2,+,g)$ are isomorphic as locally Nash groups then $(\mathbb{C}^2,+,f)$ and $(\mathbb{C}^2,+,g)$ are also isomorphic as locally $\mathbb{C}$-Nash groups. Therefore -- by Theorem \ref{2dim C-classification} -- it only remains to prove that groups of type (3r) and (4r) cannot be isomorphic.
Suppose there is an isomorphism  $\alpha(u,v)=(au+bv,cu+dv)\in  \GL_2(\R)$  from $(\R^2,+,g_{5,\omega_1,\xi_1})$ to $(\R^2,+,g_{6,\omega_2,i\xi_2})$, where
$\omega_1\in i\mathbb{R}^*, \xi_1 \in \mathbb{R}\setminus \Q$ and $\omega_2\in i\mathbb{R}^*,\xi_2 \in \mathbb{R}\setminus i\omega_2\Q$ and
\[
g_{6,\omega_2,i\xi_2}(u,v):=\left(\wp_{\omega_2}(u),\frac{1}{2i}\big(e^{iv}\widetilde{\sigma}_{\omega_2,i\xi_2}(u)-e^{-iv}\widetilde{\sigma}_{\omega_2,-i\xi_2}(u)\big)\right).
\]
Since the map $\beta(u,v)=(u,iv)$ is an isomorphism from $(\C^2,+,g_{6,\omega_2,i\xi_2})$ to $(\C^2,+,g_{5,\omega_2,i\xi_2})$, we get that $(\beta\circ \alpha)(u,v)=(au+bv,ciu+div)$ is an isomorphism from $(\C^2,+,g_{5,\omega_1,\xi_1})$ to $(\C^2,+,g_{5,\omega_2,i\xi_2})$. By the claim in the proof of Proposition \ref{representation} we have that $di\in \Q^*$, a contradiction.

\smallskip
$(II)$ We now check the isomorphism classes inside each family. To prove (i) it is enough to note that, as we pointed out in the proof of Theorem\,\ref{2dim C-classification}(II.i), if two groups -- which are direct products of $1$-dimensional -- are isomorphic, then their factors are isomorphic.

(ii) Take $\omega _1,\omega _2\in i\mathbb{R}^*$.
If $\alpha (u,v):=(a_{11}u+a_{12}v,a_{21}u+a_{22}v)\in \GL_2(\mathbb{R})$ is an isomorphism from $(\mathbb{R}^2,+,g_{4,\omega_1,1})$ to $(\mathbb{R}^2,+,g_{4,\omega _2,1})$ then,
arguing as in the proof of the claim in Lemma \ref{Z general}, $\wp _{\omega _2}(a_{11}u)\in\mathbb{C}(\wp _{\omega _1}(u))^{alg}$.
Thus, by Fact \ref{1dim R-classification}, we infer that $\omega _1/\omega _2\in \mathbb{Q}$.
Conversely, if $\omega _1/\omega _2\in \mathbb{Q}$ then, by Lemma \ref{Z general} and by Lemma \ref{qc}, there exists $\alpha \in \GL_2(\mathbb{R})$ such
that $g_{4,\omega_2,1}\circ \alpha$ is algebraic over $\mathbb{C}(g_{4,\omega_1,1})$.

(iii) Take $\omega _1,\omega _2\in i\mathbb{R}^*$ and $\xi _1,\xi _2\in \mathbb{R}$.
If $\alpha (u,v):=(a_{11}u+a_{12}v,a_{21}u+a_{22}v)\in \GL_2(\mathbb{R})$ is an isomorphism from $(\mathbb{R}^2,+,g_{5,\omega _1,\xi _1})$ to $(\mathbb{R}^2,+,g_{5,\omega _2,\xi _2})$ then,
by the claim in the proof of Lemma \ref{representation}, $\wp _{\omega _2}(a_{11}u)\in\mathbb{C}(\wp _{\omega _1}(u))^{alg}$.
By Fact \ref{1dim R-classification}, we deduce $\omega _1/\omega _2\in \mathbb{Q}$.
By Theorem \ref{2dim C-classification}(II.ii), we also have that $\xi _2\in {\langle 1,\omega _1\rangle}_{\mathbb{Q}}+\xi _1 K_{\omega _1}^*$.
Since $\xi _1,\xi _2\in \mathbb{R}$, we get that $\xi _2\in \mathbb{Q}+\xi _1 \mathbb{Q}^*$.
For the converse, if $\omega _1/\omega _2\in \mathbb{Q}$ and $\xi _2\in \mathbb{Q}+\xi _1 \mathbb{Q}^*$ then, by Proposition \ref{S characterization} and Lemma \ref{S general}, there exists
$\alpha \in \GL_2(\mathbb{C})$ such that $g_{5,\omega _2,\xi _2}\circ \alpha\in\mathbb{C}(g_{5, \omega _1,\xi _1})^{alg}$.
So we must check that $\alpha\in \GL_2(\mathbb{R})$.
Indeed, following the computations of the above results, if we choose $a,d\in \mathbb{Z}$ such that $\frac{\omega _1}{\omega _2}=\frac{d}{a}$ and we denote by $p,q\in \mathbb{Q}$, $q\neq 0$,
the rational numbers satisfying $d\xi_2=p+q\xi_1$, and we take the invariant sublattice $\Omega={\langle d,a\omega _1\rangle}_\mathbb{Z}$ of $\Omega_1:={\langle 1,\omega _1\rangle}_{\mathbb{Z}}$ then
\[
\alpha(u,v)=\left(\frac{1}{d}u, -\frac{d\xi_2\mathfrak{c}(\Omega_1,\Omega)+C}{[\Omega_1,\Omega]}u+\frac{q}{[\Omega_1,\Omega]}v\right)\in \GL_2(\mathbb{R}),
\]
where $C\in \mathbb{R}$ satisfies $\widetilde{\sigma}_{p_2^{-1}\Omega_1,p}(u)=e^{Cu+D}$ for a certain $D\in \mathbb{R}$ and $p_1,p_2\in \mathbb{Z}$ such that $p=p_1/p_2$.
The existence of such $C$ and $D$ was shown in the proof of Lemma \ref{S Theta}$(5)$ and we note that both numbers must be real, because $\widetilde{\sigma}_{p_2^{-1}\Omega_1,p}$ is invariant.

(iv) Let $\omega_1,\omega_2\in i\mathbb{R}^*$, $\xi_1,\xi_2\in \mathbb{R}$ and 
\[
g_{6,\omega_1,i\xi_1}(u,v):=\frac{1}{2i}\big(e^{iv}\widetilde{\sigma}_{\omega_1,i\xi_1}(u)-e^{-iv}\widetilde{\sigma}_{\omega_1,-i\xi_1}(u)\big).
\]
First note that for $\beta_1(u,v)=(u,iv)$ we have $g_{6,\omega_1,i\xi_1}\in\mathbb{C}(g_{5,\omega_1,i\xi}\circ \beta_1)^{alg}$.
Moreover,  it is easy to check that for $\gamma_1(u,v)=(\omega_1u,-iv)$ we have $$g_{5,\omega_1,i\xi}\circ \beta_1 \circ \gamma_1\in\mathbb{C}(g_{5,\omega_1^{-1},\omega_1^{-1}i\xi_1})^{alg}.$$
Therefore, $g_{6,\omega_1,i\xi_1}\circ \gamma_1$ is algebraic over $\mathbb{C}(g_{5,\omega_1^{-1},\omega_1^{-1}i\xi_1})$.
Similarly, for $\gamma_2(u,v)=(\omega_2u,-iv)$, so $g_{6,\omega_2,i\xi_2}\circ \gamma_2\in\mathbb{C}(g_{5,\omega_2^{-1},\omega_2^{-1}i\xi_2})^{alg}$.
Hence, given $\alpha(u,v)=(au+bv,cu+dv)\in \GL_2(\mathbb{R})$, we deduce that $g_{6,\omega_2,i\xi_2}\circ \alpha\in\mathbb{C}(g_{6,\omega_1,i\xi_1})^{alg}$ if and only if
$g_{5,\omega_2^{-1},\omega_2^{-1}i\xi_2}\circ \gamma^{-1}_2\circ \alpha\circ \gamma_1\in\mathbb{C}(g_{5,\omega_1^{-1},\omega_1^{-1}i\xi_1})^{alg}$.
Since $$(\gamma^{-1}_2\circ \alpha\circ \gamma_1)(u,v)=(\tfrac{\omega_1}{\omega_2}au-\tfrac{i}{\omega_2}bv,i\omega_1cu+dv)\in\GL_2(\mathbb{R}),$$
it follows from case (iii) that $g_{5,\omega_2^{-1},\omega_2^{-1}i\xi_2}\circ \gamma^{-1}_2\circ \alpha\circ \gamma_1\in\mathbb{C}(g_{5,\omega_1^{-1},\omega_1^{-1}i\xi_1})^{alg}$
if and only if $\omega_1/\omega_2\in \mathbb{Q}^*$ and $\xi_2\in i\omega_2\mathbb{Q}^*+\frac{\omega_2}{\omega_1}\xi_1\mathbb{Q}^*=i\omega_1\mathbb{Q}^*+\xi_1\mathbb{Q}^*$, as required.

(v) Let $\alpha:(\mathbb{R}^2,+,f)\rightarrow (\mathbb{R}^2,+,g)$, where $\alpha\in \GL_2(\mathbb{R})$, be  a locally Nash isomorphism between groups of type (5r).
Then, $\alpha:(\mathbb{C}^2,+,f)\rightarrow (\mathbb{C}^2,+,g)$ is a locally $\mathbb{C}$-Nash isomorphism and, therefore, by the proof of
\cite[Thm.\,3.10]{BDOLCN}, we get an isogeny between $\mathbb{C}^2/\Lambda_f$ and $\mathbb{C}^2/\Lambda_g$ defined over $\mathbb{R}$.
Conversely, an isogeny between the real abelian varieties which is defined over $\mathbb{R}$ gives
a Nash map between two connected neighbourhoods of the identity which is a local isomorphism, which in turn extends to a locally Nash isomorphism between
the universal coverings $(\mathbb{R}^2,+,f)$ and $(\mathbb{R}^2,+,g)$.
This ends the proof of $(II)$ and, hence, of the theorem.
\end{proof}

Finally, we compute the automorphism groups of the locally Nash groups of Theorem \ref{2dim R-classification}.

\begin{proposition}\label{R-automorphisms 2}
Let $(\mathbb{R}^2,+,f)$ be a locally Nash group.
$\Aut(\mathbb{R}^2,+,f)$ is one of the following:

\smallskip

\noindent $(1)$ $\GL_2(\mathbb{R})$, if $f=\id\times \id$.\vspace{1mm} \\
\noindent $(2)$ $\Diag(\mathbb{Q}^*, \mathbb{R}^*)$, if $f=\id\times g$ with $g=\exp$, $\sin$ or $\wp _{ai}$, for some $a\in \mathbb{R}^*$. \vspace{1mm} \\
\noindent $(3)$ $\GL_2(\mathbb{Q})$ if $f=g\times g$, with $g=\exp$ or $\sin$. \vspace{1mm} \\
\noindent $(4)$ $\Diag(\mathbb{Q}^*, \mathbb{Q}^*)$, if $f=\exp\times \sin$, $\wp_{ai}\times \exp$ or $\wp _{ai} \times \sin$, for some $a\in \mathbb{R}^*$.\vspace{1mm} \\
\noindent $(5.1)$ $\GL_2(\mathbb{Q})$, if $f=\wp _{ai} \times \wp_{bi}$, for some $a,b\in \mathbb{R}^*$ such that $a/b\in \mathbb{Q}^*$. \vspace{1mm} \\
\noindent $(5.2)$ $\Diag(\mathbb{Q}^*,\mathbb{Q}^*)$, if $f=\wp _{ai} \times \wp_{bi}$, for some $a,b\in \mathbb{R}^*$ such that $a/b\notin \mathbb{Q}^*$. \vspace{1mm} \\
\noindent $(6)$ $\left\{ q \begin{pmatrix} 1 & 0\\
\qc(\Omega,q \Omega) & [\Omega: q \Omega]q ^{-2} \end{pmatrix}
\suchthat  q \in \mathbb{Q}^* \right\}$, where $\Omega ={\langle 1,ai\rangle}_{\mathbb{Z}}$ for some $a\in \mathbb{R}^*$, if $f=(\wp _{ai}(u),v-\zeta _{ai}(u))$. \vspace{1mm} \\
\noindent $(7)$ $\left\{ q \begin{pmatrix} 1 & 0\\ \xi \qc(\Omega,q \Omega) & 1 \end{pmatrix}
\suchthat  q \in \mathbb{Q}^* \right\}$, where $\Omega ={\langle 1,ai\rangle}_{\mathbb{Z}}$ for some $a\in \mathbb{R}^*$ and $\xi \in \mathbb{R}\setminus \mathbb{Q}$,
if $f=(\wp _{ai}(u),e^v\widetilde{\sigma} _{ai,\xi}(u))$.\vspace{1mm} \\
\noindent $(8)$ $\left\{ q \begin{pmatrix} 1 & 0\\ \xi \qc(\Omega,q \Omega) & 1 \end{pmatrix}
\suchthat  q \in \mathbb{Q}^* \right\}$, where $\Omega ={\langle 1,ai\rangle}_{\mathbb{Z}}$ for some $a\in \mathbb{R}^*$ and $\xi \in \mathbb{R}\setminus a\mathbb{Q}$,
if $f=(\wp _{ai}(u),\frac{1}{2i}(e^{iv}\widetilde{\sigma}_{ai,\xi i}(u)-e^{-iv}\widetilde{\sigma}_{ai,-\xi i}(u)))$.
\end{proposition}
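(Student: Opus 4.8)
The plan is to reduce everything to the complex computation already carried out in Proposition~\ref{C-automorphisms 2}. The key observation I would establish first is that, for any locally Nash group $(\mathbb{R}^2,+,f)$ occurring in the statement,
\[
\Aut(\mathbb{R}^2,+,f)=\Aut(\mathbb{C}^2,+,f)\cap \GL_2(\mathbb{R}).
\]
Indeed, each such $f$ is $\R$-meromorphic: for cases (1)--(7) this is Lemma~\ref{wp_conjugate}, and for case (8) it is a short direct check, using $\overline{ai}=-ai$, $\langle 1,-ai\rangle_{\mathbb{Z}}=\langle 1,ai\rangle_{\mathbb{Z}}$ and $\overline{\widetilde{\sigma}_{ai,\xi i}(\overline u)}=\widetilde{\sigma}_{ai,-\xi i}(u)$. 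If $\alpha\in \GL_2(\mathbb{R})$ then $f\circ\alpha$ is again $\R$-meromorphic; and if moreover $f\circ\alpha\in\mathbb{C}(f)^{alg}$, then multiplying the minimal polynomial of a coordinate of $f\circ\alpha$ over $\mathbb{C}(f)$ by the polynomial obtained by conjugating its coefficient functions yields a polynomial over $\mathbb{R}(f)$ killing that coordinate, so $f\circ\alpha\in\mathbb{R}(f)^{alg}$. The reverse inclusion is immediate from Fact~\ref{Kabelian}(2). I would also record two elementary facts used repeatedly: $K_{ai}$ equals $\mathbb{Q}(ai)$ or $\mathbb{Q}$, so $K_{ai}^*\cap\mathbb{R}^*=\mathbb{Q}^*$; and if $\Omega$ is an invariant lattice and $q\in\mathbb{Q}^*$ then $q\Omega$ is invariant, whence $\qc(\Omega,q\Omega)\in\mathbb{R}$ by Lemma~\ref{qc}(3).

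For the direct-product cases (1)--(5): since $(\mathbb{C},+,\sin)\cong(\mathbb{C},+,\exp)$ over $\mathbb{C}$ via $u\mapsto iu$, the complexification of every $f$ of type (1r) is $\mathbb{C}$-isomorphic to a product with factors among $\id,\exp,\wp_{ai}$, so $\Aut(\mathbb{C}^2,+,f)$ is the pertinent subgroup in Proposition~\ref{C-automorphisms 2}(1)--(6), possibly conjugated by $\diag(1,i)$ or $iI$. Intersecting with $\GL_2(\mathbb{R})$ then yields the asserted groups: conjugation by $iI$ is harmless, conjugation by $\diag(1,i)$ preserves the diagonal subgroups, and in the mixed cases the reality constraint forces the off-diagonal entries to vanish, while $K_{ai}^*\cap\mathbb{R}^*=\mathbb{Q}^*$ and $\GL_2(\mathbb{Q})\subseteq\GL_2(\mathbb{R})$. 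The one slightly delicate item is (5.2), $f=\wp_{ai}\times\wp_{bi}$ with $a/b\notin\mathbb{Q}^*$: if the two one-dimensional factors are not $\mathbb{C}$-isomorphic I apply Proposition~\ref{C-automorphisms 2}(6.2) directly; if they are, then comparing real and imaginary parts in $bi=\frac{a'(ai)+b'}{c'(ai)+d'}$ shows (via Theorem~\ref{1dim C-classification}) that $ai$ is not quadratic, so $K_{ai}=\mathbb{Q}$, and that the scalar $\tau$ in Proposition~\ref{C-automorphisms 2}(6.1) is not real; hence $\diag(1,\tau^{-1})\GL_2(\mathbb{Q})\diag(1,\tau)\cap\GL_2(\mathbb{R})$ consists only of diagonal matrices and equals $\Diag(\mathbb{Q}^*,\mathbb{Q}^*)$.

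Cases (6) and (7) are then immediate: here $f$ is literally the locally $\mathbb{C}$-Nash group of Proposition~\ref{C-automorphisms 2}(7), resp.\ (8), with $\omega=ai$ (in case (7), $\xi\in\mathbb{R}\setminus\mathbb{Q}$ forces $\xi\notin\langle 1,ai\rangle_{\mathbb{Q}}$, so the hypothesis there holds). In the displayed matrix
\[
q\begin{pmatrix} 1 & 0\\ \qc(\Omega,q\Omega) & [\Omega:q\Omega]q^{-2}\end{pmatrix}
\qquad\Bigl(\text{resp.\ } q\begin{pmatrix} 1 & 0\\ \xi\,\qc(\Omega,q\Omega) & 1\end{pmatrix}\Bigr),
\]
all entries are real precisely when $q$ is, i.e.\ $q\in K_{ai}^*\cap\mathbb{R}^*=\mathbb{Q}^*$ (resp.\ $q\in\mathbb{Q}^*$ already), because then $q\Omega$ is invariant so $\qc(\Omega,q\Omega)\in\mathbb{R}$ and $\xi\in\mathbb{R}$; conversely every such matrix lies in $\GL_2(\mathbb{R})$. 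Intersecting with $\GL_2(\mathbb{R})$ gives exactly the stated groups.

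The main obstacle is case (8), where $f$ is not one of the complex models. By the computations in the proof of Theorem~\ref{2dim R-classification} (cases (4c) and (iv)), $(\mathbb{C}^2,+,f)$ is $\mathbb{C}$-isomorphic, via $\gamma_1(u,v)=(ai\cdot u,-iv)$, to the $\mathcal{P}_5$-group $(\mathbb{C}^2,+,(\wp_{\Omega_5}(u),e^v\widetilde{\sigma}_{\Omega_5,\eta}(u)))$ with $\Omega_5=(ai)^{-1}\Omega$, $\Omega=\langle 1,ai\rangle_{\mathbb{Z}}$, and $\eta=(ai)^{-1}i\xi=\xi/a$; here $\Omega_5$ is invariant and $\eta\in\mathbb{R}\setminus\mathbb{Q}$, hence $\eta\notin\langle 1,-i/a\rangle_{\mathbb{Q}}$, so Proposition~\ref{C-automorphisms 2}(8) applies and gives that group's automorphisms as $q\bigl(\begin{smallmatrix}1&0\\ \eta\,\qc(\Omega_5,q\Omega_5)&1\end{smallmatrix}\bigr)$, $q\in\mathbb{Q}^*$. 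Then $\Aut(\mathbb{C}^2,+,f)=\gamma_1\,\Aut(\mathbb{C}^2,+,(\wp_{\Omega_5}(u),e^v\widetilde{\sigma}_{\Omega_5,\eta}(u)))\,\gamma_1^{-1}$; conjugation by $\gamma_1=\diag(ai,-i)$ sends $q\bigl(\begin{smallmatrix}1&0\\c&1\end{smallmatrix}\bigr)$ to $q\bigl(\begin{smallmatrix}1&0\\-c/a&1\end{smallmatrix}\bigr)$, and Lemma~\ref{qc}(1) with $\Omega_5=(ai)^{-1}\Omega$ and $(ai)^2=-a^2$ gives $\qc(\Omega_5,q\Omega_5)=-a^2\,\qc(\Omega,q\Omega)$, so $c=\eta\,\qc(\Omega_5,q\Omega_5)=-a\xi\,\qc(\Omega,q\Omega)$ and the new $(2,1)$-entry is $-c/a=\xi\,\qc(\Omega,q\Omega)$. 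Thus $\Aut(\mathbb{C}^2,+,f)=\{\,q\bigl(\begin{smallmatrix}1&0\\ \xi\,\qc(\Omega,q\Omega)&1\end{smallmatrix}\bigr):q\in\mathbb{Q}^*\,\}$, whose members are already real (as $\Omega,q\Omega$ are invariant and $\xi\in\mathbb{R}$), so intersecting with $\GL_2(\mathbb{R})$ changes nothing and we obtain the claimed group. I expect the hardest part to be exactly this last bookkeeping: pinning down the correct $\mathbb{C}$-isomorphism $\gamma_1$, tracking how conjugation acts on the automorphism matrices, and using the rescaling identity Lemma~\ref{qc}(1) so that the final entries come out real and in the stated normalized form.
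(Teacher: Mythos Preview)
Your proof is correct and follows the same strategy as the paper: use $\Aut(\mathbb{R}^2,+,f)=\Aut(\mathbb{C}^2,+,f)\cap\GL_2(\mathbb{R})$ and read off the answer from Proposition~\ref{C-automorphisms 2}. Two remarks on where you diverge.

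For case~(8) the paper takes a shorter path. Instead of passing via $\gamma_1(u,v)=(ai\,u,-iv)$ to the $\mathcal{P}_5$-group with lattice $\Omega_5=(ai)^{-1}\Omega$ and then invoking Lemma~\ref{qc}(1) to convert $\qc(\Omega_5,q\Omega_5)$ back to $\qc(\Omega,q\Omega)$, one can simply use $\beta_1(u,v)=(u,iv)$ as an isomorphism from $(\mathbb{C}^2,+,g_{6,ai,i\xi})$ to $(\mathbb{C}^2,+,g_{5,ai,i\xi})$. The target already has lattice $\Omega=\langle 1,ai\rangle_{\mathbb{Z}}$ and parameter $i\xi\notin\langle 1,ai\rangle_{\mathbb{Q}}$, so Proposition~\ref{C-automorphisms 2}(8) applies directly, and conjugation by $\diag(1,i)$ turns the $(2,1)$-entry $i\xi\,\qc(\Omega,q\Omega)$ into $\xi\,\qc(\Omega,q\Omega)$ in one line. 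Your route works, but the rescaling detour is avoidable. (Your cross-reference ``(4c) and (iv)'' should read ``(II)(iv)'' of Theorem~\ref{2dim R-classification}.)

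For case~(5.2) you actually give more than the paper does: you explicitly treat the subcase where $(\mathbb{C},+,\wp_{ai})$ and $(\mathbb{C},+,\wp_{bi})$ are $\mathbb{C}$-isomorphic while $a/b\notin\mathbb{Q}^*$, showing that then $ai$ is not quadratic and the conjugating scalar $\tau$ is purely imaginary, so the intersection with $\GL_2(\mathbb{R})$ is still diagonal. The paper instead hints that one should rerun the argument of Proposition~\ref{C-automorphisms 2}(6.2) over~$\mathbb{R}$, using that the \emph{real} one-dimensional factors are non-isomorphic to force the off-diagonal entries to vanish. Both arguments are valid; yours stays within the ``intersect the complex automorphism group with $\GL_2(\mathbb{R})$'' framework, which is arguably cleaner.
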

\begin{proof}Since $\Aut(\mathbb{C}^2,+,f)\cap \GL_2(\R)=\Aut(\mathbb{R}^2,+,f)$, the cases $(1)-(7)$ are an easy consequence of Proposition \ref{C-automorphisms 2} (and Lemma \ref{qc}, in the cases $(6)$ and $(7)$). We just give two examples. Firstly, we compute  $\Aut(\mathbb{R}^2,+,\exp\times \sin)$.
The map $\beta(u,v)=(u,iv)$ is an isomorphism from $(\mathbb{C}^2,+,\exp\times \sin)$ to $(\mathbb{C}^2,+,\exp\times \sin)$.
Therefore, $$\Aut(\mathbb{C}^2,+,\exp\times \sin)=\beta^{-1} \circ \Aut(\mathbb{C}^2,+,\exp\times \exp) \circ \beta$$equals $\{(au+ibv,-icu+dv) \suchthat a,b,c,d \in \mathbb{Q}, ad-bd\neq 0\}$
and hence $\Aut(\mathbb{R}^2,+,\exp\times \sin)=\{(au,dv) \,|\, a,d \in \mathbb{Q}^*\}$.

Next, we compute $\Aut(\mathbb{R}^2,+,\wp _{ai}\times \wp _{bi})$.
If $a/b\in \mathbb{Q}^*$ then, by Fact \ref{1dim R-classification}, the identity map is an isomorphism from $(\mathbb{R},+,\wp _{ai})$ to $(\mathbb{R},+,\wp _{bi})$ and, therefore, we can follow the proof of case
$(6.1)$ of Proposition \ref{C-automorphisms 2} for $\tau =1$.
Otherwise, $(\mathbb{R},+,\wp _{ai})$ and $(\mathbb{R},+,\wp _{bi})$ are not isomorphic and, therefore, we can proceed as in case $(6.2)$ of Proposition \ref{C-automorphisms 2}.

Finally, we consider case $(8)$.
Given $a\in \mathbb{R}^*$ and $\xi \in \mathbb{R}\setminus a\mathbb{Q}$, let
\[
g_{6,ai,i\xi}(u,v):=\left(\wp _{ai}(u),\frac{1}{2i}\big(e^{iv}\widetilde{\sigma}_{ai,i\xi}(u)-e^{-iv}\widetilde{\sigma}_{ai,-i\xi}(u)\big)\right).
\]
We recall that, in the proof of Theorem \ref{2dim R-classification}(II.iv), we showed that $\beta _1(u,v)=(u,iv)$ is a locally $\mathbb{C}$-Nash isomorphism from
$(\mathbb{C}^2,+,g_{6,ai,i\xi})$ to $(\mathbb{C}^2,+,g_{5,ai,i\xi})$.
Hence, we get that $\Aut(\mathbb{C}^2,+,g_{6,ai,i\xi})$ equals to
\[
\beta_1^{-1} \circ \Aut(\mathbb{C}^2,+,g_{5,ai,i\xi}) \circ \beta_1=\{q (u,\xi \qc(\Omega,q \Omega)u+v) \suchthat  q \in \mathbb{Q}^*\},
\]
which in turn must equal $\Aut(\mathbb{R}^2,+,g_{6,ai,i\xi})$, as required.
\end{proof}
\begin{remark}We would like to finish this section by pointing out that the results of \S \ref{Sproofs} and \S \ref{2dlng} provide a description of \emph{all} two dimensional connected locally $\K$-Nash groups. Indeed, in \cite[Prop.\,4.9]{BDOLCN} we proved that such a group is of the form $(\K^2,+,f)/\Gamma$ for some discrete subgroup $\Gamma$ of $\K^2$. Moreover, $(\K^2,+,f)/\Gamma_1$ and $(\K^2,+,g)/\Gamma_2$ are isomorphic if and only if there is an isomorphism $\alpha\in \GL_2(\K)$ between $(\K^2,+,f)$ and $(\K^2,+,g)$ such that $\alpha(\Gamma_1)=\Gamma_2$. On the other hand, in the complex case note that by the explicit calculations of the automorphism groups in Proposition \ref{C-automorphisms 2} and the computations carried out in the proofs of Theorem \ref{2dim C-classification}(II), we have a complete control of the isomorphisms between the different families of our classification. Similarly, in the real case we have such a control by the corresponding results Proposition \ref{R-automorphisms 2} and Theorem \ref{2dim R-classification}(II).
 
\end{remark}

\section{Consequences for (real) algebraic groups}\label{finalcomments}

By \cite[Thm.\,4.10]{BDOLCN} we know that the groups in Theorem \ref{2dim C-classification} are universal coverings of abelian irreducible algebraic groups.  In the following result we identify these algebraic groups.

\begin{prop}\label{algebraic groups}
The groups listed in Theorem\,\emph{\ref{2dim C-classification}} are  universal coverings of  abelian irreducible algebraic groups. For each group of type $(1c)$, the corresponding algebraic group is a direct product of two of the following ones:
$\C$, $\mathbb{C}^*$ and an elliptic curve.
For groups of type $(2c)$, $(3c)$ and $(4c)$, we get  (specific) extensions of  elliptic curves by $\C$,
 (specific) extensions of  elliptic curves by $\mathbb{C}^*$ and  abelian surfaces, respectively.
\end{prop}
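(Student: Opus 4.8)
The plan is to use the already established fact (\cite[Thm.\,4.10]{BDOLCN}) that each group $(\mathbb{C}^2,+,f)$ of Theorem \ref{2dim C-classification} is the universal covering of an irreducible commutative algebraic group, namely of $\mathbb{C}^2/\Lambda_f$ where $\Lambda_f$ is the period group of $f$ -- already computed, in every relevant case, inside the proof of Proposition \ref{rank} -- so that only the identification of $\mathbb{C}^2/\Lambda_f$ up to isomorphism remains. By Chevalley's structure theorem together with the classification of connected commutative linear algebraic groups over $\mathbb{C}$ (the groups $\mathbb{G}_a^r\times\mathbb{G}_m^s$), a connected commutative algebraic group of dimension $2$ is isomorphic to exactly one of: $\mathbb{G}_a^2$, $\mathbb{G}_a\times\mathbb{G}_m$, $\mathbb{G}_m^2$, an extension of an elliptic curve $E$ by $\mathbb{G}_a$ -- either $E\times\mathbb{G}_a$ or the universal vectorial extension $E^\natural$, since $\mathrm{Ext}^1(E,\mathbb{G}_a)\cong H^1(E,\mathcal{O}_E)\cong\mathbb{C}$ is one-dimensional and $\Aut(\mathbb{G}_a)=\mathbb{G}_m$ acts on it by scaling -- an extension of $E$ by $\mathbb{G}_m$ (classified by $\mathrm{Ext}^1(E,\mathbb{G}_m)\cong\mathrm{Pic}^0(E)$), or an abelian surface. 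The ranks of the corresponding fundamental groups are $0,1,2,2,3,3,4$, so Proposition \ref{rank} pins down $\rank\Lambda_f$ in each of our cases, leaving only minor ambiguities to resolve via the non-product claims already proved in Theorem \ref{2dim C-classification}.

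For type $(1c)$ the period group splits as $\Lambda_f=\Lambda_{f_1}\times\Lambda_{f_2}$, whence $\mathbb{C}^2/\Lambda_f=(\mathbb{C}/\Lambda_{f_1})\times(\mathbb{C}/\Lambda_{f_2})$, and by the one-dimensional classification (Theorem \ref{1dim C-classification}) each factor is $\mathbb{C}$, $\mathbb{C}^*$, or an elliptic curve, as asserted. For type $(2c)$, $f=g_{4,{\langle1,\omega\rangle}_{\mathbb Z},1}\in\mathcal{P}_4$, so $\rank\Lambda_f=2$; projection to the first coordinate descends to a surjection $\mathbb{C}^2/\Lambda_f\to E:=\mathbb{C}/{\langle1,\omega\rangle}_{\mathbb Z}$ whose kernel is $(\{0\}\times\mathbb{C})/(\Lambda_f\cap(\{0\}\times\mathbb{C}))=\mathbb{C}=\mathbb{G}_a$ (because $m+n\omega=0$ forces $m=n=0$), so $\mathbb{C}^2/\Lambda_f$ is an extension of $E$ by $\mathbb{G}_a$; by Claim $(1)$ in the proof of Theorem \ref{2dim C-classification}$(I)$ it is not a direct product of one-dimensional groups, hence neither $\mathbb{G}_m^2$ nor $E\times\mathbb{G}_a$, and therefore it is $E^\natural$ (the only dimension-two, rank-two connected commutative algebraic group that is not a direct product), whose classical transcendental parametrization is given precisely by the Weierstrass $\zeta$ function, see \cite{Serre}.

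For type $(3c)$, $f=g_{5,{\langle1,\omega\rangle}_{\mathbb Z},\xi}\in\mathcal{P}_5$, so $\rank\Lambda_f=3$, which forces $\mathbb{C}^2/\Lambda_f$ to be an extension of an elliptic curve by $\mathbb{G}_m$; once more the first coordinate yields a surjection onto $E:=\mathbb{C}/{\langle1,\omega\rangle}_{\mathbb Z}$, now with kernel $(\{0\}\times\mathbb{C})/2\pi i\mathbb{Z}\cong\mathbb{C}^*=\mathbb{G}_m$, and using Legendre's relation and the principal polarization $E\cong\widehat{E}$ the associated class in $\mathrm{Ext}^1(E,\mathbb{G}_m)=\mathrm{Pic}^0(E)$ is that of $\mathcal{O}_E([\xi]-[0])$ -- equivalently, $\mathbb{C}^2/\Lambda_f$ is the generalized Jacobian of $E$ with modulus $[0]+[\xi]$, parametrized by $\widetilde\sigma_{\omega,\xi}$. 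Since $\xi\notin{\langle1,\omega\rangle}_{\mathbb Q}$ the point $\overline{\xi}$ is not torsion, so by Claim $(2)$ in the proof of Theorem \ref{2dim C-classification}$(I)$ (the product case being exactly $\xi\in{\langle1,\omega\rangle}_{\mathbb Q}$) this extension is not a direct product, which is why one obtains ``specific'' extensions of elliptic curves by $\mathbb{C}^*$. Finally, type $(4c)$ is immediate: here $f\in\mathbb{C}(\Lambda)^{alg}$ with $\tr_{\mathbb{C}}\mathbb{C}(\Lambda)=2$, and the proof of Theorem \ref{2dim C-classification}$(I.4c)$ already exhibits the group covered as the abelian surface $\mathbb{C}^2/\Lambda_f$, which is not a product of elliptic curves, for otherwise it would be of type $(1c)$.

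The step I expect to be the main obstacle is the precise identification of the extension class in type $(3c)$ and, in the same spirit, the verification that the natural maps $\mathbb{C}^2/\Lambda_f\to E$ are morphisms of algebraic groups with algebraic kernels $\mathbb{G}_a$, resp.\ $\mathbb{G}_m$; both are best handled by appealing to the classical theory of generalized Jacobians of curves (Rosenlicht, Serre \cite{Serre}) -- which supplies exactly the $\zeta$- and $\widetilde\sigma$-parametrizations appearing in our maps -- rather than by a direct computation.
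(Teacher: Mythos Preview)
Your approach is genuinely different from the paper's. The paper works by hand: for types (2c) and (3c) it writes down Hindry's explicit projective embeddings $\varphi:\mathbb{C}^2\to\mathbb{P}^5$ (resp.\ $\mathbb{P}^8$), quotes that the image $G$ is a quasi-projective algebraic group which is an extension of $E$ by $\mathbb{G}_a$ (resp.\ $\mathbb{G}_m$), and then checks by a transcendence-degree computation that $\varphi$ is a locally $\mathbb{C}$-Nash covering map. You instead invoke \cite[Thm.\,4.10]{BDOLCN} once, apply Chevalley's structure theorem, and read off the type of $G$ from $\rank\pi_1(G)$ together with the non-product Claims~(1) and~(2) of Theorem~\ref{2dim C-classification}. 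That is a legitimate and more conceptual route.

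There is, however, an imprecision you should fix. You write that \cite[Thm.\,4.10]{BDOLCN} gives the covered group ``namely $\mathbb{C}^2/\Lambda_f$'', and your projection arguments (``first coordinate descends to a surjection $\mathbb{C}^2/\Lambda_f\to E$'') use the explicit shape of $\Lambda_f$. But \cite[Thm.\,4.10]{BDOLCN} only produces \emph{some} irreducible algebraic $G$ with universal cover $(\mathbb{C}^2,+,f)$; it does not assert that $\pi_1(G)=\Lambda_f$, and the analytic quotient $\mathbb{C}^2/\Lambda_f$ is not known a priori to carry an algebraic structure (establishing this is precisely the content of the paper's Hindry computation). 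The repair is painless: pull back a pair of algebraically independent rational functions on $G$ to get $g$ with $\pi_1(G)\subseteq\Lambda_g$ and $\Lambda_g$ discrete, and use Fact~\ref{BDOrank} to conclude $\rank\pi_1(G)=\rank\Lambda_g=\rank\Lambda_f$. Then Chevalley plus the rank plus the non-product claims already pin down the type of $G$, and your explicit projection is unnecessary.

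One substantive cost of your route: the paper's explicit Hindry parametrizations are what make Remark~\ref{rmk:realalgebraicgroups} immediate (when the lattice is invariant and $\xi\in\mathbb{R}$, the embedding is visibly defined over $\mathbb{R}$), and that remark is used in Proposition~\ref{realalgebraicgroups}. Your structural argument does not deliver this for free; you would need a separate descent argument to recover the real forms.
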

\begin{proof}
We only need to study groups of  type (2c) and (3c). We use extensively the reference \cite{Hindry}, in which M. Hindry present explicit embeddings of two-dimensional algebraic groups in projective spaces
(and whose existence was implicitly proved by J.-P. Serre).

We begin with a locally $\mathbb{C}$-Nash group of the form
\[
\big(\mathbb{C}^2,+,(\wp _\omega (u),v-\zeta _\omega (u))\big),
\]
for some $\omega\in \mathbb{C}\setminus \mathbb{R}$.
Let $\Omega:={\langle 1,\omega \rangle}_{\mathbb{Z}}$ and consider the elliptic curve $E=:\mathbb{C}/\Omega $.
Recall that, since $\zeta'_\omega=-\wp_\omega$, for each $\lambda \in \Omega$, the difference $\zeta_\omega (u+\lambda)-\zeta_\omega(u)$ is a constant function
that we denote $\eta (\lambda)$.
Moreover, given $\lambda_1,\lambda_2\in \Omega$,
\[
\zeta_\omega(u+\lambda_1+\lambda_2)=\zeta_\omega(u+\lambda_1)+\eta(\lambda_2)=\zeta_\omega(u)+\eta(\lambda_1)+\eta(\lambda_2) 
\]
and, therefore, $\eta(\lambda_1+\lambda_2)=\eta(\lambda_1)+\eta(\lambda_2)$.

Consider the map $\varphi :\mathbb{C}^2 \rightarrow \mathbb{P}^5$, \begin{multline*}
\varphi (u,v):= (1:\wp _\omega (u):\wp '_\omega (u):v-\zeta _\omega (u):\wp _\omega (u)(v-\zeta _\omega (u))-\frac{1}{2}\wp '_{\omega}(u)\\
: \wp '_\omega (u)(v-\zeta _\omega (u))-2\wp ^2_\omega (u))
\end{multline*}
if $u\notin \Omega$, and $\varphi (u,v)=(0:0:1:0:0:v-\eta (u))$, otherwise.

By \cite[pag.\,28, Thm.]{Hindry}, $\varphi$ is a parametrisation of a quasi-projective subvariety $G$ of $\mathbb{P}^5$.
The addition in $\mathbb{C}^2$ induces an algebraic group structure on $G$ and  making $\varphi$ an homomorphism with kernel $\Omega_G :=\{ (\lambda , \eta (\lambda)) \suchthat \lambda \in \Omega \}$.
Moreover, $G$ is an extension of the elliptic curve $E$ by $(\mathbb{C}, +)$.
Therefore, it is enough to show that $\varphi$ is a locally $\mathbb{C}$-Nash map when we equip $\mathbb{C}^2$ with the locally
$\mathbb{C}$-Nash structure $(\mathbb{C}^2,+,(\wp _\omega (u),v-\zeta _\omega (u)))$.

We recall that $(\mathbb{C}^2,+,(\wp _\omega (u),v-\zeta _\omega (u)))$ is a notation that means that there exists $a\in \mathbb{C}$
such that $\phi_a(u,v):=(\wp _\omega (u+a),v-\zeta _\omega (u+a))$ is a chart of the identity of the locally
$\mathbb{C}$-Nash group structure on $(\mathbb{C}^2,+)$, see the Introduction.
On the other hand, there is an open neighbourhood $W$ of the identity of $G$ such that the projection
$$\pi:W\rightarrow \mathbb{C}^2:(x_0:\ldots :x_5)\mapsto(x_1/x_2,x_3/x_2)$$
is a chart of the locally $\mathbb{C}$-Nash structure of $G$. Thus, we have to prove that $ \pi\circ\varphi\circ\phi_a^{-1}$ is a $\C$-Nash map (see \cite[Prop.\,3.3]{BDOLCN}), and the latter is obtained by proving that $(\wp _\omega (u+a),v-\zeta _\omega (u+a))\in\mathbb{C}(\psi(u,v))^{alg}$, where
$\psi(u,v):=(\wp_\omega(u)(\wp'_\omega(u))^{-1},(v-\zeta _\omega (u))(\wp'_\omega(u))^{-1})$.
In fact,  it suffices to prove that
$\phi:=(\wp _\omega (u),v-\zeta _\omega (u))\in\mathbb{C}(\psi(u,v))^{alg}$.
The latter is done by noting that both
$\mathbb{C}(\phi(u,v))$ and $\mathbb{C}(\psi(u,v))$ are subfields of $\mathbb{C}(\wp _\omega (u),v-\zeta _\omega (u),\wp'_\omega(u))$ which has  transcendence degree 2
because $\wp'_\omega(u)$ is algebraic over $\mathbb{C}(\wp_\omega(u))$.

Next, we consider the case of a locally $\mathbb{C}$-Nash group of the form
\[
\big(\mathbb{C}^2,+,(\wp _\omega (u),e^v\widetilde{\sigma}_{\omega,\xi} (u))\big),
\]
for some $\omega\in \mathbb{C}\setminus \mathbb{R}$ and $\xi \notin {\langle 1,\omega\rangle}_{\mathbb{Q}}$.
Again, we consider the elliptic curve $E:=\mathbb{C}/\Omega $, where $\Omega:={\langle 1,\omega \rangle}_{\mathbb{Z}}$.

Consider the functions
\begin{align*}
\Phi(u,v) &:=\frac{\sigma _\omega (u-\xi)}{\sigma _\omega (u)\sigma _\omega (\xi)}e^{v+u\zeta _\omega (\xi)}=\tfrac{1}{\sigma _\omega (\xi)}\widetilde{\sigma}_{\omega,\xi} (u)e^{v+u\zeta _\omega (\xi)},\\
F(u,v) &:= \frac{\wp '_\omega (u)+\wp ' _\omega (\xi )}{\wp _\omega (u)- \wp _\omega (\xi)}.
\end{align*}
Then, by \cite[pags.\,32-34]{Hindry} (see also D. Caveny and R. Tubbs \cite[\textsection 2]{Caveny_Tubbs}), we have that
$\varphi:\mathbb{C}^2\rightarrow \mathbb{P}^8$ is a parametrisation of a quasi-projective subvariety $G$ of $\mathbb{P}^8$, where
\medskip
\begin{multline*}
\varphi (u,v)=(1:\wp _\omega (u):\wp '_\omega (u):\Phi(u,v):\Phi(-u,-v):\wp _\omega (u)\Phi(u,v)\\
:\wp _\omega (u)\Phi(-u,-v):\Phi(u,v)F(u):\Phi(-u,-v)F(-u))
\end{multline*}
\medskip
if $u\notin \Omega$ and
\[
\varphi (u,v)=(0:0:1:0:0: \tfrac{\sigma_\omega(u-\xi)}{\sigma_\omega(\xi)}e^{v+u\zeta_\omega(\xi)}:\tfrac{\sigma_\omega(-u-\xi)}{\sigma_\omega(\xi)}e^{-v-u\zeta_\omega(\xi)}:0:0)
\]
otherwise.

The addition in $\mathbb{C}^2$ induces an algebraic group structure on $G$ and the kernel of $\varphi$ is
$\Omega_G :=\{ (\lambda, \xi \eta (\lambda)-\lambda \zeta _{\omega} (\xi )+ 2\pi i m) \suchthat \lambda \in \Omega, m\in \mathbb{Z} \}$,
which is a discrete subgroup of $\mathbb{C}^2$ of rank $3$.
Moreover, $G$ is an extension of the elliptic curve $E$ by $\mathbb{C}^*$.

Since $(u,v+u\zeta _\omega(\xi)):(\mathbb{C}^2,+, (\wp _\omega(u), \Phi(u,v))\to (\mathbb{C}^2,+, (\wp _\omega(u), e^v\widetilde{\sigma} _{\omega ,\xi} (u)))$ is an isomorphism, it is enough to prove that $\varphi$ is a locally $\mathbb{C}$-Nash map when we equip $\mathbb{C}^2$ with the locally
$\mathbb{C}$-Nash structure $(\mathbb{C}^2,+, (\wp _\omega(u), \Phi(u,v))$.

As above, the  notation means that there exists $a\in \mathbb{C}$
such that $\phi_a(u,v):=(\wp _\omega (u+a),\Phi(u+a,v))$ is a chart of the identity of the locally
$\mathbb{C}$-Nash group structure on $(\mathbb{C}^2,+)$. On the other hand, there is an open neighbourhood $W$ of the identity of $G$ such that the projection
$$\pi:W\rightarrow \mathbb{C}^2:(x_0:\ldots :x_8)\mapsto(x_1/x_2,x_3/x_2)$$ is a chart of the locally $\mathbb{C}$-Nash structure of $G$. The situation now reduces to prove that
$\phi:=(\wp _\omega (u),\Phi(u,v))$ is algebraic over $\mathbb{C}(\psi(u,v))$, where
$\psi(u,v):=(\wp_\omega(u)(\wp'_\omega(u))^{-1},\Phi(u,v)(\wp'_\omega(u))^{-1})$. 
We again conclude noting  that both $\mathbb{C}(\phi(u,v))$ and $\mathbb{C}(\psi(u,v))$ are subfields of
$\mathbb{C}(\wp _\omega (u), \Phi(u,v),\wp'_\omega(u))$ which has transcendence degree $2$
because $\wp'_\omega(u)$ is algebraic over $\mathbb{C}(\wp_\omega(u))$.
\end{proof}
\begin{remark}\label{rmk:realalgebraicgroups}We point out that if in the proof of Proposition \ref{algebraic groups} the relevant lattices are invariant (and the $\xi$ parameter is real in case (3c)) then the obtained algebraic groups are defined over $\R$.
\end{remark}

Obviously, we could infer Theorem\,\ref{2dim C-classification} from the fact that every simply connected abelian locally $\C$-Nash group is the universal covering of an irreducible group \cite[Thm.\,4.10]{BDOLCN} and from the classification of
two-dimensional abelian irreducible algebraic groups (see J.-P. Serre \cite{Serre} and also P. Corvaja, D. Masser and U. Zannier \cite{Corvaja_Masser_Zannier}).
However, we would like to stress the other direction:
we have provided a new proof with analytic methods of the
classification of two-dimensional abelian irreducible complex algebraic groups.

\begin{corollary}\label{isogenous corollary}
Every two-dimensional abelian irreducible complex algebraic group is isogenous to  a group of one and only one of the following types:

\emph{(1ac)} A direct product of  two among $\C,\,\C^*$ and an elliptic curve.

\emph{(2ac)} An extension of an elliptic curve  by $\C$.

\emph{(3ac)} An extension of an elliptic curve  by $\C^*$.

\emph{(4ac)} A simple abelian surface.
\end{corollary}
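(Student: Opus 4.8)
The plan is to read the statement off Theorem \ref{2dim C-classification}, using the correspondence between two–dimensional abelian irreducible complex algebraic groups and their universal covering groups provided by \cite[Thm.\,4.10]{BDOLCN} and Proposition \ref{algebraic groups}. Let $G$ be a two–dimensional abelian irreducible complex algebraic group. Being algebraic it is a locally $\mathbb{C}$-Nash group, and its universal covering $\widetilde{G}$ is a simply connected two–dimensional abelian locally $\mathbb{C}$-Nash group, say $\widetilde{G}=(\mathbb{C}^2,+,f)$ as in Fact \ref{Kabelian}(1). By Theorem \ref{2dim C-classification}(I), $\widetilde{G}$ is locally $\mathbb{C}$-Nash isomorphic to a model $H$ of exactly one of the types (1c)--(4c), and by Proposition \ref{algebraic groups} together with \cite[Thm.\,4.10]{BDOLCN} this $H$ is the universal covering group of an algebraic group $H'$ which is: a direct product of two among $\mathbb{C}$, $\mathbb{C}^*$ and an elliptic curve, a (specific) extension of an elliptic curve by $\mathbb{C}$, a (specific) extension of an elliptic curve by $\mathbb{C}^*$, or an abelian surface, according as $H$ has type (1c), (2c), (3c), or (4c). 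In the last case one may take $H'$ \emph{simple}: were $H'$ not simple, by Poincar\'e reducibility it would be isogenous to a product of elliptic curves, hence $\widetilde{H'}\cong H$ would be isomorphic to the universal covering of such a product and therefore of type (1c), contradicting the disjointness of the types. So the corollary reduces to the two facts: (a) $G$ and $H'$, having isomorphic universal coverings, are isogenous; and (b) the types (1ac)--(4ac) are pairwise non-isogenous.

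For (a): one direction is immediate, since an isogeny has finite kernel and therefore lifts to a locally $\mathbb{C}$-Nash isomorphism of the universal coverings. For the converse, transporting through the isomorphism $\widetilde{G}\cong\widetilde{H'}$, write $G=(\mathbb{C}^2,+,f)/\Gamma_1$ and $H'=(\mathbb{C}^2,+,f)/\Gamma_2$ with $\Gamma_1,\Gamma_2$ discrete central subgroups. The function field of each of $G$, $H'$ has transcendence degree two and pulls back to a subfield of $\mathbb{C}(f)^{alg}$ whose generating map $g_i$ has algebraically independent coordinate functions and satisfies $\Gamma_i\leq\Lambda_{g_i}$ with $\Lambda_{g_i}$ discrete; by Fact \ref{BDOrank}(1) one has $N_i\Lambda_f\leq\Lambda_{g_i}$ for some $N_i\in\mathbb{N}^*$, and a comparison of ranks (Fact \ref{BDOrank} and Proposition \ref{rank}) shows that $\Gamma_1$, $\Gamma_2$ and $\Lambda_f$ are pairwise commensurable. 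Hence $\Gamma_0:=\Gamma_1\cap\Gamma_2$ has finite index in both $\Gamma_i$, the group $(\mathbb{C}^2,+,f)/\Gamma_0$ is a finite covering of both $G$ and $H'$, it carries the structure of a two–dimensional abelian algebraic group (a connected complex Lie group finitely covering such a group is again one), and the two covering maps are isogenies; so $G$ and $H'$ are isogenous. In the special case where $G$ and $H'$ are abelian surfaces this is exactly \cite[Thm.\,3.10]{BDOLCN}.

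For (b): if two of the algebraic groups in the list (1ac)--(4ac) were isogenous, then by the easy direction of (a) their universal coverings would be isomorphic locally $\mathbb{C}$-Nash groups, hence would lie in the same type among (1c)--(4c) of Theorem \ref{2dim C-classification}; but a direct product of two among $\mathbb{C},\mathbb{C}^*$ and an elliptic curve, a nonsplit extension of an elliptic curve by $\mathbb{C}$, a nonsplit extension by $\mathbb{C}^*$, and a simple abelian surface have universal coverings of the mutually distinct types (1c), (2c), (3c), (4c) respectively, so this is impossible. This yields the ``one and only one'' clause. I expect the genuine difficulty to be concentrated in the commensurability step of (a), namely pinning down $\pi_1$ of an irreducible algebraic group, up to commensurability, in terms of the period group of its locally $\mathbb{C}$-Nash universal covering; everything else is bookkeeping built on Theorem \ref{2dim C-classification}, Proposition \ref{algebraic groups} and \cite[Thm.\,4.10]{BDOLCN}.
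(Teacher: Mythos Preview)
Your overall strategy matches the paper's: pass to the universal covering, classify it via Theorem~\ref{2dim C-classification}, identify the corresponding algebraic group via Proposition~\ref{algebraic groups}, and descend. The difference is in step~(a). The paper simply observes that $G$ and $H'$ share a universal covering, hence there is a locally $\mathbb{C}$-Nash local isomorphism between them, and then invokes \cite[Thm.\,3.10]{BDOLCN} directly to obtain an isogeny. This is applied in \emph{all} cases, not only when $G$ and $H'$ are abelian surfaces; your reading of that theorem as restricted to the surface case is what forces you into the commensurability detour.

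That detour also has a genuine gap, precisely where you flag it. From $\Gamma_i\leq\Lambda_{g_i}$ and $\Lambda_{g_i}$ commensurable with $\Lambda_f$ you cannot conclude that $\Gamma_i$ is commensurable with $\Lambda_f$ without knowing $\rank\Gamma_i=\rank\Lambda_{g_i}$. The missing observation is that in fact $\Lambda_{g_i}=\Gamma_i$: any $\lambda\in\Lambda_{g_i}$ descends to a translation of the irreducible variety $G$ (resp.\ $H'$) that fixes a transcendence basis of its function field, hence fixes all rational functions, hence is the identity; so $\lambda\in\Gamma_i$. With this your route would go through, though you then also assert that a connected complex Lie group finitely covering an algebraic group is again algebraic, which is again the content packaged in \cite[Thm.\,3.10]{BDOLCN} rather than something you have established. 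Your explicit handling of the ``simple'' qualifier in (4ac) and of the ``only one'' clause is more careful than the paper's, which leaves both implicit.
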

\begin{proof}
Let $G$ be a two-dimensional abelian irreducible algebraic group.
Then, by \cite[Lem.\,3.9]{BDOLCN} and \cite[Prop.\,3.7]{BDOLCN}, its universal covering $\widetilde{G}$ is a simply connected two-dimensional
abelian locally $\mathbb{C}$-Nash group.
Then, by Proposition\,\ref{algebraic groups}, we have that $\widetilde{G}$ is the universal covering of one of the algebraic groups listed in
Proposition\,\ref{algebraic groups}.
In particular, we have a local isomorphism which is a locally $\mathbb{C}$-Nash map between $G$ and one these groups,
so the result follows from \cite[Thm.\,3.10]{BDOLCN}.
\end{proof}

Our final purpose is to provide a classification of algebraic groups defined over $\R$. In Theorem \ref{2dim R-classification} we already showed a classification of the two-dimensional simply-connected abelian  locally Nash groups, and each one is the universal covering of the connected component of the real points of an irreducible algebraic group defined over $\R$. Therefore, it only remains to recognise such  algebraic groups defined over $\R$. In fact, by Remark \ref{rmk:realalgebraicgroups} it is enough to study the groups of type (4r). And for that aim, we need to make  some comments about the \emph{descent datum} of two-dimensional irreducible algebraic groups (mentioned at  \S Introduction). That is, we recall first the \emph{descent of the base field} from \cite[Ch.V,\S 4]{Serre}. Let $H$ be a $\C$-algebraic group, \emph{i.e.}, an algebraic group defined over $\C$. Recall that algebraic groups are quasi-projective by Chevalley's theorem. Let us denote by $\hat{H}$ the algebraic group obtained by means of complex conjugation (this is denoted by $H^\sigma$ in \cite{Serre}).

Now, let $G$ be an $\R$-algebraic group and let $f:G\rightarrow H$ be a biregular isomorphism defined over $\C$. Let $\hat{f}: \hat{G} \rightarrow  \hat{H}$ denote the isomorphism obtained by applying  complex conjugation to $f$. Since $\hat{G}=G$, we get a biregular isomorphism $h:=\hat{f} \circ f^{-1}:H \rightarrow \hat{H}$ defined over $\C$ such that $\text{id}=\hat{h}\circ h$. Reciprocally, given a biregular isomorphism $h:H \rightarrow \hat{H}$ defined over $\C$ such that $\text{id}=\hat{h}\circ h$ there is a unique $\R$-algebraic group $G$ up to $\R$-isomorphism and a biregular isomorphism $f:G\rightarrow H$ such that $h:=\hat{f} \circ f^{-1}$ (see \cite[Ch.V,\S 4, Corollary 2]{Serre}). On the other hand, the map $\tau:= \sigma_H^{-1}\circ h$, where $\sigma_H: H\rightarrow \hat{H}$ is the complex conjugation map,   is clearly an antiholomorphic involution (\emph{i.e.}, $\tau^2=\id$ and $\sigma_H\circ \tau$ is holomorphic). Moreover, it is easy to show that the fixed points of $\tau$ are exactly the real points of the group $G$. In fact, in case that $H$ is projective, if an anti-holomorphic involution $\tau: H\rightarrow H$ is given, then $h:=\sigma_H\circ \tau:H\rightarrow \hat{H}$ is a biregular isomorphism such that $\hat{h}\circ h=\text{id}$.

For example, for the algebraic group $H=\C^*$ we have the biregular isomorphism $h:H\rightarrow \hat{H}=H$ given by the (multiplicative) inverse map. The corresponding algebraic group defined over $\R$ is $G=\text{SO}_2(\R)$. In fact, we can see this using the locally $\C$-Nash category. The map $h$ induces the automorphism $h_1=-\id$ of $(\C,+,\exp)$. On the other hand, we have the isomorphism $\alpha_1:(\C,+,\sin)\rightarrow (\C,+,\exp):u\mapsto iu$, which satisfies $\hat{\alpha}_1\circ \alpha_1^{-1}=h_1$. Since $(\C,+,\sin)$ is the universal covering of $\text{SO}_2(\C)$, from $\alpha_1$ it is easy to compute a biregular isomorphism $\alpha:\text{SO}_2(\C)\rightarrow \C^*$ such that $\hat{\alpha}\circ \alpha^{-1}=h$. Similarly, we could have also an elliptic curve $H$ of lattice ${\langle 1,\omega \rangle}$, where $\omega\in \R i$, and the isomorphism $h:H\rightarrow \hat{H}=H$ given by the inverse map. Again $h$ induces the automorphism $h_1=-\id$ of $(\C,+,\wp_{\langle 1,\omega \rangle})$, and note that the isomorphism $\alpha_1:(\C,+,\wp_{\langle i,\omega i \rangle})\rightarrow (\C,+,\wp_{\langle 1,\omega \rangle}): u \mapsto iu$ satisfies $\hat{\alpha}_1\circ \alpha_1^{-1}=h_1$.  Since $(\C,+,\wp_{\langle i,\omega i \rangle})$ is the universal covering of the elliptic curve $G$ of (invariant) lattice $\wp_{\langle i,\omega i \rangle}$, from $\alpha_1$ it is easy to compute a biregular isomorphism $\alpha:G\rightarrow H$ such that $\hat{\alpha}\circ \alpha^{-1}=h$.

We finish this brief reminder recalling the descent of the base field for extensions of groups. Namely, let $H$ be an algebraic extension of $E$ by $L$,
$$0\rightarrow L \rightarrow H \rightarrow E \rightarrow 0,$$
and let $h:H\rightarrow H^\sigma$ be a biregular isomorphism such that $\hat{h}\circ h=\id$ and $h(L)=\hat{L}$. In particular, the map $h_L:=h|_L:L\rightarrow \hat{L}$ is also biregular with $\hat{h}_L\circ h_L=\id$ and $h$ induces a biregular isomorphism $h_E:E\rightarrow \hat{E}$ such that $\hat{h}_E\circ h_E=\id$. 
By the above discussion, there is an $\R$-algebraic group $G$ and a biregular isomorphism $f:G\rightarrow H$ such that  $h=\hat{f} \circ f^{-1}$. Consider the algebraic subgroup  $L_1:=f^{-1}(L)$ of $G$. From the properties of $h$ and $f$ it is easy to deduce that $L_1$ is defined over $\R$ and 
the biregular isomorphism $f_{L_1}:=f|_{L_1}:L_1\rightarrow L$ satisfies $h_{L}=\hat{f}_{L_1}\circ f_{L_1}^{-1}$. Moreover, for the $\R$-algebraic group $E_1=G/L_1$ we have an induced isomorphism $f_{E_1}:E_1\rightarrow E$ which satisfies $h_{E}=\hat{f}_{E_1}\circ f_{E_1}^{-1}$. Hence, we have shown that $G$ is an algebraic extension of $E_1$ by $L_1$, where $E_1$ and $L_1$ are the $\R$-algebraic groups related to $h_L$ and $h_E$ respectively. 

We now study the above descent of the base field on the case where $H$ is the abelian two-dimensional irreducible algebraic group of Proposition \ref{algebraic groups} which is an extension of an elliptic curve by $\C^*$. In other words, the universal covering of $H$ is $(\mathbb{C}^2,+,(\wp _\omega(u), e^v\widetilde{\sigma} _{\omega,\xi}(u)))$ for some $\omega\in \mathbb{C}\setminus \mathbb{R}$ and $\xi \notin {\langle 1,\omega \rangle}_{\mathbb{Q}}$.  Let $h:H\rightarrow \hat{H}$ be a biregular isomorphism defined over $\C$ such that $\hat{h}\circ h=\text{id}$.
We will show that there are only two possibilities for such an $h$, and for each one we will describe the algebraic group $G$ defined over $\R$ for which there exists an isomorphism $\alpha:G\rightarrow H$ such that $h=\hat{\alpha}\circ \alpha^{-1}$.

Note that the universal covering of $\hat{H}$ is $(\mathbb{C}^2,+,(\wp_{\overline{\omega}}(u), e^v\widetilde{\sigma}_{\overline{\omega},\overline{\xi}} (u)))$, and $h$ induces a locally $\C$-Nash isomorphism
$$h_1:(\mathbb{C}^2,+,(\wp _\omega(u), e^v\widetilde{\sigma} _{\omega,\xi} (u))) \rightarrow (\mathbb{C}^2,+,(\wp_{\overline{\omega}}(u), e^v\widetilde{\sigma}_{\overline{\omega},\overline{\xi}} (u))),$$
such that $\hat{h}_1\circ h_1=\text{id}$, where $\hat{h}_1$ is defined by the complex conjugation of the matrix  associated to $h_1$.
Arguing as in the proof of Theorem \ref{2dim R-classification}, there exist an isomorphism
$$\beta:(\mathbb{C}^2,+,(\wp _\omega(u), e^v\widetilde{\sigma} _{\omega,\xi} (u))) \rightarrow (\mathbb{C}^2,+,(\wp _{\omega_0}(u), e^v\widetilde{\sigma} _{\omega_0,\xi_0} (u)))$$
where $\omega_0\in \R i$ and $\xi_0\in \R\setminus \Q$. We can also consider the automorphism $h_2:=  \hat{\beta}\circ h_1 \circ \beta^{-1}$ of $(\mathbb{C}^2,+,(\wp _{\omega_0}(u), e^{v}\widetilde{\sigma} _{\omega_0,\xi_0} (u)))$, which satisfies $\hat{h}_2\circ h_2=\text{id}$.

All in all, we can assume from the beginning that the universal covering of $H$ is of the form $(\mathbb{C}^2,+,(\wp _{\omega}(u), e^v\widetilde{\sigma} _{\omega,\xi} (u)))$ where $\omega\in \R i$ and $\xi\in \R\setminus \Q$. Thus, $H$ is an algebraic group defined $\R$, the extension of the elliptic curve of lattice ${\langle 1,\omega\rangle}$ by $\C^*$. Now let $h_1$ be the automorphism of $(\mathbb{C}^2,+,(\wp _{\omega}(u), e^v\widetilde{\sigma} _{\omega,\xi} (u))$ induced by the biregular isomorphism $h:H\rightarrow \hat{H}=H$. Since $\hat{h}_1\circ h_1=\id$ we deduce from Proposition \ref{C-automorphisms 2} that $h_1=\id$ or $h_1=-\id$. If $h_1=\id$ then $h=\id$ and the corresponding algebraic group $G$ defined over $\R$ is $H$ again. If $h_1=-\id$ then $h$ is the (multiplicative) inverse of $H$. Using the explicit algebraic description of $H$ provided by \cite{Hindry} it should be possible to compute such a description for the corresponding algebraic group $G$ defined over $\R$ that we obtain in this case. The latter is beyond our interests: we just point out that -- because of the above reminder, concerning the descent of the base field of extensions-- we clearly get that $G$ is an extension of an elliptic curve of lattice ${\langle i,\omega i \rangle}$ by the algebraic group $\text{SO}_2(\C)$.

Hence, we have shown the existence of an algebraic group defined over $\R$ which is the extension of an elliptic curve of invariant lattice by $\text{SO}_2(\C)$. The latter is biregularly $\C$-isomorphic to an algebraic group defined over $\R$ of type (3ac) which is an extension of an elliptic curve of invariant lattice by $\C^*$, but it is not biregularly $\R$-isomorphic.
\begin{prop}\label{realalgebraicgroups}
The groups listed in Theorem\,\emph{\ref{2dim R-classification}} are  universal coverings of the connected components of the real points of abelian irreducible algebraic groups defined over $\R$. For each group of type $(1r)$, the corresponding algebraic group is a direct product of two of the following ones:
  $\C$, $\C^*$, $\text{\emph{SO}}_2(\C)$, and an elliptic curve of invariant lattice.
For groups of type $(2r)$, $(3r)$ and $(4r)$, we get  (specific) extensions of  elliptic curves of invariant lattice by $\C$,
  by $\C^*$, and by $\text{\emph{SO}}_2(\C)$, respectively. For groups of type $(5r)$ we get a simple abelian surface defined over $\R$.
\end{prop}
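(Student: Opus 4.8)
The plan is to assemble the statement from three ingredients already at hand: the identification of the complex algebraic groups in Proposition~\ref{algebraic groups}, the observation in Remark~\ref{rmk:realalgebraicgroups} that the Hindry embeddings are defined over $\R$ as soon as the relevant lattices are invariant (and $\xi$ is real), and the descent-of-base-field analysis carried out in the discussion preceding this proposition.

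First I would pass to complexifications. If $(\R^2,+,f)$ is a group of one of the types (1r)--(5r), then $(\C^2,+,f)$ is, by the proof of Theorem~\ref{2dim R-classification}, one of the groups of Theorem~\ref{2dim C-classification} in which, moreover, all the lattices occurring are invariant and the parameter $\xi$ is real (type (3r)) or purely imaginary (type (4r)). By Proposition~\ref{algebraic groups}, $(\C^2,+,f)$ is then the universal covering of one of the explicit complex algebraic groups $G_\C$ described there, realised via a Hindry-type parametrisation $\varphi\colon\C^2\to G_\C\subseteq\mathbb P^N$; and by Remark~\ref{rmk:realalgebraicgroups} the invariance of the lattices (and reality of $\xi$) forces this $\varphi$ to be defined over $\R$, so that $G_\C$ is the base change to $\C$ of an $\R$-algebraic group $G$. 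Exactly as in the proof of Theorem~\ref{2dim R-classification}(I, case (4c)), the conjugation on $G_\C$ then induces an antiholomorphic involution whose fixed locus is $G(\R)$, the restriction $\Psi|_\R$ of the covering $\Psi$ maps $(\R^2,+,f)$ with discrete kernel onto the connected component of $G(\R)$, and $\Psi|_\R$ is a locally Nash covering; thus $(\R^2,+,f)$ is the universal covering of that connected component.

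It remains to recognise $G$ in each case. For the one-dimensional building blocks, the descent datum is trivial on the $\id$- and $\exp$-pieces (since $\mathbb G_a$ and the split torus $\mathbb G_m$ have, respectively, a unique real form and the split real form), so $\id$ corresponds to the $\R$-group $\C$ and $\exp$ to the $\R$-group $\C^*$; while the $\sin$- and $\wp_{ai}$-pieces carry the inversion twist discussed above, corresponding to $\mathrm{SO}_2(\C)$ and to the elliptic curve of invariant lattice ${\langle 1,ai\rangle}$. Taking direct products gives the assertion for type (1r). For types (2r) and (3r), the $\mathbb G_a$- resp.\ $\mathbb G_m$-kernel of the Hindry extension inherits the trivial descent datum (the chart $f$ being $\R$-meromorphic with real $\xi$), so the resulting $\R$-group is, as claimed, a (specific) extension of an elliptic curve of invariant lattice by $\C$, resp.\ by $\C^*$. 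Type (4r) is precisely the case analysed in detail before the statement: there the map $f$ of type (4r) corresponds to the descent datum $h_1=-\,\id$ on $\big(\C^2,+,(\wp_\omega(u),e^v\widetilde\sigma_{\omega,\xi}(u))\big)$, i.e.\ to the multiplicative inversion on the $\C^*$-factor of the Hindry extension, and we already saw there that this yields an extension of the elliptic curve of lattice ${\langle i,\omega i\rangle}$ by $\mathrm{SO}_2(\C)$. Finally, for type (5r), the proof of Theorem~\ref{2dim R-classification}(I, case (4c)) shows that the period lattice $\Lambda_f$ is an invariant lattice of $\C^2$ with $\tr_\C\C(\Lambda_f)=2$, so $\C^2/\Lambda_f$ is a simple abelian surface defined over $\R$ and $\Psi|_\R$ is the required covering.

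The main obstacle is the bookkeeping in the previous paragraph: one must track the antiholomorphic involution through the explicit Hindry parametrisations to be sure that the factor appearing is $\C^*$ rather than $\mathrm{SO}_2(\C)$ in types (1r)--(3r) and exactly $\mathrm{SO}_2(\C)$ in type (4r). For the one-dimensional factors this is the computation recalled in the reminder on the descent of the base field preceding the statement, and for the extensions it reduces, via that same reminder on descent for extensions, to the datum on the kernel; so the remaining work is only to combine these observations.
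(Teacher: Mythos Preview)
Your proposal is essentially correct and follows the same route as the paper's proof, which simply says that (1r)--(3r) and (5r) follow from Proposition~\ref{algebraic groups} together with Remark~\ref{rmk:realalgebraicgroups}, and (4r) from the descent discussion immediately preceding the statement. You have unpacked these references in more detail, which is fine.

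One small inaccuracy: you write that ``the $\sin$- and $\wp_{ai}$-pieces carry the inversion twist.'' This is right for $\sin$ but not for $\wp_{ai}$. The lattice ${\langle 1,ai\rangle}_\Z$ is already invariant, so $\wp_{ai}$ is $\R$-meromorphic and the associated elliptic curve is defined over $\R$ with \emph{trivial} descent datum --- no twist is needed. The discussion of the inversion twist on an elliptic curve in the paper is there only to feed into the extension-by-$\mathrm{SO}_2$ analysis for type~(4r), not for the $\wp_{ai}$ factor in~(1r). Your conclusion (an elliptic curve of invariant lattice) is nonetheless correct, so this does not damage the argument.
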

\begin{proof}The result for the groups (1r)-(3r) and (5r) is a consequence of Proposition \ref{algebraic groups} and Remark \ref{algebraic groups}. For groups of type (4r), it follows by the discussion above.
\end{proof}

We finally state one of the main results of this paper, which follows from Proposition \ref{realalgebraicgroups} and \cite[Prop.\,3.12]{BDOLCN}.
\begin{corollary}\label{real isogenous corollary}
Every two-dimensional abelian irreducible complex algebraic group defined over $\R$ is isogenous to a group of one and only one of the following types:

\emph{(1ar)} A direct product of  two among $\C,\,\C^*$, $\text{\emph{SO}}_2(\C)$ and an elliptic curve of invariant lattice.

\emph{(2ar)} An extension of an elliptic curve of invariant lattice by $\C$.

\emph{(3ar)} An extension of an elliptic curve  of invariant lattice by $\C^*$.

\emph{(4ar)} An extension of an elliptic curve  of invariant lattice by $\text{\emph{SO}}_2(\C)$.

\emph{(5ar)} A simple abelian surface defined over $\R$.
\end{corollary}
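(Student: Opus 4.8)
The plan is to run the real counterpart of the argument used for Corollary \ref{isogenous corollary}. Let $G$ be a two-dimensional abelian irreducible complex algebraic group defined over $\R$, and let $G(\R)^{0}$ be the connected component of the identity of its group of real points. By \cite[Prop.\,3.12]{BDOLCN} (the real analogue of the results invoked in the proof of Corollary \ref{isogenous corollary}), the universal covering of $G(\R)^{0}$ is a simply connected two-dimensional abelian locally Nash group, so by Theorem \ref{2dim R-classification} it is isomorphic, as a locally Nash group, to exactly one of the model groups of types (1r)--(5r).

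Next I would feed this into Proposition \ref{realalgebraicgroups}: each model group of type (1r)--(5r) is the universal covering of the connected component of the real points of one of the algebraic groups defined over $\R$ listed there, namely a group of the corresponding type among (1ar)--(5ar) (a direct product of two of $\C$, $\C^{*}$, $\text{SO}_2(\C)$ and an elliptic curve of invariant lattice; an extension of such an elliptic curve by $\C$, by $\C^{*}$, or by $\text{SO}_2(\C)$; or a simple abelian surface defined over $\R$). Composing the two locally Nash isomorphisms produces a Nash local isomorphism between a neighbourhood of the identity of $G$ and a neighbourhood of the identity of the corresponding model group defined over $\R$; applying \cite[Prop.\,3.12]{BDOLCN} once more — which upgrades such a local isomorphism of real algebraic groups to an isogeny defined over $\R$ — yields that $G$ is $\R$-isogenous to a group of one of the types (1ar)--(5ar). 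This settles existence.

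For the \emph{one and only one} clause I would argue that the five types are pairwise non-$\R$-isogenous. An $\R$-isogeny between two model groups induces, again via \cite[Prop.\,3.12]{BDOLCN}, a locally Nash isomorphism between the universal coverings of the connected components of their real points, hence between the corresponding model groups among (1r)--(5r); by the uniqueness part of Theorem \ref{2dim R-classification}$(I)$ this forces the two types to coincide. The only delicate point — and hence the place I expect to have to be careful — is that the $\R$-forms obtained through the descent-of-the-base-field discussion preceding Proposition \ref{realalgebraicgroups} are genuinely distinct; in particular the extension of an elliptic curve of invariant lattice by $\C^{*}$ (type (3ar)) and the extension by $\text{SO}_2(\C)$ (type (4ar)) are not $\R$-isomorphic even though they become isomorphic over $\C$, which is exactly the non-isomorphism of types (3r) and (4r) already proved within Theorem \ref{2dim R-classification}$(I)$. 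Thus the main obstacle is bookkeeping rather than new mathematical content: one must keep track of which $\R$-form each Nash model corresponds to throughout the reduction.
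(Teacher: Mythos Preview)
Your proposal is correct and follows essentially the same approach as the paper: the paper's proof is the single sentence that the corollary ``follows from Proposition~\ref{realalgebraicgroups} and \cite[Prop.\,3.12]{BDOLCN}'', and your argument is precisely the natural unpacking of that sentence --- pass to the universal covering of $G(\R)^0$, classify it via Theorem~\ref{2dim R-classification}, match it with an algebraic model via Proposition~\ref{realalgebraicgroups}, and use \cite[Prop.\,3.12]{BDOLCN} to produce and compare $\R$-isogenies. Your additional remarks on the uniqueness clause (in particular the separation of types (3ar) and (4ar) via the distinction between (3r) and (4r)) are accurate and simply make explicit what the paper leaves implicit.
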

\bibliographystyle{plain}
\bibliography{research}
\end{document}